\numberwithin{equation}{section}
\theoremstyle{plain}
\newtheorem{theorem}{Theorem}[section]
\newtheorem{proposition}[theorem]{Proposition}    
\newtheorem{lemma}[theorem]{Lemma}          
\newtheorem{corollary}[theorem]{Corollary}
\newtheorem{remark}[theorem]{Remark}
\theoremstyle{definition}
\newcommand{\cA}{\mathcal A} \newcommand{\cB}{\mathcal B}  
\newcommand{\cD}{\mathcal D} \newcommand{\cE}{\mathcal E} \newcommand{\cF}{\mathcal F} 
\newcommand{\cG}{\mathcal G}  \newcommand{\cI}{\mathcal I} 
 \newcommand{\cK}{\mathcal K} \newcommand{\cL}{\mathcal L} 
\newcommand{\cM}{\mathcal M} \newcommand{\cN}{\mathcal N} \newcommand{\cO}{\mathcal O} 
 \newcommand{\cQ}{\mathcal Q} \newcommand{\cR}{\mathcal R} 
\newcommand{\cS}{\mathcal S}   
\newcommand{\cY}{\mathcal Y} \newcommand{\cZ}{\mathcal Z} 
\newcommand{\R}{\mathbb{R}}
\newcommand{\N}{\mathbb{N}}
\renewcommand{\epsilon}{\varepsilon}
\newcommand{\dd}{\,\mathrm{d}}
\newcommand{\Diff}{\mathrm{D}}
\newcommand{\app}{\mathrm{app}}
\newcommand{\BS}{\mathrm{BS}}
\newcommand{\weakto}{\rightharpoonup}
\newcommand{\Ker}{\mathrm{Ker}}
\newcommand{\Ran}{\mathrm{Ran}}
\newcommand{\TS}{\textstyle}
\newcommand{\I}{\mathrm{I}}
\newcommand{\II}{\mathrm{II}}
\newcommand{\III}{\mathrm{III}}
\title{Fast relaxation of a viscous vortex in an external flow}
\author{Martin Donati and Thierry Gallay}
\date{\today}
\begin{document}

\maketitle

\begin{abstract}
We study the evolution of a concentrated vortex advected by a smooth, divergence-free 
velocity field in two space dimensions.  In the idealized situation where
the initial vorticity is a Dirac mass, we compute an approximation of the
solution which accurately describes, in the regime of high Reynolds numbers, the
motion of the vortex center and the deformation of the streamlines under the
shear stress of the external flow. For ill-prepared initial data, corresponding
to a sharply peaked Gaussian vortex, we prove relaxation to the previous
solution on a time scale that is much shorter than the diffusive time, due to
enhanced dissipation inside the vortex core.
\end{abstract}

\section{Introduction}\label{sec1}

We revisit the classical problem of the evolution of a concentrated vortex in a
background flow, which was carefully studied in the monographs \cite{TK91,TKK07}
and the previous works \cite{TT65,LT87}. We assume that the external velocity
field is smooth, divergence-free, and uniformly bounded together with its 
derivatives. Our goal is to give a rigorous description of the solution of the
two-dimensional Navier-Stokes equations in such a background flow, for
concentrated initial data corresponding either to a point vortex or to a sharply
peaked Gaussian vortex. In both cases the solution remains concentrated for
quite a long time provided the kinematic viscosity $\nu > 0$ is sufficiently
small. The leading order approximation is a Lamb-Oseen vortex whose center is
advected by the external flow, whereas the vortex core spreads diffusively due
to viscosity. Higher order corrections describe the deformation of the
streamlines under the external shear stress, and appear to be sensitive to the
choice of the initial data.

From the point of view of mathematical analysis, it is convenient to consider
first the idealized situation where the initial vorticity is just a Dirac mass.
Despite the singular nature of such data, the initial value problem is
globally well-posed, as can be seen by adapting to the present case the
results that are known for the two-dimensional vorticity equation in the space
of finite measures \cite{GMO88,GW05,GaGa05}. By construction, the size of the
vortex core vanishes at initial time, and is therefore infinitely small compared
to the typical length scale $d_0 > 0$ defined by the external flow. For such
{\em well-prepared} initial data, the approximate solution constructed in
\cite{TT65,TK91} depends only on the ``normal'' time scale associated with
the external field, and describes the deformation of the vortex core under
the external shear stress. 

The situation is quite different if the initial vorticity is a radially
symmetric vortex patch or vortex blob with finite extension $\ell_0\ll
d_0$. Such data can be described as {\em ill-prepared}, in the sense that the
resulting solution exhibits a transient regime during which the initially
symmetric vortex gets deformed to adapt its shape to the external strain. The
streamlines near the vortex core become elliptical, with an eccentricity that
undergoes damped oscillations on a short time scale until it settles down to the
value predicted by the well-prepared solution. This evolution is illustrated by a
numerical simulation in Figure~\ref{fig:relax}. In a second stage, the vorticity
distribution inside the core slowly relaxes to a Gaussian profile under the
action of viscosity.  This two-step process was carefully studied by Le Diz\`es
and Verga \cite{LDV02} in the related case of a co-rotating vortex pair, for
which the deformation of the vortex cores is just the first stage of a complex
dynamics eventually leading to vortex merging \cite{MLDL05}. In the
perturbative approach of Ting and Klein \cite{TK91}, a two-time analysis is
necessary to obtain an accurate description of the solution in the ill-prepared
case.

\renewcommand{\thesubfigure}{}
\captionsetup[subfigure]{labelformat=simple,labelsep=colon,
listofformat=subsimple}
\captionsetup{lofdepth=2}
\makeatletter
\renewcommand{\p@subfigure}{}
\makeatother

\begin{figure}
    \centering
    \subfloat[$t=0$]{\includegraphics[width=0.14\linewidth]{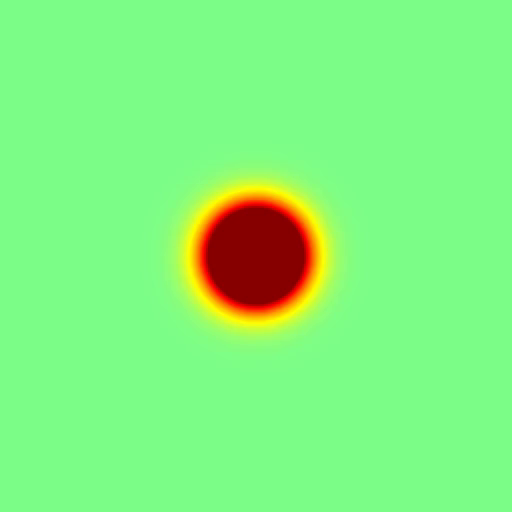}}\hspace{0.5mm}
    \subfloat[$t=0.032$]{\includegraphics[width=0.14\linewidth]{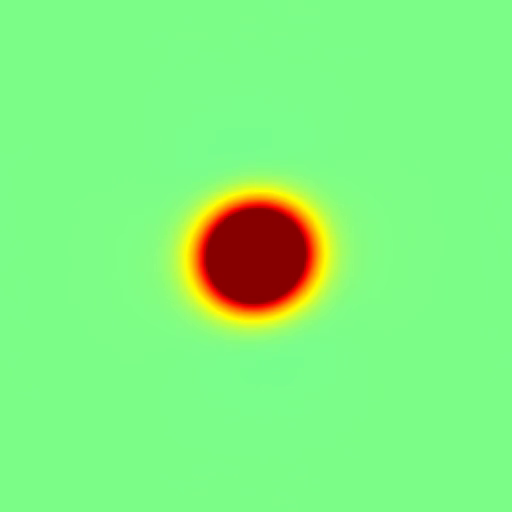}}\hspace{0.5mm}
    \subfloat[$t=0.096$]{\includegraphics[width=0.14\linewidth]{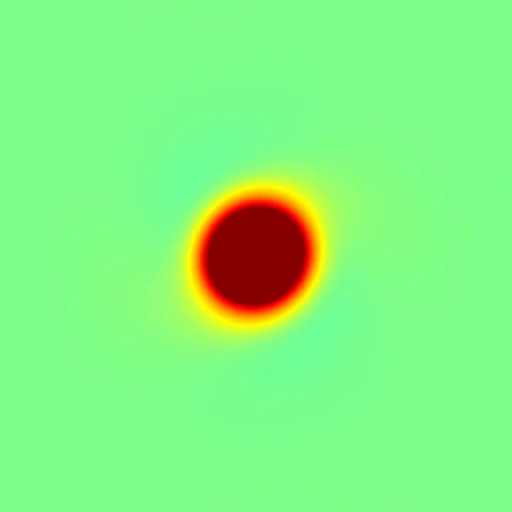}}
    \hspace{5mm}
    \subfloat[$t=0$]{\includegraphics[width=0.14\linewidth]{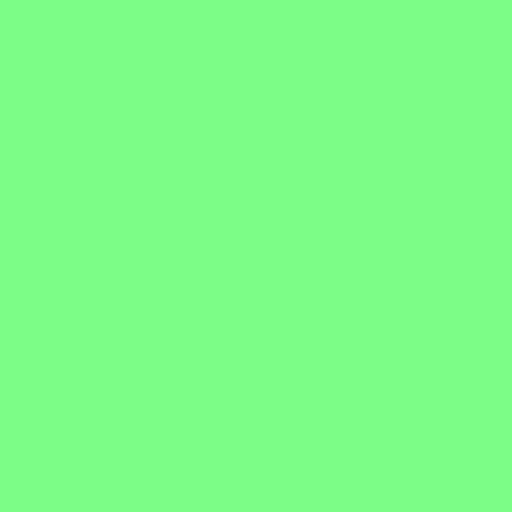}}\hspace{0.5mm}
    \subfloat[$t=0.032$]{\includegraphics[width=0.14\linewidth]{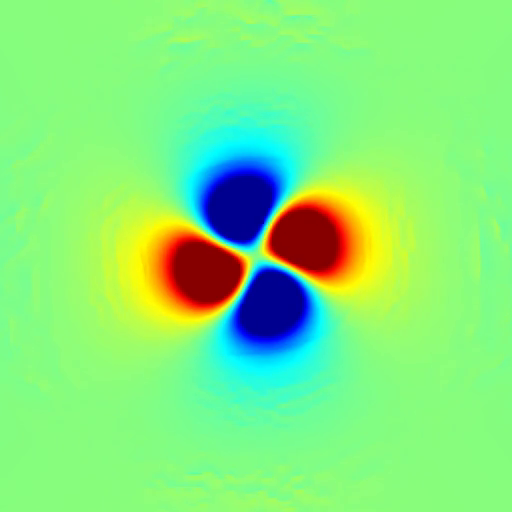}}\hspace{0.5mm}
    \subfloat[$t=0.096$]{\includegraphics[width=0.14\linewidth]{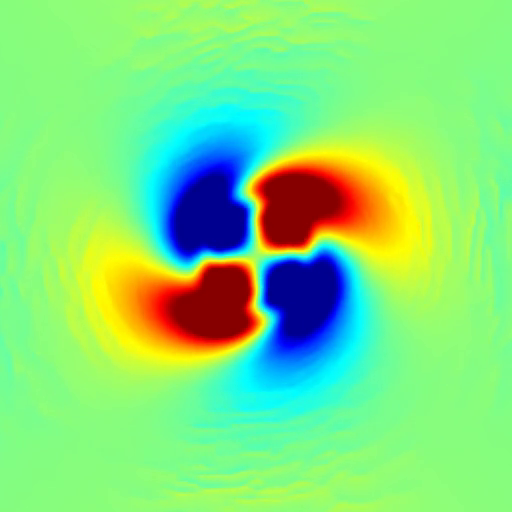}}
    
    \subfloat[$t=0.144$]{\includegraphics[width=0.14\linewidth]{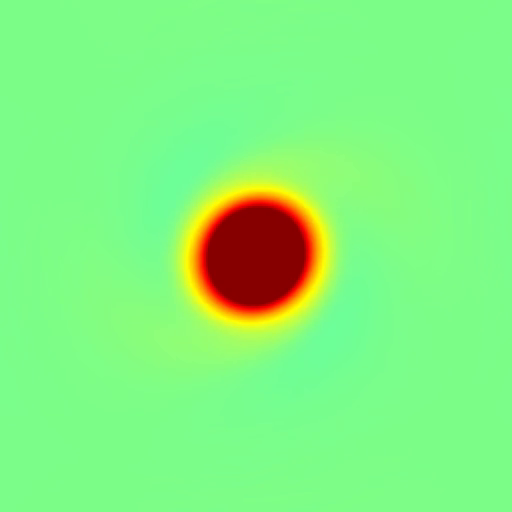}}\hspace{0.5mm}
    \subfloat[$t=0.224$]{\includegraphics[width=0.14\linewidth]{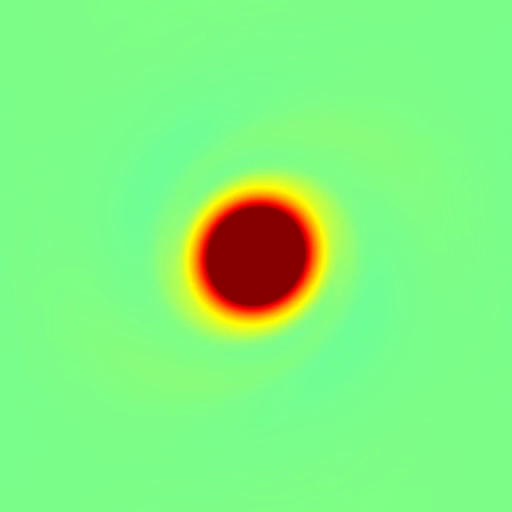}}\hspace{0.5mm}
    \subfloat[$t=0.4$]{\includegraphics[width=0.14\linewidth]{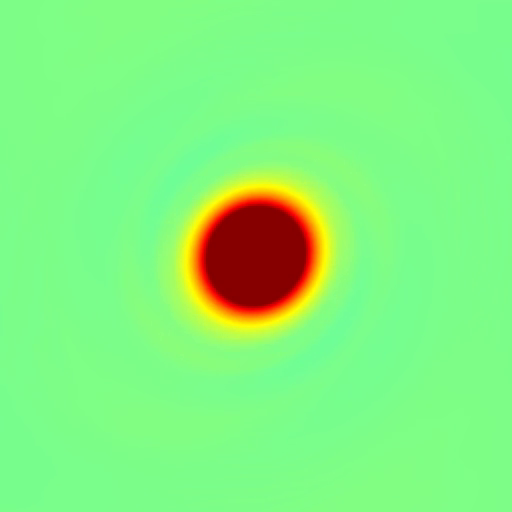}}
    \hspace{5mm}
    \subfloat[$t=0.144$]{\includegraphics[width=0.14\linewidth]{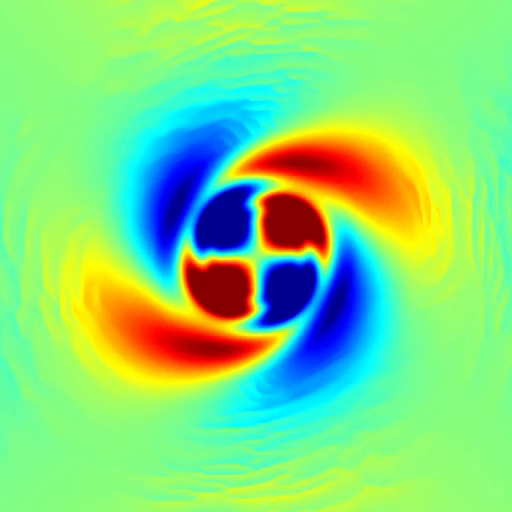}}\hspace{0.5mm}
    \subfloat[$t=0.224$]{\includegraphics[width=0.14\linewidth]{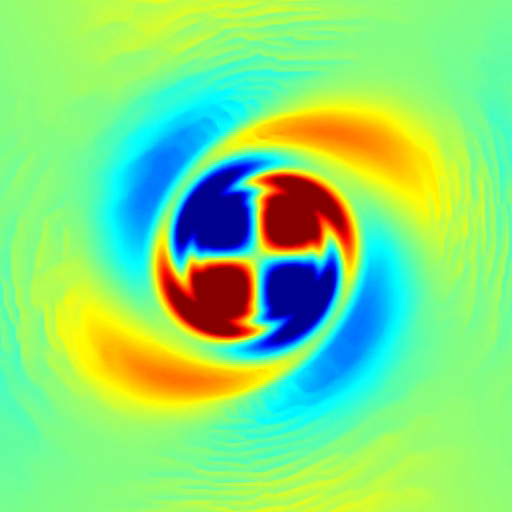}}\hspace{0.5mm}
    \subfloat[$t=0.4$]{\includegraphics[width=0.14\linewidth]{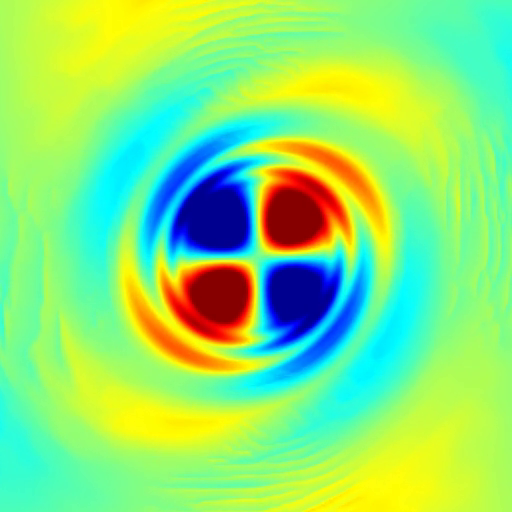}}
    
    \subfloat[$t=0.64$]{\includegraphics[width=0.14\linewidth]{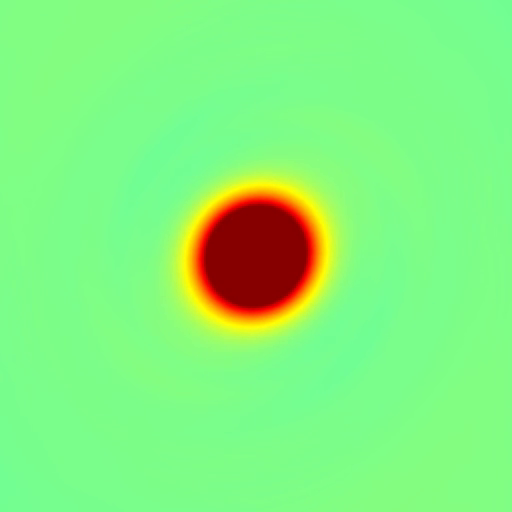}}\hspace{0.5mm}
    \subfloat[$t=1.1$]{\includegraphics[width=0.14\linewidth]{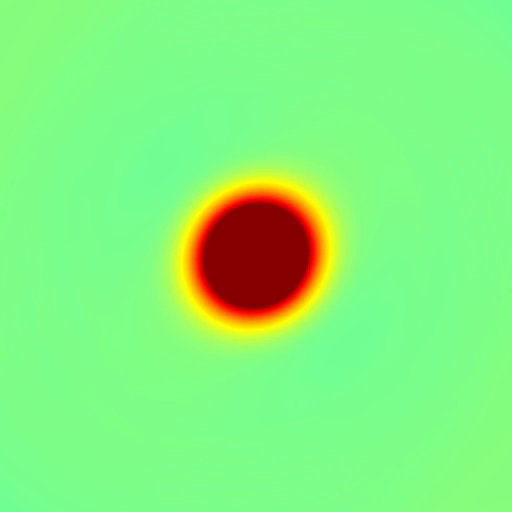}}\hspace{0.5mm}
    \subfloat[$t=7$]{\includegraphics[width=0.14\linewidth]{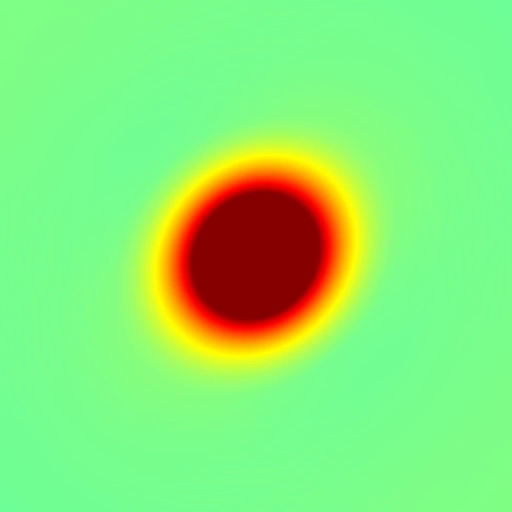}}
    \hspace{5mm}
    \subfloat[$t=0.64$]{\includegraphics[width=0.14\linewidth]{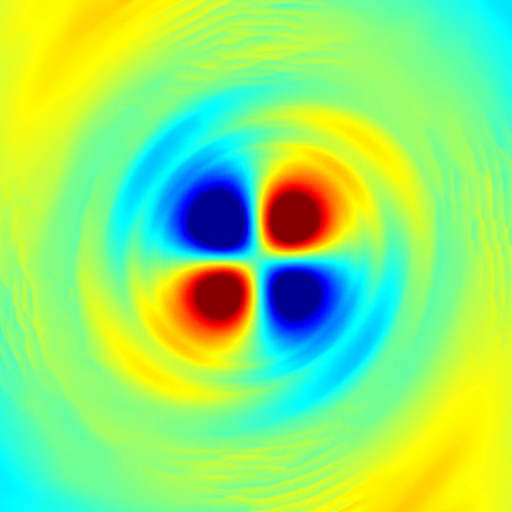}}\hspace{0.5mm}
    \subfloat[$t=1.1$]{\includegraphics[width=0.14\linewidth]{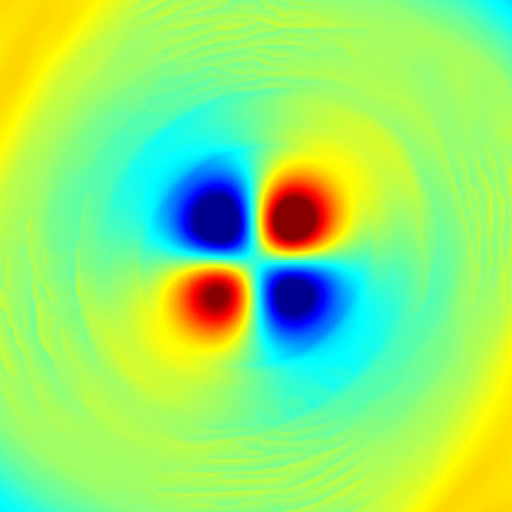}}\hspace{0.5mm}
    \subfloat[$t=7$]{\includegraphics[width=0.14\linewidth]{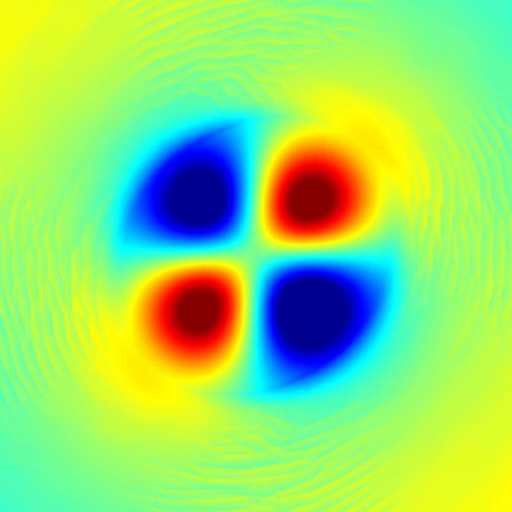}}
    \caption{Numerical simulation of a vortex in an external field with Gaussian initial data.
    The vorticity distribution (left) and the deviation from the Lamb-Oseen vortex (right)
    are represented at nine different times, using standard color codes for the vorticity
    levels. The final state at $t = 7$ is close to the approximate solution defined
    in \eqref{def:omapp}. This simulation is made with the free software
    \href{http://basilisk.fr/}{Basilisk}, and the external field is chosen as
    in Section~\ref{ssecA1}.}\label{fig:relax}
\end{figure}

The purpose of this paper is twofold. First, we show that the techniques
introduced in \cite{Ga11} to study the solution of the two-dimensional
Navier-Stokes equations with a finite collection of point vortices as initial
data can be adapted to the emblematic case of a single vortex in an external
flow, which is at the same time simpler and more general. In particular, if the
initial vorticity is a Dirac mass, we construct perturbatively an accurate
approximation of the solution, and we verify that the exact solution remains
close to it over a long time interval if the viscosity is small enough. Next, we
consider ill-prepared data for which the initial vorticity is a sharply
concentrated Gaussian function, and we prove that the resulting solution rapidly
relaxes towards the approximate solution computed in the well-prepared case.
That part of the analysis relies on enhanced dissipation estimates for the
linearized Navier-Stokes equations at the Lamb-Oseen vortex, which are due to
Li, Wei, and Zhang \cite{LWZ20}. Such estimates were already applied in
\cite{Ga18} to prove axisymmetrization near a Gaussian vortex in the
regime of high Reynolds numbers, but to our knowledge they were never
used to study the relaxation of a circular vortex towards a non-symmetric
metastable state in an external strain. 

\medskip
We now present our results in a more precise way. We give ourselves a
smooth, time-dependent velocity field $f = (f_1,f_2): \R^2 \times [0,T] \to \R^2$
which is uniformly bounded together with its derivatives with respect to the space
variable $x = (x_1,x_2) \in \R^2$ and the time $t \in [0,T]$. We assume that $f$
is divergence-free, namely
\[
  \nabla\cdot f(x,t) \,:=\, \partial_{x_1} f_1(x,t) + \partial_{x_2} f_2(x,t)
  \,=\, 0\,, \qquad \forall\,(x,t) \in \R^2\times [0,T]\,.
\]
The characteristic time $T_0 > 0$ of the velocity field $f$ is defined by
the classical formula
\begin{equation}\label{eq:T0def}
  \frac{1}{T_0} \,=\, \sup_{t \in [0,T]}\|\Diff f(\cdot,t)\|_{L^\infty(\R^2)}\,,
\end{equation}
where $\Diff f$ denotes the first order differential of $f$ with respect to the
space variable. To avoid trivial situations, we suppose from now on that 
$T_0 < \infty$, which means that $\Diff f \not\equiv 0$. 

We consider the evolution of a concentrated vortex embedded
in the external flow described by the velocity field $f$. The vorticity
distribution $\omega(x,t)$ is a scalar function satisfying the evolution equation
\begin{equation}\label{eq:NSf}
  \partial_t\omega(x,t) + \bigl(u(x,t) + f(x,t)\bigr) \cdot \nabla
  \omega(x,t) \,=\, \nu \Delta\omega(x,t)\,, \qquad \forall\,(x,t)
  \in \R^2 \times (0,T)\,,
\end{equation}
where the parameter $\nu > 0$ is the kinematic viscosity of the fluid. 
The velocity field $u = (u_1,u_2)$ associated with $\omega$ is given by the
Biot-Savart formula
\begin{equation}\label{eq:BS}
  u(x,t) \,=\, \frac{1}{2\pi} \int_{\R^2} \frac{(x-y)^\perp}{|x-y|^2}\,
  \omega(y,t)\dd y\,, \qquad \forall\,(x,t) \in \R^2 \times (0,T)\,,
\end{equation}
where we use the notation $x^\perp = (-x_2,x_1)$ and $|x|^2 = x_1^2 + x_2^2$ for
all $x = (x_1,x_2) \in \R^2$. We denote $u = \BS[\omega]$ and we observe that
$\nabla \cdot u = 0$ and $\partial_{x_1} u_2 - \partial_{x_2} u_1 = \omega$.
Equations \eqref{eq:NSf} and \eqref{eq:BS} form a closed system, which corresponds
when $f \equiv 0$ to the usual two-dimensional incompressible Navier-Stokes
equations in vorticity form. We refer the reader to \cite{MajBer02,MarPul94} for
general results on these equations.

\begin{remark}\label{rem:f}
Equation \eqref{eq:NSf} appears in at least two physical contexts. The first
one is the evolution of a finite number of isolated vortices under the Navier-Stokes equations without external field. The total vorticity 
can be decomposed as $\omega = \omega_1 + \ldots + \omega_N$, and the first component 
$\omega_1$ solves equation~\eqref{eq:NSf} with $f = \BS[\omega_2]+\ldots + \BS[\omega_N]$. 
In other words, if we focus on one particular vortex, we are naturally led to an 
advection-diffusion equation of the form \eqref{eq:NSf} involving the velocity field $f$ 
created by the other vortices. This is a standard point of view, see for example 
\cite{Mar98,GaGa05,Ga11}. Alternatively, following \cite{TK91,TKK07}, we can consider 
the evolution of a single vortex in a background potential flow $f$, which is typically 
due to an inflow condition at infinity. In that case the dynamics of the 
vortex does not influence the external flow because the associated vorticity $\omega_f := 
\partial_{x_1} f_2 - \partial_{x_2} f_1$ vanishes identically.
\end{remark}

We first consider the idealized situation where the initial vorticity is a
Dirac mass, which means that $\omega_0 = \Gamma \delta_{z_0}$ for some $\Gamma \in \R^*$
and some $z_0 \in \R^2$. Without loss of generality, we assume henceforth that
$\Gamma > 0$. Adapting the results of \cite{GMO88,GaGa05}, which hold
for $f \equiv 0$, it is not difficult to verify that Eq.~\eqref{eq:NSf} has a unique
(mild) solution $\omega \in C^0\bigl((0,T],L^1(\R^2) \cap L^\infty(\R^2)\bigr)$ such that
\begin{equation}\label{eq:omunique}
  \sup_{0 < t  \le T} \|\omega(\cdot,t)\|_{L^1} \,<\, \infty\,, \qquad
  \text{and} \qquad \omega(\cdot,t)\dd x \,\weakto\, \Gamma \delta_{z_0}
  \quad \text{as }\, t \to 0\,,
\end{equation}
where the half-arrow $\weakto$ denotes the weak convergence of measures. 
In the simple case where $f \equiv 0$, the solution takes the explicit form
\begin{equation}\label{eq:LambOseen}
  \omega(x,t) \,=\, \frac{\Gamma}{\nu t}\,\Omega_0\biggl(\frac{x-z_0}{\sqrt{\nu t}}
  \biggr)\,, \qquad 
  u(x,t) \,=\, \frac{\Gamma}{\sqrt{\nu t}}\,U_0\biggl(\frac{x-z_0}{\sqrt{\nu t}}
  \biggr)\,,  
\end{equation}
for all $(x,t) \in \R^2\times (0,+\infty)$, where the vorticity $\Omega_0$ and the
velocity $U_0 = \BS[\Omega_0]$ are given by 
\begin{equation}\label{eq:OmU0}
  \Omega_0(\xi) \,=\, \frac{1}{4\pi}\exp\Bigl(-\frac{|\xi|^2}{4}\Bigr)\,,
  \qquad U_0(\xi) \,=\, \frac{1}{2\pi}\,\frac{\xi^\perp}{|\xi|^2}\biggl(1 -
  \exp\Bigl(-\frac{|\xi|^2}{4}\Bigr)\biggr)\,, \qquad \forall\,\xi \in \R^2\,.
\end{equation}
Note that $u \cdot\nabla\omega \equiv 0$, so that $\omega$ actually solves
the linear heat equation $\partial_t\omega = \nu\Delta\omega$. The self-similar
solution \eqref{eq:LambOseen} of the two-dimensional vorticity equation is referred
to as the {\em Lamb-Oseen vortex} with circulation $\Gamma > 0$, centered at the point
$z_0 \in \R^2$. More generally, for solutions of \eqref{eq:NSf} and \eqref{eq:BS}, 
the {\em total circulation} is the conserved quantity defined by
\[
  \Gamma \,:=\, \int_{\R^2}\omega(x,t)\dd x\,.
\]
The dimensionless ratio $\Gamma/\nu$ is called the {\em circulation Reynolds number}. 

In the more interesting situation where $f \not\equiv 0$, no explicit expression
is available in general, but if the viscosity is weak enough so that the
diffusion length $\sqrt{\nu t}$ is small compared to the characteristic length
defined by the external flow, we can approximate the solution of \eqref{eq:NSf}
by a sharply concentrated Lamb-Oseen vortex which is simply advected by the
external velocity field. This fact is rigorously stated in the following result. 

\begin{proposition}\label{prop1}
Fix $\Gamma > 0$ and $z_0 \in \R^2$. There exist positive constants $K_0,\delta_0$
such that, if $0 < \nu/\Gamma < \delta_0$, the unique solution of \eqref{eq:NSf},
\eqref{eq:BS} satisfying \eqref{eq:omunique} has the following property:
\begin{equation}\label{eq:prop1}
  \frac{1}{\Gamma}\int_{\R^2}\biggl|\,\omega(x,t) - \frac{\Gamma}{\nu t}\,\Omega_0\biggl(
  \frac{x-\hat z(t)}{\sqrt{\nu t}}\biggr)\biggr|\dd x \,\le\, K_0\,
  \frac{\sqrt{\nu t}}{d}\,, \qquad \forall\,t \in (0,T)\,,
\end{equation}
where $d = \sqrt{\Gamma T_0}$ and $\hat z(t)$ is the unique solution of the
differential equation
\begin{equation}\label{eq:originz}
  \hat z'(t) \,=\, f(\hat z(t),t)\,, \qquad \hat z(0) = z_0\,.
\end{equation}
\end{proposition}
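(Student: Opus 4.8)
The plan is to pass to a self-similar frame centered at the advected point $\hat z(t)$, and to compare the rescaled vorticity with the fixed Gaussian profile $\Omega_0$. First I would introduce scaled variables $\xi = (x - \hat z(t))/\sqrt{\nu t}$ and $\tau = \log(t/t_0)$ (or a similar logarithmic time), and write $\omega(x,t) = \frac{\Gamma}{\nu t}\,w(\xi,\tau)$. In these variables the equation \eqref{eq:NSf} becomes a perturbation of the Fokker--Planck-type equation $\partial_\tau w = \cL w + (\text{nonlinear Biot--Savart term}) + (\text{external-field term})$, where $\cL = \Delta_\xi + \tfrac12 \xi\cdot\nabla_\xi + 1$ is the linearized operator whose kernel contains $\Omega_0$. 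The change of frame by $\hat z(t)$ is chosen precisely so that the term $f(\hat z(t),t)\cdot\nabla\omega$ is cancelled; what remains from the external field is $\bigl(f(x,t) - f(\hat z(t),t)\bigr)\cdot\nabla\omega$, which by the mean value theorem and the bound \eqref{eq:T0def} is controlled by $T_0^{-1}|x - \hat z(t)|\,|\nabla\omega|$, i.e. of relative size $\sqrt{\nu t}\,/\sqrt{\Gamma T_0} = \sqrt{\nu t}/d$ in the scaled variables. This is the source of the right-hand side in \eqref{eq:prop1}.

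Next I would set up the comparison in $L^1$. Writing $w = \Omega_0 + \Gamma^{-1} g$ (or directly estimating $\|w(\cdot,\tau) - \Omega_0\|_{L^1}$), one gets an integral inequality of the form
\begin{equation*}
  \frac{\dd}{\dd t}\,\|\omega(\cdot,t) - \omega_{\mathrm{app}}(\cdot,t)\|_{L^1}
  \,\le\, C\,\frac{\sqrt{\nu}}{d\sqrt{t}}\,\Gamma + (\text{nonlinear terms}),
\end{equation*}
where $\omega_{\mathrm{app}}(x,t) = \frac{\Gamma}{\nu t}\Omega_0\!\bigl((x-\hat z(t))/\sqrt{\nu t}\bigr)$. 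The key points are: (i) the heat/Fokker--Planck semigroup is a contraction in $L^1$ (conservation of mass together with positivity-preserving smoothing), so $\cL$ contributes no growth; (ii) the nonlinear Biot--Savart term $u\cdot\nabla\omega$ is quadratic and, on the relevant time scale with $\nu/\Gamma$ small, can be absorbed using a Gronwall argument together with a bootstrap assumption that $\|\omega - \omega_{\mathrm{app}}\|_{L^1}$ stays $\lesssim \sqrt{\nu t}/d$; (iii) the external-field remainder integrates to $\int_0^t C\Gamma\sqrt{\nu}/(d\sqrt{s})\dd s = 2C\Gamma\sqrt{\nu t}/d$, giving exactly the claimed bound. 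One must also verify that the scaled initial data converges to $\Omega_0$ as $t\to 0$ (so the comparison starts from zero error), which follows from \eqref{eq:omunique} and the uniqueness statement, exactly as in the $f\equiv 0$ theory of \cite{GMO88,GaGa05}.

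The main obstacle I expect is controlling the nonlinear Biot--Savart interaction uniformly for $t\in(0,T)$: a priori $\|\omega(\cdot,t)\|_{L^1}$ is only bounded, and one needs quantitative control of the velocity field generated by the error $\omega - \omega_{\mathrm{app}}$ in order to close the Gronwall estimate. This requires either an $L^1$--$L^\infty$ smoothing estimate for the linearized evolution (to bound $\|u - u_{\mathrm{app}}\|_{L^\infty}$ by the $L^1$ norm of the vorticity error, up to the scaling factor) or a weighted-norm argument controlling the spatial decay of the error, so that the logarithmic divergence of the Biot--Savart kernel is harmless; both are available in the framework of \cite{GaGa05,Ga11}. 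A secondary point is that the time interval $(0,T)$ is fixed while the natural scale on which the estimate degrades is $t \sim d^2/\nu = \Gamma T_0/\nu$; since this is large when $\nu/\Gamma$ is small, the bound \eqref{eq:prop1} is only nontrivial (i.e.\ the right-hand side is $o(1)$) for $t \ll \Gamma T_0/\nu$, and one checks that for larger $t$ the estimate \eqref{eq:prop1} can be made to hold trivially by adjusting $K_0$, or is simply not claimed to be sharp there.
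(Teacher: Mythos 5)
Your plan founders on the term you dismiss as ``quadratic''. Write the rescaled vorticity as $\Omega = \Omega_0 + g$; then the self-induced advection $\delta^{-1}\,U\cdot\nabla\Omega$ (with $\delta = \nu/\Gamma$ and $U = \BS[\Omega]$) splits as
\[
  \frac{1}{\delta}\,U_0\cdot\nabla\Omega_0 \;+\; \frac{1}{\delta}\Bigl(U_0\cdot\nabla g + \BS[g]\cdot\nabla\Omega_0\Bigr) \;+\; \frac{1}{\delta}\,\BS[g]\cdot\nabla g\,.
\]
The first term vanishes and only the last is quadratic in the error. The middle term, $\delta^{-1}\Lambda g$ in the notation \eqref{def:Lambda}, is \emph{linear} in $g$ and carries the huge factor $\Gamma/\nu$. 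In $L^1$ the transport part $U_0\cdot\nabla g$ is harmless (divergence-free transport preserves the $L^1$ norm), but the nonlocal part $\BS[g]\cdot\nabla\Omega_0$ acts as a source of size $\cO(\delta^{-1}\|g\|)$, so Gr\"onwall yields growth like $(t/t_0)^{C/\delta}$ and the bootstrap cannot close, regardless of how the quadratic term and the external forcing are handled. This is precisely the ``main problem'' the paper identifies at the start of Section~\ref{sec3}: the only known way to neutralize this term is the structural fact that $\Lambda$ is skew-adjoint in the Gaussian-weighted space $\cY$ (Proposition~\ref{prop:Lam}), so that $\langle g,\Lambda g\rangle_\cY = 0$ in the energy identity; this cancellation is invisible in $L^1$, and your proposal never engages with it.

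Two further points. Your bookkeeping of the external-field remainder is off: the mean value theorem gives $\|(f(\cdot,t)-f(\hat z(t),t))\cdot\nabla\omega_\app\|_{L^1} \le C\Gamma/T_0$, so the naive time integral is $C\Gamma t/T_0 = C\Gamma\epsilon^2/\delta$, not $C\Gamma\epsilon$ --- useless for $t$ comparable to $T_0$. The reason the true deviation from the Gaussian centered at $z(t)$ is nonetheless only $\cO(\epsilon^2)$ is that the forcing $E_2\cdot\nabla\Omega_0$ is quenched by the fast rotation (it is absorbed into $\Lambda\bar\Omega_2$, see \eqref{def:Om2}), a mechanism a Duhamel-in-$L^1$ estimate does not see; and in general the $\cO(\epsilon)$ right-hand side of \eqref{eq:prop1} is dominated not by the core deformation but by the recentering $z(t)\to\hat z(t)$, i.e.\ by the viscous drift $\nu t\,\Delta f$ missing from \eqref{eq:originz}, see \eqref{eq:zdiffer}. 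Finally, note that the paper does not prove Proposition~\ref{prop1} directly: it is deduced from Theorem~\ref{thm1} in Section~\ref{sssec334}, and Theorem~\ref{thm1} requires the full apparatus of Sections~\ref{sec2}--\ref{sec3} (fourth-order approximate solution, adapted weight $p_\epsilon$) precisely to tame the $\delta^{-1}$ terms discussed above.
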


\begin{remark}\label{rem:prop1}
The dimensionless constants $K_0$ and $\delta_0$ depend only on the ratio $\cR := T/T_0$
and on the quantity
\begin{equation}\label{eq:cKdef}
  \cK \,:=\, \frac{T_0}{d}\,\sum_{m=0}^2\,\sum_{k=0}^4 \,T_0^m d^k\,
  \|\partial_t^m \Diff^{k}f\|_{L^\infty(\R^2\times[0,T])}\,,
\end{equation}
which measures the intensity of the external flow. We expect that
$K_0 \to \infty$ and $\delta_0 \to 0$ as $\cR \to \infty$ or $\cK \to
\infty$. We emphasize, however, that estimate \eqref{eq:prop1} holds uniformly in
$\nu$ provided the inverse Reynolds number $\nu/\Gamma$ is sufficiently
small. In particular we see that $\omega(\cdot,t)\dd x \weakto\Gamma \delta_{\hat z(t)}$
for all $t \in (0,T)$ as $\nu \to 0$. Note that the assumption
that $\cK < \infty$ may be too strong for some applications, for instance 
if we consider the evolution of $N$ vortices starting from Dirac masses 
as initial data, see Remark~\ref{rem:f}. This is mainly a technical issue, however, and while it is convenient to assume that $\cK < \infty$ in the general 
situation considered here, it is also possible to obtain similar results 
in particular cases where this condition is not exactly met, see \cite{Ga11}.
\end{remark}

\begin{remark}\label{rem:d}
The quantity $d = \sqrt{\Gamma T_0}$ can be interpreted as the effective size of
a vortex of circulation $\Gamma$ in an external field, namely the size of the
neighborhood of the vortex center in which the external strain is weaker than
the strain of the vortex itself. It should not be confused with the size of the vortex
core, which depends on the vorticity distribution and can be considerably smaller. 
In the setting of Proposition~\ref{prop1}, the latter quantity is proportional to the
diffusion length $\sqrt{\nu t}$, which is indeed much smaller than $d$ if 
$\delta_0 T/T_0 \ll 1$. Under these assumptions, estimate \eqref{eq:prop1} 
provides a good approximation of the solution $\omega(x,t)$ of \eqref{eq:NSf}. 
\end{remark}

Estimate \eqref{eq:prop1} is simple and elegant, but does not describe the
deformation of the vortex core under the action of the external flow, which is
the main phenomenon we want to study in this paper. Therefore we need a more
precise asymptotic expansion of the solution of \eqref{eq:NSf}, which includes
non-radially symmetric corrections that were neglected in \eqref{eq:prop1}.
To this end, we propose the following approximation of a Gaussian vortex of
circulation $\Gamma > 0$ and core size $\ell > 0$, located at a point
$z \in \R^2$, and undergoing the strain of an external velocity field $f$:
\begin{equation}\label{def:omapp}
  \omega_\app\bigl(\Gamma,\ell,z,f\,; x\bigr) \,=\, \frac{\Gamma}{\ell^2}\,
  \Omega_0\Bigl(\frac{x-z}{\ell}\Bigr) + w_2\Bigl(\frac{|x-z|}{\ell}\Bigr)
  \bigl(a_f(z)\sin(2\theta) - b_f(z) \cos(2\theta)\bigr)\,.
\end{equation}
Here, for all $x \in \R^2$, we denote by $\theta$ the polar angle of the
rescaled variable $(x-z)/\ell$, which is adapted to the description of the
vortex core. The strain rates $a_f(z), b_f(z)$ are defined by 
\begin{equation}\label{def:ab}
  a_f(z) \,=\, \frac12 \bigl(\partial_1 f_1 - \partial_2 f_2\bigr)(z)\,, \qquad
  b_f(z) \,=\, \frac12 \bigl(\partial_1 f_2 + \partial_2 f_1\bigr)(z)\,,
\end{equation}
and the smooth function $w_2 : (0,+\infty) \to (0,+\infty)$ can be expressed
in terms of the solution of a linear differential equation, see
Remark~\ref{rem:w2} and Figure~\ref{fig:w2}. For our purposes it is enough
to know that $w_2(r) = \cO(r^2)$ as $r \to 0$ and $w_2(r) \sim (r^4/8)e^{-r^2/4}$ as
$r \to +\infty$.

\begin{remark}\label{rem:Burgers}
The expression \eqref{def:omapp} is not new and appears in related contexts, 
in particular in the large-Reynolds-number expansion of Burgers vortices, 
see \cite{RS84,MKO94} and Section~\ref{ssecA1} below.
\end{remark}

Identifying the core size $\ell$ with the diffusion length $\sqrt{\nu t}$, we see
that the first term in the right-hand side of \eqref{def:omapp} is exactly
the Lamb-Oseen vortex \eqref{eq:LambOseen}, which is a radially symmetric
function of the rescaled variable $(x-z)/\ell$; in contrast, the correction term
involving $w_2$ depends explicitly on the polar angle $\theta$. In view of
\eqref{eq:T0def} the strain rates \eqref{def:ab} are bounded by $T_0^{-1}$, so that
\begin{equation}\label{eq:w2est}
  \int_{\R^2} \Big| w_2\Bigl(\frac{|x-z|}{\ell}\Bigr)
  \bigl(a_f(z)\sin(2\theta) - b_f(z) \cos(2\theta)\bigr)\Bigr| \dd x \,\le\,
  C\,\frac{\ell^2}{T_0}\,,
\end{equation}
for some constant $C > 0$. If $\ell^2 = \nu t \ll d^2 = \Gamma T_0$, as
is the case under the assumptions of Proposition~\ref{prop1}, we deduce
that the Lamb-Oseen vortex is the leading term in the approximation
\eqref{def:omapp}. We also observe that, since the correction term is a
linear function of $\cos(2\theta)$ and $\sin(2\theta)$, the streamlines of the
corresponding velocity field are elliptical in a first approximation, which is of
course a well-known fact \cite{LDV02,MLDL05}.

We are now in a position to state our first main result, which subsumes Proposition~\ref{prop1}. 

\begin{theorem}\label{thm1}
Fix $\Gamma > 0$ and $z_0 \in \R^2$. There exist positive constants $K_1,\delta_1$
such that, if $0 < \nu/\Gamma < \delta_1$, the unique solution of \eqref{eq:NSf},
\eqref{eq:BS} satisfying \eqref{eq:omunique} has the following property:
\begin{equation}\label{eq:thm1}
  \frac{1}{\Gamma}\int_{\R^2}\Bigl|\,\omega(x,t) - \omega_\app\bigl(\Gamma,\sqrt{\nu t},z(t),f(t)
  \,; x\bigr)\Bigr|\dd x \,\le\, K_1\,\epsilon(t)^2 \bigl(\epsilon(t) + \delta\bigr)\,,
  \qquad \forall\,t \in (0,T)\,,
\end{equation}
where $\epsilon(t) = \sqrt{\nu t}/d$, $d = \sqrt{\Gamma T_0}$, $\delta = \nu/\Gamma$,
and $z(t)$ is the unique solution of the ODE
\begin{equation}\label{eq:modifz}
  z'(t) \,=\, f(z(t),t) + \nu t \Delta f(z(t),t)\,, \qquad 
\end{equation}
with initial condition $z(0) = z_0$. 
\end{theorem}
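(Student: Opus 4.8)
The strategy is the one introduced by Gallay in \cite{Ga11}. We first pass to self-similar variables adapted to the vortex core: writing $x = z(t) + \sqrt{\nu t}\,\xi$, $s = \log(t/t_0)$, $\omega(x,t) = \frac{\Gamma}{\nu t}W(\xi,s)$ and decomposing the total advecting velocity accordingly, equation \eqref{eq:NSf} becomes
\[
  \partial_s W \,=\, \cL W \,-\, \frac{\Gamma}{\nu}\,\BS[W]\cdot\nabla W \,-\, \sqrt{\tfrac{t}{\nu}}\bigl(f(z(t),t)-z'(t)\bigr)\cdot\nabla W \,-\, G\cdot\nabla W\,,
\]
where $G(\xi,s)$ gathers the external velocity field minus its value at the moving center (Taylor-expanded in powers of $\sqrt{\nu t}\,\xi$), and
\[
  \cL W \,=\, \Delta W + \tfrac12\,\xi\cdot\nabla W + W - \frac{\Gamma}{\nu}\bigl(U_0\cdot\nabla W + \BS[W]\cdot\nabla\Omega_0\bigr)
\]
is the linearization of the rescaled flow at the Lamb--Oseen profile $\Omega_0$. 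We work in the Gaussian-weighted space $X = L^2(\R^2,\Omega_0^{-1}\dd\xi)$. The two inputs we borrow from \cite{Ga11} (see also \cite{Ga18,LWZ20} for the sharp dissipation rate) are: on the closed subspace $X_0 = \{W\in X : \int_{\R^2}W\dd\xi = 0\}$ the operator $\cL$ generates an analytic semigroup with $\|e^{s\cL}\|_{X_0\to X_0}\le M e^{-\beta s}$ for constants $M,\beta>0$ that can be chosen uniformly for $0<\nu/\Gamma\le\delta_1$ — the slowest modes being the translation eigenfunctions $\partial_1\Omega_0,\partial_2\Omega_0$, which are exact eigenfunctions of $\cL$ with eigenvalue $-1/2$ for every $\Gamma/\nu$; and the accompanying parabolic smoothing estimates that make it possible to treat the quadratic Biot--Savart term.

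The first task is to show that $\omega_\app$ solves \eqref{eq:NSf} up to a residual of the claimed size. Expanding $f$ about $z(t)$ and using $\nabla\cdot f = 0$, its differential $\Diff f(z(t),t)$ splits into an antisymmetric part (a pure rotation, which annihilates the radial profile $\Omega_0$) and the symmetric strain with rates $a_f(z),b_f(z)$; the corresponding $\cO(\epsilon^2)$ source in the rescaled equation lies entirely in the angular Fourier modes $e^{\pm 2i\theta}$, and the profile $w_2$ is by construction the solution of the resulting linear ODE in the $m=2$ sector of $\cL$ — this is precisely Remark~\ref{rem:w2}. The $m=1$ (translational) part of the residual is governed by the mismatch between $z'(t)$ and the velocity actually felt by the vortex center: from the exact first-moment identity $\Gamma\,\bar z'(t) = \int_{\R^2}(\BS[\omega]+f)\,\omega\dd x$ with $\bar z(t) := \Gamma^{-1}\int x\,\omega\dd x$, inserting the leading approximation $\omega\approx \tfrac{\Gamma}{\nu t}\Omega_0(\tfrac{\cdot\,-\,z}{\sqrt{\nu t}})$, using $\nabla\cdot f=0$ to kill the linear Taylor term, and using the Gaussian second-moment identity $\int (x-z)_i(x-z)_j\,\tfrac{\Gamma}{\nu t}\Omega_0(\tfrac{x-z}{\sqrt{\nu t}})\dd x = 2\Gamma\,\nu t\,\delta_{ij}$, the right-hand side becomes $\Gamma f(z,t) + \Gamma\,\nu t\,\Delta f(z,t)$ up to lower order — which is exactly why \eqref{eq:modifz} has this precise form, and why the naive law \eqref{eq:originz} is not sufficient at this order. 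What then remains in the residual comes from the quadratic self-interaction of the $\cO(\epsilon^2)$ correction and from the second- and higher-order Taylor terms of $f$ (each carrying an extra factor $\epsilon(t)=\sqrt{\nu t}/d$), together with the genuinely viscous corrections to the quasi-stationary profile $w_2$ and to the center law (carrying the factor $\delta=\nu/\Gamma$); a careful but routine bookkeeping then gives, after rescaling, $\|(\text{rescaled residual})(\cdot,s)\|_X \le C\,\epsilon(s)^2\bigl(\epsilon(s)+\delta\bigr)$.

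With this at hand, set $\tilde\omega = \omega - \omega_\app$ and let $\tilde W\in X_0$ be its rescaled version (the mean is zero because $\omega$ and $\omega_\app$ have the same total circulation $\Gamma$). It satisfies $\partial_s\tilde W = \cL\tilde W - \tfrac{\Gamma}{\nu}\BS[\tilde W]\cdot\nabla\tilde W + \cN(\tilde W,s) + \tilde R(s)$, where $\cN$ is linear in $\tilde W$ with coefficients of size $\lesssim\epsilon(s)^2$ (it collects the terms coupling $\tilde W$ to the $\cO(\epsilon^2)$ part of $\omega_\app$, to $G$, and to $z'(t)-f(z(t),t)-\nu t\Delta f(z(t),t)$), and $\tilde R$ is the rescaled residual. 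Since the prescribed law \eqref{eq:modifz} only approximately tracks the true center, we first record, from the moment analysis above, that $\bar z(t)-z(t)$ stays of the order of the remainder, and we absorb this discrepancy by comparing $\omega_\app(\dots,z(t),\dots)$ with $\omega_\app(\dots,\bar z(t),\dots)$. We then run a variation-of-constants / Gronwall argument on the integral equation for $\tilde W$: the semigroup bound on $X_0$ contributes the decay factor $e^{-\beta(s-s')}$; the forcing $\tilde R$ integrates, because $\epsilon(s)^2\propto e^{s}$, to a contribution of order $\epsilon(s)^2(\epsilon(s)+\delta)$; the terms in $\cN$ have small coefficients and are absorbed; and the quadratic term $\tfrac{\Gamma}{\nu}\BS[\tilde W]\cdot\nabla\tilde W$, despite the large prefactor $\Gamma/\nu$, is controlled by combining Biot--Savart estimates, the parabolic smoothing of $e^{s\cL}$, and a separately propagated $L^1$ bound on $\tilde W$ (needed because $\BS[\tilde W]$ is only weakly localized), in the spirit of \cite{GaGa05,Ga11}; this closes because $\epsilon(t)\le\sqrt{\delta_1\cR}$ is uniformly small on $(0,T)$ and $\epsilon(t)^3\lesssim\delta$ in this regime. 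One concludes $\|\tilde W(\cdot,s)\|_X\le K\,\epsilon(s)^2(\epsilon(s)+\delta)$, from which \eqref{eq:thm1} follows by Cauchy--Schwarz in $X$; the bound is checked to hold as $t\to 0^+$, where it degenerates to the convergence of $\omega$ to the rescaled Lamb--Oseen vortex already furnished by Proposition~\ref{prop1}.

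The main obstacle is the handling of the Biot--Savart nonlinearity $\tfrac{\Gamma}{\nu}\BS[\tilde W]\cdot\nabla\tilde W$: the coefficient $\Gamma/\nu=1/\delta$ is large, $\BS[\tilde W]$ decays slowly at infinity, and one loses a derivative, so that closing the estimate forces one to propagate simultaneously a Gaussian-weighted norm for the vortex core and an $L^1$ norm for the far field, and to use the smallness $\epsilon(t)^3\lesssim\delta$ valid in the regime under consideration; this is the point where the argument of \cite{Ga11} has to be adapted to the present setting. A secondary difficulty is the exact determination of the residual to the order $\epsilon(t)^2(\epsilon(t)+\delta)$ — in particular, keeping track separately of the $\epsilon(t)^3$ and $\epsilon(t)^2\delta$ contributions and verifying that the modified center law \eqref{eq:modifz}, rather than \eqref{eq:originz}, is precisely what removes the translational part of the residual at this order.
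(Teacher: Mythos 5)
Your overall strategy (self-similar variables, an approximate solution built around the Lamb--Oseen profile, a perturbation estimate in a Gaussian-weighted space) is the paper's strategy, but two of your key claims do not hold as stated. First, the residual bound. You take as approximate solution essentially the two-term expression \eqref{def:omapp} (Gaussian plus the $\epsilon^2\,w_2$ correction) and assert that ``routine bookkeeping'' gives a rescaled residual of size $\epsilon^2(\epsilon+\delta)$. In the rescaled equation \eqref{eq:Omevol2} every advection term carries the prefactor $1/\delta=\Gamma/\nu$, so the cubic Taylor term of $f$ contributes $\tfrac{1}{\delta}\,\epsilon^3\,\hat E_3\cdot\nabla\Omega_0=\cO(\epsilon\,t/T_0)$, the quartic term and $(U_2+E_2)\cdot\nabla\bar\Omega_2$ contribute $\cO(\epsilon^2\,t/T_0)$, and the mismatch $(t\partial_t+1-\cL)\,\epsilon^2\bar\Omega_2$ contributes $\cO(\epsilon^2)$; none of these is $\le C\epsilon^2(\epsilon+\delta)$, and they feed back into $\tilde W$ at the same size, so your scheme yields at best $\cO(\epsilon\,t/T_0)$, i.e.\ no better than Proposition~\ref{prop1}. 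Reaching the accuracy of \eqref{eq:thm1} requires pushing the expansion to fourth order and constructing $\tilde\Omega_2,\Omega_3,\Omega_4$ by inverting the skew-adjoint operator $\Lambda$ in the sectors $\cY_1'\oplus\cY_3$ and $\cY_2\oplus\cY_4$, see \eqref{def:Om2}, \eqref{def:Om3}, \eqref{def:Om4} and Proposition~\ref{prop:Rest}; it is precisely the $n=1$ solvability condition there (not only the first-moment heuristic you give, which is fine as motivation) that forces the modified center law \eqref{eq:modifz}. Only after estimating the perturbation with respect to this refined profile does one convert back to \eqref{def:omapp}, paying an admissible extra $\cO(\epsilon^3+\delta\epsilon^2)$.

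Second, and more fundamentally, your linear term $\cN$ does not have coefficients of size $\epsilon^2$: the couplings $\tfrac{1}{\delta}(U_\app-U_0)\cdot\nabla\tilde W$, $\tfrac{1}{\delta}\BS[\tilde W]\cdot\nabla(\Omega_\app-\Omega_0)$ and, above all, $\tfrac{1}{\delta}\,t\,\Diff f(z(t),t)[\xi]\cdot\nabla\tilde W$ have size $\epsilon^2/\delta=t/T_0$, the last one growing linearly in $\xi$. In the fixed Gaussian space these are only relatively bounded with respect to the dissipation: the energy estimate gives $\tfrac{1}{\delta}\bigl|\langle\tilde w,\cA[\tilde w]\rangle_\cY\bigr|\le (t/T_0)\bigl(\cF[\tilde w]+C\cE[\tilde w]\bigr)$ as in \eqref{eq:short4}, while the uniform coercivity (or semigroup decay) constant is an absolute $\kappa\le 1/2$; so nothing is ``absorbed'' once $t/T_0$ exceeds a fixed small constant, and Theorem~\ref{thm1} makes no smallness assumption on $\cR=T/T_0$. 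Your Duhamel variant meets the same obstruction, since $E_2\cdot\nabla\tilde W$ is not controlled by the $X$-norm of $\tilde W$ and the coefficient $t/T_0$ is order one or larger. This is exactly where the paper has to do genuinely new work: the time-dependent, non-radially-symmetric, three-region weight \eqref{eq:qepsdef}--\eqref{eq:pepsdef}, designed so that $U_0\cdot\nabla q_\epsilon$ cancels the dangerous part of $\epsilon^2E_2\cdot\nabla q_0$, constant on the intermediate annulus and with weaker Gaussian growth outside (Sections~\ref{ssec32}--\ref{ssec33}, Proposition~\ref{prop:largetime}). As written, your argument reproduces only the short-time regime $T\ll T_0$ of Proposition~\ref{prop:smalltime}; by contrast, the Biot--Savart nonlinearity, which you single out as the main obstacle, is comparatively harmless here because the perturbation starts from zero and stays small (in the paper it is not even multiplied by $1/\delta$ after the normalization $\Omega-\Omega_\app=\delta w$), and the enhanced dissipation input of \cite{LWZ20} is needed only for Theorem~\ref{thm2}, not for Theorem~\ref{thm1}.
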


Estimate \eqref{eq:thm1} shows that the solution of \eqref{eq:NSf} stays very
close to the approximation \eqref{def:omapp} with $\ell = \sqrt{\nu t}$ and
$f = f(\cdot,t)$, provided the vortex position $z(t)$ evolves according to the
ODE \eqref{eq:modifz}, which contains the viscous correction term
$\nu t \Delta f$. We observe that, if the external velocity field $f$ is
irrotational, then $\Delta f = \nabla^\perp\omega_f = 0$ so that
\eqref{eq:modifz} reduces to \eqref{eq:originz}. In the general case, the
solutions of \eqref{eq:originz} and \eqref{eq:modifz} do not coincide, but
they stay close to each other, and a simple calculation that is postponed
to Section~\ref{sssec334} shows that estimate \eqref{eq:thm1} implies
\eqref{eq:prop1}.

\begin{remark}\label{rem:zbar}
The most natural way of locating the position of a concentrated vortex that
evolves according to \eqref{eq:NSf} is to use the center of vorticity $\bar z(t)$,
which satisfies
\begin{equation}\label{eq:zbardef}
  \bar z(t) \,=\, \frac{1}{\Gamma}\int_{\R^2} x\,\omega(x,t)\dd x\,, \qquad
  \bar z'(t) \,=\, \frac{1}{\Gamma}\int_{\R^2} f(x,t)\,\omega(x,t)\dd x\,.
\end{equation}
Under the assumptions of Theorem~\ref{thm1}, we show in Section~\ref{sssec334}
that $|\bar z(t) - z(t)| \le Cd\epsilon^3\bigl(\epsilon + \delta\bigr)$ for some
constant $C > 0$. This means that the motion of the center of vorticity is
accurately described by the ODE \eqref{eq:modifz}, and that estimate \eqref{eq:thm1}
still holds if $z(t)$ is replaced by $\bar z(t)$. In contrast, using the
naive vortex position $\hat z(t)$ deteriorates the precision of our approximate 
solution, as can be seen from the right-hand side of \eqref{eq:prop1} which is
$\cO(\epsilon)$ instead of $\cO(\epsilon^2)$.  
\end{remark}

\begin{remark}\label{rem:longT}
If the external flow $f$ is globally defined and satisfies uniform 
bounds, then following closely the proof of Theorem~\ref{thm1} one can verify that
estimate~\eqref{eq:thm1} actually holds as long as $t \le cT_0 \ln(1/\delta)$, 
where $c > 0$ is a small constant, see also Remark~\ref{rem:logtime}. This logarithmic time scale agrees with the confinement 
result obtained in \cite{CS24}, and is expected to be optimal in general. Indeed, 
this is the time at which instabilities appear in the dynamics of concentrated vortices, 
see for instance \cite{D24}, except in particularly stable configurations \cite{DG24}.
\end{remark}

We now consider the different situation where the initial vorticity
is not a Dirac mass, but a Gaussian vortex of circulation $\Gamma > 0$ and
small characteristic length $\ell_0 > 0$. To facilitate the comparison with
the previous results, it is convenient to fix an initial time $t_0 \in (0,T)$ and
to assume that $\ell_0 = \sqrt{\nu t_0}$. Our second  main result can be stated
as follows. 

\begin{theorem}\label{thm2}
Fix $\Gamma > 0$, $z_0 \in \R^2$, and $t_0 \in (0,T)$. There exist positive
constants $K_2,\delta_2,c_2$ such that, if $0 < \nu/\Gamma < \delta_2$, the unique
solution of \eqref{eq:NSf} and \eqref{eq:BS} with initial data
\begin{equation}\label{eq:inGauss}
  \omega(x,t_0) \,=\, \frac{\Gamma}{\nu t_0}\,\Omega_0\Bigl(\frac{x-z_0}{\sqrt{
  \nu t_0}}\Bigr)\,, \qquad \forall\,x \in \R^2\,,
\end{equation}
satisfies, for all $t \in [t_0,T]$, the estimate
\begin{equation}\label{eq:thm2}
  \frac{1}{\Gamma}\int_{\R^2}\Bigl|\,\omega(x,t) - \omega_\app\bigl(\Gamma,\sqrt{\nu t},z(t),f(t)
  \,; x\bigr)\Bigr|\dd x \,\le\, K_2\,\epsilon(t)^2 \biggl\{\delta^{1/6}
  \Bigl(\log\frac{1}{\delta}\Bigr)^{1/2} \!+ \Bigl(\frac{t_0}{t} \Bigr)^{\beta}\biggr\}\,, 
\end{equation}
where $\epsilon(t) = \sqrt{\nu t}/d$, $d = \sqrt{\Gamma T_0}$, $\delta = \nu/\Gamma$, 
$\beta = c_2\delta^{-1/3}$, and $z(t)$ is the unique solution of the ODE \eqref{eq:modifz}
with initial condition $z(t_0) = z_0$. 
\end{theorem}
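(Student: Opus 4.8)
The plan is to combine three ingredients: the approximate-solution construction from Theorem~\ref{thm1}, which already controls the well-prepared dynamics up to $\cO(\epsilon^3(\epsilon+\delta))$; a decomposition of the solution into the approximate vortex plus a remainder; and the enhanced dissipation estimates of Li--Wei--Zhang \cite{LWZ20} for the linearized operator at the Lamb-Oseen vortex, which provide the rapid $\cO((t_0/t)^\beta)$ decay with $\beta = c_2\delta^{-1/3}$. First I would pass to self-similar variables adapted to the diffusion scale $\ell(t)=\sqrt{\nu t}$, centered at the modified trajectory $z(t)$ solving \eqref{eq:modifz}, so that the Lamb-Oseen part becomes the fixed profile $\Omega_0$ and the linearized evolution is governed by the operator $\cL = \Delta_\xi + \tfrac12\xi\cdot\nabla_\xi + 1 - \tfrac{\Gamma}{\nu}\Lambda$, where $\Lambda$ is the transport by $U_0$ plus the advection of $\Omega_0$ by the perturbation velocity. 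Writing $\omega = \omega_\app + \Gamma\nu^{-1}\ell^{-2}\,\tilde w$, the key point is that the initial data \eqref{eq:inGauss} is exactly radial, so at $t=t_0$ the remainder $\tilde w$ consists only of the difference $\Omega_0 - (\text{angular correction }w_2(\cdot)(a_f\sin 2\theta - b_f\cos 2\theta))$; its radial part is $\cO(\epsilon^2)$-small after the slaving corrections of Theorem~\ref{thm1} are absorbed, while its angular part is genuinely $\cO(\epsilon^2)$ in size but lies in the Fourier sector $n=\pm 2$.

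Next I would set up an energy/contraction estimate for $\tilde w$ in the same weighted $L^2$ (or $L^1$) space used in \cite{Ga11,Ga18}, decomposing $\tilde w$ into its angular Fourier modes $\tilde w_n e^{in\theta}$. For $n=0$ the dynamics is the axisymmetric heat-type evolution plus forcing of size $\cO(\epsilon^3(\epsilon+\delta))$ coming from the mismatch in Theorem~\ref{thm1}; this contributes the $\delta^{1/6}(\log 1/\delta)^{1/2}$ term after balancing the diffusive spreading against the external-flow forcing (the logarithm and the $1/6$ power are the signature of optimizing a cutoff between the enhanced-dissipation regime and the slow diffusive regime, exactly as in the relaxation-time bookkeeping of \cite{Ga18}). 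For $|n|\ge 1$ — and crucially $n=\pm 2$, where the initial perturbation is concentrated — I invoke the spectral lower bound of \cite{LWZ20}: the real part of the spectrum of $\cL$ restricted to the $n$-th sector is bounded below (in modulus) by $c\,(\Gamma/\nu)^{1/3} = c\,\delta^{-1/3}$ uniformly for $|n|\ge 1$, which after undoing the time change $\tau = \log(t/t_0)$ yields the decay factor $(t_0/t)^{\beta}$ with $\beta \sim \delta^{-1/3}$. The nonlinear and external-flow terms must be shown to be subordinate to this linear decay on the relevant time interval; this is where smallness of $\epsilon$ and of $\delta$ is used, via a bootstrap.

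The main obstacle, I expect, is closing the bootstrap uniformly on $[t_0,T]$ while simultaneously tracking the slowly varying profile $\omega_\app(\Gamma,\sqrt{\nu t},z(t),f(t)\,;x)$: because the external strain rates $a_f(z(t)),b_f(z(t))$ and the center $z(t)$ drift in time, the ``target'' metastable state is not stationary even in self-similar variables, so the remainder equation for $\tilde w$ has an explicit time-dependent forcing $\partial_t \omega_\app - (\text{its own approximate dynamics})$ that one must show is $\cO(\epsilon^2\cdot\epsilon)$ and compatible with the enhanced-dissipation decay. Concretely, one needs the commutator between $\partial_t$ (acting on the parameters) and the linearized semigroup to be controlled, and one needs the $n=\pm 2$ forcing generated by this drift to decay like $(t_0/t)^\beta$ or faster, or else to be reabsorbed into $\omega_\app$ by a further correction. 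A secondary difficulty is that the enhanced-dissipation estimates of \cite{LWZ20} are semigroup bounds for the frozen-coefficient operator at a fixed Reynolds number, whereas here the effective Reynolds number $\Gamma/\nu$ is fixed but the base vortex $\Omega_0$ is only the leading term of $\omega_\app$; one must check that adding the $\cO(\epsilon^2)$ angular correction $w_2$ to the base state perturbs the spectral gap by at most a constant factor, so that $\beta$ retains its order $\delta^{-1/3}$. Once these are in place, the two displayed error terms in \eqref{eq:thm2} arise by optimizing the split-time argument, and the result follows.
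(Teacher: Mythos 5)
Your overall architecture (self-similar variables centered at the modified trajectory $z(t)$, the approximate solution of Theorem~\ref{thm1}, enhanced dissipation from \cite{LWZ20} for the fast $(t_0/t)^\beta$ decay, and a bootstrap for the nonlinear terms) matches the paper's. However, there is a genuine gap in how you propose to close the estimate, centered on two points.

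First, you work with a single remainder $\tilde w$ and try to run the contraction directly on it. The difficulty is that in the natural normalization the initial discrepancy $\phi_0 = \delta^{-1}(\Omega_0 - \Omega_\app(\cdot,t_0))$ has size $\epsilon_0^2/\delta = t_0/T_0$, which is \emph{not} small as $\delta \to 0$, while the advection terms in the remainder equation carry the large factor $\delta^{-1}$. The paper resolves this by splitting $w = w_0 + \tilde w$, where $w_0$ solves the \emph{pure linear} equation $\partial_\tau w_0 = (\cL - \delta^{-1}\Lambda)w_0$ with data $\phi_0 \in \Ker(\Lambda)^\perp$, and $\tilde w$ has zero initial data. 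The $\cO(1)$ part is thus confined to $w_0$, which decays like $e^{-c_0\tau/\delta^{1/3}}$, and the forcing it induces on $\tilde w$ is controlled not pointwise in time but through the \emph{time-integrated} enhanced dissipation bounds $\int_0^\infty \|w_0\|_\cY^2\dd\tau \le C\delta^{1/3}\|\phi_0\|_\cY^2$ and $\int_0^\infty \||\xi|w_0\|_\cY^2\dd\tau \le C\delta^{1/3}\log(2/\delta)\|\phi_0\|_\gamma^2$. This is also exactly where the $\delta^{1/6}(\log(1/\delta))^{1/2}$ error term comes from: it is the square root of these integrated quantities entering a Gr\"onwall argument, together with a logarithmic loss in the $L^\infty$ Biot--Savart estimate for $\BS[w_0]$. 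Your attribution of this term to the $n=0$ sector and to ``balancing the diffusive spreading against the external-flow forcing'' is incorrect; the radial sector plays no special role here, and no cutoff optimization between regimes is involved. A further subtlety you would hit: no integrated bound of this type holds for $\|\nabla w_0\|_\cY$ (see Remark~\ref{rem:w0}), so the advection term $\delta^{-1}\cA[w_0]$ must be integrated by parts onto $\tilde w$ and controlled via the weighted bound on $\||\xi|w_0\|_\cY$ — this is why Proposition~\ref{prop:ED2} is needed and where the logarithm enters.

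Second, your ``secondary difficulty'' — that the base state is $\Omega_\app$ rather than $\Omega_0$, so the spectral gap of the linearization might be perturbed — is a non-issue in the paper's scheme: the linear semigroup is always that of $\cL - \delta^{-1}\Lambda$ frozen at the exact Gaussian $\Omega_0$, and the differences $U_\app - U_0$, $\Omega_\app - \Omega_0$ (of size $\cO(\epsilon^2)$) are treated as perturbative advection terms in the equation for $\tilde w$, absorbed by the same weighted energy functionals as in Section~\ref{sec3}. No spectral stability of the gap under the $w_2$ correction needs to be established. With the linear/nonlinear splitting and the integrated enhanced-dissipation estimates in place, the rest of your outline (the $(t_0/t)^\beta$ factor from undoing $\tau = \log(t/t_0)$, the bootstrap on $[t_0,T]$) goes through as you describe.
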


\begin{remark}\label{rem:initom}
It is important to realize that the left-hand side of \eqref{eq:thm2} does not
vanish at initial time $t_0$, unless the strain rates $a_0 := a_{f(t_0)}(z_0)$ and
$b_0 := b_{f(t_0)}(z_0)$ are both equal to zero. Indeed, it follows from
\eqref{def:omapp} and \eqref{eq:inGauss} that
\[
  \omega(x,t_0) - \omega_\app\bigl(\Gamma,\sqrt{\nu t_0},z_0,f(t_0)\,; x\bigr)
  \,=\, w_2\biggl(\frac{|x-z_0|}{\sqrt{\nu t_0}}\biggr)
  \Bigl(b_0\cos(2\theta) - a_0\sin(2\theta)\Bigr)\,,
\]
and the $L^1$ norm of the right-hand side is proportional to $\nu t_0(a_0^2 + b_0^2)^{1/2}$. 
In that sense our initial data \eqref{eq:inGauss} are ill-prepared if $(a_0,b_0) \neq (0,0)$:
being radially symmetric around the point $z_0$, they do not take into account the strain
of the external velocity field $f(\cdot,t_0)$. 
\end{remark}

Since $\beta = c_2\delta^{-1/3}$ we have $(t_0/t)^\beta \le \delta$ when
$t \ge t_0(1+\tau_\delta)$, where $\tau_\delta = c_3 \delta^{1/3}\log(1/\delta)$
for some $c_3 > 0$. The right-hand side of \eqref{eq:thm2} is therefore of
size $\epsilon(t)^2 \delta^{1/6}\bigl( \log(1/\delta)\bigr)^{1/2}$ as soon as
$t \ge t_0(1+\tau_\delta)$. In other words, the solution of \eqref{eq:NSf}
rapidly relaxes towards the approximate solution \eqref{def:omapp},
which takes into account the effect of the external strain, and remains close to
it up to the final time $T$. This description agrees with the numerical observations
in Figure~\ref{fig:relax}. That the relaxation rate $\beta$ depends on the
inverse Reynolds number $\delta = \nu/\Gamma$ is a consequence of the {\em
  enhanced dissipation} effect in the vortex core, see \cite{Ga18} and
Section~\ref{sec4}. 

Theorem~\ref{thm2} can be seen as an extension of Theorem~\ref{thm1}, in the sense
that the latter is obtained from the former by taking, at least formally,
the limit $t_0 \to 0$. This connection can be made rigorous if we write
the approximation formula \eqref{eq:thm2} in a slightly more precise form,
see Section~\ref{sec4}. The comparison of \eqref{eq:thm1} and \eqref{eq:thm2} also
shows that the solution starting from a Dirac mass can be considered as a
canonical model for the deformation of a concentrated vortex in an
external field, in the sense that it attracts solutions of \eqref{eq:NSf}
starting from ill-prepared initial data. 

\begin{remark}\label{rem:nonGauss}
In the context of Theorem~\ref{thm2}, it is perfectly natural to start with a
radially symmetric vortex, but a priori there is no reason to restrict oneself
to the Gaussian case. As a matter of fact, numerical experiments show that
relaxation to the well-prepared solution occurs for a large class of initial
profiles, even though the damped oscillations that are observed in the transient
period after initial time strongly depend on the choice of the profile
\cite{LDV02}. In this paper we consider the particular initial data \eqref{eq:inGauss}
because we want to use the enhanced dissipation estimates of \cite{LWZ20}, which have
been established so far only in the Gaussian case.
\end{remark}

The proof of our results relies on the construction of an approximate solution
of the initial value problem in self-similar variables, which is performed in
Section~\ref{sec2}. This part of the argument closely follows the previous works
\cite{Ga11,DG24} where particular situations were considered. The proof of
Theorem~\ref{thm1} is carried out in Section~\ref{sec3}, first under the
simplifying assumption that $T/T_0 \ll 1$, and then for any $T > 0$. In both
cases the desired control on the solution is obtained by an energy estimate in
some weighted $L^2$ space, but the construction of the weight function is much
more complicated if $T$ is not small compared to $T_0$. Improving upon the
results of \cite{Ga11}, we construct a Gaussian-like weight that provides
an accurate control on the solution as far as the decay at infinity is
concerned, and implies in particular the $L^1$ estimate \eqref{eq:thm1}. 
In Section~\ref{sec4}, we show how these arguments can be combined with
the enhanced dissipation estimates obtained by Li, Wei, and Zhang \cite{LWZ20}
to yield a proof of Theorem~\ref{thm2}. Finally, a few auxiliary results
are collected in the Appendix. In particular, we clarify the link between our
approximate solution \eqref{def:omapp} and the Burgers vortex in an asymmetric
strain, and we investigate the motion of the center of vorticity under the
assumptions of Theorem~\ref{thm1}.

\section{Self-similar variables and approximate solution}\label{sec2}

We first explain the common strategy in the proofs of Theorems~\ref{thm1}
and \ref{thm2}. Fix $\Gamma > 0$, $z_0 \in \R^2$, and let $\omega(x,t)$
be the solution of \eqref{eq:NSf} and \eqref{eq:BS} satisfying either
\eqref{eq:omunique} or \eqref{eq:inGauss}. In both cases, the solution
is sharply concentrated near a time-dependent point $z(t) \in \R^2$
if the viscosity $\nu > 0$ is sufficiently small. To desingularize the
problem, it is useful to make the self-similar change of coordinates
\begin{equation}\label{eq:OmU}
  \omega(x,t)  \,=\, \frac{\Gamma}{\nu t}\,\Omega\biggl(\frac{x - z(t)}{
  \sqrt{\nu t}},t\biggr)\,, \qquad
  u(x,t) \,=\, \frac{\Gamma}{\sqrt{\nu t}}\,U\biggl(\frac{x - z(t)}{
  \sqrt{\nu t}},t\biggr)\,.
\end{equation}
In what follows we denote
\begin{equation}\label{eq:xidef}
  \xi \,=\, \frac{x - z(t)}{\sqrt{\nu t}}\,, \qquad
  \delta \,=\, \frac{\nu}{\Gamma}\,, \qquad
  \epsilon \,=\, \frac{\sqrt{\nu t}}{d}\,, \qquad
  d = \sqrt{\Gamma T_0}\,.
\end{equation}
The new space variable $\xi$ describes the position with respect to the vortex
center $z(t)$ measured in units of the diffusion length $\sqrt{\nu t}$. As already
explained, the small parameter $\delta$ is the inverse Reynolds number,
and the time-dependent aspect ratio $\epsilon$ compares the size
$\sqrt{\nu t}$ of the vortex core to the effective size $d$ of the
vortex.

As is easily verified, the evolution equation satisfied by the
rescaled vorticity $\Omega(\xi,t)$ is 
\begin{equation}\label{eq:Omevol}
  t\partial_t\Omega(\xi,t) + \biggl\{\frac{1}{\delta}\,U(\xi,t) 
  + \sqrt{\frac{t}{\nu}}\,\Bigl(f\bigl(z(t) + \sqrt{\nu t}\,\xi,t\bigr) - z'(t)\Bigr)
  \biggr\}\cdot\nabla\Omega(\xi,t) \,=\, \cL\Omega(\xi,t)\,,
\end{equation}
where $\cL$ is the diffusion operator defined by
\begin{equation}\label{def:cL}
  \cL \,=\, \Delta_\xi + \frac{1}{2}\,\xi\cdot\nabla_\xi + 1\,.
\end{equation}
The position $z(t)$ of the vortex center is unknown at this stage, but will be
chosen so as to minimize the quantity $f(z(t) + \sqrt{\nu t}\,\xi,t) - z'(t)$ in
an appropriate sense. The natural choice $z'(t) = f(z(t),t)$ gives the leading order
approximation, but higher order corrections will be needed to achieve the desired
precision. We also observe that the rescaled velocity $U(\xi,t)$ is divergence-free
and satisfies $\partial_1 U_2 - \partial_2 U_1 = \Omega$, which means that $U$ is
obtained from $\Omega$ by the Biot-Savart formula \eqref{eq:BS}, namely $U = \BS[\Omega]$. 

It is important to realize that \eqref{eq:Omevol} is not a regular
evolution equation at time $t = 0$, due to the singular time derivative
$t\partial_t\Omega$ in the left-hand side. Nevertheless, if we adapt to the
present case the results of \cite{GW05,GaGa05}, which hold for $f(z,t) = 0$ and
$z(t) = 0$, it is not difficult to show that \eqref{eq:Omevol} has a unique
(mild) solution $\Omega \in C^0((0,T],L^1(\R^2) \cap L^\infty(\R^2))$ that
satisfies $\|\Omega(\cdot,t) - \Omega_0\|_{L^1} \to 0$ as $t \to 0$.
This is precisely the solution we study in Theorem~\ref{thm1}. Note that
the Gaussian profile \eqref{eq:LambOseen} is, up to normalization, the only
possibility for the initial vorticity at time $t = 0$. The situation
considered in Theorem~\ref{thm2} is much different: the Cauchy problem
for equation~\eqref{eq:Omevol} is well-posed at any positive
time $t_0 > 0$, and we could therefore choose arbitrary initial data at
$t = t_0$. However, for reasons that are explained in Remark~\ref{rem:nonGauss}
above, our choice is to take the same initial vorticity $\Omega_0$ as
in Theorem~\ref{thm1}.

Since $\delta = \nu/\Gamma$ and $\Gamma > 0$ is fixed, it is clear that the
evolution equation \eqref{eq:Omevol} becomes highly singular in the vanishing
viscosity limit $\nu \to 0$, and this is actually the main problem in the
proof of both Theorems~\ref{thm1} and \ref{thm2}. To overcome this
difficulty, we use the approach introduced in \cite{Ga11,GaS24,DG24}
which relies on the construction of an approximate solution of the form:
\begin{equation}\label{def:Omapp}
  \begin{split}
  \Omega_\app(\xi,t) \,&=\, \Omega_0(\xi) + \epsilon(t)^2\,\Omega_2(\xi,t)
  + \epsilon(t)^3\,\Omega_3(\xi,t) +  \epsilon(t)^4\,\Omega_4(\xi,t)\,,\\[1mm]
  U_\app(\xi,t) \,&=\, U_0(\xi) + \epsilon(t)^2\,U_2(\xi,t)
  + \epsilon(t)^3\,U_3(\xi,t) +  \epsilon(t)^4\,U_4(\xi,t)\,,
  \end{split}
\end{equation}
where $\epsilon(t) = \sqrt{\nu t}/d$. The vorticity profiles $\Omega_j$
and the velocity profiles $U_j = \BS[\Omega_j]$ depend on the small parameter
$\delta > 0$, and will be determined so that equality \eqref{eq:Omevol} holds
up to corrections terms of size $\cO(\epsilon^5/\delta + \delta \epsilon^2)$.

\begin{remark}\label{rem:eps}
It is not obvious at this point that the aspect ratio $\epsilon(t)$
is the correct parameter for our perturbative expansion, since
it does not appear explicitly in the evolution equation \eqref{eq:Omevol}.
However this parameter naturally occurs when expanding the external
velocity field in \eqref{eq:Omevol}, as we now demonstrate.
The calculation also shows that there is no term 
proportional to $\epsilon(t)$ in \eqref{def:Omapp} if we assume 
that $z'(t) = f(z(t),t) + \cO\bigl(\epsilon(t)\bigr)$, as we shall
always do.
\end{remark}

\subsection{Expansion of the external velocity}\label{ssec21}

We first rewrite the evolution equation \eqref{eq:Omevol} in the
equivalent form
\begin{equation}\label{eq:Omevol2}
  \delta t\partial_t\Omega(\xi,t) + \Bigl(U(\xi,t) + E(f,z\,; \xi,t)\Bigr)
  \cdot \nabla \Omega(\xi,t) \,=\, \delta\cL\Omega(\xi,t)\,,
\end{equation}
where $E(f,z\,; \xi,t) \,=\, \delta \sqrt{t/\nu} \bigl(f(z(t) +
\sqrt{\nu t}\,\xi,t) - z'(t)\bigr)$. In view of \eqref{eq:xidef},
we have 
\[
  \delta \sqrt{\frac{t}{\nu}} \,=\, \frac{\sqrt{\nu t}}{\Gamma} \,=\,
  \epsilon(t)\,\frac{d}{\Gamma} \,=\, \epsilon(t)\,\frac{T_0}{d}\,,
\]
so that $E(f,z\,; \xi,t) = \cO(\epsilon)$. To obtain a better approximation, we
use a fourth-order Taylor expansion of the quantity $f(z(t) + \sqrt{\nu t}\,\xi,t)$
in powers of the diffusion length $\sqrt{\nu t} = d\epsilon(t)$, which leads to
the following result.

\begin{lemma}\label{lem:fexpand}
For all $(\xi,t) \in \R^2 \times [0,T]$ we have the expansion
\begin{equation}\label{eq:Efz}
  E(f,z\,; \xi,t) \,=\, \sum_{k=1}^4 \epsilon(t)^k\,E_k(f,z\,; \xi,t) + \cR_E(f,z\,; \xi,t)\,,
\end{equation}
where
\begin{equation}\label{def:Ek}
  \begin{split}
  E_1(f,z\,; \xi,t) \,&=\, \frac{T_0}{d}\,\bigl(f(z(t),t) - z'(t)\bigr)\,,\\
  E_2(f,z\,; \xi,t) \,&=\, T_0\,\Diff f(z(t),t)[\xi]\,,\\
  E_3(f,z\,; \xi,t) \,&=\, \frac12\, T_0\,d\,\Diff^2 f(z(t),t)[\xi,\xi]\,,\\
  E_4(f,z\,; \xi,t) \,&=\, \frac16\, T_0\,d^2\,\Diff^3 f(z(t),t)[\xi,\xi,\xi]\,.
  \end{split}
\end{equation}
Moreover the remainder in \eqref{eq:Efz} satisfies the estimate
\[
  \bigl|\cR_E(f,z\,; \xi,t)\bigr| \,\le\, \frac{\epsilon(t)^5}{24}\,
  T_0\,d^3\,\|\Diff^4f\|_{L^\infty(\R^2)}\,|\xi|^4\,,
  \qquad \forall\, (\xi,t) \in \R^2 \times [0,T]\,.
\]
\end{lemma}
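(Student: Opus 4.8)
The statement is nothing but Taylor's theorem applied to the smooth external field, once the prefactor $\delta\sqrt{t/\nu}$ in the definition of $E$ has been rewritten in terms of the aspect ratio $\epsilon(t)$. The plan is as follows. First I would record the elementary identity already noted just above the statement: since $d = \sqrt{\Gamma T_0}$ we have $d/\Gamma = T_0/d$, hence
\[
  \delta\sqrt{\frac{t}{\nu}} \,=\, \frac{\sqrt{\nu t}}{\Gamma} \,=\,
  \epsilon(t)\,\frac{d}{\Gamma} \,=\, \epsilon(t)\,\frac{T_0}{d}\,,
\]
and in particular $\sqrt{\nu t} = d\,\epsilon(t)$. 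Plugging this into the definition of $E$ from \eqref{eq:Omevol2} gives
\[
  E(f,z\,;\xi,t) \,=\, \epsilon(t)\,\frac{T_0}{d}\,
  \bigl(f\bigl(z(t) + d\,\epsilon(t)\,\xi,\,t\bigr) - z'(t)\bigr)\,,
  \qquad (\xi,t) \in \R^2\times[0,T]\,.
\]

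Next, for fixed $t \in [0,T]$ and $\xi \in \R^2$, I would apply Taylor's formula with integral remainder to the smooth scalar‑valued map $s \mapsto f(z(t) + s\xi,t)$ on the interval $[0,\,d\,\epsilon(t)]$, expanding to third order around $s = 0$:
\[
  f\bigl(z(t) + d\epsilon\,\xi,t\bigr) \,=\, f(z(t),t) + d\epsilon\,\Diff f(z(t),t)[\xi]
  + \frac{(d\epsilon)^2}{2}\,\Diff^2 f(z(t),t)[\xi,\xi]
  + \frac{(d\epsilon)^3}{6}\,\Diff^3 f(z(t),t)[\xi,\xi,\xi] + R_4(\xi,t)\,,
\]
with remainder
\[
  R_4(\xi,t) \,=\, \frac{1}{6}\int_0^{d\epsilon(t)} \bigl(d\epsilon(t) - s\bigr)^3\,
  \Diff^4 f(z(t) + s\xi,t)[\xi,\xi,\xi,\xi]\,\dd s\,,
\]
so that, using only the boundedness of $\Diff^4 f$ assumed on $f$,
\[
  \bigl|R_4(\xi,t)\bigr| \,\le\, \frac{\bigl(d\epsilon(t)\bigr)^4}{24}\,
  \|\Diff^4 f\|_{L^\infty(\R^2)}\,|\xi|^4\,.
\]
Subtracting $z'(t)$ from the expansion, multiplying through by $\epsilon(t)\,T_0/d$, and collecting the powers $\epsilon(t)^k$ for $k = 1,\dots,4$, one reads off exactly the coefficients $E_k$ listed in \eqref{def:Ek}: the single power of $d$ in the prefactor cancels one of the $k$ powers of $d$ coming from $s \sim d\epsilon$ at order $k$, which is why $E_1$ carries $T_0/d$, $E_2$ carries $T_0$, $E_3$ carries $T_0 d$ and $E_4$ carries $T_0 d^2$. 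The leftover piece is $\cR_E(f,z\,;\xi,t) = \epsilon(t)\,\tfrac{T_0}{d}\,R_4(\xi,t)$, and the claimed bound in \eqref{eq:Efz} follows immediately:
\[
  \bigl|\cR_E(f,z\,;\xi,t)\bigr| \,\le\, \epsilon(t)\,\frac{T_0}{d}\cdot
  \frac{\bigl(d\epsilon(t)\bigr)^4}{24}\,\|\Diff^4 f\|_{L^\infty(\R^2)}\,|\xi|^4
  \,=\, \frac{\epsilon(t)^5}{24}\,T_0\,d^3\,\|\Diff^4 f\|_{L^\infty(\R^2)}\,|\xi|^4\,.
\]

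Finally I would observe that at $t = 0$ one has $\epsilon(0) = 0$, so both sides of \eqref{eq:Efz} vanish and the singular nature of the time variable plays no role; the expansion is genuinely valid on the closed interval $[0,T]$. I do not expect any real obstacle in this lemma: it is a pointwise‑in‑$t$ Taylor expansion, and the only thing requiring care is the bookkeeping of the powers of $d$, $T_0$ and $\epsilon(t)$, which is entirely dictated by the relation $\sqrt{\nu t} = d\,\epsilon(t)$ together with the prefactor $\epsilon(t)\,T_0/d$. One should merely keep in mind that the differentials $\Diff^k f$ are evaluated at the moving base point $z(t)$ rather than at $z_0$, which is harmless since Taylor's theorem is applied separately for each $t$.
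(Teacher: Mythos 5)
Your proof is correct and is precisely the ``straightforward calculation'' that the paper omits: rewrite the prefactor $\delta\sqrt{t/\nu}$ as $\epsilon(t)T_0/d$, Taylor-expand $f(z(t)+d\epsilon(t)\xi,t)$ to third order with integral remainder, and collect powers of $\epsilon(t)$; the coefficients and the remainder bound all check out. The only cosmetic slip is calling $s\mapsto f(z(t)+s\xi,t)$ scalar-valued when $f$ takes values in $\R^2$, which is harmless since the expansion applies componentwise.
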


\begin{proof} The proof is a straightforward calculation which can be omitted.
\end{proof}
  
As is clear from \eqref{def:Ek}, if $z'(t) = f(z(t),t)$, the leading term $E_1$
in \eqref{eq:Efz} vanishes, so that $E(f,z\,; \xi,t) = \cO(\epsilon^2)$.  For
any $k \in \{1,2,3\}$, the term $E_{k+1}$ is a homogeneous polynomial of degree $k$
in the variable $\xi \in \R^2$, the coefficients of which are linear
combinations of $k$-th order derivatives of $f$ evaluated at the point
$(z(t),t)$. The following more precise information about $E_2$ and $E_3$ will be
needed.

\begin{lemma}\label{lem:E2}
For $(\xi,t) \in \R^2 \times [0,T]$ the quantity $E_2 := E_2(f,z\,; \xi,t)$
satisfies    
\begin{equation}\label{eq:E2}
  E_2\,=\, T_0\Bigl(a(t)\cos(2\theta)+b(t)\sin(2\theta)\Bigr)\xi
  + T_0 \Bigl(b(t)\cos(2\theta)-a(t)\sin(2\theta) + c(t)\Bigr)\xi^\perp\,,
\end{equation}
where $\theta$ is the polar angle of the variable $\xi \in \R^2$, and
$a(t), b(t), c(t)$ denote the following derivatives of $f$ evaluated
at $(z(t),t)$: 
\[
  a \,=\, \frac12 \bigl(\partial_1 f_1 - \partial_2 f_2\bigr)\,,\qquad
  b \,=\, \frac12 \bigl(\partial_1 f_2 + \partial_2 f_1\bigr)\,,\qquad
  c \,=\, \frac12 \bigl(\partial_1 f_2 - \partial_2 f_1\bigr)\,.
\]
\end{lemma}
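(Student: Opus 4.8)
The plan is to start from the definition of $E_2$ given in Lemma~\ref{lem:fexpand}, namely $E_2(f,z\,;\xi,t) = T_0\,\Diff f(z(t),t)[\xi]$, and to decompose the $2\times 2$ matrix $\Diff f(z(t),t)$ into its symmetric and antisymmetric parts. Since $f$ is divergence-free, we have $\partial_1 f_1 + \partial_2 f_2 = 0$, so the symmetric (traceless) part of $\Diff f(z(t),t)$ is determined by the two strain rates $a = \tfrac12(\partial_1 f_1 - \partial_2 f_2)$ and $b = \tfrac12(\partial_1 f_2 + \partial_2 f_1)$, while the antisymmetric part is governed by the rotation rate $c = \tfrac12(\partial_1 f_2 - \partial_2 f_1)$. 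Explicitly,
\[
  \Diff f(z(t),t) \,=\, \begin{pmatrix} a & b - c \\ b + c & -a \end{pmatrix}
  \,=\, \begin{pmatrix} a & b \\ b & -a \end{pmatrix} + c\begin{pmatrix} 0 & -1 \\ 1 & 0 \end{pmatrix}\,,
\]
where I have suppressed the arguments $(z(t),t)$ of $a,b,c$.

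Next I would act on the vector $\xi = (\xi_1,\xi_2)$ with each of these two matrices. The antisymmetric part simply gives $c\,\xi^\perp$, which accounts for the $c(t)\,\xi^\perp$ term in \eqref{eq:E2}. For the symmetric part, I would write $\xi$ in polar coordinates, $\xi_1 = |\xi|\cos\theta$, $\xi_2 = |\xi|\sin\theta$, and compute
\[
  \begin{pmatrix} a & b \\ b & -a \end{pmatrix}\begin{pmatrix}\xi_1\\\xi_2\end{pmatrix}
  = \begin{pmatrix} a\xi_1 + b\xi_2 \\ b\xi_1 - a\xi_2 \end{pmatrix}\,.
\]
The goal is to re-express this vector in the frame $\{\xi,\xi^\perp\}$, i.e. to find scalars $\alpha,\beta$ with $a\xi_1+b\xi_2 = \alpha\xi_1 - \beta\xi_2$ and $b\xi_1 - a\xi_2 = \alpha\xi_2 + \beta\xi_1$. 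Using the double-angle identities $\cos(2\theta) = \cos^2\theta - \sin^2\theta$, $\sin(2\theta) = 2\sin\theta\cos\theta$, one checks that $\alpha = a\cos(2\theta) + b\sin(2\theta)$ and $\beta = b\cos(2\theta) - a\sin(2\theta)$ do the job: for instance $\alpha\xi_1 - \beta\xi_2 = |\xi|\bigl[(a\cos 2\theta + b\sin 2\theta)\cos\theta - (b\cos 2\theta - a\sin 2\theta)\sin\theta\bigr]$, and collapsing the products of trigonometric functions using $\cos 2\theta\cos\theta - \sin 2\theta\,(-\sin\theta) = \cos\theta$ etc. yields exactly $|\xi|(a\cos\theta + b\sin\theta) = a\xi_1 + b\xi_2$; the second component is verified the same way. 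Adding the antisymmetric contribution $c\,\xi^\perp$ and multiplying by $T_0$ gives precisely \eqref{eq:E2}.

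This is essentially a routine linear-algebra and trigonometry computation, so I do not anticipate a real obstacle; the only mild care needed is bookkeeping with signs and with the convention $\xi^\perp = (-\xi_2,\xi_1)$, and making sure the decomposition into the $\xi$ and $\xi^\perp$ directions is carried out consistently. An alternative, perhaps cleaner, route is to observe that a symmetric traceless $2\times 2$ matrix acts on $\R^2\cong\C$ as $\xi\mapsto \bar\mu\,\bar\xi$ where $\mu = a + ib$ (identifying $\xi$ with $\xi_1 + i\xi_2$), so that $\Diff f(z,t)[\xi] = \overline{(a+ib)}\,\bar\xi + ic\xi$; writing $\xi = |\xi|e^{i\theta}$ gives $\overline{(a+ib)}\,\bar\xi = |\xi|(a-ib)e^{-i\theta} = |\xi|e^{i\theta}(a - ib)e^{-2i\theta}$, and expanding $(a-ib)e^{-2i\theta} = (a\cos 2\theta + b\sin 2\theta) - i(b\cos 2\theta - a\sin 2\theta)$ recovers the same coefficients after identifying the real part with the $\xi$-component and the imaginary part with the $\xi^\perp$-component. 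Either way the statement follows directly.
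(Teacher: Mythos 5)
Your proposal is correct and follows essentially the same route as the paper: both use the divergence-free condition to write $\Diff f(z(t),t)$ in terms of $a,b,c$, and then identify the components of $\Diff f[\xi]$ along the orthogonal frame $\{\xi,\xi^\perp\}$ via the double-angle identities (the paper does this by computing $\xi\cdot\Diff f[\xi]$ and $\xi^\perp\cdot\Diff f[\xi]$ directly, which is the same computation as your solve for $\alpha,\beta$). The symmetric/antisymmetric splitting and the complex-variable reformulation are pleasant but cosmetic variants of the same argument.
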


\begin{proof}
Since $f$ is divergence-free, the Jacobian matrix $\Diff f(z(t),t)$
takes the form
\[
  \Diff f \,=\, \begin{pmatrix} \partial_1 f_1 & \partial_2 f_1 \\
  \partial_1 f_2 & \partial_2 f_2 \end{pmatrix} \,=\,
  \begin{pmatrix} a & b-c\\ b+c & -a\end{pmatrix}\,,  
\]
where $a,b,c$ are as in the statement. We deduce that
\[
  \xi\cdot \Diff f[\xi] \,=\, a\bigl(\xi_1^2-\xi_2^2\bigr) +2b\xi_1 \xi_2\,, \qquad
  \xi^\perp\cdot \Diff f[\xi] \,=\, b\bigl(\xi_1^2 - \xi_2^2\bigr) -2a\xi_1\xi_2
  + c\bigl(\xi_1^2 + \xi_2^2\bigr)\,,
\]
and \eqref{eq:E2} immediately follows. 
\end{proof}

\begin{lemma}\label{lem:E3}
For $(\xi,t) \in \R^2 \times [0,T]$ the quantity $E_3 := E_3(f,z\,; \xi,t)$
satisfies    
\begin{equation}\label{eq:E3}
    \xi\cdot E_3 \,=\, T_0 d \,|\xi|^3 \Bigl(\frac18 \Delta f_1 \cos(\theta)
    + \frac18 \Delta f_2 \sin(\theta) + A\cos(3\theta) + B\sin(3\theta)
    \Bigr)\,,
\end{equation}
where $\theta$ is the polar angle of the variable $\xi \in \R^2$, and $A = \frac38
\partial_1^2 f_1 - \frac18 \partial_2^2 f_1$, $B = \frac18 \partial_1^2 f_2
- \frac38 \partial_2^2 f_2$. All derivatives of $f$ are evaluated at $(z(t),t)$. 
\end{lemma}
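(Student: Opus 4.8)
The plan is to regard $\xi\cdot E_3$ as a homogeneous cubic polynomial in the planar variable $\xi = (\xi_1,\xi_2)$ and to decompose it into angular harmonics. By \eqref{def:Ek} we have $\xi\cdot E_3 = \frac12 T_0 d\,P(\xi)$, where
\[
  P(\xi) \,=\, \sum_{i=1}^2 \xi_i\,\Diff^2 f_i[\xi,\xi]
  \,=\, \sum_{i,j,k=1}^2 \bigl(\partial_j\partial_k f_i\bigr)\,\xi_i\xi_j\xi_k\,,
\]
all derivatives of $f$ being evaluated at $(z(t),t)$. A homogeneous cubic in two variables, restricted to a circle, involves only the first and third angular harmonics $\cos\theta,\sin\theta,\cos(3\theta),\sin(3\theta)$; there is no zeroth and no second harmonic. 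It therefore suffices to identify these four coefficients.

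First I would simplify $P$ using incompressibility: differentiating $\partial_1 f_1 + \partial_2 f_2 = 0$ gives $\partial_1\partial_2 f_2 = -\partial_1^2 f_1$ and $\partial_1\partial_2 f_1 = -\partial_2^2 f_2$, so that
\[
  P \,=\, \partial_1^2 f_1\,\xi_1^3 + \bigl(\partial_1^2 f_2 - 2\partial_2^2 f_2\bigr)\xi_1^2\xi_2
  + \bigl(\partial_2^2 f_1 - 2\partial_1^2 f_1\bigr)\xi_1\xi_2^2 + \partial_2^2 f_2\,\xi_2^3\,.
\]
Then, substituting $\xi_1 = |\xi|\cos\theta$, $\xi_2 = |\xi|\sin\theta$ and using $|\xi|^3\cos(3\theta) = \xi_1^3 - 3\xi_1\xi_2^2$, $|\xi|^3\sin(3\theta) = 3\xi_1^2\xi_2 - \xi_2^3$, together with $|\xi|^3\cos\theta = \xi_1(\xi_1^2+\xi_2^2)$ and $|\xi|^3\sin\theta = \xi_2(\xi_1^2+\xi_2^2)$, I would read off the four coefficients. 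The first-harmonic coefficients combine to $\frac14\Delta f_1$ and $\frac14\Delta f_2$, and the third-harmonic coefficients to $\frac34\partial_1^2 f_1 - \frac14\partial_2^2 f_1$ and $\frac14\partial_1^2 f_2 - \frac34\partial_2^2 f_2$. Multiplying throughout by $\frac12 T_0 d$ gives \eqref{eq:E3} with $A,B$ as in the statement.

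The appearance of the Laplacian in the first-harmonic part is not a coincidence, and I would record the structural reason. For any homogeneous cubic $P$ on $\R^2$, its first-harmonic part equals $\frac18|\xi|^2\Delta_\xi P$: indeed $\Delta_\xi$ annihilates the harmonic (third-harmonic) part, while $\Delta_\xi\bigl(|\xi|^2 L\bigr) = 4L + 4\,\xi\cdot\nabla L = 8L$ for any linear $L$, by Euler's relation. A direct computation yields $\Delta_\xi P = 2\bigl(\xi_1\Delta f_1 + \xi_2\Delta f_2\bigr) + 4\,\xi\cdot\nabla\bigl(\nabla\cdot f\bigr)$, whose last term vanishes since $f$ is divergence-free; this is what pins the first harmonic of $\xi\cdot E_3$ to a multiple of $\Delta f = \nabla^\perp\omega_f$, precisely the quantity that enters the modified ODE \eqref{eq:modifz}.

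I do not expect any genuine obstacle here: the result is a bookkeeping exercise with triple-angle identities plus the two differentiated incompressibility relations, and the one structural point is explained by the Laplacian identity above. As sanity checks one may note that the even harmonics vanish automatically, and that $\xi\cdot E_3 \equiv 0$ whenever $f$ is affine, since then $\Diff^2 f \equiv 0$.
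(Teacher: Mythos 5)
Your proof is correct and follows essentially the same route as the paper: expand the homogeneous cubic $\xi\cdot E_3$ in polar coordinates via the triple-angle identities and read off the first and third harmonics. The only difference is that you make explicit the use of the differentiated incompressibility relations $\partial_1\partial_2 f_1 = -\partial_2^2 f_2$ and $\partial_1\partial_2 f_2 = -\partial_1^2 f_1$ (which the paper buries in its ``straightforward calculations'' but which are genuinely needed to eliminate the mixed derivatives), and you add a correct structural remark identifying the first-harmonic part with $\frac18|\xi|^2\Delta_\xi P$.
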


\begin{proof}
From the definition of $E_3$ in \eqref{def:Ek} we readily obtain
\[
  \frac{\xi\cdot E_3}{T_0d} \,=\, \frac{\xi_1}{2}\Bigl(\xi_1^2\partial_1^2 f_1 
  + 2\xi_1\xi_2\partial_1\partial_2 f_1 + \xi_2^2\partial_2^2 f_1\Bigr) +
  \frac{\xi_2}{2}\Bigl(\xi_1^2 \partial_1^2 f_2 + 2\xi_1 \xi_2 \partial_1\partial_2
  f_2  + \xi_2^2\partial_2^2 f_2\Bigr)\,.
\]
Introducing polar coordinates $\xi = |\xi|\bigl(\cos(\theta),\sin(\theta)\bigr)$
and using the elementary identities
\begin{align*}
  \cos^3(\theta) \,&=\, \frac34\,\cos(\theta) + \frac14\,\cos(3\theta)\,,
  \hskip 30pt \cos^2(\theta)\sin(\theta) \,=\, \frac14\,\sin(\theta)
  + \frac14\,\sin(3\theta)\,,\\
  \sin^3(\theta) \,&=\, \frac34\,\sin(\theta) - \frac14\,\sin(3\theta)\,,
  \hskip 32pt \cos(\theta)\sin^2(\theta) \,=\, \frac14\,\cos(\theta)
  - \frac14\,\cos(3\theta)\,,
\end{align*}
we arrive at \eqref{eq:E3} after straightforward calculations. 
\end{proof}

\subsection{Functional framework}\label{ssec22}

This section is almost entirely taken from the previous works
\cite{Ga11,GaS24,DG24}, and is reproduced here for the reader's
convenience. Our goal is to introduce the function spaces in which the
approximate solution \eqref{def:Omapp} will be constructed, and to study the
properties of a pair of linear operators in that framework. We first define the
weighted $L^2$ space
\begin{equation}\label{def:cY}
  \cY \,=\, \biggl\{\Omega\in L^2(\R^2)\,;\, \int_{\R^2}
  |\Omega(\xi)|^2\,e^{|\xi|^2/4}\,\dd \xi < \infty\biggr\}\,,
\end{equation}
which is a Hilbert space equipped with the natural scalar product.  If we use
polar coordinates $(r,\theta)$ such that $\xi = \bigl( r\cos\theta,r\sin\theta\bigr)$,
we have the direct sum
decomposition
\begin{equation}\label{Ydecomp}
  \cY \,= \,\bigoplus_{n=0}^{\infty}\,\cY_n\,,
\end{equation}
where $\cY_0$ is the subspace of all radially symmetric functions in $\cY$, and,
for each $n \ge 1$, the subspace $\cY_n$ contains all $\Omega \in \cY$ of the
form $\Omega = a(r)\cos(n\theta) + b(r)\sin(n\theta)$.  It is clear that the
decomposition \eqref{Ydecomp} is orthogonal, in the sense that
$\cY_n \perp \cY_{n'}$ if $n \neq n'$. We also introduce the dense subset
$\cZ \subset \cY$ defined by
\begin{equation}\label{def:cZ}
  \cZ \,=\, \Bigl\{\Omega:\R^2\to \R \,;\, \xi \mapsto e^{|\xi|^2/4}
  \Omega(\xi) \in \cS_*(\R^2)\Bigr\}\,,
\end{equation}
where $\cS_*(\R^2)$ is the space of smooth functions on $\R^2$ with moderate
growth at infinity. In other words a function $\Omega \in C^\infty(\R^2)$
belongs to $\cS_*(\R^2)$ if, for any multi-index $\alpha=(\alpha_1,\alpha_2)
\in \N^2$ there exists $C>0$ and $N\in \N$ such that $|\partial^{\alpha}
\Omega(\xi)| \le C(1+|\xi|)^N$ for all $\xi\in \R^2$.

The linear operators we are interested in are the diffusion operator $\cL$
introduced in \eqref{def:cL} and the advection operator $\Lambda$ defined by the
formula
\begin{equation}\label{def:Lambda}
  \Lambda \Omega \,=\, U_0\cdot \nabla \Omega + \BS[\Omega]\cdot
  \nabla \Omega_0\,,
\end{equation}
where $\Omega_0,U_0$ are given by \eqref{eq:OmU0}. If we consider the
operators $\cL,\Lambda$ as acting on the function space \eqref{def:cY} with
maximal domain, we have the following result: 

\begin{proposition}\label{prop:Lam} {\bf\cite{GW02,GW05,Ma11}}\\[0.5mm]
1) The linear operator $\cL$ is {\em self-adjoint} in $\cY$, with purely 
discrete spectrum
\[
  \sigma(\cL) \,=\, \Bigl\{-\frac{n}{2}\,\Big|\, n = 0,1,2, \dots\Bigr\}\,.
\]
The kernel of $\cL$ is one-dimensional and spanned by the Gaussian function
$\Omega_0$. More generally, for any $n \in \N$, the eigenspace corresponding
to the  eigenvalue $\lambda_n = -n/2$ is spanned by the $n+1$ Hermite functions
$\partial^\alpha \Omega_0$ where $\alpha = (\alpha_1,\alpha_2) \in \N^2$ and 
$\alpha_1 + \alpha_2 = n$. \\[1mm]
2) The linear operator $\Lambda$ is {\em skew-adjoint} in $\cY$, so that 
$\Lambda^* = -\Lambda$. Moreover,
\begin{equation}\label{eq:KerLam}
  \Ker(\Lambda) \,=\, \cY_0 \oplus \bigl\{\beta_1\partial_1 \Omega_0 + \beta_2
  \partial_2 \Omega_0 \,\big|\, \beta_1, \beta_2 \in \R\bigr\}\,,
\end{equation}
where $\cY_0 \subset \cY$ is the subspace of all radially symmetric 
elements of $\cY$. 
\end{proposition}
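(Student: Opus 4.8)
The plan is to treat the self-adjoint operator $\cL$ and the skew-adjoint operator $\Lambda$ separately, and for $\Lambda$ to reduce the computation of the kernel to an ODE analysis on each angular Fourier mode. For $\cL$, the natural move is to conjugate away the weight: the map $\Omega \mapsto v := e^{|\xi|^2/8}\,\Omega$ is a unitary isomorphism from $\cY$ onto $L^2(\R^2)$, and a direct computation gives $e^{|\xi|^2/8}\,\cL\,e^{-|\xi|^2/8} = \Delta_\xi - \tfrac{1}{16}|\xi|^2 + \tfrac12$, which is a shifted copy of the two-dimensional quantum harmonic oscillator. Classical facts about the latter — essential self-adjointness on the Schwartz class, compact resolvent, eigenvalues $\tfrac{N+1}{2}$ with multiplicity $N+1$, eigenfunctions built from Hermite functions, and Gaussian ground state $e^{-|\xi|^2/8}$ — translate directly, after undoing the conjugation, into the self-adjointness of $\cL$ on $\cY$, the spectrum $\sigma(\cL) = \{-n/2 : n \in \N\}$, and $\Ker\cL = \R\,\Omega_0$ (the ground state pulls back to $e^{-|\xi|^2/4} \propto \Omega_0$). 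To identify the eigenspace of $\lambda_n = -n/2$ I would use the intertwining relation $[\cL,\partial_j] = -\tfrac12\,\partial_j$, a one-line commutator computation; iterating it from $\cL\Omega_0 = 0$ yields $\cL\,\partial^\alpha\Omega_0 = -\tfrac{|\alpha|}{2}\,\partial^\alpha\Omega_0$, and since the $n+1$ functions $\partial^\alpha\Omega_0$ with $|\alpha| = n$ are linearly independent, a dimension count against the known multiplicity $n+1$ shows they span the whole eigenspace.

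For $\Lambda\Omega = U_0\cdot\nabla\Omega + \BS[\Omega]\cdot\nabla\Omega_0$, the key algebraic input is that $\Omega_0(\xi)\,e^{|\xi|^2/4} = \tfrac{1}{4\pi}$ is constant, so $e^{|\xi|^2/4}\,\nabla\Omega_0 = -\tfrac{1}{8\pi}\,\xi$, together with the facts that $U_0$ is azimuthal ($U_0\cdot\xi = 0$) and divergence-free. Integrating by parts and using that the weight $e^{|\xi|^2/4}$ is annihilated by $U_0\cdot\nabla$, the first piece of $\Lambda$ is already skew-symmetric on $\cY$; the second piece contributes the bilinear form $(\Omega_1,\Omega_2) \mapsto -\tfrac{1}{8\pi}\int_{\R^2}(\BS[\Omega_1]\cdot\xi)\,\Omega_2\dd\xi$, and writing out the Biot--Savart kernel and using the elementary identity $(\xi-\eta)^\perp\cdot\xi = (\xi-\eta)^\perp\cdot\eta$ shows this double integral is antisymmetric under $\xi \leftrightarrow \eta$, hence in $(\Omega_1,\Omega_2)$. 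Thus $\Lambda$ is skew-symmetric on the dense domain $\cZ$, and a standard argument (verifying that $\cZ$ is a core, or that $\Ran(\mathrm{Id} \pm \Lambda)$ is dense) upgrades this to $\Lambda^* = -\Lambda$.

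It remains to compute $\Ker\Lambda$. First, $\Lambda$ respects the orthogonal decomposition $\cY = \bigoplus_n \cY_n$, since $U_0\cdot\nabla = \tfrac{1-e^{-r^2/4}}{2\pi r^2}\,\partial_\theta$ acts within each mode, $\BS$ commutes with rotations, and $\nabla\Omega_0$ is radial. On $\cY_0$ one has $\partial_\theta\Omega = 0$ and $\BS[\Omega]$ is purely azimuthal, hence orthogonal to $\nabla\Omega_0$, so $\cY_0 \subset \Ker\Lambda$. For $n \ge 1$, write $\Omega = \omega_n(r)\,e^{in\theta}$ with Biot--Savart stream function $\psi_n(r)\,e^{in\theta}$; then $\Lambda\Omega = 0$ is equivalent to the relation $\omega_n = -g(r)\,\psi_n$ with $g(r) = \tfrac{r^2 e^{-r^2/4}}{4(1-e^{-r^2/4})}$, which combined with $\Delta\psi = \Omega$ collapses to the single ODE $\psi_n'' + \tfrac1r\psi_n' + \bigl(g(r) - \tfrac{n^2}{r^2}\bigr)\psi_n = 0$, subject to $\psi_n$ being the Biot--Savart stream function, i.e. regular at $r=0$ ($\psi_n \sim r^n$) and decaying at $r=\infty$ ($\psi_n \sim r^{-n}$). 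For $n = 1$ this ODE is solved explicitly by $\psi_1(r) = \tfrac{1}{2\pi r}(1-e^{-r^2/4})$, the radial part of the stream function of $\partial_1\Omega_0$, which one also sees directly from $\Lambda\partial_j\Omega_0 = U_0\cdot\nabla\partial_j\Omega_0 + (\partial_j U_0)\cdot\nabla\Omega_0 = \partial_j(U_0\cdot\nabla\Omega_0) = 0$; this produces exactly the two-dimensional space $\{\beta_1\partial_1\Omega_0 + \beta_2\partial_2\Omega_0\}$ appearing in \eqref{eq:KerLam}.

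The step I expect to be the main obstacle is to show that for $n \ge 2$ the ODE above has no nontrivial solution that is simultaneously regular at $0$ and decaying at $\infty$. I would argue by positivity: viewing $-\Delta_n := -\partial_r^2 - \tfrac1r\partial_r + \tfrac{n^2}{r^2}$ as the radial mode-$n$ Laplacian on $L^2((0,\infty),\,r\dd r)$, the function $\psi_1 > 0$ is a strictly positive zero-energy solution of $(-\Delta_1 - g)\psi_1 = 0$, so by the ground-state (Allegretto--Piepenbrink) principle the operator $-\Delta_1 - g$ is non-negative; hence for $n \ge 2$ one has $-\Delta_n - g = (-\Delta_1 - g) + \tfrac{n^2-1}{r^2} \ge \tfrac{n^2-1}{r^2} > 0$ strictly as a quadratic form. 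An admissible $\psi_n$ would behave like $r^n$ at $0$ and like $r^{-n}$ at $\infty$, hence lie in $L^2((0,\infty),\,r\dd r)$ — this is precisely where $n \ge 2$ enters — and thus be a genuine zero eigenfunction of $-\Delta_n - g$, contradicting strict positivity. Therefore $\Ker\Lambda \cap \cY_n = \{0\}$ for $n \ge 2$, and assembling the contributions of all modes gives exactly \eqref{eq:KerLam}.
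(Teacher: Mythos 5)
Your argument is essentially correct, and it reconstructs what the paper itself only cites: Proposition~\ref{prop:Lam} is attributed to \cite{GW02,GW05,Ma11} and is not proved in the text, though Section~\ref{ssecA2} reproduces the mode-by-mode reduction of $\Lambda$ that underlies the kernel computation. Your treatment of $\cL$ (conjugation to the harmonic oscillator $\Delta - |\xi|^2/16 + 1/2$, then the commutator $[\cL,\partial_j] = -\tfrac12\partial_j$ plus a multiplicity count) is exactly the route of \cite{GW02}, and your proof of skew-symmetry of $\Lambda$ via $e^{|\xi|^2/4}\nabla\Omega_0 = -\xi/(8\pi)$ and the antisymmetry of the Biot--Savart kernel is \cite[Lemma~4.8]{GW05}. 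Two points deserve comment. First, for the kernel on the modes $n\ge 2$ you use a ground-state (Allegretto--Piepenbrink) positivity argument seeded by the explicit $n=1$ solution $rv_*$, whereas the references and Section~\ref{ssecA2} exploit the more elementary fact that the potential $n^2/r^2 - h(r)$ is itself pointwise positive for $n\ge 2$ (equivalent to $e^s - 1 > s^2$ for $s>0$); both work, and yours has the advantage of explaining \emph{why} $n=1$ is the borderline case, but you should also note that for $n=1$ the indicial analysis at $r=0$ shows the admissible solution space is exactly one-dimensional per trigonometric component, so that $\Ker(\Lambda)\cap\cY_1$ is precisely the span of $\partial_1\Omega_0,\partial_2\Omega_0$ and nothing more. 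Second, the upgrade from skew-symmetry on $\cZ$ to genuine skew-adjointness with maximal domain is the one place where your sketch is thin; the cleanest implementation (due to Maekawa \cite{Ma11}) is to observe that the nonlocal part $\Omega\mapsto \BS[\Omega]\cdot\nabla\Omega_0$ is a \emph{bounded} operator on $\cY$, while $U_0\cdot\nabla$ is skew-adjoint by Stone's theorem since the flow of $U_0$ preserves the measure $e^{|\xi|^2/4}\dd\xi$; a bounded skew-symmetric perturbation of a skew-adjoint operator is skew-adjoint, which closes the gap without any core or range argument.
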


Another important feature of both operators $\cL,\Lambda$ is rotation
invariance. As is easily verified, if $\Omega \in \cY_n \cap \cZ$ for
some $n \ge 0$, then $\Omega$ belongs to the domain of $\cL$ and
$\cL \Omega \in \cY_n \cap \cZ$. The same property holds for the
integro-differential operator $\Lambda$, and can be established using
the definitions \eqref{eq:OmU0} and \eqref{def:Lambda} together with the
properties of the Biot-Savart law. 

As we shall see in the next section, the construction of the approximate
solution \eqref{def:Omapp} requires solving elliptic equations of
the form 
\begin{equation}\label{eq:Lamell}
  \Lambda \Omega \,=\, F\,, \qquad \text{for some}\,~ F \in \cY\,.
\end{equation}
Since the operator $\Lambda$ is skew-adjoint in $\cY$ we have $\Ker(\Lambda)^\perp =
\overline{\Ran(\Lambda)}$, where $\Ran(\Lambda)$ is the range of $\Lambda$.
Thus a necessary condition for the solvability of \eqref{eq:Lamell} is that
$F \perp \Ker(\Lambda)$. In view of \eqref{eq:KerLam}, this is equivalent to
\begin{equation}\label{eq:solvcond}
  \int_0^{2\pi} F\bigl(r\cos(\theta),r\sin(\theta)\bigr)\dd \theta \,=\, 0
  \quad \forall\,r > 0\,, \quad \text{and}\quad \int_{\R^2} \xi_j F(\xi)\dd \xi
  \,=\, 0 \quad \forall j \in \{1,2\}\,.
\end{equation}
It turns out that, in the subspace \eqref{def:cZ}, the solvability conditions
above are also sufficient. This is the content of the following result,
whose proof is recalled in Section~\ref{ssecA2}.

\begin{proposition}\label{prop:solv}
If $F \in \cZ \cap \Ker(\Lambda)^\perp$, there is a unique $\Omega \in \cZ
\cap \Ker(\Lambda)^\perp$ such that $\Lambda\Omega = F$.
\end{proposition}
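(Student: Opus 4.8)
The plan is to exploit the rotational symmetry of $\Lambda$. Since $\Omega_0$ and $U_0 = \BS[\Omega_0]$ are radially symmetric and the Biot--Savart operator commutes with rotations, $\Lambda$ maps $\cY_n \cap \cZ$ into itself for every $n \geq 0$. Writing $F = \sum_{n \geq 0} F_n$ with $F_n \in \cY_n \cap \cZ$ and looking for $\Omega = \sum_{n \geq 0}\Omega_n$ with $\Omega_n \in \cY_n \cap \cZ$, it suffices to solve $\Lambda \Omega_n = F_n$ in $\cY_n \cap \cZ$ for each $n$ (imposing $\Omega_1 \perp \Ker(\Lambda)$ when $n = 1$), and then to check that the series converges in $\cZ$. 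Uniqueness is immediate: the difference of two solutions lying in $\cZ \cap \Ker(\Lambda)^\perp$ would belong to $\Ker(\Lambda) \cap \Ker(\Lambda)^\perp = \{0\}$. For $n = 0$ one has $\Lambda \equiv 0$ on $\cY_0$ (a radial vorticity produces a purely azimuthal velocity, orthogonal to $\nabla\Omega_0$, and $U_0 \cdot \nabla \Omega_n = 0$ for radial $\Omega_n$), so the first condition in \eqref{eq:solvcond} forces $F_0 = 0$ and we take $\Omega_0 = 0$.

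For $n \geq 1$ I would parametrise $\cY_n$ by a single complex radial profile, writing $\Omega_n = \phi(r)e^{in\theta}$, $F_n = g(r)e^{in\theta}$, and introduce the associated stream function $\psi = \chi(r)e^{in\theta}$, so that $\chi'' + r^{-1}\chi' - n^2 r^{-2}\chi = \phi$. Using $U_0 \cdot \nabla \Omega_n = in\,\varphi_0(r)\,\phi(r)\,e^{in\theta}$, where $\varphi_0(r) = r^{-2}\int_0^r s\,\Omega_0(s)\dd s$ is the angular velocity of the Oseen vortex, and $\BS[\Omega_n]\cdot\nabla\Omega_0 = \tfrac{in}{2}\,\Omega_0(r)\,\chi(r)\,e^{in\theta}$ (which uses $\Omega_0'(r) = -\tfrac r2 \Omega_0(r)$), the equation $\Lambda\Omega_n = F_n$ becomes, after eliminating $\phi$, the scalar second order ODE
\begin{equation*}
  \chi'' + \frac1r\,\chi' - \frac{n^2}{r^2}\,\chi + \frac{\Omega_0(r)}{2\varphi_0(r)}\,\chi
  \,=\, \frac{g(r)}{i\,n\,\varphi_0(r)}\,, \qquad r > 0\,,
\end{equation*}
after which $\phi$ is recovered algebraically by $\phi = \varphi_0^{-1}\bigl((in)^{-1}g - \tfrac12\Omega_0\chi\bigr)$. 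The admissible solutions are those for which $\chi$ is regular at the origin --- which forces $\chi(r) = \cO(r^n)$ --- and decays at infinity, so that $\psi$ is genuinely the Biot--Savart potential of $\Omega_n$; equivalently $\chi$ is a particular solution plus a combination of the fundamental solutions $\chi_1(r) \sim r^n$ (regular at $0$) and $\chi_2(r) \sim r^{-n}$ (decaying at $\infty$) of the homogeneous equation. Near $r = 0$ the homogeneous equation is a bounded perturbation of Bessel's equation, since $\varphi_0(0) = \tfrac{1}{8\pi}$ and $\Omega_0(0) = \tfrac{1}{4\pi}$ give $\Omega_0/(2\varphi_0) \to 1$ there; near $r = \infty$ it is a perturbation of $\chi'' + r^{-1}\chi' - n^2r^{-2}\chi = 0$, since $\Omega_0/(2\varphi_0)$ decays like $r^2 e^{-r^2/4}$.

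For $n = 1$ the homogeneous equation has the explicit solution $\phi = \Omega_0'$, i.e.\ $\Omega_1 \in \mathrm{span}\{\partial_1\Omega_0,\partial_2\Omega_0\} = \Ker(\Lambda|_{\cY_1})$, so $\chi_1$ and $\chi_2$ are linearly dependent and the ODE is solvable only under one scalar condition on $g$; translated back, this condition is exactly the orthogonality of $F_1$ to $\Ker(\Lambda|_{\cY_1})$, i.e.\ the second condition in \eqref{eq:solvcond}, and we then select the unique solution with $\Omega_1 \perp \Ker(\Lambda)$. For $n \geq 2$, Proposition~\ref{prop:Lam} gives $\Ker(\Lambda|_{\cY_n}) = \{0\}$, which means that no nonzero solution of the homogeneous equation is at once regular at $0$ and decaying at $\infty$; hence $\chi_1,\chi_2$ are linearly independent, their Wronskian $r(\chi_1'\chi_2 - \chi_1\chi_2')$ is a nonzero constant, and variation of constants produces a unique admissible $\chi$, hence a unique $\Omega_n \in \cY_n$ solving $\Lambda\Omega_n = F_n$. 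It then remains to check that $\Omega_n \in \cZ$: since $F \in \cZ$, the profile $g$ equals $e^{-r^2/4}$ times a smooth function of polynomial growth that vanishes to order $n$ at the origin, and substituting this into the explicit integral formula for $\chi$, together with the asymptotics $\varphi_0(r) \to \tfrac1{8\pi}$ as $r \to 0$, $\varphi_0(r) \sim \tfrac1{2\pi r^2}$ as $r \to \infty$ and $\Omega_0(r) = \tfrac1{4\pi}e^{-r^2/4}$, shows that $e^{r^2/4}\phi$ is smooth with polynomial growth, i.e.\ $\Omega_n \in \cZ$.

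Finally I would assemble $\Omega = \sum_{n \geq 0}\Omega_n$. The angular modes $F_n$ of an element of $\cZ$ satisfy Schwartz-type bounds uniform in $n$ (indeed decaying rapidly in $n$), while the solution operators $F_n \mapsto \Omega_n$ are bounded uniformly in $n$ --- the term $-n^2/r^2$ only helps, so the relevant Green's functions are $\cO(1/n)$ --- so the series converges in the topology of $\cZ$ and gives the desired solution, which lies in $\Ker(\Lambda)^\perp$ by construction. I expect the main obstacle to be precisely this last part: extracting from the explicit ODE solution the sharp Gaussian decay $e^{-r^2/4}$ (up to polynomial factors) at infinity and the correct $r^n$ behaviour at the origin, in a form uniform enough in $n$ to control the series in $\cZ$. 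The soft information that $\Omega_n \in \cY$ is essentially what skew-adjointness of $\Lambda$ already yields; the work of the proof is to upgrade it to membership in $\cZ$ and to genuine solvability --- the range of the unbounded operator $\Lambda$ need not be closed --- which is exactly where the explicit structure of $\Lambda$ must be used.
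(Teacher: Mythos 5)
Your argument is correct and follows essentially the same route as the paper's proof recalled in Section~\ref{ssecA2}: decomposition into the angular modes $\cY_n$, reduction to the stream-function ODE with potential $n^2/r^2 - h(r)$ (your coefficient $\Omega_0/(2\varphi_0)$ is exactly the paper's $h$ from \eqref{def:vgh}, and your scalar equation is \eqref{eq:varphidef2} in complex notation), together with the same treatment of the exceptional modes $n=0$ and $n=1$ and the identification of the solvability condition with membership in $\cY_1'$. The only differences are cosmetic: for $n\ge 2$ the paper deduces unique solvability from the positivity of $n^2/r^2-h(r)$ rather than by quoting the kernel computation of Proposition~\ref{prop:Lam}, and you rightly flag that the uniform-in-$n$ bounds needed to sum the modes in $\cZ$ are the part the appendix leaves implicit (in the applications of Section~\ref{ssec23} only finitely many modes ever occur).
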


\subsection{Construction of the approximate solution}\label{ssec23}

We now explain how to construct the vorticity profiles $\Omega_2, \Omega_3, \Omega_4$
in \eqref{def:Omapp} so as to obtain a precise approximate solution of
\eqref{eq:Omevol2}. From now on, we denote by $z(t)$ the unique solution of the
ODE \eqref{eq:modifz}, and we use the decomposition $z'(t) = z_0'(t) + \epsilon^2 z_2'(t)$,
where
\begin{equation}\label{eq:zdecomp}
  z_0'(t) \,:=\, f\bigl(z(t),t\bigr)\,, \qquad
  z_2'(t) \,:=\, d^2 \Delta f\bigl(z(t),t\bigr)\,.
\end{equation}
Taking into account the correction term $z_2'$ in \eqref{eq:zdecomp}, the velocity
expansion \eqref{eq:Efz} can be written in the slightly modified form 
\begin{equation}\label{eq:Efz2}
  E(f,z\,; \xi,t) \,=\, \sum_{k=1}^4 \epsilon^k\,\hat E_k(f,z\,; \xi,t) + \cR_E(f,z\,; \xi,t)\,,
\end{equation}
where $\hat E_1 = 0$, $\hat E_2 = E_2$, $\hat E_4 = E_4$, and
\begin{equation}\label{def:hatE3}
  \hat E_3(f,z\,; \xi,t) \,=\, T_0d\,\Bigl(\frac12\,\Diff^2 f(z(t),t)[\xi,\xi]
  - \Delta f(z(t),t)\Bigr)\,.
\end{equation}

If $\Omega_\app$ is an approximate solution of \eqref{eq:Omevol2}, we define
\begin{equation}\label{eq:remdef}
  \cR_\app \,:=\, \delta\bigl(t\partial_t\Omega_\app - \cL\Omega_\app\bigr) +
  \bigl(U_\app + E(f,z)\bigr)\cdot \nabla \Omega_\app\,.
\end{equation}
Our goal is to choose $\Omega_\app$ and $U_\app = \BS[\Omega_\app]$ so as to minimize
the remainder $\cR_\app$. Since the external velocity $E(f,z)$ has a power
series expansion in the (time-dependent) parameter $\epsilon$, it is natural
to expand $\Omega_\app$ and $U_\app$ in powers of $\epsilon$ too, as in
\eqref{def:Omapp}. Note that equation \eqref{eq:Omevol2} also involves the small
parameter $\delta$, which means that the profiles $\Omega_k$ and $U_k$ in
\eqref{def:Omapp} are functions of $\delta$. This dependence will
not be indicated explicitly, but it is understood that all quantities
we consider are uniformly bounded as $\delta \to 0$. 

To obtain a more explicit expression of $\cR_\app$, we insert the expansions
\eqref{def:Omapp} and \eqref{eq:Efz2} into \eqref{eq:remdef}, and we use the facts
that $\delta t = \epsilon^2 T_0$ and $t\partial_t \epsilon^k = (k/2)\epsilon^k$ for
all $k \in \N$, see \eqref{eq:xidef}. Recalling also the definition \eqref{def:Lambda}
of the operator $\Lambda$, we arrive at the decomposition
\begin{equation}\label{eq:remdef2}
  \cR_\app \,=\, \sum_{k=2}^8 \epsilon^k \cR_k + \cR_E \cdot \nabla\Omega_\app\,,
\end{equation}
where
\begin{equation}\label{eq:Rkdef}
  \begin{split}
  \cR_2 \,&=\, \delta\bigl(1-\cL\bigr)\Omega_2 + \Lambda\Omega_2 + E_2\cdot
  \nabla \Omega_0\,, \\
  \cR_3 \,&=\, \delta\bigl({\TS \frac32}-\cL\bigr)\Omega_3 + \Lambda\Omega_3 +
  \hat E_3\cdot\nabla \Omega_0\,, \\[1mm]
  \cR_4 \,&=\, \delta\bigl(2-\cL\bigr)\Omega_4 + \Lambda\Omega_4 +
  E_4\cdot\nabla \Omega_0 + \bigl(U_2 + E_2\bigr)\cdot\nabla\Omega_2 +
  T_0\partial_t \Omega_2\,.
  \end{split}
\end{equation}
The exact expression of the higher-order terms $\cR_k$ for $k \ge 5$ is not important
for our analysis. We now determine the profiles $\Omega_2, \Omega_3, \Omega_4$ so as
to minimize the quantities $\cR_2, \cR_3, \cR_4$. Since the expressions $E_2$,
$\hat E_3$, $E_4$ involve derivatives of the external field $f$, which is
time-dependent, the vorticity profiles $\Omega_j$ also depend on time, as
indicated in \eqref{def:Omapp}. However, they are determined by solving
``elliptic'' equations which can be studied at frozen time. 

\subsubsection{Second order vorticity profile}\label{sssec231}
We take $\Omega_2 = \bar\Omega_2 + \delta\tilde\Omega_2$, where
$\bar\Omega_2,\tilde\Omega_2 \in \cY_2 \cap \cZ$ satisfy
\begin{equation}\label{def:Om2}
  \Lambda\bar\Omega_2 + E_2 \cdot\nabla\Omega_0 \,=\, 0\,, \qquad
  \text{and}\qquad \Lambda\tilde\Omega_2 + (1-\cL)\bar\Omega_2 \,=\, 0\,.
\end{equation}
This is indeed possible because, using \eqref{eq:E2} and the identity
$\nabla\Omega_0 = -(\xi/2)\Omega_0$, we see that
\begin{equation}\label{eq:E2nab}
  E_2 \cdot\nabla\Omega_0 \,=\, -\frac12\,T_0\Omega_0 |\xi|^2 \Bigl(
  a(t)\cos(2\theta) + b(t)\sin(2\theta)\Bigr)\,,
\end{equation}
where $a,b$ are as in Lemma~\ref{lem:E2}. In view of the definitions
\eqref{def:cY}--\eqref{def:cZ}, this expression shows that $E_2 \cdot\nabla
\Omega_0 \in \cY_2 \cap \cZ$. Applying Lemma~\ref{lem:Lambda}, we deduce
that there exists a unique $\bar\Omega_2 \in \cY_2 \cap \cZ$ such that
$\Lambda\bar\Omega_2 + E_2 \cdot\nabla\Omega_0 = 0$. Now, as already
observed, the diffusion operator $\cL$ leaves the subspace $\cY_2 \cap \cZ$
invariant. So, applying Lemma~\ref{lem:Lambda} again, we see that there
exists a unique $\tilde\Omega_2 \in \cY_2 \cap \cZ$ such that
$\Lambda\tilde\Omega_2 + (1-\cL)\bar\Omega_2 = 0$. Finally, combining
\eqref{eq:Rkdef} and \eqref{def:Om2}, we obtain
\begin{equation}\label{eq:R2est}
  \cR_2 \,=\, \delta^2 (1 - \cL)\tilde\Omega_2 \,=\, \cO(\delta^2)\,.
\end{equation}

\begin{remark}\label{rem:w2}
Using \eqref{eq:E2nab} and the formulas reproduced in Section~\ref{ssecA2},
it is straightforward to verify that $\bar\Omega_2(\xi,t) = T_0 w_2(|\xi|)
\bigl(a(t)\sin(2\theta) - b(t)\cos(2\theta)\bigr)$ where
\[
  w_2(r) \,=\, h(r)\Bigl(\varphi_2(r) + \frac{r^2}{2}\Bigr)\,,
 \qquad   h(r) \,=\, \frac{r^2/4}{e^{r^2/4}-1}\,, \qquad r > 0\,,
\]
and $\varphi_2$ is the unique solution of the differential equation
\[
  -\varphi_2''(r) -\frac{1}{r}\,\varphi_2'(r) + \Bigl(\frac{4}{r^2} - h(r)\Bigr)
  \varphi_2(r) \,=\, \frac{r^2}{2}\,h(r)\,, \qquad r > 0\,,
\]
such that $\varphi_2(r) = \cO(r^2)$ as $r \to 0$ and $\phi(r) = \cO(r^{-2})$
as $r \to +\infty$. In particular $w_2(r) > 0$ for all $r > 0$, $w_2(r) =
\cO(r^2)$ as $r \to 0$, and $w_2(r) \sim (r^4/8)e^{-r^2/4}$ as $r \to +\infty$.
The graph of the function $w_2$ is represented in Figure~\ref{fig:w2}. 
\end{remark}

\begin{figure}
    \centering
    \includegraphics[width=0.45\linewidth]{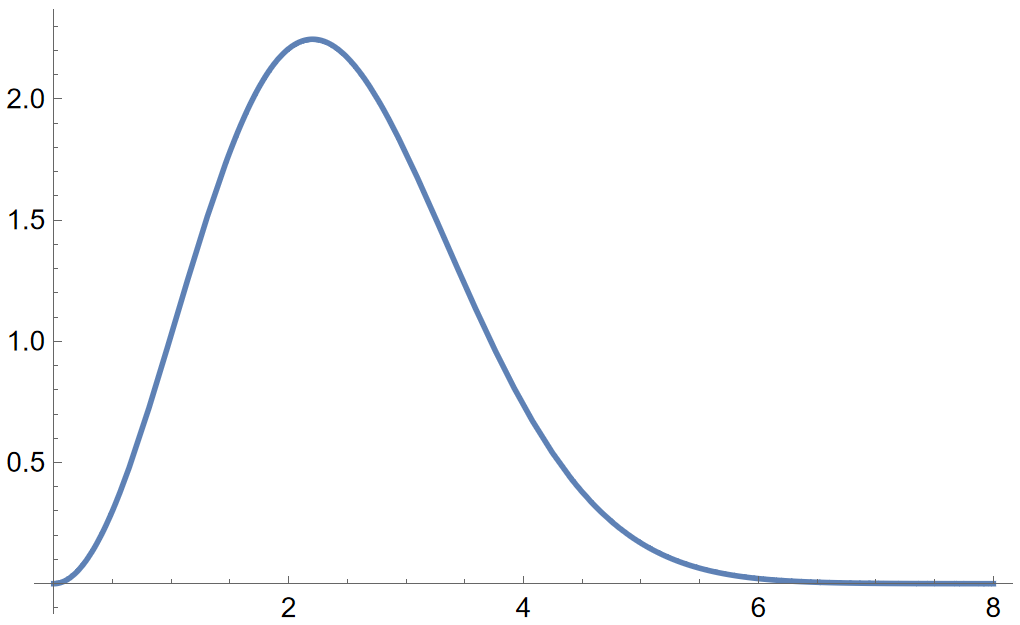}
    \put(-15,12){$r$}
    \put(-120,100){$w_2(r)$}
    \caption{The function $w_2$, which enters the definition of the approximate
     solution \eqref{def:omapp} and describes to leading order the deviation
     of the vorticity distribution from the Gaussian profile, is represented as a 
     function of the radius $r = |\xi|$.}\label{fig:w2}
\end{figure}

\subsubsection{Third order vorticity profile}\label{sssec232}
Let $\cY_1' = \cY_1 \cap \Ker(\Lambda)^\perp$ be the subspace introduced 
in \eqref{eq:Y1Kerperp}. We take $\Omega_3 = \bar\Omega_3 + \delta\tilde\Omega_3$,
where $\bar\Omega_3,\tilde\Omega_3 \in \bigl(\cY_1' \oplus \cY_3)\cap \cZ$ satisfy
\begin{equation}\label{def:Om3}
  \Lambda\bar\Omega_3 + \hat E_3 \cdot\nabla\Omega_0 \,=\, 0\,, \qquad
  \text{and}\qquad \Lambda\tilde\Omega_3 + \bigl({\TS \frac32}-\cL\bigr)
  \bar\Omega_3 \,=\, 0\,.
\end{equation}
The strategy for solving \eqref{def:Om3} is the
same as before. Using \eqref{def:hatE3} and Lemma~\ref{lem:E3}, we easily
find
\begin{equation}\label{eq:hE3dot}
  \begin{split}
  \hat E_3\cdot\nabla\Omega_0  \,&=\, -\frac12 T_0 d \,\Omega_0|\xi|^3
  \Bigl(\frac18 \Delta f_1 \cos(\theta) + \frac18 \Delta f_2 \sin(\theta)
  + A\cos(3\theta) + B\sin(3\theta)\Bigr) \\
  &\quad\, + \frac12 T_0 d \,\Omega_0|\xi| \bigl(\Delta f_1 \cos(\theta) +
  \Delta f_2 \sin(\theta)\bigr)\,,
  \end{split}
\end{equation}
where the first line is the expression of $E_3\cdot\nabla\Omega_0$,
and the second one is the correction due to the additional velocity
$z_2'(t)$ of the vortex center. This shows that $E_3\cdot\nabla\Omega_0 \in
\bigl(\cY_1 \oplus \cY_3)\cap \cZ$, which is not quite sufficient since
we cannot invert the operator $\Lambda$ in the full subspace $\cY_1 \cap \cZ$.
Actually, thanks to the correction term in \eqref{eq:hE3dot}, we have
$E_3\cdot\nabla\Omega_0 \in \bigl(\cY_1' \oplus \cY_3)\cap \cZ$, because
a direct calculation shows that
\[
  \int_{\R^2} \xi_j \bigl(\hat E_3\cdot\nabla\Omega_0\bigr)\dd\xi
  \,=\, -\frac18 T_0 d\,\Delta f_j \int_0^\infty e^{-r^2/4}\Bigl(\frac{r^5}{8}
  - r^3\Bigr)\dd r \,=\, 0\,, \qquad \text{for }\,j = 1,2\,.
\]
Thus, applying Lemmas~\ref{lem:Lambda} and \ref{lem:Lambda2}, we conclude that
there exists a unique $\bar\Omega_3 \in \bigl(\cY_1' \oplus \cY_3)\cap \cZ$ such
that $\Lambda\bar\Omega_3 + \hat E_3 \cdot\nabla\Omega_0 = 0$.  The profile
$\tilde\Omega_3$ is then constructed as before, and we arrive at
\begin{equation}\label{eq:R3est}
  \cR_3 \,=\, \delta^2 \bigl({\TS \frac32} - \cL\bigr)\tilde\Omega_3
  \,=\, \cO(\delta^2)\,.
\end{equation}

\begin{figure}
    \centering
    \includegraphics[width=0.35\linewidth]{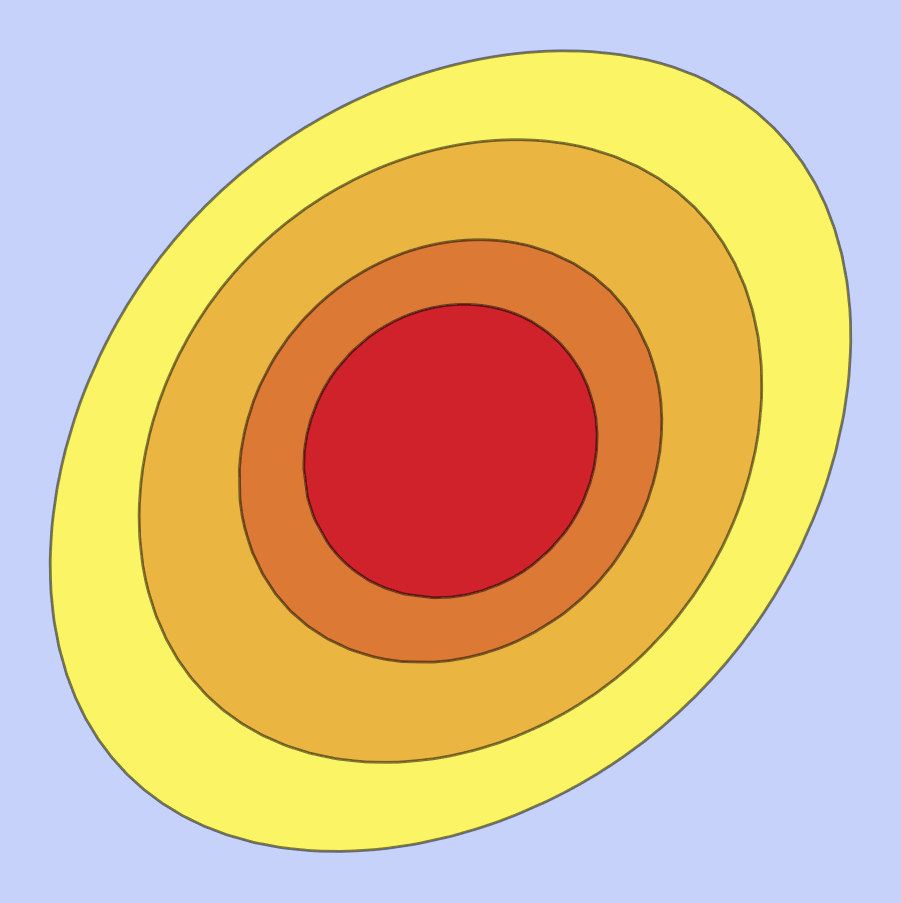} \hspace{5mm}
    \includegraphics[width=0.35\linewidth]{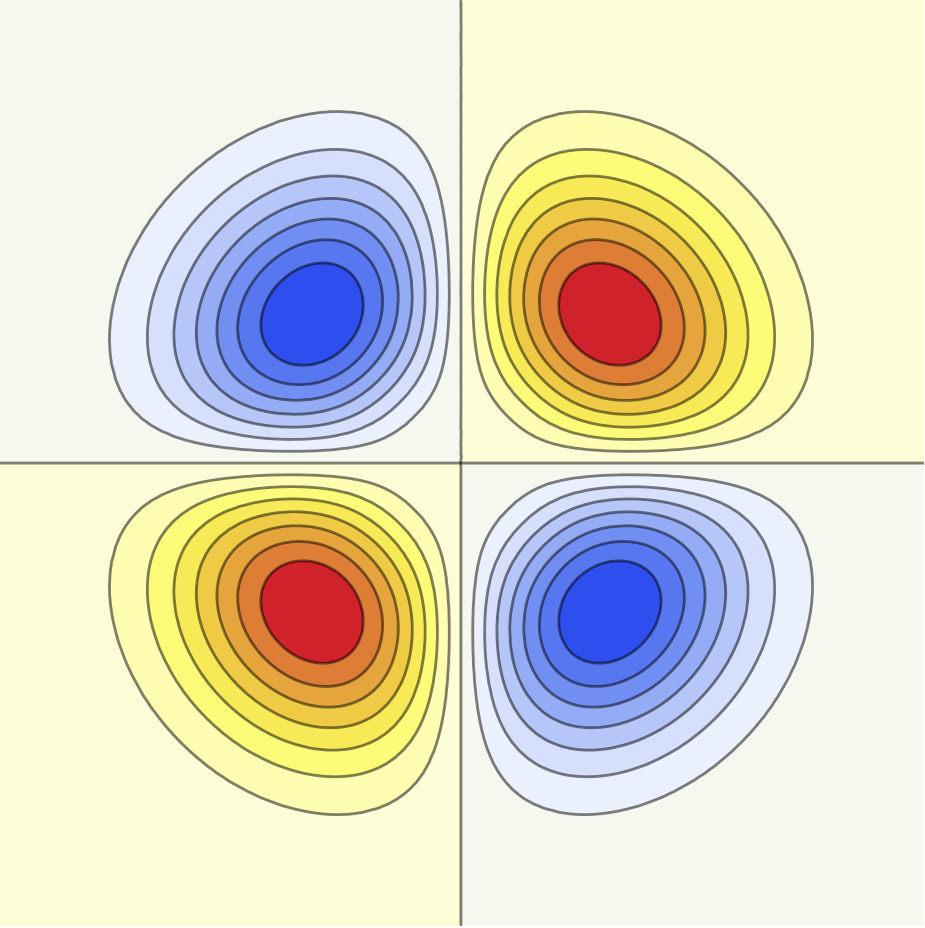}
    \caption{Level lines of the perturbation $\Omega_2: \xi \mapsto w_2(|\xi|)\sin(2\theta)$
    on the square $[-6,6]^2$ (right), and of the approximate solution $\Omega_0 +
     0.04*\Omega_2$ on the smaller square $[-1.4,1.4]^2$ (left).}
    \label{fig:levelLines}
\end{figure}

\subsubsection{Fourth order vorticity profile}\label{sssec233}
We start from the following observation.

\begin{lemma}\label{lem:4th} One has $E_4\cdot\nabla \Omega_0 +
\bigl(\bar U_2 + E_2\bigr)\cdot\nabla\bar\Omega_2 + T_0\partial_t \bar\Omega_2\in
\bigl(\cY_2 \oplus \cY_4\bigr) \cap \cZ$. 
\end{lemma}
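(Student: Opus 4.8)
The plan is to examine the three summands one at a time, and for each of them to check (i) that it lies in $\cZ$ and (ii) that its angular Fourier expansion in the polar angle $\theta$ contains only the modes $n=2$ and $n=4$. Part (i) will be routine throughout: every summand is obtained from $\Omega_0$ (and from $\bar\Omega_2\in\cY_2\cap\cZ$, already constructed in Section~\ref{sssec231}) by multiplying by polynomials in $\xi$, applying the Biot--Savart operator $\BS$, and taking first-order derivatives, and $\cZ$ is stable under all of these; the time derivative in the last term acts only through the smooth scalars $a(t),b(t)$ and does not affect this. So the real content is the angular structure, and in particular the vanishing of the radial ($n=0$) component.

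For the first summand I would use $\nabla\Omega_0=-(\xi/2)\Omega_0$ to write $E_4\cdot\nabla\Omega_0=-\tfrac12(\xi\cdot E_4)\,\Omega_0$. By the definition of $E_4$ in \eqref{def:Ek}, the factor $\xi\cdot E_4$ is a homogeneous polynomial of degree $4$ in $\xi$, hence in polar coordinates it equals $|\xi|^4$ times a trigonometric polynomial supported on the modes $n\in\{0,2,4\}$; multiplying by $\Omega_0$ then gives an element of $(\cY_0\oplus\cY_2\oplus\cY_4)\cap\cZ$. It remains only to kill the $n=0$ part, i.e. to show that the average over $\theta$ of $\xi\cdot\Diff^3 f(z(t),t)[\xi,\xi,\xi]$ is zero. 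Computing this average with the elementary identity $\frac1{2\pi}\int_0^{2\pi}\hat\xi_i\hat\xi_j\hat\xi_k\hat\xi_\ell\dd\theta=\tfrac18(\delta_{ij}\delta_{k\ell}+\delta_{ik}\delta_{j\ell}+\delta_{i\ell}\delta_{jk})$, where $\hat\xi=\xi/|\xi|$, each of the three resulting contractions reduces to $\Delta(\nabla\cdot f)(z(t),t)$, which vanishes since $f$ is divergence-free. Hence $E_4\cdot\nabla\Omega_0\in(\cY_2\oplus\cY_4)\cap\cZ$.

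For the two transport terms I would pass to stream functions. Since $E_2=T_0\Diff f(z(t),t)[\xi]$ has vanishing divergence in $\xi$ (its divergence is $T_0\,\nabla\cdot f=0$), one may write $E_2=\nabla^\perp\psi_E$ with $\psi_E$ a quadratic polynomial, and a short computation from Lemma~\ref{lem:E2} gives $\psi_E=\psi_0+\psi_2$ with $\psi_0=\tfrac12 T_0 c(t)|\xi|^2\in\cY_0$ and $\psi_2=-\tfrac12 T_0|\xi|^2\bigl(a(t)\sin(2\theta)-b(t)\cos(2\theta)\bigr)\in\cY_2$; similarly $\bar U_2=\BS[\bar\Omega_2]=\nabla^\perp\psi_U$ with $\psi_U\in\cY_2$ solving $\Delta\psi_U=\bar\Omega_2$. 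The key observation is that $\psi_2$ and $\psi_U$ carry exactly the same angular profile $g(\theta):=a(t)\sin(2\theta)-b(t)\cos(2\theta)$ as $\bar\Omega_2=T_0 w_2(|\xi|)g(\theta)$ (Remark~\ref{rem:w2}): for $\psi_2$ this is visible above, and for $\psi_U$ it holds because the radial Laplacian does not mix angular modes. Writing $\{\cdot,\cdot\}$ for the Jacobian, $(\bar U_2+E_2)\cdot\nabla\bar\Omega_2=\{\psi_0,\bar\Omega_2\}+\{\psi_2,\bar\Omega_2\}+\{\psi_U,\bar\Omega_2\}$. The first bracket is a radial function times $g'(\theta)$, hence in $\cY_2$. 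For the other two, the Jacobian of $p(r)g(\theta)$ with $q(r)g(\theta)$ equals $\tfrac1r(p'q-pq')\,g(\theta)g'(\theta)$, and $g\,g'=(a^2-b^2)\sin(4\theta)-2ab\cos(4\theta)$ is a pure mode-$4$ function (indeed $gg'=\tfrac12(g^2)'$ and squaring a mode-$2$ function produces a constant plus a mode-$4$ term, whose $\theta$-derivative is pure mode $4$); hence both brackets lie in $\cY_4$. Thus $(\bar U_2+E_2)\cdot\nabla\bar\Omega_2\in(\cY_2\oplus\cY_4)\cap\cZ$.

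Finally $T_0\partial_t\bar\Omega_2=T_0^2 w_2(|\xi|)\bigl(a'(t)\sin(2\theta)-b'(t)\cos(2\theta)\bigr)\in\cY_2\cap\cZ$, since the $t$-dependence of $\bar\Omega_2$ enters only through $a(t),b(t)$, which are smooth along the trajectory $z(t)$. Summing the three contributions gives the statement. The only steps that are not bookkeeping are the two vanishing statements for the radial modes: for $E_4\cdot\nabla\Omega_0$ this is the divergence-free condition on $f$, and for the transport terms it is the fact that $\bar\Omega_2$ and the stream functions $\psi_2,\psi_U$ share a common angular profile — itself a consequence of the rotation-covariance of $\Lambda$ and $\BS$ together with the way $\bar\Omega_2$ is defined in \eqref{def:Om2}. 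Making that common-profile structure precise is the point I expect to require the most care.
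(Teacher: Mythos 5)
Your proof is correct and follows the same term-by-term strategy as the paper: decompose each summand into angular Fourier modes and use $\nabla\cdot f=0$ to kill the radial component. The two places where you organize the computation differently are both sound and arguably a bit more conceptual. For $E_4\cdot\nabla\Omega_0$ you average the fourth-order tensor $\hat\xi_i\hat\xi_j\hat\xi_k\hat\xi_\ell$ and reduce each contraction to $\Delta(\nabla\cdot f)=0$, whereas the paper gets the same vanishing in one line from the divergence theorem applied to $\int_0^{2\pi}E_4\cdot e_r\dd\theta$ using $\nabla\cdot E_4=0$; the two arguments are equivalent, the paper's being shorter and yours more explicit. For the transport term the paper simply writes out $\bar U_2\cdot\nabla\bar\Omega_2$ and $E_2\cdot\nabla\bar\Omega_2$ from the formula \eqref{eq:Undef} and reads off the modes, while your Jacobian identity $\{p(r)g(\theta),q(r)g(\theta)\}=\tfrac1r(p'q-pq')\,gg'$ with $gg'=\tfrac12(g^2)'$ a pure mode-$4$ function explains \emph{structurally} why no mode-$0$ term can arise, without computing the radial coefficients; this is a nice gain, and your identification of the stream function of $E_2$ (the $\psi_0$ bracket reproducing the paper's $2T_0^2w_2(ac\cos 2\theta+bc\sin 2\theta)$ term) checks out. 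One small imprecision: $\cZ$ is \emph{not} literally stable under $\BS$ (velocities of compactly concentrated vorticities decay only algebraically, cf.\ Lemma~\ref{lem:BS3}), so the blanket claim in your part (i) is overstated; what you actually need, and what is true, is that the product of a velocity field with moderate growth and a gradient of a $\cZ$ function stays in $\cZ$. This does not affect the validity of the argument.
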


\begin{proof}
Since $\nabla\Omega_0 = -(\xi/2)\Omega_0$, it follows from the definition
\eqref{def:Ek} that $E_4\cdot\nabla \Omega_0 = P_4 \Omega_0$ where
$P_4$ is a homogeneous polynomial of degree $4$ in the variable
$\xi = \bigl(r\cos(\theta),r\sin(\theta)\bigr) \in \R^2$. In particular
we have $E_4\cdot\nabla \Omega_0 \in \bigl(\cY_0 \oplus \cY_2 \oplus
\cY_4\bigr) \cap \cZ$. In addition, since $\Omega_0$ is radially symmetric,
we have for any $r> 0$:
\[
  \int_0^{2\pi} \bigl(E_4\cdot\nabla \Omega_0\bigr)\bigl(r\cos(\theta),
  r\sin(\theta)\bigr)\dd\theta \,=\, -\frac{r}{8\pi}\,e^{-r^2/4}
  \int_0^{2\pi} E_4\bigl(r\cos(\theta),r\sin(\theta)\bigr)\cdot e_r\dd\theta
  \,=\, 0\,,
\]
where in the last equality we used the divergence theorem and the fact that
$\nabla\cdot E_4 = 0$. This shows that $E_4\cdot\nabla \Omega_0$ has zero
radial average, hence zero projection onto the subspace $\cY_0$. 

Next we recall that $\bar\Omega_2(\xi,t) = T_0\,w_2(r) \bigl(a\sin(2\theta)
- b\cos(2\theta)\bigr)$, see Remark~\ref{rem:w2}. According to
\eqref{eq:Undef}, the associated velocity field takes the form
\[
  \bar U_2 \,=\, \frac{2T_0}{r}\,\varphi_2(r)\bigl(a\cos(2\theta)+ b\sin(2\theta)\bigr)\,e_r
  + T_0 \varphi_2'(r)\bigl(-a\sin(2\theta)+b\cos(2\theta)\bigr)\,e_\theta\,.
\]
By a direct calculation, we deduce that
\[
  \bar U_2\cdot\nabla\bar\Omega_2 \,=\, \frac{T_0^2}{r}\bigl(\varphi_2' w_2 -
  \varphi_2 w_2'\bigr)\Bigl((b^2-a^2)\sin(4\theta) + 2ab\cos(4\theta)\Bigr)
  \,\in\, \cY_4 \cap \cZ\,.
\]
Similarly, using the expression of $E_2$ in Lemma~\ref{lem:E2}, we obtain
\begin{align*}
  E_2\cdot\nabla\bar\Omega_2 \,&=\, \frac{T_0^2}{2}\bigl(2w_2 - r w_2'\bigr)
  \Bigl((b^2-a^2)\sin(4\theta) + 2ab\cos(4\theta)\Bigr) \\
  &\quad\, +2T_0^2 w_2 \bigl(ac\cos(2\theta) + bc\sin(2\theta)\bigr)
  \,\in\, \bigl(\cY_2 \oplus \cY_4\bigr) \cap \cZ\,.
\end{align*}
Finally, the expression above of $\bar\Omega_2$ shows that $T_0\partial_t \bar\Omega_2
\in \cY_2 \cap \cZ$. This concludes the proof. 
\end{proof}

According to Lemmas~\ref{lem:4th} and \ref{lem:Lambda}, there
exists a unique profile $\Omega_4 \in \bigl(\cY_2 \oplus \cY_4\bigr) \cap \cZ$
such that
\begin{equation}\label{def:Om4}
  \Lambda\Omega_4 + E_4\cdot\nabla \Omega_0 + \bigl(\bar U_2
  + E_2\bigr)\cdot\nabla\bar\Omega_2 + T_0\partial_t \bar\Omega_2 \,=\, 0\,.
\end{equation}
We that choice, we obviously have
\begin{equation}\label{eq:R4est}
  \cR_4 \,=\, \delta (2 - \cL)\Omega_4 + \delta \bigl(\bar U_2 + E_2\bigr)
  \cdot\nabla\tilde\Omega_2 + \delta \tilde U_2 \cdot\nabla \Omega_2 +
  \delta T_0\partial_t \tilde\Omega_2 \,=\, \cO(\delta)\,.
\end{equation}

The results obtained in this section can be summarized as follows.

\begin{proposition}\label{prop:Rest}
There exist $C > 0$ and $N \in \N$ such that, if the profiles
$\Omega_2,\Omega_3,\Omega_4$ of the approximate solution
\eqref{def:Omapp} are given by \eqref{def:Om2}, \eqref{def:Om3},    
and \eqref{def:Om4}, then the remainder $\cR_\app$ defined by
\eqref{eq:remdef} satisfies the estimate
\begin{equation}\label{eq:Rest}
  \big|\cR_\app(\xi,t)\bigr| \,\le\, C\bigl(\epsilon(t)^5 + \delta^2
  \epsilon(t)^2\bigr) (1+|\xi|)^N e^{-|\xi|^2/4}\,, \qquad
  \forall\,(\xi,t) \in \R^2 \times [0,T]\,.
\end{equation}
\end{proposition}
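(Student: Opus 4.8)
The plan is to collect the estimates on the individual terms $\cR_2, \cR_3, \cR_4$ obtained in \S\ref{sssec231}--\S\ref{sssec233}, to control the higher-order terms $\cR_k$ for $5 \le k \le 8$ by a crude uniform bound, to handle the Taylor remainder contribution $\cR_E \cdot \nabla\Omega_\app$ using Lemma~\ref{lem:fexpand}, and finally to reassemble everything via the decomposition \eqref{eq:remdef2}. Since all the profiles $\Omega_0, \Omega_2, \Omega_3, \Omega_4$ lie in $\cZ$ — hence are of the form (polynomial) $\times\, e^{-|\xi|^2/4}$ with coefficients that are smooth and uniformly bounded in $t \in [0,T]$ and in $\delta$ — and since $\cL$, $\Lambda$ and $\BS[\cdot]$ preserve this structure, every quantity appearing below is automatically of the form $C(1+|\xi|)^N e^{-|\xi|^2/4}$ for suitable $C, N$; the only real content is to track the powers of $\epsilon$ and $\delta$.

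First I would record that \eqref{eq:R2est}, \eqref{eq:R3est}, \eqref{eq:R4est} give
\[
  |\cR_2| + |\cR_3| \,\le\, C\delta^2 (1+|\xi|)^N e^{-|\xi|^2/4}\,, \qquad
  |\cR_4| \,\le\, C\delta (1+|\xi|)^N e^{-|\xi|^2/4}\,,
\]
so that $\epsilon^2 \cR_2 + \epsilon^3 \cR_3 + \epsilon^4 \cR_4 = \cO(\delta^2 \epsilon^2 + \delta\epsilon^4)(1+|\xi|)^N e^{-|\xi|^2/4}$, and since $\delta\epsilon^4 \le \epsilon^5$ when $\delta \le \epsilon$ and otherwise $\delta\epsilon^4 \le \delta^2\epsilon^3 \le \delta^2\epsilon^2$ — more simply, $\delta\epsilon^4 = (\delta\epsilon^2)\epsilon^2 \le \frac12(\delta^2\epsilon^2 + \epsilon^6)$ — this term is absorbed into the right-hand side of \eqref{eq:Rest}. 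Next, for $5 \le k \le 8$ the terms $\cR_k$ in \eqref{eq:remdef2} are finite sums of expressions of the form $\delta(c - \cL)\Omega_j$, $U_i \cdot \nabla \Omega_j$, $E_i \cdot \nabla \Omega_j$, $\hat E_3 \cdot \nabla \Omega_j$, $E_4 \cdot \nabla \Omega_j$ and $T_0\partial_t \Omega_j$ with $2 \le i,j \le 4$; each such expression is bounded, uniformly in $t$ and $\delta$, by $C(1+|\xi|)^N e^{-|\xi|^2/4}$ because of the $\cZ$-regularity just noted and because the derivatives of $f$ up to order four are uniformly bounded (as quantified by $\cK$ in \eqref{eq:cKdef}). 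Hence $\sum_{k=5}^8 \epsilon^k \cR_k = \cO(\epsilon^5)(1+|\xi|)^N e^{-|\xi|^2/4}$, using $\epsilon^k \le \epsilon^5$ for $k \ge 5$ since $\epsilon(t) \le \epsilon(T)$ can be assumed bounded (absorb a power of $\epsilon(T)$ into $C$ if necessary).

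Finally I would treat $\cR_E \cdot \nabla\Omega_\app$. By Lemma~\ref{lem:fexpand} one has $|\cR_E(f,z;\xi,t)| \le C\epsilon^5 (1+|\xi|)^4$, while $\nabla\Omega_\app = \nabla\Omega_0 + \epsilon^2\nabla\Omega_2 + \epsilon^3\nabla\Omega_3 + \epsilon^4\nabla\Omega_4$ is bounded by $C(1+|\xi|)^N e^{-|\xi|^2/4}$ uniformly in $t,\delta$; multiplying, $|\cR_E \cdot \nabla\Omega_\app| \le C\epsilon^5 (1+|\xi|)^{N'} e^{-|\xi|^2/4}$ after raising $N$ to $N' = N+4$. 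Combining the three pieces through \eqref{eq:remdef2} yields \eqref{eq:Rest} with $N$ the maximum of the exponents encountered and $C$ the sum of the constants. I expect no genuine obstacle here: the Proposition is essentially a bookkeeping exercise, the only mild care being (i) to be sure that the implicit constants in the $\cZ$-bounds on $\Omega_2,\Omega_3,\Omega_4$ are uniform in $\delta \to 0$ — which holds because each profile solves $\Lambda\Omega = F$ with $F \in \cZ$ depending on $\delta$ only through bounded combinations, and Proposition~\ref{prop:solv} together with the explicit formulas of \S\ref{ssecA2} gives $\delta$-uniform estimates — and (ii) to absorb the mixed power $\delta\epsilon^4$ correctly, as done above.
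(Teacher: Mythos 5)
Your proposal is correct and follows essentially the same route as the paper: the decomposition \eqref{eq:remdef2}, the bounds \eqref{eq:R2est}--\eqref{eq:R4est} combined with the absorption $\delta\epsilon^4 \le \frac12(\delta^2\epsilon^2+\epsilon^6)$, the crude $\cZ$-bound on $\cR_k$ for $k\ge 5$, and Lemma~\ref{lem:fexpand} for the term $\cR_E\cdot\nabla\Omega_\app$. The only difference is that you spell out the $\delta$-uniformity of the $\cZ$-bounds on the profiles, which the paper leaves implicit.
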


\begin{proof}
This is a straightforward consequence of the calculations above and of the
choice of the function space $\cZ$. We consider the expression
\eqref{eq:remdef2} of the remainder $\cR_\app$. Using Lemma~\ref{lem:fexpand}
and the fact that $\Omega_\app \in \cZ$, we see that the last term
$\cR_E \cdot \nabla\Omega_\app$ satisfies an estimate of the form
\eqref{eq:Rest}. This is also the case for the terms $\epsilon^k \cR_k$
for $k \ge 5$, because $\cR_k \in \cZ$ and $\epsilon^k \le \epsilon^5$.
Finally, in view of \eqref{eq:R2est}, \eqref{eq:R3est}, and \eqref{eq:R4est}, 
we have
\[
  \epsilon^2 \cR_2 + \epsilon^3 \cR_3 + \epsilon^4 \cR_4 \,=\,
  \cO\bigl(\delta^2 \epsilon^2 + \delta^2 \epsilon^3 + \delta \epsilon^4
  \bigr) \,=\, \cO\bigl(\delta^2 \epsilon^2 + \epsilon^6\bigr)\,,
\]
in the topology of $\cZ$, which again implies an inequality of the
form \eqref{eq:Rest}. 
\end{proof}

\begin{remark}\label{rem:moms}
Since $\Omega_2 \in \cY_2$, $\Omega_3 \in \cY_1' \oplus \cY_3$, and
$\Omega_4 \in \cY_2 \oplus \cY_4$, the approximate solution \eqref{def:Omapp}
satisfies for all positive times
\begin{equation}\label{eq:moms}
  \int_{\R^2} \Omega_\app(\xi,t)\dd \xi \,=\, 1\,, \qquad
  \int_{\R^2} \xi_1 \Omega_\app(\xi,t)\dd \xi \,=\,
  \int_{\R^2} \xi_2 \Omega_\app(\xi,t)\dd \xi \,=\, 0\,.
\end{equation}
\end{remark}

\section{The solution starting from a point vortex}\label{sec3}

In this section we complete the proof of Theorem~\ref{thm1}. We assume
throughout that the kinematic viscosity $\nu > 0$ is small compared to the
circulation parameter $\Gamma > 0$, which is fixed once and for all.  Given any
$z_0 \in \R^2$, we consider the unique solution $\omega(x,t)$ of the vorticity
equation \eqref{eq:NSf} and \eqref{eq:BS} satisfying the conditions \eqref{eq:omunique},
which imply that the initial vorticity is a Dirac mass of strength $\Gamma$ located
at point $z_0$. Since \eqref{eq:NSf} is a viscous conservation law, we know
in particular that $\int_{\R^2}\omega(x,t)\dd x = \Gamma$ for all positive times. 
To desingularize the solution in the regime where $\nu t$ is small, we make the
change of variables \eqref{eq:OmU}, where $z(t)$ denotes the unique solution of
the ODE \eqref{eq:modifz} such that $z(0) = z_0$. The rescaled vorticity
$\Omega(\xi,t)$ and the associated velocity $U(\xi,t)$ then satisfy
the evolution equation \eqref{eq:Omevol2} with initial data \eqref{eq:OmU0}.

To obtain precise estimates on $\Omega$ and $U$, we use the decomposition
\begin{equation}\label{OmUdecomp}
  \Omega(\xi,t) \,=\, \Omega_\app(\xi,t) + \delta w(\xi,t)\,, \qquad
  U(\xi,t) \,=\, U_\app(\xi,t) + \delta v(\xi,t)\,, 
\end{equation}
for all $t \in (0,T)$ and all $\xi \in \R^2$, where $\Omega_\app$ is the
approximate solution \eqref{def:Omapp}, $U_\app$ is the associated velocity
field, and $\delta = \nu/\Gamma$. By construction, the correction terms $w(\xi,t),
v(\xi,t)$ in \eqref{OmUdecomp} vanish at initial time $t = 0$, and our goal is to
show that they remain small in an appropriate topology for all $t \in [0,T]$.
In view of \eqref{eq:Omevol2}, the vorticity $w(\cdot,t)$ satisfies the evolution
equation
\begin{equation}\label{eq:wevol}
  t\partial_t w + \frac{1}{\delta}\bigl(U_\app + E(f,z)\bigr)\cdot
  \nabla w + \frac{1}{\delta}\,v\cdot\nabla\Omega_\app + v \cdot\nabla w
  \,=\, \cL w - \frac{1}{\delta^2}\,\cR_\app\,,
\end{equation}
for $t \in (0,T)$, where $\cR_\app$ is the remainder
term \eqref{eq:remdef}. The velocity $v(\cdot,t)$ is obtained from the vorticity
$w(\cdot,t)$ by the usual Biot-Savart formula \eqref{eq:BS}. Since
$\int \Omega(\xi,t)\dd \xi = 1$, it follows from Remark~\ref{rem:moms} that
\begin{equation}\label{eq:wmoml}
  \int_{\R^2} w(\xi,t)\dd \xi \,=\, 0\,, \qquad \forall\,t \in (0,T)\,.
\end{equation}

The main difficulty in our analysis is the necessity of controlling the solution
of \eqref{eq:wevol} uniformly in the small parameter $\delta > 0$.  Since we
consider zero initial data, the evolution is entirely driven by the source term
$\delta^{-2}\cR_\app$ in the right-hand side, which is of size $\cO(\epsilon^2
+ \delta^{-2}\epsilon^5)$ according to Proposition~\ref{prop:Rest}. Here and
in \eqref{eq:Rest}, the fifth power of $\epsilon$ is due to our choice of constructing
the approximate solution \eqref{def:Omapp} as a {\em fourth order} expansion in
$\epsilon$. This is sufficient to counterbalance the large factor $\delta^{-2}$
because $\delta^{-1}\epsilon^2 = t/T_0$ by \eqref{eq:xidef}, which means that
$\delta^{-2} \epsilon^5 = \cO(\epsilon)$ as long as $t$ remains comparable with $T_0$. 

To prove that the solution of \eqref{eq:wevol} remains of size $\cO(\epsilon)$
over the whole time interval $[0,T]$, we have to show that the linear terms in
\eqref{eq:wevol}, which are multiplied by the large factor $\delta^{-1}$, do
not create instabilities that could result in a rapid amplification of the
solution, on a timescale proportional to $\delta$. This can be done using an
appropriate energy estimate in a weighted $L^2$ space, where the weight function
is carefully chosen so as to minimize the contributions of the dangerous linear
terms.  Note that the nonlinearity $v\cdot\nabla w$ in \eqref{eq:wevol} is not
multipled by $\delta^{-1}$, because we chose to include a factor of $\delta$ in
the definition \eqref{OmUdecomp} of the corrections terms $w,v$.

\subsection{The short time estimate}\label{ssec31}

If the observation time $T$ is small compared to the time scale $T_0$ defined by
\eqref{eq:T0def}, the solution of \eqref{eq:wevol} can be controlled using a simple
energy estimate in the space $\cY$ defined by \eqref{def:cY}. To show this, 
we introduce the functionals
\begin{equation}\label{eq:EFdef}
  \cE[w] \,=\, \int_{\R^2} p(\xi) w(\xi)^2\dd\xi\,, \qquad
  \cF[w] \,=\, \int_{\R^2} p(\xi)\bigl(|\nabla w(\xi)|^2 + |\xi|^2
  w(\xi)^2 + w(\xi)^2\bigr)\dd\xi\,,
\end{equation}
where $p(\xi) = e^{|\xi|^2/4}$, and we observe that $\cE[w] = \|w\|_{\cY}^2$. We have the
following result:

\begin{proposition}\label{prop:smalltime}
There exist positive constants $K_3, \rho, \kappa$ such that, if $0 < \delta \le 1$
and $T/T_0 \le \rho$, the solution of \eqref{eq:wevol} with zero initial data satisfies
\begin{equation}\label{eq:westsmall}
  t \partial_t \cE[w(\cdot,t)] + \kappa\cF[w(\cdot,t)] \,\le\,
  K_3\Bigl(\frac{\epsilon^5}{\delta^2} + \epsilon^2\Bigr)\cE[w(\cdot,t)]^{1/2}\,,
  \qquad \forall\, t\in (0,T)\,.
\end{equation}
In particular
\begin{equation}\label{eq:westsmall2}
  \|w(\cdot,t)\|_\cY \,=\, \cE[w(\cdot,t)]^{1/2} \,\le\, K_3\Bigl(\frac{\epsilon^5}{\delta^2}
  + \epsilon^2\Bigr)\,, \qquad \forall\, t\in (0,T)\,.
\end{equation}
\end{proposition}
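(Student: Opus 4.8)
The plan is to obtain the differential inequality \eqref{eq:westsmall} by a weighted $L^2$ energy estimate on \eqref{eq:wevol}, and then to deduce \eqref{eq:westsmall2} by integrating in time from $t=0$, where $w$ vanishes. Concretely, I would multiply \eqref{eq:wevol} by $2p(\xi)w(\xi)$ with $p(\xi)=e^{|\xi|^2/4}$ and integrate over $\R^2$, which yields the identity
\[
 t\,\partial_t\cE[w] \,=\, 2\langle w,\cL w\rangle_\cY \,-\, \frac{2}{\delta^2}\langle w,\cR_\app\rangle_\cY \,-\, \frac{2}{\delta}\bigl\langle w,(U_\app+E)\cdot\nabla w\bigr\rangle_\cY \,-\, \frac{2}{\delta}\bigl\langle w,v\cdot\nabla\Omega_\app\bigr\rangle_\cY \,-\, 2\bigl\langle w,v\cdot\nabla w\bigr\rangle_\cY\,,
\]
where $\langle\cdot,\cdot\rangle_\cY$ is the scalar product of $\cY$, and I would then bound the five terms on the right.

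The diffusion and source terms are the easy ones. Since $\int_{\R^2}w\,\dd\xi=0$ by \eqref{eq:wmoml}, the function $w$ is orthogonal in $\cY$ to $\Ker(\cL)=\mathrm{span}(\Omega_0)$; combining the spectral gap of $\cL$ from Proposition~\ref{prop:Lam} with the explicit form of its quadratic form (equivalently, conjugating $\cL$ to a harmonic-oscillator Schr\"odinger operator) gives a coercivity estimate $-\langle w,\cL w\rangle_\cY\ge c_0\,\cF[w]$ for some $c_0>0$. For the source term, Cauchy--Schwarz together with \eqref{eq:Rest} of Proposition~\ref{prop:Rest} gives $|\langle w,\cR_\app\rangle_\cY|\le\|w\|_\cY\|\cR_\app\|_\cY\le C(\epsilon^5+\delta^2\epsilon^2)\,\cE[w]^{1/2}$, since the Gaussian factor $e^{-|\xi|^2/4}$ in \eqref{eq:Rest} dominates the weight $p$; dividing by $\delta^2$ reproduces exactly the right-hand side of \eqref{eq:westsmall}.

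The heart of the argument is the two transport terms, which carry the dangerous prefactor $\delta^{-1}$. Their leading contribution is $\delta^{-1}$ times $\langle w,U_0\cdot\nabla w\rangle_\cY+\langle w,\BS[w]\cdot\nabla\Omega_0\rangle_\cY=\langle w,\Lambda w\rangle_\cY$, with $\Lambda$ as in \eqref{def:Lambda}, and this vanishes because $\Lambda$ is skew-adjoint in $\cY$ (Proposition~\ref{prop:Lam}). What remains is $\delta^{-1}$ times the $\cO(\epsilon^2)$ corrections $(U_\app-U_0+E)\cdot\nabla w$ and $v\cdot\nabla(\Omega_\app-\Omega_0)$. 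For the first I would integrate by parts, using that $U_\app$ and $E(f,z)$ are divergence-free and that $\nabla p=\tfrac12\xi\,p$, to rewrite the contribution as $-\tfrac14\int p\,w^2\,(U_\app-U_0+E)\cdot\xi$; since $\epsilon^2/\delta=t/T_0\le\rho$ while the profiles $U_j$ are Gaussian-localized and $E$ grows at most polynomially in $\xi$, this is bounded by $C\rho\,\cF[w]$. For the second I would use the standard Biot--Savart estimates in $\cY$ from \cite{GW05,Ga11} — where the zero-mean property of $w$ is essential for the decay of $v$ — together with the Gaussian decay of $\nabla\Omega_j\in\cZ$, again obtaining a bound $C\rho\,\cF[w]$. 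Choosing $\rho$ small enough, both are absorbed into $c_0\cF[w]$.

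The nonlinearity is treated in the same spirit: integrating by parts gives $-2\langle w,v\cdot\nabla w\rangle_\cY=\tfrac12\int p\,w^2\,(v\cdot\xi)$, which is $\le C\|v\|_{L^\infty}\cE[w]^{1/2}\cF[w]^{1/2}\le C\,\cE[w]\,\cF[w]^{1/2}$ by the Biot--Savart bound $\|v\|_{L^\infty}\le C\|w\|_\cY$; being cubic it cannot be absorbed unconditionally, so I would close the argument by a continuation scheme, assuming a priori $\cE[w(\cdot,t)]^{1/2}\le M(\epsilon^5/\delta^2+\epsilon^2)$ on a maximal subinterval of $(0,T)$, which turns the nonlinear term into $\le CM\epsilon^2\,\cE[w]^{1/2}\cF[w]^{1/2}\le\tfrac{c_0}{2}\cF[w]+\tfrac{K_3}{2}(\epsilon^5/\delta^2+\epsilon^2)\cE[w]^{1/2}$ once $\rho$ is small. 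Collecting the bounds yields \eqref{eq:westsmall} with $\kappa=c_0/2$; then, dropping $\kappa\cF\ge0$, noting that $t\,\partial_t\cE^{1/2}\le\tfrac{K_3}{2}(\epsilon^5/\delta^2+\epsilon^2)$, and integrating from $t=0$ using $w(\cdot,0)=0$ and the relation $\epsilon(t)^2=(t/T_0)\,\delta$, so that $\int_0^t s^{-1}\epsilon(s)^2\,\dd s=\epsilon(t)^2$ and $\int_0^t s^{-1}\epsilon(s)^5\delta^{-2}\,\dd s=\tfrac25\,\epsilon(t)^5\delta^{-2}$, one obtains \eqref{eq:westsmall2}, after which a standard continuity argument promotes the a priori bound used above to an unconditional one. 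I expect the main obstacle to be the bookkeeping in the transport terms: checking that every contribution beyond the leading $\delta^{-1}\Lambda w$ is genuinely $\cO(\rho)\,\cF[w]$ rather than $\cO(\delta^{-1})$, which is precisely where the smallness of $T/T_0$, the Gaussian localization of the constructed profiles $\Omega_j,U_j$, and the Biot--Savart decay of $v$ must all be used together.
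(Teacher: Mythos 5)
Your overall strategy is exactly the paper's: the same weighted energy identity in $\cY$, the cancellation of the $\cO(\delta^{-1})$ leading transport term via the skew-adjointness of $\Lambda$, the coercivity $-\langle w,\cL w\rangle_\cY\ge c_0\cF[w]$ on mean-zero functions, absorption of the remaining $\delta^{-1}\cO(\epsilon^2)$ advection terms using $\epsilon^2/\delta=t/T_0\le\rho$, the Cauchy--Schwarz treatment of the source term, the continuation argument, and the explicit time integration $\int_0^t s^{-1}\epsilon(s)^k\dd s=\tfrac{2}{k}\epsilon(t)^k$. All of that is sound and matches the proof in Section~\ref{ssec31}.

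There is, however, one step that fails as written: the claimed Biot--Savart bound $\|v\|_{L^\infty}\le C\|w\|_\cY$ used for the nonlinear term. This inequality is false: the linear functional $w\mapsto v_1(0)=\tfrac{1}{2\pi}\int \tfrac{\xi_2}{|\xi|^2}\,w(\xi)\dd\xi$ is unbounded on $\cY$, as one sees by testing with $w(\xi)=\tfrac{\xi_2}{|\xi|^2}\mathbf{1}_{\{\eta\le|\xi|\le1\}}$, for which $v_1(0)\sim\log(1/\eta)$ while $\|w\|_\cY\sim(\log(1/\eta))^{1/2}$. This is the classical critical failure ($L^2$ vorticity does not control $L^\infty$ velocity in two dimensions; one needs $L^p\cap L^q$ with $p<2<q$, cf.\ Lemma~\ref{lem:BS1}), and the $\cY$ norm gives no $L^q$ control for $q>2$. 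The paper circumvents it differently: after the same integration by parts reducing the trilinear term to $-\tfrac14\int p\,w^2(v\cdot\xi)\dd\xi$, it applies Lemma~\ref{lem:BS2} with $q=3$, $m=5/3$ to get $\|v\cdot\xi\|_{L^3}\le C\|w\|_\cY$ (here the zero mean of $w$ is used), then H\"older and the Gagliardo--Nirenberg interpolation $\|p^{1/2}w\|_{L^3}^2\le C\cF[w]^{1/3}\cE[w]^{2/3}$, arriving at $|\langle w,\cB[w,w]\rangle_\cY|\le C\cF[w]^{1/2}\cE[w]$ --- the same final bound you assert, after which your absorption and bootstrap go through unchanged. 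Alternatively, one can salvage your route by replacing $\|v\|_{L^\infty}\le C\|w\|_\cY$ with $\|v\|_{L^\infty}\le C\|w\|_{L^{4/3}}^{1/2}\|w\|_{L^4}^{1/2}\le C\cE[w]^{3/8}\cF[w]^{1/8}$, which still yields a term absorbable into $\kappa\cF[w]$ once $\cE[w]^{1/2}$ is small. So the gap is local and repairable, but the specific inequality you invoke must be replaced.
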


\begin{proof}
Using the definition \eqref{def:Lambda} of the operator $\Lambda$, we can write
the evolution equation \eqref{eq:wevol} in the more compact form
\[
  t\partial_t w + \frac{1}{\delta}\,\Lambda w + \frac{1}{\delta}\,\cA[w]
  + \cB[w,w] \,=\, \cL w - \frac{1}{\delta^2}\,\cR_\app\,,
\]
where $\cA$ is the (time-dependent) linear operator defined by
\begin{equation}\label{def:opcA}
  \cA[w] \,=\, (U_\app - U_0)\cdot\nabla w + \BS[w]\cdot\nabla
  (\Omega_\app-\Omega_0) +  E(f,z)\cdot \nabla w\,,
\end{equation}
and $\cB$ is the bilinear map
\begin{equation}\label{def:opcB}
  \cB[w_1,w_2] \,=\, v_1\cdot \nabla w_2\,, \qquad \text{where}\quad
  v_1 \,=\, \BS[w_1]\,.
\end{equation}
Since $\cE[w] = \|w\|_{\cY}^2$, it follows that
\begin{equation}\label{eq:wev1}
  \frac{t}{2}\,\partial_t \cE[w] + \frac{1}{\delta\,}\langle w,\cA[w]\rangle_\cY +
  \langle w,\cB[w,w]\rangle_\cY \,=\, \langle w,\cL w \rangle_\cY
  -\frac{1}{\delta^2}\,\langle w,\cR_\app\rangle_\cY\,,
\end{equation}
where $\langle \cdot,\cdot\rangle_\cY$ denotes the scalar product in the Hilbert
space $\cY$. Here we used the well known fact that $\langle w,\Lambda w\rangle_\cY = 0$
since $\Lambda$ is skew-symmetric in $\cY$, see Proposition~\ref{prop:Lam}.
Our task is to estimate the various terms in \eqref{eq:wev1}.

First of all, we know that the diffusion operator $\cL$ is negative in the subspace of
all $w \in \cY$ with zero integral, see Proposition~\ref{prop:Lam}. In fact, there
exists a constant $\kappa > 0$ such that
\begin{equation}\label{eq:short1}
  \langle w,\cL w \rangle_\cY \,=\,  \int_{\R^2} p w(\cL w)\dd\xi \,\le\, -\kappa \int_{\R^2}
  p\bigl(|\nabla w(\xi)|^2 + |\xi|^2w(\xi)^2 + w(\xi)^2\bigr)\dd\xi \,=\, -\kappa \cF[w]\,,
\end{equation}
see \cite[Lemma~5.1]{Ga18}. On the other hand, using Proposition~\ref{prop:Rest}, we
easily obtain
\begin{equation}\label{eq:short2}
  \frac{1}{\delta^2}\,\bigl|\langle w,\cR_\app\rangle_\cY\bigr| \,\le\, 
  \frac{1}{\delta^2}\,\|w\|_\cY \,\|\cR_\app\|_\cY \,\le\,
  C\Bigl(\frac{\epsilon^5}{\delta^2} + \epsilon^2\Bigr)\|w\|_\cY\,.
\end{equation}
Note that $\epsilon^2/\delta = t/T_0 \le \rho$, which also implies that $\epsilon^2 \le
\rho$ since we assumed that $\delta \le 1$. To bound the trilinear term in \eqref{eq:wev1},
we integrate by parts, using the incompressibility condition $\nabla\cdot v = 0$, to obtain
the convenient expression
\[
   \langle w,\cB[w,w]\rangle_\cY \,=\, \int_{\R^2} pw \bigl(v\cdot\nabla w\bigr)\dd\xi \,=\,
  - \frac12 \int_{\R^2} w^2 (v\cdot\nabla p)\dd \xi \,=\, -\frac14 \int_{\R^2} p w^2(v\cdot\xi)\dd \xi\,,
\]
where we used the fact that $\nabla p = (\xi/2)p$. Since $w \in \cY$ satisfies \eqref{eq:wmoml},
we can apply Lemma~\ref{lem:BS2} with $q = 3$ and $m = 5/3$ to obtain the bound 
$\|v\cdot\xi\|_{L^3} \le C\|w\|_{\cY}$. By H\"older's inequality, we thus find
\[
  \bigl|\langle w,\cB[w,w]\rangle_\cY\bigr| \,\le\, \frac14\,\|p^{1/2}w\|_{L^3}^2\,\|v\cdot\xi\|_{L^3}
  \,\le\, \, C\,\|\nabla(p^{1/2}w)\|_{L^2}^{2/3}\,\|p^{1/2}w\|_{L^2}^{4/3}\,\|w\|_\cY\,,
\]
where in the last step we applied the interpolation estimate $\|g\|_{L^3} \le C\,\|\nabla g\|_{L^2}^{1/3}
\,\| g \|_{L^2}^{2/3}$ to the function $g = p^{1/2}w$. Observing that $\nabla(p^{1/2}w) = p^{1/2}\nabla w +
(\xi/4)p^{1/2}w$ and using the notation \eqref{eq:EFdef}, we conclude that
\begin{equation}\label{eq:short3}
  \bigl|\langle w,\cB[w,w]\rangle_\cY\bigr| \,\le\, C\,\cF[w]^{1/3}\,\cE[w]^{7/6} \,\le\,
  C\,\cF[w]^{1/2}\cE[w]\,,
\end{equation}
where the last inequality follows from the fact that $\cE[w] \le \cF[w]$.

We now consider the quadratic term $\langle w,\cA[w]\rangle_\cY$ in \eqref{eq:wev1}
which is multiplied by the large factor $1/\delta$. Integrating by parts as before
we easily find
\[
  \biggl|\,\int_{\R^2} p w (U_\app-U_0) \cdot\nabla w\dd\xi\,\biggr| \,=\,
  \frac14\,\biggl|\,\int_{\R^2} pw^2 (U_\app-U_0)\cdot\xi\dd \xi\,\biggr| \,\le\,
  C\epsilon^2\|w\|_{\cY}^2\,,
\]
because $\|(U_\app - U_0)\cdot\xi\|_{L^\infty} \le C \epsilon^2$ in view of \eqref{def:Omapp}. Similarly
\[
  \biggl|\,\int_{\R^2} p w \Bigl(v\cdot\nabla(\Omega_\app-\Omega_0)\Bigr)\dd\xi\,\biggr|
  \,\le\, \|p^{1/2}w\|_{L^2}\,\|v\|_{L^2}\,\|p^{1/2}\nabla(\Omega_\app-\Omega_0)\|_{L^\infty}
  \,\le\,   C\epsilon^2\|w\|_{\cY}^2\,,
\]
because $\|v\|_{L^2} \le C\|w\|_\cY$ by Lemma~\ref{lem:BS2} and $\|p^{1/2}\nabla(\Omega_\app-
\Omega_0)\|_{L^\infty} \le C\epsilon^2$. Finally, we recall that
\[
  E(f,z\,;\xi,t) \,=\, \frac{\epsilon T_0}{d}\Bigl(f\bigl(z(t) + \sqrt{\nu t}\,\xi,t\bigr)
  - f(z(t),t) - \epsilon^2 d^2\Delta f(z(t),t)\Bigr)\,,
\]
so that
\[
  |E(f,z;\xi,t)| \,\le\, \epsilon^2 |\xi| + \epsilon^3 T_0 d \,\|\Delta f\|_{L^\infty}
  \,\le\, \epsilon^2 |\xi| + \cK \epsilon^3\,,
\]
where $\cK$ is defined by \eqref{eq:cKdef}. Assuming that $\rho$ is small enough so that
$\cK\epsilon \le 1$, we thus obtain
\[
  \biggl|\,\int_{\R^2} p w E(f,z)\cdot\nabla w\dd\xi\,\biggr|
  \,\le\, \epsilon^2\,\|\nabla w\|_\cY \Bigl(\|\xi w\|_\cY +
  \cK \epsilon \|w\|_\cY\Bigr) \,\le\, \epsilon^2\,\cF[w]\,.
\]
Altogether, recalling that $\epsilon^2/\delta = t/T_0$, we find
\begin{equation}\label{eq:short4}
  \frac{1}{\delta}\,\bigl|\langle w,\cA[w]\rangle_\cY\bigr| \,\le\, \frac{t}{T_0}
  \,\bigl(\cF[w] + C\cE[w]\bigr)\,, \qquad \text{for some } C > 0\,.
\end{equation}

Collecting all estimates \eqref{eq:short1}--\eqref{eq:short4}, we deduce from \eqref{eq:wev1} that
\begin{equation}\label{eq:wev2}
  t\partial_t \cE[w] + \Bigl(2\kappa - \frac{2t}{T_0}\Bigr)\cF[w]
  \,\le\, K_3\Bigl(\frac{\epsilon^5}{\delta^2} + \epsilon^2\Bigr)\cE[w]^{1/2}
  + C_0\Bigl(\frac{t}{T_0} + \cF[w]^{1/2}\Bigr)\cE[w]\,,
\end{equation}
for some positive constants $K_3$ and $C_0$. Since $\cE[w] \le \cF[w]$, it follows that 
\begin{equation}\label{eq:wev3}
  t\partial_t \cE[w] + \Bigl(2\kappa - (2+C_0)\frac{t}{T_0} - C_0\,\cE[w]^{1/2}\Bigr)\cF[w]
  \,\le\, K_3\Bigl(\frac{\epsilon^5}{\delta^2} + \epsilon^2\Bigr)\cE[w]^{1/2}\,.
\end{equation}
Taking $\rho > 0$ small enough, we can ensure that $(2+C_0)t/T_0 \le (2+C_0)\rho \le \kappa/2$
for all $t \in [0,T]$. We now define
\begin{equation}\label{eq:T1def}
    T_1 \,:=\, \inf\bigl\{t \in [0,T)\, ; \,C_0 \cE[w(\cdot,t)]^{1/2} > \kappa/2 \bigr\}\,,
\end{equation}
with the convention that $T_1 = T$ if the set above is empty. Since $w(\cdot,0) = 0$,
it is clear that $T_1 > 0$ by continuity. By construction, on the time interval $(0,T_1)$,
the differential inequality \eqref{eq:wev3} reduces to \eqref{eq:westsmall}. In particular,
we have
\[
  \partial_t \cE[w(\cdot,t)]^{1/2}  \,\le\, \frac{K_3}{2t}\,\biggl(\frac{\epsilon(t)^5}{\delta^2}
  + \epsilon(t)^2\biggr)\,, \qquad \forall\,t \in (0,T_1)\,,
\]
where we recall that $\epsilon(t) = \sqrt{\nu t}/d$. Since $\cE[w(\cdot,0)] = 0$ we deduce that,
for all $t \in (0,T_1)$, 
\begin{equation}\label{eq:wev4}
  \cE[w(\cdot,t)]^{1/2} \,\le\,\frac{K_3}{2} \int_0^t \Bigl(\frac{\epsilon(s)^5}{\delta^2}
  + \epsilon(s)^2\Bigr)\frac{\dd s}{s} \,=\, K_3\biggl(\frac{\epsilon(t)^5}{5\delta^2}
  + \frac{\epsilon(t)^2}{2}\biggr)\,.
\end{equation}
As $\epsilon^2/\delta = t/T_0 \le \rho$, the right-hand side of \eqref{eq:wev4} is no larger 
than $K_3\rho$ if $\rho$ is small enough. We assume finally that $C_0 K_3 \rho \le \kappa/4$.
Then $C_0\cE[w(\cdot,t)]^{1/2} \le \kappa/4$ for all $t \in (0,T_1)$, and in view of
the definition \eqref{eq:T1def} this implies that $T_1 = T$. Thus inequalities
\eqref{eq:wev3} and \eqref{eq:wev4} hold for all $t \in (0,T)$, and 
imply \eqref{eq:westsmall} and \eqref{eq:westsmall2}. This concludes the proof. 
\end{proof}

\subsection{Construction of the energy functional}\label{ssec32}

The approach of the previous section is relatively simple and provides a control
of the solution of \eqref{eq:wevol} in the natural function space $\cY$.
However, as can be seen from the left-hand side of \eqref{eq:wev2}, the argument
completely breaks down when $t/T_0 > \kappa$. To reach longer times, it is necessary
to use a more sophisticated energy functional which allows us to treat separately
three regions of the physical space: a small neighborhood of the vortex center,
an intermediate region, and a far field region where the influence of the
vortex is negligible. This idea is implemented in the previous work \cite{Ga11},
where the interaction of localized vortices is studied. In this section,
we provide a simplified version of the argument, which gives slightly stronger
results. 

Given a small parameter $\epsilon > 0$, we consider the non-radially symmetric
function
\begin{equation}\label{eq:qepsdef}
  q_\epsilon(\xi,t) \,=\, \frac{|\xi|^2}{4} + \frac{\epsilon^2 T_0}{4v_*(|\xi|)}\,
  \Bigl(b(t)(\xi_1^2-\xi_2^2) - 2 a(t)\xi_1\xi_2\Bigr)\,,
  \qquad \forall\,(\xi,t) \in \R^2\times (0,T)\,,
\end{equation}
where $a(t),b(t),T_0$ are as in Lemma~\ref{lem:E2} and
\begin{equation}\label{def:vdef}
  v_*(|\xi|) \,=\, \frac{1}{2\pi |\xi|^2}\Bigl(1-e^{-|\xi|^2/4}\Bigr)\,,
  \qquad \forall\,\xi \in \R^2\,.
\end{equation}
Next, given positive numbers $A,B$ with $A \ll 1 \ll B$ we define the time-dependent regions
\begin{equation}\label{eq:Iepsdef}
\begin{split}
  \I_\epsilon(t) \,&=\, \Bigl\{\xi \in \R^2\,;\, \epsilon|\xi| \,\le\, 2A\,,
  ~\epsilon^2 q_\epsilon(\xi,t) \,\le\, A^2/4\Bigr\}\,, \\
  \II_\epsilon(t) \,&=\, \Bigl\{\xi \in \R^2\setminus \I_\epsilon(t)\,;\,
  \epsilon|\xi| \,<\, B\Bigr\}\,, \\ 
  \III_\epsilon(t) \,&=\, \Bigl\{\xi \in \R^2\,;\, \epsilon |\xi| \,\ge\, B\Bigr\}\,,
\end{split}
\end{equation}
which are pairwise distinct and satisfy $\R^2 = \I_\epsilon(t) \cup \II_\epsilon(t) \cup
\III_\epsilon(t)$ for any $t \in (0,T)$. Finally, we introduce the weight function
$p_\epsilon : \R^2 \times (0,T) \to (0,+\infty)$ defined by the formula
\begin{equation}\label{eq:pepsdef}
  p_\epsilon(\xi,t) \,=\, \begin{cases}
  \exp\bigl(q_\epsilon(\xi,t)\bigr) & \text{ if } \xi \in \I_\epsilon(t)\,, \\[1mm]
  \exp\bigl(A^2/(4\epsilon^2)\bigr) & \text{ if } \xi \in \II_\epsilon(t)\,, \\[1mm]
  \exp\bigl(\gamma|\xi|^2/4\bigr) & \text{ if } \xi \in \III_\epsilon(t)\,,
  \end{cases}
\end{equation}
where $\gamma = A^2/B^2 \ll 1$. 

It is not difficult to verify that, if $A > 0$ is small enough and $0 < \epsilon
\ll A$, the inner region $\I_\epsilon(t)$ defined by \eqref{eq:Iepsdef} is 
a small deformation of the disk of radius $A/\epsilon$ centered at the origin: 

\begin{lemma}\label{lem:Ieps}
If $A > 0$ is sufficiently small and $\epsilon = \cO(A^\alpha)$ for some $\alpha > 1$, 
then for all $t \in (0,T)$ the inner region $\I_\epsilon(t)$ is given by
\begin{equation}\label{eq:Ieps2}
  \I_\epsilon(t) \,=\, \Bigl\{(r\cos(\theta),r\sin(\theta)) \in \R^2\,;\, 0 \le \theta
  \le 2\pi\,,~0 \le r \le \frac{A}{\epsilon}\bigl(1+\rho(\theta,t)\bigr)\Bigr\}\,,
\end{equation}
where $\rho(\cdot,t)$ is smooth, $2\pi$-periodic, and satisfies
\begin{equation}\label{def:rho}
    \rho(\theta,t) \,=\, \pi T_0 \bigl(a(t)\sin(2\theta) - b(t)\cos(2\theta)\bigr)A^2
  + \cO(A^4)\,.
\end{equation}
\end{lemma}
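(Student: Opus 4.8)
The plan is to pass to polar coordinates $\xi = (r\cos\theta,r\sin\theta)$ and to rescale the radius by $s = \epsilon r$, which ranges over $[0,2A]$ on $\I_\epsilon(t)$ because of the first constraint in \eqref{eq:Iepsdef}. Writing $g(\theta,t) = b(t)\cos(2\theta) - a(t)\sin(2\theta)$ and using $\xi_1^2-\xi_2^2 = r^2\cos(2\theta)$, $2\xi_1\xi_2 = r^2\sin(2\theta)$, a direct computation from \eqref{eq:qepsdef}, \eqref{def:vdef} gives
\[
  \epsilon^2 q_\epsilon(\xi,t) \,=\, \frac{s^2}{4} + \frac{\pi T_0 s^4}{2\bigl(1-e^{-r^2/4}\bigr)}\,g(\theta,t)\,.
\]
The first step is to note that wherever $s$ stays bounded away from $0$ --- which is where the boundary of $\I_\epsilon(t)$ will lie --- the radius $r = s/\epsilon$ is large, since the hypothesis $\epsilon = \cO(A^\alpha)$ with $\alpha > 1$ forces $r \gtrsim A/\epsilon \gtrsim A^{1-\alpha}\to\infty$ as $A\to 0$; hence $e^{-r^2/4}$ is smaller than any power of $A$. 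Consequently, for $s\in[A/2,2A]$,
\[
  \epsilon^2 q_\epsilon(\xi,t) \,=\, \frac{s^2}{4} + \frac{\pi T_0 s^4}{2}\,g(\theta,t) + \cO(A^N) \qquad\text{for every } N\in\N\,,
\]
uniformly in $t\in[0,T]$ because $a,b$ are bounded.

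Next I would establish that $\I_\epsilon(t)$ is star-shaped about the origin, which is what makes the description \eqref{eq:Ieps2} meaningful. For fixed $(\theta,t)$ the $r$-derivative of $\epsilon^2 q_\epsilon$ equals $\epsilon^2 r/2$ plus a correction proportional to $\partial_r\bigl(r^4/(1-e^{-r^2/4})\bigr)\,g(\theta,t)$; using the bounds $|a(t)|,|b(t)|\le T_0^{-1}$ coming from \eqref{eq:T0def}, \eqref{def:ab}, this correction is $\cO(A^2)$ times the main term throughout $\{\epsilon|\xi|\le 2A\}$ once $A$ is small, so the map $r\mapsto\epsilon^2 q_\epsilon(r\cos\theta,r\sin\theta,t)$ is strictly increasing there. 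Hence $\{r\ge0:\epsilon^2 q_\epsilon\le A^2/4\}$ is an interval $[0,R(\theta,t)]$. The same domination gives $\epsilon^2 q_\epsilon = \tfrac14 s^2\bigl(1+\cO(A^2)\bigr)$ on that set, so $\epsilon^2 q_\epsilon\le A^2/4$ already forces $s\le\sqrt2\,A<2A$; thus the first constraint $\epsilon|\xi|\le 2A$ in \eqref{eq:Iepsdef} is inactive and $\I_\epsilon(t)=\{\xi:\epsilon^2 q_\epsilon(\xi,t)\le A^2/4\}$.

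It then remains to solve the boundary equation $\epsilon^2 q_\epsilon = A^2/4$. Writing $s=\epsilon R(\theta,t)$ and using the first-step expansion, this reads $s^2 = A^2 - 2\pi T_0 s^4 g(\theta,t) + \cO(A^N)$; since $s = A + \cO(A^3)$ at leading order, inserting $s^4 = A^4+\cO(A^6)$ and expanding the square root yields $s = A\bigl(1-\pi T_0 A^2 g(\theta,t)+\cO(A^4)\bigr)$, i.e. $R(\theta,t) = \tfrac{A}{\epsilon}\bigl(1+\rho(\theta,t)\bigr)$ with
\[
  \rho(\theta,t) \,=\, \pi T_0\bigl(a(t)\sin(2\theta)-b(t)\cos(2\theta)\bigr)A^2 + \cO(A^4)\,,
\]
which is exactly \eqref{def:rho}. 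Smoothness and $2\pi$-periodicity of $\rho(\cdot,t)$, together with the uniformity in $t$ of the $\cO(A^4)$ remainders (including those for the $\theta$-derivatives), follow from the implicit function theorem applied to $\Phi(r,\theta,t):=\epsilon^2 q_\epsilon - A^2/4$, whose $r$-derivative is bounded away from $0$ near the boundary by the previous step, and from the smoothness and boundedness of $a,b$ and their derivatives on $[0,T]$.

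The main obstacle is the two-scale nature of the weight $q_\epsilon$: the factor $v_*(r)$ equals the constant $\tfrac1{8\pi}$ near the origin but decays like $1/(2\pi r^2)$ near the boundary of $\I_\epsilon(t)$, and one must check that the exponentially small terms $e^{-r^2/4}$ --- which near that boundary are of size $e^{-c/\epsilon^2}$ for some $c>0$ --- are genuinely negligible against every power of $A$; this is precisely where the assumption $\epsilon=\cO(A^\alpha)$, $\alpha>1$, enters. Once the correct expansion of $\epsilon^2 q_\epsilon$ is in place, the remaining points (monotonicity in $r$, inactivity of the first constraint, the implicit-function argument) are routine.
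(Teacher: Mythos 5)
Your proof is correct and follows essentially the same route as the paper's: both reduce to a one-dimensional monotonicity statement for $q_\epsilon$ along each ray (the paper via the substitution $s=r^2/4$ and the bound $|g_\epsilon'(s)-1|\le 1/2$, you via the equivalent estimate that the angular correction to $\partial_r(\epsilon^2 q_\epsilon)$ is $\cO(A^2)$ relative to the main term), then exploit that on the boundary $r\sim A/\epsilon\to\infty$ so the $e^{-r^2/4}$ terms are $\cO(A^\infty)$ thanks to $\epsilon=\cO(A^\alpha)$ with $\alpha>1$, and finally solve the level-set equation perturbatively with the implicit function theorem giving smoothness of $\rho$. Your additional remark that the constraint $\epsilon|\xi|\le 2A$ is inactive matches the paper's observation that the level $A^2/(4\epsilon^2)$ is attained strictly inside $[0,A^2/\epsilon^2]$.
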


\begin{proof}
Fix $t \in (0,T)$, $\theta \in [0,2\pi]$, and assume that $\xi = (r\cos(\theta),r\sin(\theta))$. 
If we denote $s = r^2/4$, we observe that $q_\epsilon(\xi,t) = g_\epsilon(s)$ where
\begin{equation}\label{def:geps}
  g_\epsilon(s) \,=\, s + 8\pi \epsilon^2 T_0 \bigl(b(t)\cos(2\theta) - a(t)
  \sin(2\theta)\bigr)\phi(s)\,, \qquad \phi(s) \,=\, \frac{s^2}{1-e^{-s}}\,.
\end{equation}
The function $\phi$ is increasing on $\R_+$ with $\phi'(s) \le 1+2s$ for all
$s \ge 0$. Moreover, we know from \eqref{eq:T0def} that $T_0|a(t)| \le 1$ and
$T_0|b(t)| \le 1$. Assuming that $0 \le s \le A^2/\epsilon^2$, we thus find
\begin{equation}\label{eq:gpbounds}
  \bigl|g_\epsilon'(s) - 1\bigr| \,\le\, 16\pi \epsilon^2(1+2s)
  \,\le\, 16\pi \bigl(\epsilon^2 + 2A^2\bigr) \,\le\, \frac12\,,
\end{equation}
provided $A$ and $\epsilon$ are small enough. This implies that the function $g_\epsilon$ 
is strictly increasing on the interval $[0,A^2/\epsilon^2]$ with $g_\epsilon(0) = 0$ and 
$g_\epsilon\bigl(A^2/\epsilon^2) \ge A^2/(2\epsilon^2)$. By the intermediate value theorem, the 
equation $g_\epsilon(s) = A^2/(4\epsilon^2)$ has a unique solution $s = \bar s(\theta,t)$ in
that interval, and the implicit function theorem ensures that $\bar s(\theta,t)$ is a smooth
function of $\theta$ and $t$. Moreover, we easily deduce from \eqref{eq:gpbounds} that
$A^2/(6\epsilon^2) \le \bar{s} \le A^2/(2\epsilon^2)$. If we assume that $\epsilon = \cO(A^\alpha)$
for some $\alpha > 1$, this implies that $\phi(\bar s) = \bar s^2 + \cO(A^\infty)$.
Returning to \eqref{def:geps}, we deduce that
\begin{equation}\label{eq:barsexp}
  \bar{s}(\theta,t) \,=\, \frac{A^2}{4 \epsilon^2}\Bigl(1 - 2\pi T_0 \bigl(b(t)\cos(2\theta)
  - a(t)\sin(2\theta)\bigr)A^2 + \cO(A^4)\Bigr)\,.
\end{equation}
Now, in view of the definition \eqref{eq:Iepsdef}, we have $\xi \in \I_\epsilon(t)$ if and only
if $r^2/4 \le \bar s(\theta,t)$, which gives the formula \eqref{eq:Ieps2} where $\rho(\theta,t)$
is defined by the relation
\begin{equation}\label{eq:rhobars}
    \bigl(1+\rho(\theta,t)\bigr)^2 \,=\, \frac{4\epsilon^2}{A^2}\,\bar s(\theta,t)\,.
\end{equation}
The expansion \eqref{def:rho} follows directly from \eqref{eq:barsexp} and \eqref{eq:rhobars}. 
\end{proof}

According to the definition \eqref{eq:pepsdef}, the weight function $p_\epsilon(\xi,t)$
is equal to $\exp\bigl(q_\epsilon(\xi,t)\bigr)$ in the (elliptical) inner region $\I_\epsilon(t)$,
at the boundary of which $p_\epsilon(\xi,t) = \exp\bigl(A^2/(4\epsilon^2)\bigr)$ by
construction. The weight $p_\epsilon$ is then extended as a constant function in the intermediate
(annular) region $\II_\epsilon(t)$, and as a Gaussian function in the exterior region
$\III_\epsilon(t)$. Since $\gamma = A^2/B^2$, we observe that $\exp(\gamma |\xi|^2/4)
= \exp\bigl(A^2/(4\epsilon^2)\bigr)$ when $|\xi| = B/\epsilon$, which means that
$p_\epsilon$ has no jump at the common boundary of $\II_\epsilon(t)$ and $\III_\epsilon(t)$,
and is therefore a locally Lipschitz function of $\xi \in \R^2$. It is not difficult
to verify that $p_\epsilon$ satisfies uniform bounds of the form
\begin{equation}\label{eq:pepsbd}
  \exp(\gamma |\xi|^2/4) \,\le\, p_\epsilon(\xi,t) \,\le\, \exp(\mu |\xi|^2/4)\,,
  \qquad \forall\,(\xi,t) \in \R^2 \times (0,T)\,,
\end{equation}
where $\mu > 1$ and $\mu = 1 + \cO(A^2)$ as $A \to 0$. 

In analogy with \eqref{eq:EFdef}, we introduce the functionals that will be used
to control the solution of \eqref{eq:wevol}. The first one is the weighted energy
\begin{equation}\label{eq:cEdef}
  \cE_{\epsilon,t}[w] \,=\, \int_{\R^2} p_\epsilon(\xi,t) w(\xi)^2\dd\xi\,,
\end{equation}
which depends explicitly on time because the coefficients $a(t), b(t)$
in the definition \eqref{eq:qepsdef} are time-dependent. Our second functional
is
\begin{equation}\label{eq:cFdef}
  \cF_{\epsilon,t}[w] \,=\, \int_{\R^2} p_\epsilon(\xi,t)\Bigl\{|\nabla w(\xi)|^2 +
  \chi_\epsilon(\xi)w(\xi)^2 + w(\xi)^2\Bigr\}\dd\xi \,\ge\, \cE_{\epsilon,t}[w]\,,
\end{equation}
where
\begin{equation}\label{eq:chiepsdef}
  \chi_\epsilon(\xi) \,=\, \begin{cases}
  |\xi|^2 & \text{ if } |\xi| \le A/\epsilon\,, \\[1mm]
  A^2/\epsilon^2 & \text{ if } A/\epsilon < |\xi| < B/\epsilon\,, \\[1mm]
  \gamma |\xi|^2 & \text{ if } |\xi| \ge B/\epsilon\,. \end{cases}
\end{equation}

We can now state the main result of this section, which provides an accurate
estimate of the solution of \eqref{eq:wevol} on the whole time interval $(0,T)$.
Unlike in Proposition~\ref{prop:smalltime}, there is no smallness assumption
on the ratio $T/T_0$, but the various constants in the statement depend on
$T/T_0$ and on the quantity $\cK$ defined in \eqref{eq:cKdef}. 

\begin{proposition}\label{prop:largetime}
If $A > 0$ is small enough and $B > 0$ is large enough, there exist positive constants
$K_4$, $K_5$, $\kappa$ and $\delta_0$ such that, if $0 < \delta < \delta_0$, 
the solution of \eqref{eq:wevol} with zero initial data satisfies
\begin{equation}\label{eq:westlarge}
  t\partial_t \cE(t) + \kappa\cF(t)  \,\le\, K_4\Bigl(\frac{\epsilon^5}{\delta^2}
  + \epsilon^2\Bigr)\cE(t)^{1/2} + K_5\Bigl(\frac{t}{T_0} + \cF(t)^{1/2}\Bigr)
  \cE(t)\,,\qquad \forall\, t\in (0,T)\,,
\end{equation}
where $\cE(t) = \cE_{\epsilon,t}[w(\cdot,t)]$, $\cF(t) = \cF_{\epsilon,t}[w(\cdot,t)]$,
and $\epsilon = \sqrt{\nu t}/d$. In particular
\begin{equation}\label{eq:westlarge2}
  \cE(t)^{1/2} \,\le\, K_4 \Bigl(\frac{\epsilon^5}{\delta^2} + \epsilon^2\Bigr)
  \,\exp\Bigl(\frac{K_5 t}{2T_0}\Bigr)\,, \qquad \forall\, t\in (0,T)\,.
\end{equation}
\end{proposition}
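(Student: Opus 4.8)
\medskip
\noindent\textbf{Proof strategy.}
The plan is to establish the differential inequality \eqref{eq:westlarge} by a weighted energy estimate, and then to deduce \eqref{eq:westlarge2} by integration exactly as at the end of the proof of Proposition~\ref{prop:smalltime}. Multiplying \eqref{eq:wevol} by $p_\epsilon(\cdot,t)\,w$ and integrating over $\R^2$ yields an identity for $\tfrac{t}{2}\,\partial_t\cE(t)$ whose right-hand side consists of: the dissipation $\langle w,\cL w\rangle_{p_\epsilon}$; the source term $-\delta^{-2}\langle w,\cR_\app\rangle_{p_\epsilon}$; the term $\tfrac12\int_{\R^2}(t\partial_t p_\epsilon)\,w^2\dd\xi$, arising because the weight depends on time through $a(t),b(t)$ and $\epsilon=\epsilon(t)$; the transport term $-\delta^{-1}\langle w,(U_\app+E)\cdot\nabla w\rangle_{p_\epsilon}$; the Biot--Savart feedback $-\delta^{-1}\langle w,v\cdot\nabla\Omega_\app\rangle_{p_\epsilon}$; and the nonlinear term $-\langle w,v\cdot\nabla w\rangle_{p_\epsilon}$, where $v=\BS[w]$ and $\langle\cdot,\cdot\rangle_{p_\epsilon}$ denotes the $p_\epsilon$-weighted scalar product. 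Estimating each of these and rearranging will give \eqref{eq:westlarge}.

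The source, nonlinear, dissipation and $\partial_t p_\epsilon$ terms are treated much as in Proposition~\ref{prop:smalltime}, now with the weight $p_\epsilon$ of \eqref{eq:pepsdef}. Since $p_\epsilon\le e^{\mu|\xi|^2/4}$ with $\mu<2$ by \eqref{eq:pepsbd}, and $\cR_\app$ decays like a polynomial times $e^{-|\xi|^2/4}$ by Proposition~\ref{prop:Rest}, one gets $\delta^{-2}|\langle w,\cR_\app\rangle_{p_\epsilon}|\le K_4(\epsilon^5/\delta^2+\epsilon^2)\cE^{1/2}$; the nonlinear term is bounded by $C\,\cF^{1/2}\cE$ by the same Biot--Savart plus interpolation argument as for \eqref{eq:short3}; and the coercivity estimate $\langle w,\cL w\rangle_{p_\epsilon}\le -2\kappa\,\cF+(\text{absorbable error})$ is the analogue of \eqref{eq:short1}, using that $p_\epsilon$ coincides with $e^{|\xi|^2/4}$ up to a small mode-two perturbation on the core $\I_\epsilon(t)$, is constant on $\II_\epsilon(t)$, and equals $e^{\gamma|\xi|^2/4}$ with $\gamma\ll1$ on $\III_\epsilon(t)$, together with the zero-mean condition \eqref{eq:wmoml} needed to dominate the ``$+1$'' in $\cL$; the region transitions, where $p_\epsilon$ is only Lipschitz, produce boundary-layer contributions of favourable sign or of order $\gamma\,\cF$, as in \cite{Ga11}. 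For the $\partial_t p_\epsilon$ term, write $q_\epsilon=\tfrac{|\xi|^2}{4}+\epsilon^2\tilde q$ with $\tilde q(\xi,t)=\tfrac{T_0}{4v_*(|\xi|)}\bigl(b(t)(\xi_1^2-\xi_2^2)-2a(t)\xi_1\xi_2\bigr)$: then $|\epsilon^2\tilde q|\le CA^2\chi_\epsilon$ on $\I_\epsilon(t)$ by \eqref{eq:gpbounds}, and $|t\partial_t q_\epsilon|\le CA^2(1+t/T_0)\chi_\epsilon$ using that $|a'|,|b'|\le C/T_0^2$ (with $C$ depending on $\cK$ through \eqref{eq:cKdef}); hence this term is bounded by $CA^2(1+t/T_0)\cF$, which, after fixing $\kappa$ and then choosing $A$ small enough depending on $\kappa$, $\cK$ and $T/T_0$, is absorbed into $\kappa\,\cF$.

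The heart of the argument — and the step I expect to be the main obstacle — is the two linear terms carrying the large factor $1/\delta$. Recombining $-\tfrac1\delta\langle w,U_0\cdot\nabla w\rangle_{p_\epsilon}$ with the $\nabla\Omega_0$ part of $-\tfrac1\delta\langle w,v\cdot\nabla\Omega_\app\rangle_{p_\epsilon}$ reconstitutes $-\tfrac1\delta\langle w,\Lambda w\rangle_{p_\epsilon}$, while the leftover pieces $-\tfrac1\delta(U_\app-U_0+E)\cdot\nabla w$ and $-\tfrac{\epsilon^2}{\delta}\langle w,v\cdot\nabla\Omega_2\rangle_{p_\epsilon}-\cdots$ carry the factor $\epsilon^2/\delta=t/T_0$ against bounded or Gaussian-decaying profiles, and after integrating by parts (using $\nabla\cdot(U_\app-U_0+E)=0$) and the bound $|E|\le\epsilon^2|\xi|+C\cK\epsilon^3$ contribute $O((t/T_0)\cE)$ plus small multiples of $(t/T_0)\cF$. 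For $-\tfrac1\delta\langle w,\Lambda w\rangle_{p_\epsilon}$, integration by parts gives $-\tfrac1\delta\langle w,\Lambda w\rangle_{p_\epsilon}=\tfrac1{2\delta}\int w^2\,U_0\cdot\nabla p_\epsilon+\tfrac1{2\delta}\int p_\epsilon\,\Omega_0\,w\,(v\cdot\xi)$, using $\nabla\Omega_0=-\tfrac{\xi}{2}\Omega_0$. On $\I_\epsilon(t)$ one has $U_0\cdot\nabla p_\epsilon=\epsilon^2 p_\epsilon\,U_0\cdot\nabla\tilde q$ because $U_0\cdot\xi=0$, and the whole point of the definition \eqref{eq:qepsdef} is that, by the explicit formula \eqref{def:vdef} for $v_*$ together with \eqref{eq:E2}, $U_0\cdot\nabla\tilde q=-\tfrac12\,\xi\cdot E_2=-E_2\cdot\nabla(|\xi|^2/4)$; consequently the dangerous $E_2$-contribution $\tfrac1{2\delta}\,\epsilon^2\int w^2 E_2\cdot\nabla p_\epsilon$ coming from the transport term cancels against $\tfrac1{2\delta}\int w^2 U_0\cdot\nabla p_\epsilon$, leaving only terms that are $O((t/T_0)\cE)$ (bounded profiles such as $\tfrac12U_2\cdot\xi$) or of the form $(\text{small prefactor})\cdot(t/T_0)\cF$. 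The Biot--Savart feedback term $\tfrac1{2\delta}\int p_\epsilon\Omega_0\,w\,(v\cdot\xi)$ is handled via the identity $\int_{\R^2}w\,(\BS[w]\cdot\xi)\dd\xi=0$ (which is the skew-adjointness of $\Lambda$ in $\cY$ applied to the standard weight $e^{|\xi|^2/4}$): subtracting $\tfrac1{4\pi}$ times this identity replaces $p_\epsilon\Omega_0$ by $p_\epsilon\Omega_0-\tfrac1{4\pi}=\tfrac1{4\pi}(e^{\epsilon^2\tilde q}-1)$ on the core, which is $O(\epsilon^2(1+|\xi|^4))$ where $\Omega_0$ is not negligible — thereby supplying the compensating factor $\epsilon^2/\delta=t/T_0$ — while on the complementary part of $\I_\epsilon(t)$ the function $e^{-q_\epsilon}(e^{\epsilon^2\tilde q}-1)^2\le Ce^{-c|\xi|^2}$ for $A$ small, so a Cauchy--Schwarz estimate together with the Biot--Savart bounds of Lemma~\ref{lem:BS2} leaves only an exponentially small remainder. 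On $\II_\epsilon(t)$, $\nabla p_\epsilon=0$, so the transport term contributes nothing from the interior; on $\III_\epsilon(t)$, $\nabla p_\epsilon=\tfrac\gamma2\xi\,p_\epsilon$ and $U_0\cdot\xi=0$, so the transport error carries the small factor $\gamma$ and, using $\chi_\epsilon=\gamma|\xi|^2$ there and $|E\cdot\xi|\le\epsilon^2|\xi|^2+C\cK\epsilon^3|\xi|$, contributes $O(\gamma\,\cE)$ plus a small multiple of $(t/T_0)\cF$. Collecting all the estimates yields \eqref{eq:westlarge}.

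Finally, \eqref{eq:westlarge2} follows from \eqref{eq:westlarge} exactly as in the proof of Proposition~\ref{prop:smalltime}: using $\cE\le\cF$ to move $K_5\cF^{1/2}\cE\le K_5\cE^{1/2}\cF$ to the left, and setting $T_1=\inf\{t\in[0,T)\,:\,K_5\cE(t)^{1/2}>\kappa/2\}$ (positive since $w(\cdot,0)=0$), one obtains on $(0,T_1)$ the inequality $t\partial_t\cE+\tfrac\kappa2\cF\le K_4(\epsilon^5/\delta^2+\epsilon^2)\cE^{1/2}+K_5(t/T_0)\cE$, hence $\partial_t\bigl(\cE^{1/2}e^{-K_5 t/(2T_0)}\bigr)\le\tfrac{K_4}{2t}(\epsilon^5/\delta^2+\epsilon^2)$; integrating from $0$ and using $\int_0^t(\epsilon(s)^5/\delta^2+\epsilon(s)^2)\tfrac{\dd s}{s}=\tfrac{\epsilon(t)^5}{5\delta^2}+\tfrac{\epsilon(t)^2}{2}$ gives \eqref{eq:westlarge2} on $(0,T_1)$. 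Since $\epsilon(t)\le\sqrt{\delta T/T_0}\to0$ as $\delta\to0$, choosing $\delta_0$ small makes the right-hand side of \eqref{eq:westlarge2} smaller than $\kappa/(2K_5)$, so $T_1=T$ and \eqref{eq:westlarge2} holds on all of $(0,T)$.
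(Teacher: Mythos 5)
Your overall architecture matches the paper's: the weighted energy identity with $p_\epsilon$, the cancellation on $\I_\epsilon(t)$ between $U_0\cdot\nabla q_\epsilon$ and the $E_2$-part of the transport term (which is indeed the raison d'\^etre of the correction in \eqref{eq:qepsdef}), and the use of the identity $\int_{\R^2}w\,(\BS[w]\cdot\xi)\dd\xi=0$ to extract a factor $\epsilon^2$ from the nonlocal term $\delta^{-1}\int p_\epsilon w\,(v\cdot\nabla\Omega_0)\dd\xi$, are all the right ingredients and coincide with Lemmas~\ref{lem:A1} and \ref{lem:A2}. The genuine gap is in your treatment of the diffusion term and of $t\partial_t p_\epsilon$ in the intermediate region $\II_\epsilon(t)$. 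You assert a coercivity estimate $\langle w,\cL w\rangle_{p_\epsilon}\le -2\kappa\,\cF+(\text{absorbable error})$ as an ``analogue of \eqref{eq:short1}''. This is false for the functional $\cF=\cF_{\epsilon,t}$ that appears in the statement you are proving: on $\II_\epsilon(t)$ the weight is flat, so integrating $\cL$ by parts against $p_\epsilon$ yields no potential term at all there, only a positive leftover of order $\int_{\II_\epsilon}p_\epsilon w^2\dd\xi$ (from the dilation and zeroth-order parts of $\cL$), and in particular nothing that controls the large piece $\int_{\II_\epsilon}\chi_\epsilon\,p_\epsilon w^2\dd\xi$ with $\chi_\epsilon=A^2/\epsilon^2$ contained in $\cF$. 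This is exactly the content of Lemma~\ref{lem:Qeps}, whose lower bound carries the defect $-\int_{\II_\epsilon}p_\epsilon w^2\dd\xi$ and gives no $\chi_\epsilon$-gain on $\II_\epsilon$.

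The missing idea is that on the plateau the weight decreases in time: there $p_\epsilon=\exp\bigl(A^2/(4\epsilon^2)\bigr)$ and $t\partial_t\epsilon^2=\epsilon^2$, so $t\partial_t p_\epsilon=-\tfrac{A^2}{4\epsilon^2}\,p_\epsilon$, a large term of favourable sign which is precisely what supplies the control of $\int_{\II_\epsilon}(\chi_\epsilon+1)p_\epsilon w^2\dd\xi$ and repairs the non-coercivity of $\cL$ there (this is how Corollary~\ref{cor:Deps} reaches $\cD_{\epsilon,t}[w]\le-\tfrac{\kappa}{2}\cF_{\epsilon,t}[w]$). Your proposal only computes $t\partial_t q_\epsilon$ on $\I_\epsilon(t)$ and then bounds the whole $\partial_t p_\epsilon$ contribution in absolute value by $CA^2(1+t/T_0)\cF$ to be absorbed; this ignores $\II_\epsilon(t)$ and discards the sign, so nothing in your inequality produces the $\chi_\epsilon$-part of $\cF$ on $\II_\epsilon$, and even dropping that part one is left with an unabsorbable term of size $\cE$ (not multiplied by $t/T_0$ or $\cF^{1/2}$) on the right-hand side, which is incompatible with \eqref{eq:westlarge} and would degrade the Gr\"onwall step. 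A secondary, fixable slip: in $\III_\epsilon$ you bound $E$ by the Taylor estimate $|E|\le\epsilon^2|\xi|+C\cK\epsilon^3$, which gives $\tfrac{\gamma}{2\delta}\,\epsilon^2|\xi|^2 p_\epsilon w^2=\tfrac{t}{2T_0}\chi_\epsilon p_\epsilon w^2$, a full (not small) multiple of $(t/T_0)\cF$; one must instead use the uniform bound $|E|\le\cK\epsilon$ together with $|\xi|\ge B/\epsilon$ to gain the factor $1/B$, as in the proof of Lemma~\ref{lem:A1}.
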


\subsection{The large time estimate}\label{ssec33}

The goal of this section is to prove Proposition~\ref{prop:largetime}.
If $w(\cdot,t)$ is the solution of \eqref{eq:wevol} with zero initial
data, a direct calculation shows that the energy function \eqref{eq:cEdef}
satisfies
\begin{equation}\label{eq:cEevol}
  \frac{t}{2}\,\partial_t \cE_{\epsilon,t}[w(\cdot,t)] \,=\, \cD_{\epsilon,t}[w(\cdot,t)]
  -\frac{1}{\delta}\,\cA_{\epsilon,t}[w(\cdot,t)] - \cN_{\epsilon,t}[w(\cdot,t)]
  - \frac{1}{\delta^2}\,\cS_{\epsilon,t}[w(\cdot,t)]\,,
\end{equation}
for all $t \in (0,T)$, where the {\em diffusion terms} $\cD_{\epsilon,t}[w]$, the
{\em advection terms} $\cA_{\epsilon,t}[w]$, the {\em nonlinear term}
$\cN_{\epsilon,t}[w]$, and the {\em source term} $\cS_{\epsilon,t}[w]$ are defined by
\begin{equation}\label{def:DANS}
  \begin{split}
  \cD_{\epsilon,t}[w] \,&=\, \frac12 \int_{\R^2} \bigl(t\partial_t p_\epsilon\bigr) w^2\dd\xi
     + \int_{\R^2} p_\epsilon w\bigl(\cL w\bigr)\dd\xi\,, \\[1mm] 
   \cA_{\epsilon,t}[w] \,&=\, \int_{\R^2} p_\epsilon w \bigl(U_\app + E(f,z)\bigr)\cdot
     \nabla w\dd\xi + \int_{\R^2} p_\epsilon w \bigl(v\cdot\nabla\Omega_\app\bigr)\dd\xi\,,\\[1mm]
  \cN_{\epsilon,t}[w] \,&=\, \int_{\R^2} p_\epsilon w \bigl( v\cdot\nabla w\bigr)\dd\xi\,, \\[1mm]
  \cS_{\epsilon,t}[w] \,&=\, \int_{\R^2} p_\epsilon w \cR_\app\dd \xi\,.
  \end{split}
\end{equation}
In \eqref{eq:cEevol} it is understood that $\epsilon = \sqrt{\nu t}/d$ as usual,
so that $t\partial_t \epsilon = \epsilon/2$. Except for that relation, we can 
consider the quantities introduced in \eqref{def:DANS} as defined for any fixed
$t \in (0,T)$ and any fixed $\epsilon$ such that $0 < \epsilon \ll 1$. 
Useful estimates on these quantities are derived in the following paragraphs. 

\subsubsection{Control of the diffusion terms}\label{sssec331}

Using the definition \eqref{def:cL} of the differential operator $\cL$ and integrating
by parts, we see that
\begin{equation}\label{eq:diffrel}
  \cD_{\epsilon,t}[w] \,=\, \frac12 \int_{\R^2} (t\partial_tp_\epsilon)w^2\dd\xi
  - \cQ_\epsilon[w]\,,
\end{equation}
where
\begin{equation}\label{eq:cQdef}
  \cQ_{\epsilon,t}[w] \,=\, \int_{\R^2} \Bigl\{p_\epsilon|\nabla w|^2 + w
  (\nabla w\cdot\nabla p_\epsilon) + \frac14\,(\xi\cdot\nabla p_\epsilon)w^2
  - \frac12\,p_\epsilon w^2\Bigr\}\dd\xi\,.
\end{equation}
The quadratic form $\cQ_{\epsilon,t}$ is everywhere coercive except in the region $\II_\epsilon$
where $\nabla p_\epsilon = 0$. The following lower bound can be established as in
\cite[Prop.~4.15]{GaS24}. For the reader's convenience, we provide the details in
Section~\ref{ssecA4}.

\begin{lemma}\label{lem:Qeps}
Assume that $A > 0$ is small enough and $0 < \epsilon \ll A$. There exists a positive
constant $\kappa$ such that, if $\int_{\R^2}w\dd\xi = 0$, the following estimate holds 
\begin{equation}\label{eq:Qepslow}
  \cQ_{\epsilon,t}[w] \,\ge\,  \kappa\int_{\R^2} p_\epsilon|\nabla w|^2\dd\xi +
  \kappa \int_{\I_\epsilon \cup \III_\epsilon}\bigl(\chi_\epsilon + 1\bigr)p_\epsilon
  w^2\dd\xi - \int_{\II_\epsilon}p_\epsilon w^2\dd\xi\,,
\end{equation}
where $\chi_\epsilon$ is given by \eqref{eq:chiepsdef} and the regions $\I_\epsilon$,
$\II_\epsilon$, $\III_\epsilon$ are defined in \eqref{eq:Iepsdef}. 
\end{lemma}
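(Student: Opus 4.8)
The plan is to reduce the lower bound to a spectral‑gap property of the diffusion operator $\cL$, exploiting that $p_\epsilon$ is a controlled perturbation of the Gaussian weight $e^{|\xi|^2/4}$ on the inner region $\I_\epsilon$ and is super‑exponentially large on $\II_\epsilon\cup\III_\epsilon$. First I would complete the square: setting $g=p_\epsilon^{1/2}w$, the pointwise identity $p_\epsilon|\nabla w|^2+w\,\nabla w\cdot\nabla p_\epsilon=|\nabla g|^2-\tfrac14 p_\epsilon^{-1}|\nabla p_\epsilon|^2w^2$ (valid a.e., the Lipschitz kinks of $p_\epsilon$ lying on null sets) turns \eqref{eq:cQdef} into
\[
  \cQ_{\epsilon,t}[w]\,=\,\int_{\R^2}|\nabla g|^2\dd\xi+\int_{\R^2}W_\epsilon\,g^2\dd\xi\,,\qquad
  W_\epsilon=\tfrac14\,\xi\cdot\nabla\log p_\epsilon-\tfrac14\,|\nabla\log p_\epsilon|^2-\tfrac12\,.
\]
The next step is to estimate $W_\epsilon$ on each region. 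On $\III_\epsilon$, $\log p_\epsilon=\gamma|\xi|^2/4$ gives $W_\epsilon=\tfrac{\gamma}{8}(1-\tfrac{\gamma}{2})|\xi|^2-\tfrac12\ge c(\chi_\epsilon+1)$ for $\gamma\ll1$ and $\epsilon$ small, since $|\xi|\ge B/\epsilon$ and $\gamma B^2=A^2$. On $\II_\epsilon$, $\log p_\epsilon$ is constant and $W_\epsilon=-\tfrac12$. On $\I_\epsilon$, writing $\log p_\epsilon=q_\epsilon=\tfrac14|\xi|^2+\epsilon^2\psi$ with $\psi=\tfrac{T_0|\xi|^2}{4v_*(|\xi|)}\bigl(b(t)\cos2\theta-a(t)\sin2\theta\bigr)$, the cross terms of order $\epsilon^2$ in $W_\epsilon$ cancel identically, leaving $W_\epsilon=\tfrac1{16}|\xi|^2-\tfrac12-\tfrac14\epsilon^4|\nabla\psi|^2$; since $|a(t)|,|b(t)|\le T_0^{-1}$ and $v_*$ is explicit, one checks $|\epsilon^2\psi|+\epsilon^4|\nabla\psi|^2\le CA^2(1+|\xi|^2)$ on $\I_\epsilon$ (where $\epsilon|\xi|\le2A$), hence $W_\epsilon=\tfrac1{16}|\xi|^2-\tfrac12+\cO(A^2(1+|\xi|^2))\ge\tfrac1{32}|\xi|^2-1$ there for $A$ small. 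The analogous bound $|\nabla\log p_\epsilon|^2\le C(\chi_\epsilon+1)$ holds everywhere, which lets one trade $\int|\nabla g|^2$ for $\kappa\int p_\epsilon|\nabla w|^2$ at the price of a term absorbed into the $\chi_\epsilon$‑weighted contributions once $\kappa$ is small (this trade is exact on $\II_\epsilon$, where $\nabla\log p_\epsilon=0$).

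The main obstacle is the inner region, where the leading part $\tfrac1{16}|\xi|^2-\tfrac12$ of $W_\epsilon$ is the potential of the shifted two‑dimensional harmonic oscillator $H_0:=-\Delta+\tfrac1{16}|\xi|^2-\tfrac12$, which is nonnegative but not coercive: its ground state is $g_0\propto e^{-|\xi|^2/8}$, with eigenvalue $0$ and spectral gap $\tfrac12$, so that $\langle h,H_0h\rangle\ge c_4(\|\nabla h\|^2+\||\xi|h\|^2+\|h\|^2)$ for all $h\perp g_0$ in $L^2(\R^2)$ (equivalently, $\Ker(-\cL)=\R\Omega_0$ with spectral gap $\tfrac12$ in $\cY$, Proposition~\ref{prop:Lam}). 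To recover coercivity I would localize $g$ to $\I_\epsilon$ by a cutoff $\chi=\zeta(\epsilon|\xi|/A)$, with $\zeta$ smooth, $\zeta\equiv1$ near $0$ and $\zeta\equiv0$ on $[\tfrac12,\infty)$, so that $\chi\equiv1$ on a ball of radius $\sim A/(3\epsilon)$, $\chi\equiv0$ on $\II_\epsilon\cup\III_\epsilon$, and $|\nabla\chi|\lesssim\epsilon/A$ is supported where $|\xi|\sim A/\epsilon$. With $\tilde\chi:=\sqrt{1-\chi^2}$, the IMS identity bounds $\cQ_{\epsilon,t}[w]$ from below by $\langle\chi g,H_0(\chi g)\rangle+\|\nabla(\tilde\chi g)\|^2+\int W_\epsilon\tilde\chi^2g^2$, minus a commutator error $\cO(\epsilon^4/A^4)\int_{\I_\epsilon}|\xi|^2g^2$ and a potential error $\cO(A^2)\int_{\I_\epsilon}(1+|\xi|^2)g^2$, both absorbable; since $\tilde\chi\equiv1$ on $\II_\epsilon\cup\III_\epsilon$ and $W_\epsilon\ge c(\chi_\epsilon+1)$ there on $\III_\epsilon$, the $\tilde\chi$‑terms already supply $-\tfrac12\int_{\II_\epsilon}g^2+c\int_{\III_\epsilon}(\chi_\epsilon+1)g^2$ together with a nonnegative $|\xi|^2$‑contribution on $\I_\epsilon$. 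So only coercivity of $\langle\chi g,H_0(\chi g)\rangle$ in $\chi g$ itself remains to be established.

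Writing $\chi g=h+Pg_0$ with $h\perp g_0$ and $P=\langle\chi g,g_0\rangle$, the spectral gap gives $\langle\chi g,H_0(\chi g)\rangle=\langle h,H_0h\rangle\ge c(\|\nabla(\chi g)\|^2+\||\xi|\chi g\|^2+\|\chi g\|^2)-CP^2$, and this is where the hypothesis $\int_{\R^2}w\dd\xi=\int_{\R^2}p_\epsilon^{-1/2}g\dd\xi=0$ enters. Indeed $p_\epsilon^{-1/2}$ differs from a fixed multiple of $e^{-|\xi|^2/8}$ by a function whose relevant weighted norm is $\cO(A^2)$ on $\I_\epsilon$ (because $e^{-|\xi|^2/8}|1-e^{-\epsilon^2\psi/2}|\le CA^2(1+|\xi|^2)e^{-|\xi|^2/16}$ by the bound on $\psi$) and which is bounded by $e^{-A^2/(8\epsilon^2)}$ on $\II_\epsilon\cup\III_\epsilon$; subtracting $\int p_\epsilon^{-1/2}g=0$ from $P$ (which is, up to a fixed factor, $\int\chi g\,e^{-|\xi|^2/8}$) therefore yields
\[
  P^2\,\le\,CA^4\!\int_{\I_\epsilon}(1+|\xi|^2)g^2\dd\xi+C\,e^{-A^2/(4\epsilon^2)}\Bigl(\int p_\epsilon|\nabla w|^2
  +\!\int_{\I_\epsilon\cup\III_\epsilon}\!(\chi_\epsilon+1)g^2+\!\int_{\II_\epsilon}\!g^2\Bigr)\,,
\]
both terms being absorbable once $A$ and $\epsilon$ are small. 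Re‑assembling $\|\nabla g\|^2$ from $\|\nabla(\chi g)\|^2+\|\nabla(\tilde\chi g)\|^2$ via the IMS identity, trading it for $\int p_\epsilon|\nabla w|^2$, and tolerating the loss $-\int_{\II_\epsilon}p_\epsilon w^2$ allowed by the statement, one reaches \eqref{eq:Qepslow} after fixing $A$ small, $B$ large, $\epsilon$ small, and finally $\kappa$ small enough to absorb the finitely many remaining constants. This last bookkeeping — keeping every error term multiplied either by a small parameter ($A^4$, $\epsilon^4/A^4$, $e^{-A^2/(c\epsilon^2)}$) or by part of the spectral‑gap budget — is the delicate point; structurally the argument follows \cite[Prop.~4.15]{GaS24}.
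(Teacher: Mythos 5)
Your proposal is correct, and its core strategy is the same as the paper's: reduce $\cQ_{\epsilon,t}$ to a Schr\"odinger quadratic form, bound the potential region by region, and recover coercivity on the inner region from the spectral gap of the harmonic oscillator $-\Delta+\tfrac{1}{16}|\xi|^2-\tfrac12$ together with near-orthogonality to its ground state $e^{-|\xi|^2/8}$, which is exactly where the hypothesis $\int w\dd\xi=0$ enters in the paper as well. The implementation differs in two respects. First, you complete the square exactly and globally (so that the $\cO(\epsilon^2)$ cross terms in the conjugated potential cancel identically — a nice structural observation), whereas the paper keeps a Young-type splitting of the cross term in \eqref{eq:cQdef}, which retains $\tfrac14\int p_\epsilon|\nabla w_2|^2$ directly and yields the slightly different potential $V_\epsilon$; both routes then need the same final trade between $\|\nabla(p_\epsilon^{1/2}w)\|_{L^2}^2$ and $\int p_\epsilon|\nabla w|^2$, harmless since $\nabla\log p_\epsilon=0$ on $\II_\epsilon$ and $|\nabla\log p_\epsilon|^2\le C(\chi_\epsilon+1)$ elsewhere. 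Second, you place the IMS cutoff at the scale $A/\epsilon$ of the full inner region $\I_\epsilon$, so the deviation of the potential and of $p_\epsilon^{-1/2}$ from their Gaussian counterparts is only $\cO(A^2(1+|\xi|^2))$ rather than uniformly small, and you must absorb it (and the orthogonality defect $P^2\lesssim A^4\int_{\I_\epsilon}(1+|\xi|^2)g^2+e^{-cA^2/\epsilon^2}(\cdots)$) into the spectral-gap budget by taking $A$ small; the paper instead cuts at the intermediate scale $\epsilon^{-1/4}$, where $q_\epsilon-|\xi|^2/4=\cO(\epsilon)$ uniformly, so the inner piece is an $\cO(\epsilon)$ perturbation of the oscillator with an $\cO(\epsilon)$ orthogonality error, and everything beyond $\epsilon^{-1/4}$ (including most of $\I_\epsilon$) is handled by the crude potential bound alone, at the cost of $\cO(\epsilon^{1/2})$ commutator errors. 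Your version buys a cleaner region decomposition and the exact cancellation; the paper's buys error terms that are small in $\epsilon$ alone and a shorter verification on the inner ball. The bookkeeping claims you label ``absorbable'' do all check out, provided, as you say, the small fraction of the gradient term used in the final trade is chosen after $A$, $B$, $\epsilon$.
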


\begin{corollary}\label{cor:Deps}
Under the assumptions of Lemma~\ref{lem:Qeps}, the diffusion term $\cD_{\epsilon,t}$ defined
in \eqref{def:DANS} satisfies
\begin{equation}\label{eq:Depsbd}
  \cD_{\epsilon,t}[w] \,\le\, -\frac{\kappa}{2}\int_{\R^2} p_\epsilon \Bigl\{|\nabla w|^2 +
  \chi_\epsilon w^2 + w^2\Bigr\}\dd\xi \,=\, -\frac{\kappa}{2}\,\cF_{\epsilon,t}[w]\,.
\end{equation}
\end{corollary}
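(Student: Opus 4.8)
The plan is to deduce Corollary~\ref{cor:Deps} from Lemma~\ref{lem:Qeps} by absorbing the bad term $-\int_{\II_\epsilon} p_\epsilon w^2\dd\xi$ and, separately, controlling the time-derivative contribution $\frac12\int_{\R^2}(t\partial_t p_\epsilon)w^2\dd\xi$ that appears in the expression \eqref{eq:diffrel} for $\cD_{\epsilon,t}$. The starting point is the identity $\cD_{\epsilon,t}[w] = \frac12\int_{\R^2}(t\partial_t p_\epsilon)w^2\dd\xi - \cQ_{\epsilon,t}[w]$, so we need an upper bound on $t\partial_t p_\epsilon / p_\epsilon$ and a way to dominate $\int_{\II_\epsilon} p_\epsilon w^2\dd\xi$ by the good terms produced by the lemma.

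First I would estimate $t\partial_t p_\epsilon$. On $\II_\epsilon(t)$ the weight equals the constant $\exp(A^2/(4\epsilon^2))$; using $t\partial_t \epsilon = \epsilon/2$, we get $t\partial_t p_\epsilon = -\frac{A^2}{4\epsilon^2}p_\epsilon$ there, which is negative and hence harmless (it only helps). On $\III_\epsilon(t)$ the weight is $\exp(\gamma|\xi|^2/4)$, independent of $t$, so $t\partial_t p_\epsilon = 0$. The only region where $t\partial_t p_\epsilon$ has a sign-indefinite, potentially positive contribution is the inner region $\I_\epsilon(t)$, where $p_\epsilon = \exp(q_\epsilon(\xi,t))$ and $q_\epsilon$ depends on $t$ through both $\epsilon(t)$ and the coefficients $a(t),b(t)$. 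Differentiating \eqref{eq:qepsdef}, the $\epsilon^2$ prefactor contributes a factor $\epsilon^2$, and in $\I_\epsilon(t)$ one has $\epsilon^2 q_\epsilon \le A^2/4$, so the correction term in $q_\epsilon$ is $\cO(A^2)$; the derivatives $a'(t),b'(t)$ are controlled by $T_0$ and the bound \eqref{eq:cKdef} on the external flow, giving $t|\partial_t q_\epsilon| \le C(A^2 + t/T_0 \cdot (\text{something bounded}))$. The upshot I expect is a pointwise bound of the shape $|t\partial_t p_\epsilon| \le C A^2 (\chi_\epsilon + 1) p_\epsilon$ on $\I_\epsilon(t)$, perhaps with an additional $O(\cK)$ factor, where $\chi_\epsilon = |\xi|^2$ there; the key point is that the prefactor is small (proportional to $A^2$, or at least can be made smaller than $\kappa$ by taking $A$ small).

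Next I would combine these bounds. Writing $\cD_{\epsilon,t}[w] \le C A^2 \int_{\I_\epsilon}(\chi_\epsilon+1)p_\epsilon w^2\dd\xi - \cQ_{\epsilon,t}[w]$ and inserting the lower bound \eqref{eq:Qepslow} for $\cQ_{\epsilon,t}$, the two competing integrals over $\I_\epsilon \cup \III_\epsilon$ combine, with coefficient $CA^2 - \kappa$, which is $\le -\kappa/2$ once $A$ is small enough. It remains to deal with $-\int_{\II_\epsilon} p_\epsilon w^2\dd\xi$ from the lemma and the leftover $\int_{\III_\epsilon} p_\epsilon w^2$ (where $t\partial_t p_\epsilon = 0$ so nothing extra is spent). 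On $\II_\epsilon$, the good gradient term $\kappa\int_{\R^2}p_\epsilon|\nabla w|^2\dd\xi$ is available: since $p_\epsilon$ is constant on $\II_\epsilon$, one can use a Poincaré-type inequality there, or more simply the constant weight means $\int_{\II_\epsilon}p_\epsilon w^2$ is comparable to the $L^2$ norm of $w$ on an annulus of inner radius $\sim A/\epsilon$, which can be absorbed into $\kappa\int p_\epsilon|\nabla w|^2$ plus a small multiple of $\int_{\I_\epsilon\cup\III_\epsilon}(\chi_\epsilon+1)p_\epsilon w^2$ — this is precisely the mechanism already used in \cite[Prop.~4.15]{GaS24}. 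After this absorption one obtains $\cD_{\epsilon,t}[w] \le -\frac{\kappa}{2}\int_{\R^2}p_\epsilon(|\nabla w|^2 + \chi_\epsilon w^2 + w^2)\dd\xi$, which is \eqref{eq:Depsbd} (possibly after renaming $\kappa$).

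The main obstacle is the handling of the intermediate region $\II_\epsilon(t)$: there $\nabla p_\epsilon = 0$, so the quadratic form $\cQ_{\epsilon,t}$ loses coercivity, and one must recover control of $\int_{\II_\epsilon}p_\epsilon w^2$ from the gradient term together with the zero-mean constraint $\int w\dd\xi = 0$. This is exactly where the geometry of the annulus (its inner radius $\sim A/\epsilon \to \infty$) and a careful use of the Poincaré inequality matter, and it is the crux already isolated in Lemma~\ref{lem:Qeps}; the corollary then only requires checking that the extra term $\frac12\int (t\partial_t p_\epsilon)w^2$ does not destroy this balance, which — as sketched — it does not, because $t\partial_t p_\epsilon \le 0$ in $\II_\epsilon$ and is $\cO(A^2)(\chi_\epsilon+1)p_\epsilon$ in $\I_\epsilon$. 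I would therefore present the proof as: (i) pointwise bound on $t\partial_t p_\epsilon$ region by region; (ii) insertion into \eqref{eq:diffrel}; (iii) application of Lemma~\ref{lem:Qeps}; (iv) absorption of the $\II_\epsilon$ term and adjustment of constants.
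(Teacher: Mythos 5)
Your steps (i)--(iii) follow the paper's argument: start from \eqref{eq:diffrel}, bound $t\partial_t p_\epsilon$ region by region (with exactly the bound $|t\partial_t q_\epsilon|\le C_0A^2(1+|\xi|^2)$ on $\I_\epsilon$, the value $-\tfrac{A^2}{4\epsilon^2}p_\epsilon$ on $\II_\epsilon$, and $0$ on $\III_\epsilon$), insert Lemma~\ref{lem:Qeps}, and absorb the $\cO(A^2)$ loss on $\I_\epsilon$ using $|\xi|^2\le 4\chi_\epsilon$ there. However, step (iv) has a genuine gap. You dismiss the negative contribution $\tfrac12\,t\partial_t p_\epsilon=-\tfrac{A^2}{8\epsilon^2}\,p_\epsilon$ on $\II_\epsilon$ as merely ``harmless'' and propose instead to handle the bad term $\int_{\II_\epsilon}p_\epsilon w^2\dd\xi$ left over from Lemma~\ref{lem:Qeps} by a Poincar\'e-type absorption into the gradient term. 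This cannot work, for two reasons. First, the conclusion \eqref{eq:Depsbd} requires the coercive term $\tfrac{\kappa}{2}\int p_\epsilon\chi_\epsilon w^2\dd\xi$ on \emph{all} of $\R^2$, and by \eqref{eq:chiepsdef} one has $\chi_\epsilon=A^2/\epsilon^2$ on $\II_\epsilon$, a quantity of size $\epsilon^{-2}$; no Poincar\'e inequality on the annulus $\II_\epsilon$, whose diameter is of order $B/\epsilon\to\infty$, can convert $\int_{\II_\epsilon}p_\epsilon|\nabla w|^2$ into $\epsilon^{-2}\int_{\II_\epsilon}p_\epsilon w^2$ (the scaling goes the wrong way: on a domain of diameter $L$ the Poincar\'e constant is of order $L^2$, not $L^{-2}$). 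Second, Lemma~\ref{lem:Qeps} does not already ``isolate and resolve'' this difficulty: estimate \eqref{eq:Qepslow} deliberately leaves the non-coercive term $-\int_{\II_\epsilon}p_\epsilon w^2\dd\xi$, precisely because $\nabla p_\epsilon=0$ there and the zero-mean constraint is only exploited in the inner region.

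The missing idea is that the term you discarded is the whole point of the construction: because the plateau value $\exp\bigl(A^2/(4\epsilon^2)\bigr)$ of the weight \emph{decreases} in time, the contribution $\tfrac12\int_{\II_\epsilon}(t\partial_t p_\epsilon)w^2\dd\xi=-\tfrac{A^2}{8\epsilon^2}\int_{\II_\epsilon}p_\epsilon w^2\dd\xi$ supplies the coercivity in the intermediate region. Since $\chi_\epsilon\le A^2/\epsilon^2$ on $\II_\epsilon$ and $\epsilon\ll A$, one has $\tfrac{A^2}{8\epsilon^2}-1\ge\tfrac{1}{16}(\chi_\epsilon+1)$, which both swallows the bad term from \eqref{eq:Qepslow} and produces the required $(\chi_\epsilon+1)$-weighted term on $\II_\epsilon$ (after taking $\kappa\le 1/16$). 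With that replacement for your step (iv), the rest of your sketch yields \eqref{eq:Depsbd} as in the paper.
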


\begin{proof}
In view of \eqref{eq:diffrel} and \eqref{eq:Qepslow}, it remains to estimate
the time derivative of the weight function $p_\epsilon$. In the region $\I_\epsilon$
we have $p_\epsilon = \exp(q_\epsilon)$  where $q_\epsilon$ is given by \eqref{eq:qepsdef}.
Recalling that $t\partial_t \epsilon^2 = \epsilon^2$, we find
\begin{equation}\label{eq:Deps1}
  t\partial_t q_\epsilon(\xi,t) \,=\, \frac{\epsilon^2 T_0}{4v_*(|\xi|)}\,
  \Bigl(\bigl[b(t) + tb'(t)\bigr](\xi_1^2-\xi_2^2) - 2 \bigl[a(t) + ta'(t)\bigr]\xi_1\xi_2
  \Bigr)\,. 
\end{equation}
Using the definitions of $a(t),b(t)$ in Lemma~\ref{lem:E2} and the ODE \eqref{eq:modifz}
for $z(t)$, it is straightforward to verify that
\begin{equation}\label{eq:Deps2}
  T_0 \bigl(|a(t)| + |b(t)|\bigr) + T_0^2 \bigl(|a'(t)| + |b'(t)|\bigr) \,\le\, C\,,
\end{equation}
where the constant $C$ only depends on the quantity $\cK$ defined in \eqref{eq:cKdef}.
Since $\epsilon^2 |\xi|^2 \le 2A^2$ in region $\I_\epsilon$ and $|v_*(|\xi|)| \ge C/(1+|\xi|^2)$
by \eqref{def:vdef}, we deduce from \eqref{eq:Deps1} and \eqref{eq:Deps2} that 
\[
  |t\partial_t q_\epsilon| \,\le\, C_0 A^2(1+|\xi|^2)\,, \quad \forall\,\xi \in \I_\epsilon\,, 
\]
where the constant $C_0$ depends only on $\cK$ and $T/T_0$.

On the other hand, it is clear that $t\partial_t p_\epsilon = -\bigl(A^2/(4\epsilon^2)
\bigr) p_\epsilon$ in region $\II_\epsilon$, and $t\partial_t p_\epsilon = 0$ in region
$\III_\epsilon$. Summarizing, we have shown that
\begin{equation}\label{eq:Deps3}
  \frac12 \int_{\R^2} \bigl(t\partial_t p_\epsilon\bigr) w^2\dd\xi \,\le\, 
  \frac12\,C_0 A^2 \int_{\I_\epsilon} (1+|\xi|^2) p_\epsilon w^2 \dd \xi -\frac{A^2}{8\epsilon^2}
  \int_{\II_\epsilon} p_\epsilon w^2 \dd \xi\,.
\end{equation}
Now, combining \eqref{eq:diffrel}, \eqref{eq:Qepslow} and \eqref{eq:Deps3}, we obtain
\begin{align*}
  \cD_{\epsilon,t}[w] \,\le\, &-\kappa\int_{\R^2} p_\epsilon|\nabla w|^2\dd\xi -
  \int_{\I_\epsilon} \Bigl[\kappa\bigl(\chi_\epsilon + 1\bigr)-\frac{C_0}{2} A^2
  \big(1+|\xi|^2\big)\Bigr]p_\epsilon w^2\dd\xi \\  &-
  \int_{\II_\epsilon}\Bigl(\frac{A^2}{8\epsilon^2} - 1\Bigr)p_\epsilon w^2\dd\xi 
  -\kappa \int_{\III_\epsilon}\bigl(\chi_\epsilon + 1\bigr)p_\epsilon w^2\dd\xi\,.
\end{align*}
As is easily verified, we have $|\xi|^2 \le 4\chi_\epsilon$ in region $\I_\epsilon$,
so that the quantity inside the square brackets is larger than $\kappa(\chi_\epsilon + 1)/2$
if $A > 0$ is small enough. Similarly $\chi_\epsilon \le A^2/\epsilon^2$ in region
$\II_\epsilon$, which implies that $A^2/(8\epsilon^2) - 1 \ge (\chi_\epsilon+1)/16$
if $\epsilon \ll A$. So, assuming that $\kappa \le 1/16$, we find 
\[
  \cD_{\epsilon,t}[w] \,\le\, - \kappa\int_{\R^2} p_\epsilon|\nabla w|^2\dd\xi
  - \frac{\kappa}{2}\int_{\I_\epsilon} (\chi_\epsilon+1)p_\epsilon w^2\dd\xi
  - \kappa\int_{\II_\epsilon \cup \III_\epsilon} (\chi_\epsilon+1)p_\epsilon w^2\dd \xi\,,
\]
and \eqref{eq:Depsbd} immediately follows. 
\end{proof}

\subsubsection{Control of the advection terms}\label{sssec332}

We first consider the local advection term 
\[
  \cA_{\epsilon,t}^{(1)}[w] \,:=\, \int_{\R^2} p_\epsilon w \bigl(U_\app + E(f,z)\bigr)
  \cdot\nabla w\dd\xi \,=\, -\frac12 \int_{\R^2} w^2\bigl(U_\app + E(f,z)\bigr)
  \cdot\nabla p_\epsilon \dd\xi\,,
\]
where the second expression is obtained after integrating by parts. 

\begin{lemma}\label{lem:A1}
There exists a positive constant $K_6$ (independent of $A,B$) such that
\begin{equation}\label{eq:A1eps}
  \bigl|\cA_{\epsilon,t}^{(1)}[w]\bigr| \,\le\, K_6\,\epsilon^2\Bigl(A + \frac{1}{B}\Bigr)
  \int_{\R^2} p_\epsilon \chi_\epsilon w^2\dd\xi + K_6\,\epsilon^2 \int_{\R^2}p_\epsilon w^2\dd\xi\,.
\end{equation}
\end{lemma}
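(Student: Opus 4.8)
The plan is to estimate the integrand $w^2(U_\app + E(f,z))\cdot\nabla p_\epsilon$ region by region, using the three-region decomposition $\R^2 = \I_\epsilon(t) \cup \II_\epsilon(t) \cup \III_\epsilon(t)$ from \eqref{eq:Iepsdef}. The key simplification is that in the intermediate region $\II_\epsilon$ we have $\nabla p_\epsilon = 0$ by construction, so that region contributes nothing. Hence only $\I_\epsilon$ and $\III_\epsilon$ need to be handled, and on each of them $\chi_\epsilon$ is comparable to $|\xi|^2$ up to the factor $\gamma$ in the outer region.

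First I would record the pointwise bound on $U_\app + E(f,z)$. From the expansion \eqref{def:Omapp} one has $U_\app = U_0 + \cO(\epsilon^2)$ in a suitable weighted sense, and the bound on $E(f,z)$ derived just before \eqref{eq:short4}, namely $|E(f,z;\xi,t)| \le \epsilon^2|\xi| + \cK\epsilon^3$, shows $|U_\app + E(f,z)| \le |U_0(\xi)| + C\epsilon^2(1+|\xi|)$. Since $|U_0(\xi)| \le C/(1+|\xi|)$ decays, this quantity is uniformly bounded, but the crucial point for region $\I_\epsilon$ is to exploit the gradient $\nabla p_\epsilon = (\nabla q_\epsilon)\,p_\epsilon$ there. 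In $\I_\epsilon$, differentiating \eqref{eq:qepsdef} gives $\nabla q_\epsilon = \xi/2 + \cO(\epsilon^2)|\xi|$ because the correction term involving $v_*$ and the strain rates $a(t),b(t)$ is a smooth bounded perturbation (using $T_0|a|, T_0|b| \le 1$ and the decay of $v_*$ and its derivative). So $|\nabla p_\epsilon| \le C|\xi|\,p_\epsilon$ on $\I_\epsilon$, and on $\I_\epsilon$ one has $\epsilon|\xi| \le 2A$, so $|(U_\app + E(f,z))\cdot\nabla p_\epsilon| \le C(|U_0(\xi)| + \epsilon^2|\xi|)|\xi|\,p_\epsilon$. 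The delicate contribution is the velocity field $U_0(\xi)\cdot\xi$: this actually vanishes because $U_0$ is azimuthal, $U_0(\xi) = v_*(|\xi|)\xi^\perp$, so $U_0 \cdot \xi = 0$; only the non-radial correction in $q_\epsilon$ couples to $U_0$, and that correction is $\cO(\epsilon^2)$. Thus on $\I_\epsilon$, $|(U_\app+E(f,z))\cdot\nabla p_\epsilon| \le C\epsilon^2|\xi|^2 p_\epsilon \le C\epsilon^2 A\,\chi_\epsilon p_\epsilon$ once we use $\epsilon|\xi| \le 2A$ and $\chi_\epsilon = |\xi|^2$ in the inner part, or more crudely $C\epsilon^2 \chi_\epsilon p_\epsilon$. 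This yields the first term in \eqref{eq:A1eps} with a factor of $A$.

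For the outer region $\III_\epsilon$, where $\epsilon|\xi| \ge B$, the weight is Gaussian, $p_\epsilon = \exp(\gamma|\xi|^2/4)$ with $\gamma = A^2/B^2$, so $\nabla p_\epsilon = (\gamma\xi/2)p_\epsilon$ and $|\nabla p_\epsilon| \le C\gamma|\xi|\,p_\epsilon$. Here $|U_0(\xi)| \le C/|\xi| \le C\epsilon/B$ is small, and $|E(f,z)| \le \epsilon^2|\xi| + \cK\epsilon^3 \le C\epsilon^2|\xi|$ when $\epsilon|\xi|\ge B \gg \cK\epsilon$ (absorbing $\cK$ into $B$ large). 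So $|(U_\app+E(f,z))\cdot\nabla p_\epsilon| \le C(\epsilon/B + \epsilon^2|\xi|)\gamma|\xi|\,p_\epsilon$. Writing $\chi_\epsilon = \gamma|\xi|^2$ there, the term $\epsilon^2|\xi|\cdot\gamma|\xi| = \epsilon^2 \chi_\epsilon$ produces $C\epsilon^2\chi_\epsilon p_\epsilon$, while $(\epsilon/B)\gamma|\xi| = (\epsilon/B)\sqrt{\gamma}\sqrt{\chi_\epsilon}\,|\xi|^{0}$... more carefully, $(\epsilon/B)\gamma|\xi|\,p_\epsilon \le (\epsilon^2/B^2)\gamma|\xi|^2 p_\epsilon \cdot (B/(\epsilon|\xi|)) \le$ — since $\epsilon|\xi|\ge B$, one has $(\epsilon/B)\gamma|\xi| \le (\epsilon/B)\cdot\epsilon|\xi|\cdot(\gamma/B)\cdot(1) $; the cleanest route is $(\epsilon/B)\gamma|\xi| \le \epsilon^2\gamma|\xi|^2/B^2 = \epsilon^2\chi_\epsilon/B^2$, so this contributes $C(\epsilon^2/B)\chi_\epsilon p_\epsilon$, i.e. the $1/B$ in \eqref{eq:A1eps}. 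Combining the $\I_\epsilon$ and $\III_\epsilon$ estimates, remembering $\II_\epsilon$ contributes nothing, and absorbing the leftover lower-order pieces into $K_6\epsilon^2\int p_\epsilon w^2$, gives exactly \eqref{eq:A1eps} with a constant $K_6$ that does not depend on $A$ or $B$.

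The main obstacle, and the point requiring care, is verifying the cancellation $U_0(\xi)\cdot\nabla|\xi|^2 = 0$ so that the leading (order $1$) part of the velocity does not contribute to $\cA^{(1)}_{\epsilon,t}$ — otherwise one would only get an $\cO(1)$ bound rather than the needed $\cO(\epsilon^2)$. This is why the weight $p_\epsilon$ was designed with the elliptical correction in \eqref{eq:qepsdef}: the correction is precisely tuned so that the radial derivative of $q_\epsilon$ matches $|\xi|^2/4$ to leading order while the non-radial part, which does couple to $U_0$, enters only at order $\epsilon^2$. A secondary technical point is that $\nabla p_\epsilon$ is only Lipschitz (not $C^1$) across the region boundaries, but since the integration by parts producing the second expression for $\cA^{(1)}_{\epsilon,t}$ is on $w^2\,(U_\app+E(f,z))$ against $\nabla p_\epsilon$ with $\nabla\cdot(U_\app+E(f,z)) = 0$, no boundary terms arise and the piecewise estimates glue together directly.
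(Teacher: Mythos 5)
Your overall set-up (integrating by parts, treating the three regions separately, and using that $\nabla p_\epsilon = 0$ on $\II_\epsilon$) matches the paper's proof, but there is a genuine gap exactly at the point the lemma is about: producing the smallness factor $A + 1/B$ in front of $\int p_\epsilon \chi_\epsilon w^2\dd\xi$. In the inner region you exploit only the cancellation $U_0\cdot\xi = 0$ and arrive at a bound of the form $C\epsilon^2|\xi|^2\,p_\epsilon$; the step ``$\le C\epsilon^2 A\,\chi_\epsilon p_\epsilon$ once we use $\epsilon|\xi|\le 2A$'' is not valid, since $\epsilon|\xi|\le 2A$ yields $\epsilon^2|\xi|^2 \le 2A\,\epsilon|\xi|$ but gives no extra factor $A$ in front of $\epsilon^2|\xi|^2 = \epsilon^2\chi_\epsilon$. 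Indeed each of the two dangerous contributions, $U_0\cdot\nabla q_\epsilon$ and $\epsilon^2 E_2\cdot\nabla q_0$, is separately of size $\epsilon^2|\xi|^2$ with an $A$-independent constant; the paper's argument hinges on the fact that they cancel \emph{exactly}, $U_0\cdot\nabla q_\epsilon = -\epsilon^2 E_2\cdot\nabla q_0$, which is precisely why the elliptical correction in \eqref{eq:qepsdef} carries the factor $1/v_*(|\xi|)$ (so that the $v_*$ in $U_0 = v_*(|\xi|)\xi^\perp$ cancels against it). After this cancellation the remaining terms are $\epsilon^2 E_2\cdot\nabla(q_\epsilon-q_0) + \epsilon^2\hat U_2\cdot\nabla q_\epsilon + \epsilon^3\bar E_3\cdot\nabla q_\epsilon = \cO\bigl(\epsilon^4|\xi|^2(1+|\xi|^2)\bigr) + \cO(\epsilon^2) + \cO(\epsilon^3|\xi|^3)$, and only then does $\epsilon|\xi|\le 2A$ give $C\epsilon^2\bigl(A|\xi|^2+1\bigr)$. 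Your ``tuning'' remark (radial derivative matches $|\xi|^2/4$, non-radial part enters at order $\epsilon^2$) would equally apply to the plain Gaussian weight $p_0$, for which the lemma fails; and the cruder bound $C\epsilon^2\chi_\epsilon p_\epsilon$ that you offer as a fallback is not sufficient, because without the prefactor $A+1/B$ this term cannot be absorbed by the dissipation in the proof of Proposition~\ref{prop:largetime} when $t/T_0$ is not small.

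A second, independent loss occurs in the outer region: for the strain term you use $|E(f,z)|\le C\epsilon^2|\xi|$, which together with $|\nabla p_\epsilon| = (\gamma|\xi|/2)p_\epsilon$ gives $C\epsilon^2\gamma|\xi|^2 p_\epsilon = C\epsilon^2\chi_\epsilon p_\epsilon$, again without the $1/B$; since $\chi_\epsilon$ is unbounded this cannot be ``absorbed into $K_6\epsilon^2\int p_\epsilon w^2$''. The paper instead uses the uniform bound $|E(f,z)|\le \cK\epsilon$ (coming from the global boundedness of $f$) together with $\epsilon|\xi|\ge B$, so that $\epsilon\,\gamma|\xi| \le \epsilon^2\gamma|\xi|^2/B = \epsilon^2\chi_\epsilon/B$. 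Your handling of $U_0$ in $\III_\epsilon$ can also be simplified: $\nabla p_\epsilon$ is radial there, so $U_0\cdot\nabla p_\epsilon = 0$ exactly, and only the $\cO(\epsilon^2)$ part $\hat U_2$ of $U_\app$ contributes.
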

  
\begin{proof}
In view of \eqref{def:Omapp} and Lemma~\ref{lem:fexpand} we can decompose
\[
  U_\app(\xi,t) \,=\, U_0(\xi) + \epsilon^2 \hat U_2(\xi,t)\,, \qquad
  E(f,z\,;\xi,t) \,=\, \epsilon^2 E_2(\xi,t) + \epsilon^3 \bar E_3(\xi,t)\,,
\]
where $U_0(\xi) = \xi^\perp v_*(|\xi|)$, $E_2$ is given by \eqref{eq:E2},
$|\hat U_2(\xi,t)| \le C/(1+|\xi|)$, and $|\bar E_3(\xi,t)| \le C(1+|\xi|^2)$.
To estimate the quantity $\cA_{\epsilon,t}^{(1)}[w]$, we first assume that 
$\xi \in \I_\epsilon$, so that $p_\epsilon = \exp(q_\epsilon)$. Using the explicit
expression \eqref{eq:qepsdef} and denoting $q_0(\xi) = |\xi|^2/4$, we find by
a direct calculation
\[
  U_0 \cdot \nabla q_\epsilon \,=\, \frac{\epsilon^2 T_0}{4}\,\xi^\perp \cdot
  \nabla \Bigl(b(\xi_1^2-\xi_2^2) - 2 a\xi_1\xi_2\Bigr) \,=\,
  -\frac{\epsilon^2 T_0}{2} \Bigl(a(\xi_1^2-\xi_2^2) + 2 b\xi_1\xi_2\Bigr)
  \,=\, -\epsilon^2E_2 \cdot \nabla q_0\,, 
\]
where the last equality follows from \eqref{eq:E2}. We thus have a {\em partial cancellation}
between the terms $U_\app \cdot \nabla q_\epsilon$ and $E(f,z) \cdot \nabla q_\epsilon$,
which is precisely the reason for which we included a non-radially symmetric correction
in the definition \eqref{eq:qepsdef} of the function $q_\epsilon$. It follows that
\[
  \bigl(U_\app + E(f,z)\bigr) \cdot\nabla q_\epsilon \,=\,
  \epsilon^2 E_2\cdot \nabla\bigl(q_\epsilon - q_0\bigr) +
  \epsilon^2 \hat U_2\cdot \nabla q_\epsilon + \epsilon^3 \bar E_3\cdot
  \nabla q_\epsilon\,.
\]
Since $|\nabla q_\epsilon| \le C|\xi|$ and $|\nabla(q_\epsilon -q_0)| \le C\epsilon^2
|\xi|(1+|\xi|^2)$ for $\xi \in \I_\epsilon$, we easily obtain
\begin{equation}\label{eq:A1first}
  \bigl|\bigl(U_\app + E(f,z)\bigr) \cdot\nabla q_\epsilon\bigr| \,\le\,
  C\epsilon^4 |\xi|^2(1+|\xi|^2) + C\epsilon^2 + C\epsilon^3|\xi|^3
  \,\le\, C\epsilon^2 \bigl(A|\xi|^2 + 1\bigr)\,,
\end{equation}
because $\epsilon^2 |\xi|^2 \le 2A^2$ in region $\I_\epsilon$ and $A \le 1$.

Obviously, we do not need to consider the case where $\xi \in \II_\epsilon$,
because $\nabla p_\epsilon = 0$ in that region. When $\xi \in \III_\epsilon$,
we have $\nabla p_\epsilon = (\gamma \xi/2)p_\epsilon$, so that $U_\app\cdot
\nabla p_\epsilon = \epsilon^2 \hat U_2 \cdot\nabla p_\epsilon$. Observing
that $|E(f,z)| \le \cK \epsilon$ where $\cK$ is defined in \eqref{eq:cKdef},
we conclude that
\begin{equation}\label{eq:A1second}
  \bigl|\bigl(U_\app + E(f,z)\bigr) \cdot\nabla p_\epsilon\bigr| \,\le\,
  C\gamma \Bigl(\epsilon^2 + \epsilon |\xi|\Bigr)p_\epsilon \,\le\,
  C\gamma \epsilon^2  \Bigl(1 + \frac{|\xi|^2}{B}\Bigr) p_\epsilon\,, 
\end{equation}
because $|\xi| \ge B/\epsilon$ in region $\III_\epsilon$. 
Combining \eqref{eq:A1first} and \eqref{eq:A1second}, we have shown that
\[
  \bigl|\bigl(U_\app + E(f,z)\bigr)\cdot\nabla p_\epsilon\bigr| \,\le\,
  C\epsilon^2 \Bigl\{1 + \Bigl(A + \frac{1}{B}\Bigr)\chi_\epsilon\Bigr\}\,p_\epsilon \,,
  \quad \forall\,\xi \in \R^2\,.
\]
Multiplying by $w^2$ and integrating over $\R^2$, we obtain \eqref{eq:A1eps}.
\end{proof}

We next consider the nonlocal term
\[
  \cA_{\epsilon,t}^{(2)}[w] \,:=\, \int_{\R^2} p_\epsilon w \bigl(v\cdot\nabla
  \Omega_\app\bigr)\dd\xi\,, \qquad \text{where}\quad v = \BS[w]\,.
\]

\begin{lemma}\label{lem:A2}
There exists a positive constant $K_7$ such that $|\cA_{\epsilon,t}^{(2)}[w]\bigr| \le
K_7\epsilon^2\cE_{\epsilon,t}[w]$. 
\end{lemma}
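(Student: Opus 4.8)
The nonlocal advection term $\cA^{(2)}_{\epsilon,t}[w] = \int_{\R^2} p_\epsilon\, w\,(v\cdot\nabla\Omega_\app)\dd\xi$ must be controlled by $\epsilon^2 \cE_{\epsilon,t}[w]$, which is favourable since it carries the prefactor $\epsilon^2$ from the expansion $\Omega_\app = \Omega_0 + \epsilon^2\Omega_2 + \dots$ --- but note that, unlike in the local term, the ``$\Omega_0$'' part does not drop out, because here we are differentiating $\Omega_\app$ rather than $w$. The key structural fact is that $v\cdot\nabla\Omega_0$ is \emph{not} arbitrary: writing $\Lambda w = U_0\cdot\nabla w + \BS[w]\cdot\nabla\Omega_0$ and recalling that $\langle w, \Lambda w\rangle_{\cY_*} = 0$ only in the \emph{unweighted} Gaussian space, I expect the proof to exploit the pointwise bound $|\nabla\Omega_0(\xi)| = \tfrac12|\xi|\Omega_0(\xi) \le C(1+|\xi|)e^{-|\xi|^2/4}$ together with a Biot--Savart estimate on $v = \BS[w]$.

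\medskip
First I would split $\nabla\Omega_\app = \nabla\Omega_0 + \epsilon^2\nabla\Omega_2 + \epsilon^3\nabla\Omega_3 + \epsilon^4\nabla\Omega_4$. For the higher-order terms $j\ge 2$, since $\Omega_j\in\cZ$ we have $|\nabla\Omega_j(\xi)| \le C(1+|\xi|)^N e^{-|\xi|^2/4}$, and using the weight bound \eqref{eq:pepsbd}, namely $p_\epsilon(\xi,t)\le e^{\mu|\xi|^2/4}$ with $\mu = 1 + \cO(A^2)$, the Gaussian factor $e^{-|\xi|^2/4}$ dominates $p_\epsilon^{1/2}$ times any polynomial, so $\|p_\epsilon^{1/2}\nabla\Omega_j\|_{L^\infty}\le C$ uniformly. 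Then $|\int p_\epsilon w(v\cdot\nabla\Omega_j)| \le \|p_\epsilon^{1/2}w\|_{L^2}\|v\|_{L^2}\|p_\epsilon^{1/2}\nabla\Omega_j\|_{L^\infty} \le C\|w\|_{\cY}^2$ by a Biot--Savart bound $\|v\|_{L^2}\le C\|w\|_{\cY}$ of the type used elsewhere in the paper (cf.\ Lemma~\ref{lem:BS2}), and the overall contribution is $\cO(\epsilon^2)\cE_{\epsilon,t}[w]$ since these terms come with at least $\epsilon^2$.

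\medskip
The delicate piece is the leading term $\int_{\R^2} p_\epsilon\, w\,(v\cdot\nabla\Omega_0)\dd\xi$, which has \emph{no} explicit $\epsilon$ factor, so the required $\epsilon^2$ must be produced. This is where the mean-zero condition $\int w\dd\xi = 0$ from \eqref{eq:wmoml} enters. I would write $\nabla\Omega_0 = -\tfrac12\xi\,\Omega_0$, and then estimate $v\cdot\xi$ rather than $v$ itself: by the Biot--Savart lemma of the paper (Lemma~\ref{lem:BS2} with the exponents already used in the short-time proof, giving $\|v\cdot\xi\|_{L^3}\le C\|w\|_{\cY}$ when $\int w = 0$), we get $|\int p_\epsilon w (v\cdot\xi)\Omega_0\dd\xi|$. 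Since $p_\epsilon\Omega_0$ is bounded --- indeed $p_\epsilon(\xi,t)\Omega_0(\xi)\le Ce^{-(1-\mu)|\xi|^2/4}$, which is \emph{not} bounded since $\mu > 1$; here one must be more careful and keep the sharper bound $p_\epsilon\le e^{q_\epsilon}$ with $q_\epsilon\le |\xi|^2/4 + \cO(\epsilon^2|\xi|^2)$ in the inner region and match regions --- the product $p_\epsilon\Omega_0|\xi|$ is integrable with a bound that produces exactly one extra power of the amplitude, and combined with $\|v\cdot\xi\|_{L^3}\le C\|w\|_\cY$ via Hölder yields $C\cE_{\epsilon,t}[w]$; but to extract the additional $\epsilon^2$, I believe one instead uses that $\langle w,U_0\cdot\nabla w\rangle_{p_\epsilon} + \langle w,\BS[w]\cdot\nabla\Omega_0\rangle_{p_\epsilon} = \langle w,\Lambda w\rangle_{p_\epsilon}$ vanishes \emph{when $p_\epsilon = e^{|\xi|^2/4}$}, and that $p_\epsilon$ differs from $e^{|\xi|^2/4}$ only by $\cO(\epsilon^2)$ in $\I_\epsilon$ and in the far region, so the $\Lambda$-quadratic form is $\cO(\epsilon^2)\cF_{\epsilon,t}[w]$ --- but since $\cA^{(2)}$ as defined contains only the $\nabla\Omega_\app$ half, the correct accounting is that $\cA^{(2)}_{\epsilon,t}[w]$ together with the $U_0\cdot\nabla w$ part of $\cA^{(1)}_{\epsilon,t}$ forms the skew-symmetric combination, and it is the \emph{combined} quantity that is $\cO(\epsilon^2)$; hence I would either restate the lemma with that grouping in mind or observe that, after the integration by parts already performed to define $\cA^{(1)}$, the remaining genuinely non-cancelling contribution to $\cA^{(2)}$ is the commutator $[\Lambda, p_\epsilon]$, which by the explicit form \eqref{eq:qepsdef} of $q_\epsilon$ and the identity $U_0\cdot\nabla q_\epsilon = -\epsilon^2 E_2\cdot\nabla q_0$ already exploited in Lemma~\ref{lem:A1} is $\cO(\epsilon^2)$ pointwise. \textbf{The main obstacle} will be organising this cancellation bookkeeping cleanly: one has to track which half of $\Lambda$ lives in $\cA^{(1)}$ and which in $\cA^{(2)}$, and verify that the weight $p_\epsilon$ --- deliberately chosen in \eqref{eq:qepsdef} so that $U_0\cdot\nabla q_\epsilon$ cancels $\epsilon^2 E_2\cdot\nabla q_0$ --- also tames the $\BS[w]\cdot\nabla\Omega_0$ term, using that the mean-zero condition on $w$ gives the needed decay of $v$ at the origin and at infinity, so that the full advective contribution is quadratically small in $\epsilon$ and absorbable into $\cE_{\epsilon,t}[w]$.
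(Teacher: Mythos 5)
Your treatment of the higher-order part of $\Omega_\app$ is sound and matches the paper: writing $\Omega_\app = \Omega_0 + \epsilon^2\hat\Omega_2$ with $\hat\Omega_2 \in \cZ$, the Gaussian decay of $\nabla\hat\Omega_2$ beats the weight $p_\epsilon$, and H\"older plus a Biot--Savart bound on $v$ gives $C\epsilon^2\cE_{\epsilon,t}[w]$. The gap is in the leading term $\int_{\R^2} p_\epsilon\, w\,(v\cdot\nabla\Omega_0)\dd\xi$. The fact you are missing (and hedge about) is that the \emph{nonlocal half alone} vanishes with the exact Gaussian weight: for every $w$, $\int_{\R^2} p_0\, w\,(\BS[w]\cdot\nabla\Omega_0)\dd\xi = 0$ with $p_0 = e^{|\xi|^2/4}$; this is \cite[Lemma~4.8]{GW05}, and follows since $p_0\nabla\Omega_0 = -\xi/(8\pi)$ and $\int w\,(v\cdot\xi)\dd\xi = 0$ by antisymmetry of the Biot--Savart kernel. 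Consequently no regrouping of $\cA^{(2)}_{\epsilon,t}$ with the $U_0\cdot\nabla w$ part of $\cA^{(1)}_{\epsilon,t}$ is needed, and the cancellation $U_0\cdot\nabla q_\epsilon = -\epsilon^2 E_2\cdot\nabla q_0$ you import from Lemma~\ref{lem:A1} is irrelevant here: it concerns the local advection of $w$, not $v\cdot\nabla\Omega_0$, so your ``commutator'' alternative does not address the term in question.

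Even granting the identity, your perturbative step is wrong as stated: after writing the leading term as $\int (p_\epsilon - p_0)\,w\,(v\cdot\nabla\Omega_0)\dd\xi$, it is false that $p_\epsilon - p_0$ is $\cO(\epsilon^2)$ ``in the far region''. In $\II_\epsilon$ the weight is the constant $e^{A^2/(4\epsilon^2)}$ and in $\III_\epsilon$ it equals $e^{\gamma|\xi|^2/4}$ with $\gamma\ll 1$, so there $p_\epsilon$ and $p_0$ are not close in any useful sense. The estimate only closes if you split at $|\xi| = \epsilon^{-1/2}$: on $\{|\xi|\le\epsilon^{-1/2}\}$ (which lies in $\I_\epsilon$) one has $|p_\epsilon - p_0| \le C\epsilon^2 p_0\,|\xi|^2(1+|\xi|^2)$, giving the factor $\epsilon^2$; on $\{|\xi|>\epsilon^{-1/2}\}$ one uses the crude bound $|p_\epsilon - p_0|\le p_\epsilon + p_0$ and lets the Gaussian decay of $\nabla\Omega_0$ absorb both weights, producing an $e^{-c/\epsilon}$ contribution. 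Without this region-by-region argument (or some substitute for it), the required bound $|\cA^{(2)}_{\epsilon,t}[w]|\le K_7\epsilon^2\cE_{\epsilon,t}[w]$ is not established.
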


\begin{proof}
We deduce from \eqref{def:Omapp} that $\Omega_\app(\xi,t) \,=\, \Omega_0(\xi) + \epsilon^2
\hat\Omega_2(\xi,t)$, where $\Omega_0$ is given by \eqref{eq:OmU0} and the correction
satisfies $|\nabla \hat\Omega_2(\xi,t)| \le C(1+|\xi|)^N\Omega_0$ for some $N \in \N$, 
because $\hat\Omega_2$ belongs to the function space $\cZ$ defined in \eqref{def:cZ}. 
We can thus decompose $\cA_{\epsilon,t}^{(2)}[w] = \cA_{\epsilon,t}^{(3)}[w] + \cA_{\epsilon,t}^{(4)}[w]$ 
where
\[
  \cA_{\epsilon,t}^{(3)}[w] \,=\, \int_{\R^2} p_\epsilon w \bigl(v\cdot\nabla\Omega_0
  \bigr)\dd\xi\,, \qquad
  \cA_{\epsilon,t}^{(4)}[w] \,=\, \epsilon^2\int_{\R^2} p_\epsilon w
  \bigl(v\cdot\nabla\hat\Omega_2\bigr)\dd\xi\,.
\]
The second term is easily estimated using H\"older's inequality:  
\[
  \bigl|\cA_{\epsilon,t}^{(4)}[w]\bigr| \,\le\, \epsilon^2\, \|p_\epsilon^{1/2}w\|_{L^2}
  \|v\|_{L^4} \|p_\epsilon^{1/2}\nabla \hat\Omega_2\|_{L^4} \,\le\,
  C\epsilon^2\,\cE_{\epsilon,t}[w]\,,
\]
because $\|v\|_{L^4} \le C \|w\|_{L^{4/3}} \le C \|p_\epsilon^{1/2}w\|_{L^2}$, see
Lemma~\ref{lem:BS1}, and $\|p_\epsilon^{1/2}\nabla \hat\Omega_2\|_{L^4}$ is uniformly
bounded. Here and in what follows, we use the uniform bounds \eqref{eq:pepsbd}
satisfied by the weight function $p_\epsilon$. Next, denoting $p_0(\xi) = \exp(|\xi|^2/4)$,
we observe that
\[
  \cA_{\epsilon,t}^{(3)}[w] \,=\, \int_{\R^2} \bigl(p_\epsilon - p_0\bigr)
  w \bigl(v\cdot\nabla\Omega_0\bigr)\dd\xi\,, \qquad \text{because}\quad
  \int_{\R^2} p_0 w (v\cdot \nabla\Omega_0)\dd\xi \,=\, 0\,,
\]
see \cite[Lemma~4.8]{GW05}. If $|\xi| \le \epsilon^{-1/2}$, then  $|p_\epsilon-p_0|
\le C\epsilon^2 p_0 |\xi|^2(1+|\xi|^2)$ so that
\[
  \int_{\{|\xi|\le\epsilon^{-1/2}\}} \bigl|p_\epsilon - p_0\bigr|
 \,|w|\,|v|\,|\nabla\Omega_0|\dd\xi \,\le\, C\epsilon^2\,
 \int_{\I_\epsilon} |\xi|^3(1+|\xi|^2)\,|w|\,|v| \dd\xi \,\le\,
 C\epsilon^2\,\cE_{\epsilon,t}[w]\,,
\]
by the same arguments as before. When $|\xi| > \epsilon^{-1/2}$ we use the crude bound
$|p_\epsilon-p_0| \le p_\epsilon + p_0$ together with the estimates \eqref{eq:pepsbd}
to obtain
\begin{align*}
  \int_{\{|\xi|>\epsilon^{-1/2}\}} p_\epsilon \,|w|\,|v|\,|\nabla\Omega_0|\dd\xi \,&\le\,
  \|p_\epsilon^{1/2}w\|_{L^2}\|v\|_{L^4} \|p_\epsilon^{1/2}\nabla\Omega_0\|_{L^4
  (|\xi|>\epsilon^{-1/2})} \,\le\, C\,e^{-c/\epsilon}\cE_{\epsilon,t}[w]\,, \\
  \int_{\{|\xi|>\epsilon^{-1/2}\}} p_0 \,|w|\,|v| \,|\nabla\Omega_0|\dd\xi \,&\le\, 
  \|p_\epsilon^{1/2}w\|_{L^2}\|v\|_{L^4} \||\xi| p_\epsilon^{-1/2}\|_{L^4
    (|\xi|>\epsilon^{-1/2})} \,\le\, C \,e^{-c/\epsilon} \cE_{\epsilon,t}[w]\,,
\end{align*}
for some $c > 0$. We deduce that $|\cA_{\epsilon,t}^{(3)}[w]\bigr| \le C\epsilon^2
\cE_{\epsilon,t}[w]$, which concludes the proof. 
\end{proof}

\subsubsection{End of the proof of Proposition~\ref{prop:largetime} and
Theorem~\ref{thm1}}\label{sssec333}

In the previous paragraphs we obtained accurate bounds on the diffusion terms
$\cD_{\epsilon,t}[w]$ and the advection term $\cA_{\epsilon,t}[w]$ in \eqref{def:DANS}.
As for the nonlinear term $\cN_{\epsilon,t}[w]$ and the source term $\cS_{\epsilon,t}[w]$,
they can be estimated exactly as in the proof of Proposition~\ref{prop:smalltime}.
Using in particular the uniform bounds \eqref{eq:pepsbd} and the estimate
$|\nabla p_\epsilon| \le C\chi_\epsilon^{1/2}p_\epsilon$, which is established
as in Lemma~\ref{lem:A1}, we find
\begin{equation}\label{eq:NSbd}
  \bigl|\cN_{\epsilon,t}[w]\bigr| \,\le\, K_8\,\cF_{\epsilon,t}[w]^{1/2} \cE_{\epsilon,t}[w]\,,
  \qquad \bigl|\cS_{\epsilon,t}[w]\bigr| \,\le\, K_9 \bigl(\epsilon^5 + \delta^2
  \epsilon^2\bigr)\cE_{\epsilon,t}[w]^{1/2}\,,
\end{equation}
for some positive constants $K_8, K_9$ depending on $A$ and $B$.

It is now a straightforward task to complete the proof of Proposition~\ref{prop:largetime}.
If $w(\xi,t)$ is the solution of \eqref{eq:wevol} with zero initial data, we consider
the weighted energy $\cE(t) := \cE_{\epsilon,t}[w(\cdot,t)]$, which evolves according
to \eqref{eq:cEevol}. It is understood here that $\epsilon = \sqrt{\nu t}/d$, so that
$\epsilon^2/\delta = t/T_0$. Using the bounds \eqref{eq:NSbd} and the estimates collected
in Corollary~\ref{cor:Deps}, Lemma~\ref{lem:A1} and Lemma~\ref{lem:A2}, we obtain the
differential inequality
\begin{equation}\label{eq:cEdiff}
\begin{split}
  t\partial_t \cE(t) \,\le\, &-\kappa\cF(t) + 2K_6\Bigl(A + \frac{1}{B}\Bigr)\frac{t}{T_0}
  \,\cF(t) + 2\bigl(K_6+K_7\bigr)\frac{t}{T_0}\,\cE(t) \\ &+ 2K_8\,\cF(t)^{1/2}\cE(t)
  + 2K_9  \Bigl(\frac{\epsilon^5}{\delta^2} + \epsilon^2\Bigr)\cE(t)^{1/2}\,,
\end{split}
\end{equation}
where $\cF(t) := \cF_{\epsilon,t}[w(\cdot,t)]$. We next choose $A > 0$ small enough
and $B > 1$ large enough so that
\begin{equation}\label{eq:long1}
  2K_6\Bigl(A + \frac{1}{B}\Bigr)\frac{T}{T_0} \,\le\, \frac{\kappa}{2}\,.
\end{equation}
Under this assumption, inequality \eqref{eq:cEdiff} implies \eqref{eq:westlarge} with
$K_4 = 2K_9$, $K_5 = 2(K_6+K_7+K_8)$, and $\kappa$ replaced by $\kappa/2$. 

To deduce \eqref{eq:westlarge2} from \eqref{eq:westlarge}, we proceed as in the
proof of Proposition~\ref{prop:smalltime}. Since $\cE(0) = 0$, we know that
$K_5\,\cE(t)^{1/2} \le \kappa$ at least for short times. As long as that
inequality holds, we deduce from \eqref{eq:westlarge} that 
\[
  t\partial_t \cE(t) \,\le\, K_4\Bigl(\frac{\epsilon^5}{\delta^2}
  + \epsilon^2\Bigr)\cE(t)^{1/2} + K_5\frac{t}{T_0}\,\cE(t)\,,
\]
and that differential inequality can be integrated using Gr\"onwall's lemma  
to give the estimate in \eqref{eq:westlarge2}. As $\epsilon^2 = \delta t/T_0
\le \delta T/T_0$, the conclusion remains true for all $t \in (0,T)$ provided
$\delta > 0$ is chosen small enough so that
\begin{equation}\label{eq:long2}
  K_4K_5\Bigl(\frac{\epsilon^5}{\delta^2} + \epsilon^2\Bigr)\,\exp\Bigl(
  \frac{K_5 t}{2T_0}\Bigr) \,\le\, K_4K_5\biggl\{\delta^{1/2}\Bigl(
  \frac{T}{T_0}\Bigr)^{5/2} + \frac{\delta T}{T_0}\biggr\}\,\exp\Bigl(
  \frac{K_5 T}{2T_0}\Bigr) \,<\, \kappa\,.
\end{equation}
This concludes the proof of Proposition~\ref{prop:largetime}.\qed

\begin{remark}\label{rem:logtime}
In connection with Remark~\ref{rem:longT} we observe that, if the external flow $f$
is globally defined and satisfies uniform bounds, then inequality \eqref{eq:long2}
is still satisfied for small $\delta > 0$ if we take $T = c T_0 \log(1/\delta)$ 
with $0 < c < 1/K_5$, and condition \eqref{eq:long1} is also fulfilled if we choose
$A^{-1} = B = C \log(1/\delta)$ with $C = 8cK_6/\kappa$. In that situation, 
the lower bound in \eqref{eq:pepsbd} is not uniform anymore, since $\gamma  = 
A^2/B^2\to 0$ as $\delta \to 0$, but one can verify that quantities such as 
$\|p_\epsilon^{-1}\|_{L^1}$ are still uniformly bounded, and this is sufficient
to show that the constants $K_4$ and $K_5$ in \eqref{eq:westlarge} are independent 
of $\delta$. A detailed verification of these claims is left to the reader. 
\end{remark}

\begin{proof}[End of the proof of Theorem~\ref{thm1}]
We consider the solution $\omega(x,t)$ of \eqref{eq:NSf} and \eqref{eq:BS} satisfying
\eqref{eq:omunique}, and we make the change of variables \eqref{eq:OmU} where
$z(t)$ is the solution of the ODE \eqref{eq:modifz} with initial condition
$z(0) = z_0$. Comparing the definition \eqref{def:omapp} with the
expression of $\bar\Omega_2$ given in Remark~\ref{rem:w2}, we see that
\[
  \omega_\app\bigl(\Gamma,\sqrt{\nu t},z(t),f(t)\,; x\bigr) \,=\,
  \frac{\Gamma}{\nu t}\,\biggl\{\Omega_0\Bigl(\frac{x - z(t)}{\sqrt{\nu t}}\Bigr)
  + \epsilon^2\bar\Omega_2\Bigl(\frac{x - z(t)}{\sqrt{\nu t}},t\Bigr)\biggr\}\,,
\]
where $\Omega_0$ is defined in \eqref{eq:OmU0} and $\epsilon = \sqrt{\nu t}/d$.
It follows that
\begin{align*}
   &\frac{1}{\Gamma}\int_{\R^2}\Bigl|\,\omega(x,t) - \omega_\app\bigl(\Gamma,\sqrt{\nu t},z(t),f(t)
  \,; x\bigr)\Bigr|\dd x \,=\, \int_{\R^2} \bigl|\Omega(\xi,t) - \Omega_0(\xi) -
  \epsilon^2 \bar\Omega_2(\xi,t)\bigr|\dd\xi \\
  \,&\le\, \int_{\R^2} \bigl|\Omega(\xi,t) - \Omega_\app(\xi,t)\bigr|\dd\xi +
  \int_{\R^2} \bigl|\Omega_\app(\xi,t) - \Omega_0(\xi) - \epsilon^2 \bar\Omega_2(\xi,t)
  \bigr|\dd\xi \,=:\, \cI_1(t) + \cI_2(t)\,,
\end{align*}
where $\Omega_\app$ is defined in \eqref{def:Omapp}. Using \eqref{OmUdecomp},
\eqref{eq:pepsbd} and Proposition~\ref{prop:largetime}, we find
\begin{equation}\label{eq:cI1}
  \cI_1(t) \,=\, \delta\,\|w(t)\|_{L^1} \,\le\, C\delta\,\|p_\epsilon^{1/2}w(t)\|_{L^2}
  \,\le\, C\Bigl(\frac{\epsilon^5}{\delta} + \delta \epsilon^2\Bigr) \,\le\,
  C\epsilon^2(\epsilon + \delta)\,, \quad \forall\,t \in (0,T)\,.
\end{equation}
On the other hand, we have $\Omega_\app - \Omega_0 - \epsilon^2\bar\Omega_2 =
\delta \epsilon^2 \tilde\Omega_2 + \epsilon^3 \Omega_3 +  \epsilon^4 \Omega_4$, 
as can be seen from the expansion \eqref{def:Omapp} and the definition of $\Omega_2$
in Section~\ref{sssec231}. We deduce that $\cI_2(t) \le C\epsilon^2(\epsilon + \delta)$,
and together with \eqref{eq:cI1} this concludes the proof of estimate \eqref{eq:thm1}. 
\end{proof}

\subsubsection{Alternative definitions of the vortex position}\label{sssec334}

The results stated in the introduction are sensitive to the precise
definition of the vortex position, because they concern sharply
concentrated solutions. As is mentioned in Remark~\ref{rem:zbar},
it is natural to consider the center of vorticity $\bar z(t)$ defined
by \eqref{eq:zbardef}, which however does not satisfy an ODE
such as \eqref{eq:originz} or \eqref{eq:modifz}. Under the assumptions
of Theorem~\ref{thm1}, it turns out that $\bar z(t)$ stays very close
to the solution $z(t)$ of \eqref{eq:modifz} with $z(0) = z_0$. To see
this, we observe that
\[
  \bar z(t) \,=\, \frac{1}{\Gamma}\int_{\R^2} x\,\omega(x,t)\dd x \,=\, 
  \int_{\R^2} \frac{x}{\nu t}\,\Omega\biggl(\frac{x-z(t)}{\sqrt{\nu t}}\biggr)\dd x
  \,=\, \int_{\R^2} \bigl(z(t)+\epsilon d\xi\bigr)\,\Omega(\xi,t)\dd \xi\,,
\]
where $\Omega$ is defined in \eqref{eq:OmU}. Using the decomposition
\eqref{OmUdecomp} and the moment identities \eqref{eq:moms} and \eqref{eq:wmoml},
we deduce
\[
  \bar z(t) \,=\, \int_{\R^2} \bigl(z(t)+\epsilon d\xi\bigr)\Bigl(\Omega_\app(\xi,t)
  + \delta w(\xi,t)\Bigr)\dd \xi \,=\, z(t) + \delta \epsilon d \int_{\R^2}\xi
  w(\xi,t)\dd \xi\,.
\]
The integral in the right-hand side can be estimated using Proposition~\ref{prop:largetime},
which gives
\[
  \bigl|\bar z(t) - z(t)\bigr| \,\le\, \delta \epsilon d\,\bigl\||\xi|w(t)\bigr\|_{L^1}
  \,\le\, C\delta \epsilon d \,\bigl\|p_\epsilon^{1/2}w(t)\bigr\|_{L^2} \,\le\,
  Cd \,\epsilon^3\bigl(\epsilon + \delta\bigr)\,, \qquad \forall\,t \in (0,T)\,.
\]
As is easily verified, this implies that estimate \eqref{eq:thm1} in
Theorem~\ref{thm1} remains valid if the vortex position $z(t)$ is replaced by the
center of vorticity $\bar z(t)$. 

On the other hand, as computed below, the approximate vortex position $\hat z(t)$
given by the simple ODE \eqref{eq:originz} is only $\cO(\epsilon^2)$ close to
the solution $z(t)$ of \eqref{eq:modifz}, unless the additional term
$\Delta f(z(t),t)$ in \eqref{eq:modifz} vanishes identically. As was already
mentioned, this is the case if the external velocity field $f$ is
irrotational, see the discussion in Remark~\ref{rem:f}. In general, taking the
difference of \eqref{eq:originz} and \eqref{eq:modifz}, we obtain the inequality
\[
  \bigl|z'(t) - \hat z'(t)\bigr| \,\le\, \bigl|f(z(t),t) - f(\hat z(t),t)
  \bigr| + \nu t\,\bigl|\Delta f(z(t),t)\bigr| \,\le\, \frac{1}{T_0}\,
  \bigl|z(t) - \hat z(t)\bigr| + \cK\,\frac{\nu t}{T_0d}\,,
\]
which can be integrated using Gr\"onwall's lemma to give
\begin{equation}\label{eq:zdiffer}
  |z(t) - \hat z(t)| \,\le\, \cK \int_0^t e^{(t-s)/T_0} \,\frac{\nu s}{T_0d}\dd s
  \,\le\, \cK\,e^{t/T_0}\,\frac{\nu t}{d} \,\le\, C d\,\epsilon^2\,,
  \qquad \forall\,t \in [0,T]\,.
\end{equation}

With estimate \eqref{eq:zdiffer} at hand, it is not difficult to verify that
\eqref{eq:thm1} implies \eqref{eq:prop1}, so that Proposition~\ref{prop1}
follows from Theorem~\ref{thm1}. Indeed, using the bound \eqref{eq:w2est} with
$\ell = \sqrt{\nu t}$ and the fact that $\epsilon(t) \le 1$ and $\delta \le 1$ under
the assumptions of Theorem~\ref{thm1}, we deduce from \eqref{eq:thm1} that
\begin{equation}\label{eq:prop1bis}
  \frac{1}{\Gamma}\int_{\R^2}\Bigl|\,\omega(x,t) - \frac{\Gamma}{\nu t}\,\Omega_0\Bigl(
  \frac{x-z(t)}{\sqrt{\nu t}}\Bigr)\Bigr|\dd x \,\le\, C\,\epsilon(t)^2\,,
  \qquad \forall\,t \in (0,T)\,,
\end{equation}
for some constant $C > 0$. This estimate is similar to \eqref{eq:prop1}, except that the
Lamb-Oseen vortex is centered at the modified position $z(t)$, which differs
from $\hat z(t)$. To obtain \eqref{eq:prop1} from \eqref{eq:prop1bis}, we use the
elementary bound
\[
  \int_{\R^2}\Bigl|\Omega_0\Bigl(\frac{x-z}{\ell}\Bigr) - \Omega_0\Bigl(\frac{x-\hat z}{\ell}\Bigr)
  \Bigr| \dd x \,\le\, \ell\,\|\nabla \Omega_0\|_{L^1}\,|z - \hat z|\,,
\]
together with the estimate \eqref{eq:zdiffer} for the difference $z(t) - \hat z(t)$. 

\subsubsection{Alternative choice of the weight function}\label{sssec335}

There is a certain amount of freedom in the choice of the weight function that is
used to control the solution of \eqref{eq:wevol} for large times. In particular,
the diameter of the inner region $\I_\epsilon(t)$ should tend to infinity as
$\epsilon \to 0$, but does not need to be proportional to $\epsilon^{-1}$, as in
\eqref{eq:Iepsdef}. Arguably the definition \eqref{eq:pepsdef} gives the
largest possible weight $p_\epsilon(\xi,t)$ for which our energy estimates apply.  
Compared with the previous works \cite{Ga11,GaS24,DG24}, where a similar approach
is implemented, our argument here provides a stronger control of the solution of
\eqref{eq:NSf}. One can also observe that the diameters of the regions $\I_\epsilon$
and $\II_\epsilon$ defined by \eqref{eq:Iepsdef} are $\cO(\epsilon^{-1})$ in the
self-similar variable $\xi$, hence $\cO(1)$ in the original variable $x$, which is
quite remarkable. A minor drawback of the definition \eqref{eq:pepsdef} is that
the weight $p_\epsilon$ is not a small perturbation of $p_0$, even in the inner
region $\I_\epsilon(t)$. In particular, as indicated in \eqref{eq:pepsbd},
it satisfies an upper bound of the form $p_\epsilon(\xi,t) \le C \exp(\mu|\xi|^2/4)$
for $\mu = 1 +\cO(A^2)$, but not for $\mu = 1$. 

Another interesting possibility is to use the alternative weight
\begin{equation}\label{eq:hpepsdef}
  \hat p_\epsilon(\xi,t) \,=\, \begin{cases}
  \exp\bigl(q_\epsilon(\xi,t)\bigr) & \text{ if } \xi \in \hat\I_\epsilon(t)\,, \\[1mm]
  \exp\bigl(A^2/(4\epsilon)\bigr) & \text{ if } \xi \in \hat\II_\epsilon(t)\,, \\[1mm]
  \exp\bigl(\gamma|\xi|/4\bigr) & \text{ if } \xi \in \III_\epsilon(t)\,,
  \end{cases}
\end{equation}
where $\gamma = A^2/B$ and the new regions $\hat\I_\epsilon$, $\hat\II_\epsilon$
are defined by
\begin{align*}
  \hat\I_\epsilon(t) \,&=\, \Bigl\{\xi \in \R^2\,;\, \epsilon^{1/2}|\xi| \,\le\, 2A\,,
  ~\epsilon q_\epsilon(\xi,t) \,\le\, A^2/4\Bigr\}\,, \\
  \hat\II_\epsilon(t) \,&=\, \Bigl\{\xi \in \R^2\setminus \hat\I_\epsilon(t)\,;\,
  \epsilon|\xi| \,<\, B\Bigr\}\,,
\end{align*}
Up to inessential details, this is the choice made in \cite{Ga11} in a related context.
The main difference with the previous definition is that the inner region is
now smaller, with a diameter proportional to $\epsilon^{-1/2}$ instead of $\epsilon^{-1}$.
Since the outer region is unchanged, the intermediate region is proportionally larger.
The new weight satisfies uniform estimates of the form
\begin{equation}\label{eq:hpest}
  C^{-1}\,e^{\gamma |\xi|/4} \,\le\, \hat p_\epsilon(\xi,t) \,\le\, C\,e^{|\xi|^2/4}\,,
  \qquad \forall\,(\xi,t) \in \R^2 \times [0,T]\,,
\end{equation}
for some constant $C > 1$. As is easily verified, the analogue of
Proposition~\ref{prop:largetime} holds for the functionals \eqref{eq:cEdef},
\eqref{eq:cFdef} defined with the new weight $\hat p_\epsilon$, provided the
function $\chi_\epsilon$ introduced in \eqref{eq:chiepsdef} is replaced by 
\begin{equation}\label{eq:hchidef}
  \hat\chi_\epsilon(\xi) \,=\, \begin{cases}
  |\xi|^2 & \text{ if } |\xi| \le A/\epsilon^{1/2}\,, \\[1mm]
  A^2/\epsilon & \text{ if } A/\epsilon^{1/2} < |\xi| < B/\epsilon\,, \\[1mm]
  \gamma |\xi| & \text{ if } |\xi| \ge B/\epsilon\,. \end{cases}
\end{equation}
The estimates are even simpler because the intermediate region, where the
dangerous advection terms give no contribution, is now larger. Although
somewhat weaker, the result obtained in this way is still sufficient to imply
Theorem~\ref{thm1}. 

\section{The solution starting from a Gaussian vortex}\label{sec4}

In this section we prove our second main result, Theorem~\ref{thm2}, by combining
the energy estimates developed in Section~\ref{sec3} with the enhanced dissipation
estimates established by Li, Wei, and Zhang in \cite{LWZ20}. Let $\omega(x,t)$
denote the solution of \eqref{eq:NSf} and \eqref{eq:BS} with initial data
\eqref{eq:inGauss} at time $t_0 \in (0,T)$. We make the change of variables
\eqref{eq:OmU} for $t \ge t_0$, where $z(t)$ is the solution of the ODE \eqref{eq:modifz}
with initial condition $z(t_0) = z_0$. The new functions $\Omega(\xi,t)$ and
$U(\xi,t)$ still satisfy the evolution equation \eqref{eq:Omevol}. To eliminate
the slightly unusual time derivative $t\partial_t$, it is convenient here to introduce
the new dimensionless variable
\begin{equation}\label{eq:taudef}
  \tau \,=\, \log(t/t_0)\,,
\end{equation}
which is nonnegative and vanishes precisely at initial time $t = t_0$. 

In the spirit of \eqref{OmUdecomp}, we decompose 
\begin{equation}\label{OmUdecomp2}
  \Omega(\xi,t) \,=\, \Omega_\app(\xi,t) + \delta w\bigl(\xi,\log(t/t_0)\bigr)\,,
  \qquad U(\xi,t) \,=\, U_\app(\xi,t) + \delta v\bigl(\xi,\log(t/t_0)\bigr)\,,
\end{equation}
the only difference being that the perturbations $w,v$ are now considered as functions
of the dimensionless time $\tau \ge 0$, instead of the original time $t \ge t_0$. 
As in \eqref{eq:wevol}, they satisfy the evolution equation
\begin{equation}\label{eq:wevol3}
  \partial_\tau w + \frac{1}{\delta}\bigl(U_\app + E(f,z)\bigr)\cdot
  \nabla w + \frac{1}{\delta}\,v\cdot\nabla\Omega_\app + v \cdot\nabla w
  \,=\, \cL w - \frac{1}{\delta^2}\,\cR_\app\,,
\end{equation}
where $\cR_\app$ is defined in \eqref{eq:remdef}. Here and in what follows,
it is understood that all quantities that depend explicitly on time, such
as the vortex position $z(t)$ or the aspect ratio $\epsilon(t) = \sqrt{\nu t}/d$,
should be considered as functions of the dimensionless variable \eqref{eq:taudef}
via the relation $t = t_0 e^{\tau}$.

At initial time $\tau = 0$, we have by construction
\begin{equation}\label{def:winit0}
  w(\xi,0) \,=\, \phi_0(\xi) \,:=\, \frac{1}{\delta}\bigl(\Omega_0(\xi)
  - \Omega_\app(\xi,t_0)\bigr)\,, \qquad \forall\,\xi \in \R^2\,.
\end{equation}
Using the notations of Section~\ref{ssec22}, we observe that $\phi_0 \in \cZ
\cap \Ker(\Lambda)^\perp$, and that $\phi_0$ is of size $\cO(\epsilon_0^2/\delta)$
where $\epsilon_0 = \sqrt{\nu t_0}/d$. Since $\epsilon_0^2/\delta = t_0/T_0$,
this means that the initial data \eqref{def:winit0} are not small in the limit
where $\delta \to 0$, which is in contrast with the situation considered in
the proof of Theorem~\ref{thm1}. However, as we shall see, it is possible to decompose
the perturbation $w(\xi,\tau)$ into a linear component $w_0(\xi,\tau)$ that is initially
of size $\cO(1)$ but decays rapidly as time evolves, and a correction term $\tilde w(\xi,\tau)$
which vanishes initially and remains small for all times. 

\subsection{Enhanced dissipation estimates}\label{ssec41}

Given $\phi_0 \in \cY$ and $\delta > 0$, we define $w_0(\xi,\tau)$ as the unique solution of
the linear equation
\begin{equation}\label{def:w0}
  \partial_\tau w_0(\xi,\tau) \,=\, \bigl(\cL - \delta^{-1}\Lambda\bigr)w_0(\xi,\tau)\,,
  \qquad w_0(\xi,0) = \phi_0(\xi)\,.
\end{equation}
The spectral and pseudospectral properties of the linear operator $\cL- \delta^{-1}\Lambda$
have been thoroughly studied in previous works, including \cite{GW05,Ma11,De13,Ga18,LWZ20}. 
Combining the optimal resolvent estimates obtained by Li, Wei, Zhang \cite{LWZ20}
with the quantitative Gearhart-Pr\"uss theorem due to Wei \cite{Wei21}, we
immediately obtain the following result.

\begin{proposition}\label{prop:ED1}
There exist positive constants $C_0$ and $c_0$ such that, for all $\delta \in (0,1)$
and all $\phi_0 \in \cY \cap \Ker(\Lambda)^\perp$, the solution of \eqref{def:w0}
satisfies
\begin{equation}\label{eq:w0bd1}
  \|w_0(\tau)\|_\cY \,\le\, C_0\,\exp\Bigl(-\frac{c_0\tau}{\delta^{1/3}}\Bigr)
  \|\phi_0\|_\cY\,, \qquad \forall\,\tau \ge 0\,.
\end{equation}
In particular, denoting $C_1 = C_0^2/(2c_0)$, we have
\begin{equation}\label{eq:w0bd2}
  \int_0^\infty \|w_0(\tau)\|_\cY^2\dd\tau \,\le\, C_1 \delta^{1/3}\|\phi_0\|_\cY^2\,.
\end{equation}
\end{proposition}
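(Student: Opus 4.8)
The plan is to recognize the evolution \eqref{def:w0} as the contraction semigroup generated by a maximally accretive operator on the Hilbert space $\Ker(\Lambda)^\perp$, to bound its pseudospectral abscissa from below by the optimal resolvent estimate of Li, Wei and Zhang \cite{LWZ20}, and to convert this into the exponential decay \eqref{eq:w0bd1} by invoking the quantitative Gearhart--Pr\"uss theorem of Wei \cite{Wei21}.

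First I would note that $\Ker(\Lambda)^\perp \subset \cY$ is invariant under both $\cL$ and $\Lambda$. For $\Lambda$ this is clear; for $\cL$ it follows because $\Ker(\Lambda) = \cY_0 \oplus \mathrm{span}\{\partial_1\Omega_0,\partial_2\Omega_0\}$ by \eqref{eq:KerLam}, and each summand is $\cL$-invariant (the first because $\cL$ commutes with rotations, the second because $\partial_j\Omega_0$ are Hermite eigenfunctions of $\cL$, see Proposition~\ref{prop:Lam}); since $\cL$ is self-adjoint, $\Ker(\Lambda)^\perp$ is $\cL$-invariant as well. Consequently, for $\phi_0 \in \cY\cap\Ker(\Lambda)^\perp$ the solution $w_0(\tau)$ of \eqref{def:w0} remains in $\Ker(\Lambda)^\perp$, and we may set $H_\delta := (-\cL + \delta^{-1}\Lambda)\big|_{\Ker(\Lambda)^\perp}$, with domain $D(\cL)\cap\Ker(\Lambda)^\perp$, so that $w_0(\tau) = e^{-\tau H_\delta}\phi_0$. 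Since $-\cL \ge 0$ and $\Lambda$ is skew-adjoint, $\mathrm{Re}\,\langle H_\delta u,u\rangle_\cY = \langle -\cL u,u\rangle_\cY \ge 0$; moreover $\delta^{-1}\Lambda$ is a lower-order, $\cL$-bounded perturbation of the self-adjoint operator $-\cL$ with relative bound zero, so $H_\delta$ is maximally accretive (see \cite{GW05,Ma11}) and generates a contraction semigroup on $\Ker(\Lambda)^\perp$.

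The heart of the matter is the resolvent estimate of \cite{LWZ20}: there is a constant $C > 0$, independent of $\delta\in(0,1)$ and $\lambda\in\R$, such that
\[
  \bigl\|(\cL - \delta^{-1}\Lambda - i\lambda)^{-1}\bigr\|_{\Ker(\Lambda)^\perp \to \Ker(\Lambda)^\perp} \,\le\, C\,\delta^{1/3}\,.
\]
The restriction to $\Ker(\Lambda)^\perp$ cannot be dropped, since $\Omega_0$ belongs to the kernel of both $\cL$ and $\Lambda$, so no uniform bound can hold near the origin on all of $\cY$. Replacing $\lambda$ by $-\lambda$ and using $H_\delta - i\lambda = -(\cL - \delta^{-1}\Lambda + i\lambda)$, this is equivalent to the lower bound
\[
  \Psi(H_\delta) \,:=\, \inf\bigl\{\|(H_\delta - i\lambda)u\|_\cY \,;\, \lambda\in\R\,,~u\in D(H_\delta)\,,~\|u\|_\cY = 1\bigr\} \,\ge\, c_0\,\delta^{-1/3}\,, \qquad c_0 := 1/C\,.
\]
Wei's quantitative Gearhart--Pr\"uss theorem \cite{Wei21} applied to the maximally accretive operator $H_\delta$ then yields $\|e^{-\tau H_\delta}\| \le e^{\pi/2}\,e^{-\tau\Psi(H_\delta)}$, whence \eqref{eq:w0bd1} with $C_0 = e^{\pi/2}$, the precise universal constant in the exponent being immaterial. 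Estimate \eqref{eq:w0bd2} is then immediate: squaring \eqref{eq:w0bd1} and integrating in $\tau$,
\[
  \int_0^\infty \|w_0(\tau)\|_\cY^2\,\dd\tau \,\le\, C_0^2\,\|\phi_0\|_\cY^2 \int_0^\infty e^{-2c_0\tau/\delta^{1/3}}\,\dd\tau \,=\, \frac{C_0^2}{2c_0}\,\delta^{1/3}\,\|\phi_0\|_\cY^2\,,
\]
i.e. $C_1 = C_0^2/(2c_0)$.

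The only point that requires care is the compatibility of the quoted resolvent estimate with the present functional setting: one must verify that \cite{LWZ20}, where the linearized operator is considered modulo its one-dimensional kernel (spanned by $\Omega_0$), indeed delivers the uniform-in-$\lambda$ bound stated above on the invariant subspace $\Ker(\Lambda)^\perp$, and that the maximal accretivity of $H_\delta$ there is genuinely available from \cite{GW05,Ma11}. Everything else is a direct combination of the two cited theorems, and no further difficulty is expected.
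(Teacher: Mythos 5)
Your argument is correct and is essentially the paper's own proof: the paper likewise obtains \eqref{eq:w0bd1} by combining the uniform-in-$\lambda$ resolvent bound of Li, Wei and Zhang \cite{LWZ20} on $\Ker(\Lambda)^\perp$ with Wei's quantitative Gearhart--Pr\"uss theorem \cite{Wei21}, and then deduces \eqref{eq:w0bd2} by squaring and integrating. The extra details you supply (invariance of $\Ker(\Lambda)^\perp$ under $\cL$ and $\Lambda$, maximal accretivity of $-\cL+\delta^{-1}\Lambda$) are exactly the routine verifications the paper leaves implicit, and your caveat about checking that the \cite{LWZ20} bound is stated on the full orthogonal complement of $\Ker(\Lambda)$ (including the $n=\pm1$ modes) is the right point to be careful about.
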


\begin{remark}\label{rem:w0}
Unfortunately, it is impossible to obtain an estimate of the form \eqref{eq:w0bd2}
for the gradient norm $\|\nabla w_0\|_\cY$. Indeed, since the operator $\Lambda$
is skew-symmetric in $\cY$, an easy calculation shows that
\[
  \frac12 \frac{\dd}{\dd \tau}\,\|w_0\|_\cY^2 \,=\, \langle w_0\,,\bigl(\cL-\delta^{-1}
  \Lambda\bigr)w_0\rangle_\cY \,=\, \langle w_0\,,\cL w_0\rangle_\cY \,=\,
  -\|\nabla w_0\|_{\cY}^2 + \|w_0\|_{\cY}^2\,,
\]
where the last equality is obtained after integrating by parts. It follows that
\[
  \|w_0(\tau)\|_\cY^2 + 2\int_0^\tau \|\nabla w_0(s)\|_{\cY}^2\dd s \,=\,
  \|\phi_0\|_\cY^2 + 2\int_0^\tau \|w_0(s)\|_{\cY}^2\dd s\,,\qquad
  \forall\,\tau \ge 0\,.
\]
Taking the limit $\tau \to +\infty$ and using \eqref{eq:w0bd1}, we see that
$\int_0^\infty \|\nabla w_0(s)\|_{\cY}^2\dd s \ge \frac12 \|\phi_0\|_\cY^2$,
a lower bound that holds uniformly for all $\delta \in (0,1)$. 
\end{remark}

Our main result regarding the linear equation \eqref{def:w0} is the following
integrated estimate for the weighted norm $\||\xi|w_0\|_\cY$. 
\begin{proposition}\label{prop:ED2}
For any $\gamma > 1/8$, there exists a constant $C_2 > 0$ such that,
for all $\delta \in (0,1)$ and all $\phi_0 \in \cY \cap \Ker(\Lambda)^\perp$
satisfying
\[
  \|\phi_0\|_\gamma \,:=\, \sup_{\xi \in \R^2}e^{\gamma |\xi|^2} |\phi_0(\xi)|
  \,<\, \infty\,,
\]
the solution of \eqref{def:w0} satisfies
\begin{equation}\label{eq:w0bd3}
  \int_0^\infty \||\xi| w_0(\tau)\|_\cY^2\dd\tau \,\le\, C_2 \delta^{1/3}
  \log\Bigl(\frac{2}{\delta}\Bigr)\,\|\phi_0\|_\gamma^2\,.
\end{equation}
\end{proposition}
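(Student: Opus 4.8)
The plan is to interpolate the enhanced–dissipation smallness of Proposition~\ref{prop:ED1} against a pointwise Gaussian bound, separating the contributions of small and large times at the threshold $\tau_1:=\tfrac{\delta^{1/3}}{2c_0}\log(1/\delta)$. For $\tau>\tau_1$ I would simply use $\||\xi|w_0\|_\cY^2\le\kappa^{-1}\|\nabla w_0\|_\cY^2$, a consequence of the coercivity \eqref{eq:short1} and of the identity $\langle w_0,\cL w_0\rangle_\cY=\|w_0\|_\cY^2-\|\nabla w_0\|_\cY^2$ (here $\int w_0(\tau)\dd\xi=0$, since $\phi_0\in\Ker(\Lambda)^\perp\supset\cY_0$ and this property is preserved by the flow). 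Because $\Lambda$ is skew–symmetric in $\cY$, the energy identity for \eqref{def:w0} reads $\|w_0(\tau)\|_\cY^2+2\int_\sigma^\tau\|\nabla w_0\|_\cY^2\dd s=\|w_0(\sigma)\|_\cY^2+2\int_\sigma^\tau\|w_0\|_\cY^2\dd s$; letting $\tau\to\infty$ and invoking Proposition~\ref{prop:ED1} gives $\int_{\tau_1}^\infty\|\nabla w_0\|_\cY^2\dd s\le C\,e^{-2c_0\tau_1/\delta^{1/3}}\|\phi_0\|_\cY^2=C\delta\,\|\phi_0\|_\cY^2$, hence $\int_{\tau_1}^\infty\||\xi|w_0\|_\cY^2\dd\tau\le C\delta\,\|\phi_0\|_\cY^2$, far below the target.

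On $(0,\tau_1)$, where $\phi_0$ is not yet damped in $\cY$, I would split, for a parameter $R>0$,
\[
  \||\xi|w_0(\tau)\|_\cY^2\;\le\;R^2\|w_0(\tau)\|_\cY^2\;+\;C\,e^{-cR^2}\,M(\tau)^2\,,\qquad M(\tau):=\sup_{\xi\in\R^2}e^{b|\xi|^2}|w_0(\xi,\tau)|\,,
\]
with $b$ fixed in $(\tfrac18,\min\{\gamma,\tfrac14\})$; the tail is estimated via $\int_{\{|\xi|>R\}}|\xi|^2e^{(1/4-2b)|\xi|^2}\dd\xi\le Ce^{-cR^2}$ with $c=b-\tfrac18>0$, which is exactly where the hypothesis $\gamma>\tfrac18$ enters. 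After integration over $(0,\tau_1)$ the first term contributes at most $R^2C_1\delta^{1/3}\|\phi_0\|_\cY^2$ by Proposition~\ref{prop:ED1}, so it only remains to bound $\sup_{[0,\tau_1]}M$ by a fixed power of $1/\delta$. For this I would write Duhamel's formula for \eqref{def:w0} based on the \emph{local} drift–diffusion semigroup $e^{\tau(\cL-\delta^{-1}U_0\cdot\nabla)}$, keeping only $-\delta^{-1}\BS[w_0]\cdot\nabla\Omega_0$ as a forcing. The decisive point is that $U_0$ is azimuthal, so $U_0\cdot\xi\equiv0$: setting $w_0=e^{-b|\xi|^2}v$, the function $v$ solves a scalar parabolic equation whose zeroth–order coefficient equals $1-4b+(4b^2-b)|\xi|^2\le1-4b$, uniformly in $\delta$, so the maximum principle gives $\|e^{\tau(\cL-\delta^{-1}U_0\cdot\nabla)}\|_{L^\infty(e^{b|\xi|^2})\to L^\infty(e^{b|\xi|^2})}\le e^{(1-4b)\tau}$.

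Since $e^{b|\xi|^2}|\nabla\Omega_0|$ is bounded ($b<\tfrac14$) and $\|\BS[w_0]\|_{L^\infty}\le C\|w_0\|_{L^1}^{1/2}\|w_0\|_{L^\infty}^{1/2}\le C\|w_0\|_\cY^{1/2}M^{1/2}$, Duhamel yields, for $\tau\le\tau_1$ (so $e^{(1-4b)\tau}\le2$), the inequality $M(\tau)\le2\|\phi_0\|_\gamma+C\delta^{-1}\int_0^\tau\|w_0(s)\|_\cY^{1/2}M(s)^{1/2}\dd s$. Bounding $\int_0^{\tau_1}\|w_0\|_\cY\dd s\le\tau_1^{1/2}\bigl(\int_0^\infty\|w_0\|_\cY^2\dd s\bigr)^{1/2}\le C\delta^{1/3}(\log\tfrac1\delta)^{1/2}\|\phi_0\|_\cY$ (Cauchy–Schwarz and Proposition~\ref{prop:ED1}) and applying Young's inequality, a bootstrap on $\sup_{[0,\tau_1]}M$ gives $\sup_{[0,\tau_1]}M\le C\delta^{-4/3}\log(1/\delta)\,\|\phi_0\|_\gamma$. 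Feeding this back with the choice $R^2=\tfrac3c\log(1/\delta)$ makes each contribution on $(0,\tau_1)$ of order $\delta^{1/3}\log(1/\delta)\|\phi_0\|_\gamma^2$; adding the large–time estimate and using $\|\phi_0\|_\cY\le C\|\phi_0\|_\gamma$ and $\log(1/\delta)\le\log(2/\delta)$ gives the result (for $\delta$ bounded away from $0$ the statement is trivial from $\int_0^\infty\||\xi|w_0\|_\cY^2\dd\tau\le\kappa^{-1}(\tfrac12+C\delta^{1/3})\|\phi_0\|_\cY^2$).

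The main obstacle is precisely the short–time control of the Gaussian–weighted sup–norm $M(\tau)$: equation \eqref{def:w0} carries the singular factor $\delta^{-1}$, and, unlike in Proposition~\ref{prop:ED1}, no energy identity survives once the Gaussian weight exceeds the critical exponent $\tfrac14$ — indeed, as Remark~\ref{rem:w0} already signals, even $\|\nabla w_0\|_\cY$ admits no gain in $\delta$. The resolution rests on two structural facts that neutralise the $\delta^{-1}$ on the time scale $\tau_1$: transport by $U_0$ does not amplify radial Gaussian weights (so the maximum–principle growth rate is the $\delta$–independent constant $1-4b$), and $\nabla\Omega_0$ has full Gaussian decay (so the nonlocal term is a bounded perturbation into $L^\infty(e^{b|\xi|^2})$); the compensating smallness then comes entirely from $\int_0^\infty\|w_0\|_\cY^2\dd\tau=\cO(\delta^{1/3})$. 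The remaining powers of $\delta$ and logarithms are immaterial, since they are absorbed by the factor $e^{-cR^2}$ with $R^2\sim\log(1/\delta)$.
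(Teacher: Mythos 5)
Your argument is correct in substance, but it takes a genuinely different route from the paper. The paper does not split in time at all: it localizes in \emph{space}, setting $g=\chi\,p^{1/2}w_0$ with a radial cutoff $\chi$ equal to $e^{-\rho^2}$ on $\{|\xi|\le\rho/2\}$ and to $1$ on $\{|\xi|\ge\rho\}$, where $\rho^2=N\log(2/\delta)$, and runs a plain $L^2$ energy estimate for $g$. The point there is that the nonlocal part of $\Lambda$, once conjugated, carries the factor $|\xi|\,\chi\,p^{-1/2}$ (because $\nabla\Omega_0$ has full Gaussian decay), which is uniformly $\cO(\delta)$ by the choice of $\rho$; the singular prefactor $\delta^{-1}$ is thus cancelled pointwise and the smallness comes entirely from $\int_0^\infty\|w_0\|_\cY^2\dd\tau=\cO(\delta^{1/3})$, exactly the quantity you also rely on. The hypothesis $\gamma>1/8$ enters both proofs in the same way, to make $p^{1/2}\phi_0$ Gaussian-decaying. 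What your route buys is a transparent mechanism (transport by $U_0$ is tangent to circles, hence does not amplify radial Gaussian weights in $L^\infty$, and the inward drift $(\tfrac12-4b)\xi$ for $b>1/8$ makes the maximum principle legitimate); what it costs is an additional \emph{a priori} qualitative input, namely that $M(\tau)=\sup_\xi e^{b|\xi|^2}|w_0(\xi,\tau)|$ is finite and locally bounded on $[0,\tau_1]$, without which the Volterra inequality $M(\tau)\le 2\|\phi_0\|_\gamma+C\delta^{-1}\int_0^\tau\|w_0\|_\cY^{1/2}M^{1/2}\dd s$ cannot be closed by Young or Bihari. This is not a fatal gap --- it follows from a local fixed-point argument for \eqref{def:w0} in the weighted space $L^\infty(e^{b|\xi|^2})$, in the spirit of Lemma~\ref{lem:Lqweight} --- but it needs to be stated. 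Two cosmetic points: the inclusion you invoke should read $\Ker(\Lambda)\supset\cY_0$, hence $\phi_0\perp\Omega_0$ in $\cY$, which is what gives $\int w_0\dd\xi=0$; and the bootstrap actually yields $\sup_{[0,\tau_1]}M\le C\delta^{-4/3}\bigl(\log(1/\delta)\bigr)^{3/2}\|\phi_0\|_\gamma$, an extra half power of the logarithm that is of course still absorbed by $e^{-cR^2}$ with $R^2\sim\log(1/\delta)$.
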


\begin{proof}
Since $\gamma > 1/8$, we first observe that $\|\phi_0\|_\cY \le C \|\phi_0\|_\gamma$ for
some constant $C > 0$ depending on $\gamma$. Next, we give ourselves a cut-off parameter
$\rho > 0$ satisfying $\rho^2 = N \log(2/\delta)$, for some (large) integer $N$ that will
be chosen later, depending only on $\gamma$. In view of \eqref{eq:w0bd2}, we have
\begin{equation}\label{eq:w0bdfirst}
  \int_0^\infty \||\xi| \mathbf{1}_{\{|\xi| \le \rho\}}w_0(\tau)\|_\cY^2\dd\tau
  \,\le\, \rho^2 \int_0^\infty \|w_0(\tau)\|_\cY^2\dd\tau \,\le\, C_1N \delta^{1/3}
  \log\Bigl(\frac{2}{\delta}\Bigr)\|\phi_0\|_\cY^2\,,
\end{equation}
which gives the first half of the desired bound. To complete the proof of \eqref{eq:w0bd3},
we need an integral estimate of the quantity $\||\xi| \mathbf{1}_{\{|\xi| > \rho\}}
w_0(\tau)\|_\cY^2$. This can be obtained using appropriate energy estimates, as we now explain. 

First of all, we introduce the function $h(\xi,\tau) = p(\xi)^{1/2}w_0(\xi,t)$, where
$p(\xi) = e^{|\xi|^2/4}$. It is easy to verify that $\partial_\tau h = \bigl(L - \delta^{-1}
\hat\Lambda\bigr)h$, where
\[
  L \,=\, \Delta - \frac{|\xi|^2}{16} + \frac12\,, \qquad
  \hat\Lambda h \,=\, U_0 \cdot\nabla h \,+\, p^{1/2}\,\BS\bigl[p^{-1/2}h\bigr]
  \cdot\nabla\Omega_0\,.
\]
As asserted in Proposition~\ref{prop:Lam}, the operator $L$ is self-adjoint in
the space $L^2(\R^2)$, whereas $\hat\Lambda$ is skew-adjoint. Moreover, we have
$\|h\|_{L^2} = \|w_0\|_\cY$ by definition. We are interested in estimating the
$L^2$ norm of the function $|\xi| h(\xi,\tau)$ outside the disk of radius $\rho$
centered at the origin. The desired localization is obtained by setting
$g(\xi,\tau) = \chi(\xi)h(\xi,\tau)$, where $\chi(\xi) = \psi(|\xi|)$ and
$\psi : \R_+ \to (0,1]$ is a smooth function satisfying
\[
  \psi(r) \,=\, \begin{cases} e^{-\rho^2} & \text{if }~ r \le \rho/2\,,\\
  1 & \text{if }~ r \ge \rho\,,\end{cases} \qquad \text{and}\qquad 0 \le \psi'(r) \le 1
  \quad \forall\, r > 0\,.  
\]
A direct calculation leads to the evolution equation $\partial_\tau g = \bigl(L_\chi -
\delta^{-1}\hat\Lambda_\chi\bigr)g$, where
\[
  L_\chi g \,=\, \Delta g - \frac{2\nabla\chi}{\chi}\cdot\nabla g - \frac{|\xi|^2}{16}\,g
  + \Bigl(\frac12 + \frac{2|\nabla \chi|^2}{\chi^2} - \frac{\Delta \chi}{\chi}\Bigr)g\,,
\]
and $\hat\Lambda_\chi g = U_0\cdot\nabla g + \tilde\Lambda_\chi g$ with 
\begin{equation}\label{eq:tLamdef}
  \tilde\Lambda_\chi g \,=\, \chi p^{1/2}\,\BS\bigl[p^{-1/2}\chi^{-1}g\bigr]
  \cdot\nabla\Omega_0 \,=\, -\frac{1}{8\pi}\,\chi p^{-1/2}\,\xi\cdot
  \BS\bigl[p^{-1/2}\chi^{-1}g\bigr]\,.
\end{equation}

The idea is now to perform a standard energy estimate in $L^2(\R^2)$, namely
\[
  \frac12 \frac{\dd}{\dd \tau}\,\|g\|_{L^2}^2 \,=\, -\|\nabla g\|_{L^2}^2 -\frac{1}{16}
  \,\||\xi| g\|_{L^2}^2 + \frac12\,\|g\|_{L^2}^2 + \Bigl\|\frac{|\nabla \chi|}{\chi}\,g\Bigr\|_{L^2}^2
  -\frac{1}{\delta}\,\langle g\,, \tilde \Lambda_\chi g\rangle_{L^2}\,.
\]
In particular, since $|\nabla\chi| \le 1$ and $|g| \le \chi^{-1}|g| = |h|$, we have
\begin{equation}\label{eq:genergy}
  \frac18 \int_0^\infty \||\xi|g(\tau)\|_{L^2}^2\dd\tau \,\le\, \|g_0\|_{L^2}^2
  + \int_0^\infty\Bigl(3 \|h(\tau)\|_{L^2}^2 + \frac{2}{\delta}
  \,\bigl|\langle g(\tau)\,,\tilde \Lambda_\chi g(\tau)\rangle_{L^2}\bigr|\Bigr)\dd \tau\,,
\end{equation}
where $g_0 = \chi p^{1/2} \phi_0$. It remains to estimate the various terms in the
right-hand side of \eqref{eq:genergy}. We already know from \eqref{eq:w0bd2} that
\[
  \int_0^\infty \|h(\tau)\|_{L^2}^2\dd\tau \,=\, \int_0^\infty \|w_0(\tau)\|_\cY^2\dd\tau
  \,\le\, C_1 \delta^{1/3}\|\phi_0\|_\cY^2\,.
\]
To bound the other terms, the following elementary observation will be useful.
Since $\chi(\xi) = e^{-\rho^2}$ when $|\xi| \le \rho/2$ and $\chi(\xi) \le 1$
otherwise, we have for any $\mu \in (0,4)$:
\begin{equation}\label{eq:muchi}
  \sup_{\xi \in \R^2}\Bigl(\chi(\xi)\,e^{-\mu|\xi|^2}\Bigr) \,\le\, \max\Bigl(e^{-\rho^2},
  \,e^{-\mu\rho^2/4}\Bigr) \,=\, e^{-\mu\rho^2/4} \,\le\, \frac{\delta}{2}\,,
\end{equation}
provided $\rho^2 = N \log(2/\delta)$ with $N \ge 4/\mu$.

As a first application of \eqref{eq:muchi}, we consider the initial data $g_0 = \chi
p^{1/2} \phi_0$. Taking $\mu \in (0,4)$ such that $2\mu \le \gamma - 1/8$, we can bound
\[
  |g_0(\xi)| \,=\, \chi(\xi)\,e^{|\xi|^2/8}|\phi_0(\xi)| \,\le\, 
  \chi(\xi)\,e^{-(\gamma-1/8)|\xi|^2}\|\phi_0\|_\gamma \,\le\, e^{-\mu|\xi|^2}
  \Bigl(\chi(\xi)\,e^{-\mu|\xi|^2}\Bigr)\|\phi_0\|_\gamma\,, 
\]
and using \eqref{eq:muchi} we deduce that $\|g_0\|_{L^2} \le C \delta \|\phi_0\|_\gamma$.
Similarly, if $\mu \le 1/16$, we have
\[
  \chi(\xi) p(\xi)^{-1/2} \,=\, \chi(\xi)\,e^{-|\xi|^2/8} \,\le\, e^{-\mu|\xi|^2}
  \Bigl(\chi(\xi)\,e^{-\mu|\xi|^2}\Bigr)\,, \quad \text{hence}\quad
  \bigl\||\xi| \chi p^{-1/2}\bigr\|_{L^4} \,\le\, C\delta\,.
\]
Using \eqref{eq:tLamdef} and applying H\"older's inequality, we thus obtain
\begin{align*}
  \bigl|\langle g\,,\tilde \Lambda_\chi g\rangle_{L^2}\bigr| \,\le\,
  \bigl\|g\,\chi p^{-1/2}\,\xi\cdot \BS\bigl[p^{-1/2}h\bigr]\bigr\|_{L^1} \,&\le\, 
  C \|g\|_{L^2} \,\bigl\||\xi| \chi p^{-1/2}\bigr\|_{L^4}\,\bigl\|\BS\bigl[p^{-1/2}h\bigr]\bigr\|_{L^4} \\
  \,&\le\, C \delta\,\|g\|_{L^2}\,\|p^{-1/2}h\|_{L^{4/3}} \,\le\, C \delta \,\|h\|_{L^2}^2\,.
\end{align*}
Altogether, we deduce from \eqref{eq:genergy} that
\begin{equation}\label{eq:w0bdsec}
  \int_0^\infty \||\xi|g(\tau)\|_{L^2}^2\dd\tau \,\le\, C\biggl(\|g_0\|_{L^2}^2 + \int_0^\infty
  \|h(\tau)\|_{L^2}^2\dd\tau\biggr) \,\le\, C\Bigl(\delta^2 \|\phi_0\|_\gamma^2 +
  \delta^{1/3}\|\phi_0\|_\cY^2\Bigr)\,.
\end{equation}
Observing that $\||\xi|g(\tau)\|_{L^2} \ge \||\xi| \mathbf{1}_{\{|\xi| > \rho\}}h(\tau)\|_{L^2} = 
\||\xi| \mathbf{1}_{\{|\xi| > \rho\}}w_0(\tau)\|_\cY$, we see that estimate \eqref{eq:w0bd3}
is a direct consequence of \eqref{eq:w0bdfirst} and \eqref{eq:w0bdsec}. 
\end{proof}

It is unclear if enhanced dissipation estimates of the form \eqref{eq:w0bd1}
hold in weighted $L^q$ norms for $q > 2$. The following bound is certainly
not optimal, but will be sufficient for our purposes.

\begin{lemma}\label{lem:Lqweight}
Assume that $\phi_0 \in \cY \cap \Ker(\Lambda)^\perp$ satisfies $p^{1/2}\phi_0 \in
L^q(\R^2)$ for some $q \in (2,+\infty)$. Then there exists a constant $C_3 > 0$
such that, for any $\delta \in (0,1)$, the solution of \eqref{def:w0} satisfies
\begin{equation}\label{eq:sgbounds}
  \sup_{\tau \ge 0} \|p^{1/2}w_0(\tau)\|_{L^q} \,\le\, C_3\Bigl(
  \|p^{1/2}\phi_0\|_{L^q} + \frac{1}{\delta}\,\|\phi_0\|_\cY \Bigr)\,.
\end{equation}
\end{lemma}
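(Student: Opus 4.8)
The plan is to argue by Duhamel's formula, taking as the ``free'' evolution the semigroup generated by $L-\delta^{-1}U_0\cdot\nabla$ (which I claim is uniformly bounded on $L^q$) and treating the remaining nonlocal term as a mild perturbation. Recall from the proof of Proposition~\ref{prop:ED2} that $h:=p^{1/2}w_0$ satisfies $\partial_\tau h = (L-\delta^{-1}\hat\Lambda)h$ with $L=\Delta-|\xi|^2/16+1/2$ and $\hat\Lambda h = U_0\cdot\nabla h + p^{1/2}\BS[p^{-1/2}h]\cdot\nabla\Omega_0$; since $p^{-1/2}h=w_0$ and $\nabla\Omega_0 = -\frac{\xi}{8\pi}p^{-1}$, the nonlocal part is $\tilde\Lambda h := -\frac{1}{8\pi}\,p^{-1/2}\,\xi\cdot\BS[w_0]$. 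I would therefore rewrite the equation as $\partial_\tau h + \delta^{-1}U_0\cdot\nabla h = Lh - \delta^{-1}\tilde\Lambda h$, and denote by $P(\tau)$ the semigroup generated on $L^q(\R^2)$ by $L-\delta^{-1}U_0\cdot\nabla$. Duhamel-ing only the term $-\delta^{-1}\tilde\Lambda h$ is essential: $\tilde\Lambda h$ is harmless, whereas the advection $\delta^{-1}U_0\cdot\nabla$ cannot be treated perturbatively because of the factor $\delta^{-1}$.

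The heart of the matter is the bound $\|P(\tau)\|_{L^q\to L^q}\le 2$, \emph{uniform in $\tau\ge0$ and $\delta\in(0,1)$}. To get it I would write $L-\delta^{-1}U_0\cdot\nabla = \tfrac12 + \cM$ with $\cM:=\Delta-\tfrac1{16}|\xi|^2-\delta^{-1}U_0\cdot\nabla$. Since $U_0$ is smooth, bounded and divergence-free and the potential $\tfrac1{16}|\xi|^2$ is nonnegative, $\cM$ generates a positivity-preserving (Feynman--Kac) semigroup, $e^{\tau\cM}g(\xi)=\mathbb{E}_\xi\bigl[g(X_\tau)\exp\bigl(-\tfrac1{16}\int_0^\tau|X_s|^2\dd s\bigr)\bigr]$, where $X$ solves $\dd X = -\delta^{-1}U_0(X)\dd\tau + \sqrt2\,\dd B$. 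Two estimates suffice. In $L^2$, the skew-symmetric drift $U_0\cdot\nabla$ drops from the energy identity (because $\nabla\cdot U_0=0$) and the spectral bound $-\Delta+\tfrac1{16}|\xi|^2\ge\tfrac12$ gives $\langle\cM f,f\rangle_{L^2}\le-\tfrac12\|f\|_{L^2}^2$, hence $\|e^{\tau\cM}\|_{L^2\to L^2}\le e^{-\tau/2}$. In $L^\infty$, positivity yields $\|e^{\tau\cM}\|_{L^\infty\to L^\infty}=\|e^{\tau\cM}\mathbf 1\|_{L^\infty}$; but $U_0(\xi)$ is orthogonal to $\xi$, so $\dd|X_s|^2$ carries no drift term and $|X_s|$ has the same law as under Brownian motion started at $|\xi|$, independently of $\delta$. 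Consequently $e^{\tau\cM}\mathbf 1$ equals the explicit solution of $\partial_\tau u=\Delta u-\tfrac1{16}|\xi|^2u$, $u(\cdot,0)\equiv1$, namely $u(\xi,\tau)=(\cosh(\tau/2))^{-1}\exp(-\tfrac18\tanh(\tau/2)|\xi|^2)$, so $\|e^{\tau\cM}\|_{L^\infty\to L^\infty}\le(\cosh(\tau/2))^{-1}\le 2e^{-\tau/2}$. Interpolating between $L^2$ and $L^\infty$ gives $\|e^{\tau\cM}\|_{L^q\to L^q}\le 2e^{-\tau/2}$ for all $q\in[2,\infty)$, whence $\|P(\tau)\|_{L^q\to L^q}=e^{\tau/2}\|e^{\tau\cM}\|_{L^q\to L^q}\le 2$.

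With this in hand, Duhamel's formula reads
\[
  h(\tau) \,=\, P(\tau)\bigl(p^{1/2}\phi_0\bigr)\,-\,\frac1\delta\int_0^\tau P(\tau-s)\,\tilde\Lambda h(s)\dd s\,,
\]
so that $\|h(\tau)\|_{L^q}\le 2\|p^{1/2}\phi_0\|_{L^q}+\frac2\delta\int_0^\tau\|\tilde\Lambda h(s)\|_{L^q}\dd s$. For the source term, $\|\tilde\Lambda h(s)\|_{L^q}=\tfrac1{8\pi}\|\,|\xi|e^{-|\xi|^2/8}\,\BS[w_0(s)]\|_{L^q}$; since $|\xi|e^{-|\xi|^2/8}$ lies in every $L^m(\R^2)$, H\"older's inequality, the Hardy--Littlewood--Sobolev bound $\|\BS[g]\|_{L^{2q}}\le C\|g\|_{L^{r}}$ (with $\tfrac1{2q}=\tfrac1r-\tfrac12$, so $r=\tfrac{2q}{q+1}\in(1,2)$), and the embedding $\cY\hookrightarrow L^{r}(\R^2)$ for $r<2$ (write $g=p^{-1/2}\cdot p^{1/2}g$ and apply H\"older) combine to give $\|\tilde\Lambda h(s)\|_{L^q}\le C_q\|w_0(s)\|_\cY$. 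Finally $\int_0^\tau\|w_0(s)\|_\cY\dd s\le\int_0^\infty\|w_0(s)\|_\cY\dd s\le C\|\phi_0\|_\cY$ by the enhanced decay \eqref{eq:w0bd1} (or, more elementarily, by $\langle w_0,(\cL-\delta^{-1}\Lambda)w_0\rangle_\cY=\langle w_0,\cL w_0\rangle_\cY$ together with $w_0(s)\in\Ker(\Lambda)^\perp$, on which $\cL$ has strictly negative spectrum). Collecting these estimates gives \eqref{eq:sgbounds} with $C_3$ depending only on $q$.

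The main obstacle is precisely the uniform-in-$\delta$ bound on $P(\tau)$: a naive $L^q$ energy estimate for $L$ is useless when $q>2$, since the numerical range of $L$ on $L^q$ is not contained in $(-\infty,0]$ and one only extracts exponential growth, while perturbing $e^{\tau L}$ by the advection term is ruined by the $\delta^{-1}$. What rescues the argument is the azimuthal structure of $U_0$: it makes the advection both an $L^q$-isometry that vanishes in the energy identity and, crucially, leaves the radial motion of the underlying diffusion — hence the confining effect of the potential $-\tfrac1{16}|\xi|^2$ that produces the $(\cosh(\tau/2))^{-1}$ decay — entirely independent of $\delta$.
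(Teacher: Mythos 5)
Your proof is correct, but it takes a genuinely different route from the paper's. The paper writes the whole perturbation in divergence form, $\delta^{-1}\Lambda w_0 = \delta^{-1}\nabla\cdot\bigl(U_0 w_0 + v_0\Omega_0\bigr)$, and treats it entirely by Duhamel around $e^{\tau\cL}$, invoking the weighted smoothing estimate $\|p^{1/2}e^{\tau\cL}\nabla\phi\|_{L^q}\le C e^{-\tau/2}a(\tau)^{-(1-1/q)}\|p^{1/2}\phi\|_{L^2}$ from \cite{Ga18}; the integrable singularity $a(\tau)^{-(1-1/q)}$ (this is where $q<\infty$ enters) together with the uniform bound $\|U_0 w_0(s)+v_0(s)\Omega_0\|_\cY\le C\|\phi_0\|_\cY$ then gives \eqref{eq:sgbounds} with the factor $\delta^{-1}$ coming from the prefactor of the Duhamel integral. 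You instead keep the advection $\delta^{-1}U_0\cdot\nabla$ inside the ``free'' semigroup and only treat the nonlocal Biot--Savart term perturbatively; the key new input is your uniform-in-$\delta$ bound $\|P(\tau)\|_{L^q\to L^q}\le 2$, obtained by combining the $L^2$ energy identity (where the skew-symmetric drift drops out) with an $L^\infty$ Feynman--Kac bound exploiting $U_0(\xi)\cdot\xi=0$, so that the radial part of the underlying diffusion --- and hence the confining effect of the potential $-|\xi|^2/16$ --- is independent of $\delta$; I checked the Mehler computation and the interpolation, and they are correct. Each approach buys something: the paper's is shorter given the semigroup estimates of \cite{Ga18} as a black box and requires no probabilistic input, whereas yours dispenses with the gradient-smoothing estimate entirely (no derivative has to be moved onto the semigroup) and, if one inserts the enhanced dissipation \eqref{eq:w0bd1} into $\int_0^\infty\|w_0(s)\|_\cY\dd s\le C\delta^{1/3}\|\phi_0\|_\cY$, actually sharpens the second term in \eqref{eq:sgbounds} to $\delta^{-2/3}\|\phi_0\|_\cY$.
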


\begin{proof}
We recall that the linear operator $\cL$ is the generator of a
strongly continuous semigroup in $\cY$ which satisfies, for any $q \ge 2$ and
any $\tau > 0$, the following estimates
\begin{equation}\label{eq:sgbounds2}
  \bigl\|p^{1/2}e^{\tau\cL}\phi\bigr\|_{L^q} \,\le\, C \|p^{1/2}\phi\|_{L^q}\,, \qquad
  \bigl\|p^{1/2}e^{\tau\cL}\nabla\phi\bigl\|_{L^q} \,\le\, \frac{C\,e^{-\tau/2}}{a(\tau)^{1-1/q}}
  \,\|p^{1/2}\phi\|_{L^2}\,,
\end{equation}
where $a(\tau) = 1-e^{-\tau}$, see \cite[Section~5.1]{Ga18}. To prove \eqref{eq:sgbounds}
we start from the integral formulation of equation \eqref{def:w0}, namely
\[
  w_0(\tau) \,=\, e^{\tau\cL}\phi_0 - \frac{1}{\delta}\int_0^\tau
  e^{(\tau-s)\cL}\,\nabla\cdot\Bigl(U_0 w_0(s) + v_0(s) \Omega_0\Bigr)\dd s\,,
\]
where $v_0 = \BS[w_0]$. Using estimates \eqref{eq:sgbounds2} we thus find
\begin{equation}\label{eq:Lqinteg}
  \|p^{1/2}w_0(\tau)\|_{L^q} \,\le\, C\|p^{1/2}\phi_0\|_{L^q} + \frac{C}{\delta}
   \int_0^\tau \frac{e^{-(\tau-s)/2}}{a(\tau-s)^{1-1/q}}\,\bigl\|U_0 w_0(s) + v_0(s)
  \Omega_0\bigr\|_\cY \dd s\,.
\end{equation}
To bound the integral term, we observe that $\|U_0 w_0(s)\|_\cY \le \|U_0\|_{L^\infty}
\|w_0(s)\|_\cY \le C \|\phi_0\|_\cY$ and that $\|v_0(s)\Omega_0\|_\cY \le
\|p^{1/2}\Omega_0\|_{L^4} \|v_0(s)\|_{L^4} \le C \|w_0(s)\|_{L^{4/3}} \le C \|w_0(s)\|_\cY
\le C \|\phi_0\|_\cY$. Since $2 < q < \infty$, we also have 
\[
  \int_0^\tau \frac{e^{-(\tau-s)/2}}{a(\tau-s)^{1-1/q}}\dd s \,\le\,
  \int_0^\infty \frac{e^{-s/2}}{a(s)^{1-1/q}}\dd s \,<\, \infty\,,
\]
hence \eqref{eq:sgbounds} follows directly from \eqref{eq:Lqinteg}. 
\end{proof}

\subsection{Energy estimates}\label{ssec42}

We now come back to the evolution equation \eqref{eq:wevol3} for the 
vorticity perturbation $w(\xi,\tau)$. We introduce the following decomposition 
\begin{equation}\label{eq:wdecomp}
  w(\xi,\tau) \,=\, w_0(\xi,\tau) + \tilde w(\xi,\tau)\,,
\end{equation}
where $w_0$ is the solution of the linear equation \eqref{def:w0}. The correction
$\tilde w(\xi,\tau)$ satisfies
\begin{equation}\label{eq:twevol}
  \partial_\tau \tilde w + \frac{1}{\delta}\,\Lambda \tilde w + \frac{1}{\delta}\,
  \cA[w_0 + \tilde w] + \cB[w_0 + \tilde w,w_0 + \tilde w] \,=\, \cL \tilde w
  - \frac{1}{\delta^2}\,\cR_\app\,,
\end{equation}
where $\cA$ is the linear operator \eqref{def:opcA} and $\cB$ the bilinear map
\eqref{def:opcB}.  Since $w_0(0) = w(0) = \phi_0$, the correction $\tilde w$
vanishes at initial time, and our goal is to show that this quantity remains
small in an appropriate function space for all $\tau \in [0,\log(T/t_0)]$. As in
Section~\ref{sec3}, the proof is much simpler if we assume that the observation
time $T$ is small compared to $T_0$.  So, for the sake of clarity, we first
provide the details of the argument in the simpler situation, and we return to
the general case at the end of this section.

\subsubsection{Relaxation for small time}\label{sssec421}

Under the assumption that $T/T_0 \ll 1$, the solution of \eqref{eq:twevol} with zero
initial data can be controlled using a simple energy estimate in the function
space $\cY$, as in Section~\ref{ssec31}.

\begin{proposition}\label{prop:twshort}
There exists a constant $C_4 > 0$ such that, if $T/T_0 \ll 1$ and $\delta
\in (0,1)$, the solution of \eqref{eq:twevol} with zero initial data
satisfies
\begin{equation}\label{eq:twshort}
  \|\tilde w(\tau)\|_\cY \,\le\, C_4 \Bigl(\frac{\epsilon^5}{\delta^2} + \epsilon^2\Bigr)
  + C_4\,\delta^{1/6}\Bigl(\log\frac{2}{\delta}\Bigr)^{1/2}\,\frac{t}{T_0}\,
  \Theta\Bigl(\frac{t_0}{T_0}\Bigr)\,,
\end{equation}
for all $\tau \in \bigl[0,\log(T/t_0)\bigr]$, where $\Theta(s) = s\bigl(1+\log_+(1/s)
\bigr)^{1/2}$ and $\log_+(s) = \max(\log(s),0)$. 
\end{proposition}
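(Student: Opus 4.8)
\textbf{Proof plan for Proposition~\ref{prop:twshort}.} The plan is to imitate the energy estimate behind Proposition~\ref{prop:smalltime}, now applied to equation \eqref{eq:twevol} for $\tilde w$, treating every term that involves the linear component $w_0$ as an external forcing controlled by the enhanced dissipation estimates of Propositions~\ref{prop:ED1} and \ref{prop:ED2}. First I would record two preliminary facts: since $\phi_0 \in \cZ\cap\Ker(\Lambda)^\perp$, the solution $w_0$ of \eqref{def:w0} stays in $\Ker(\Lambda)^\perp$, so $\int_{\R^2}\tilde w(\xi,\tau)\dd\xi = 0$ for all $\tau$; and, for $1/8 < \gamma < 1/4$, one has $\|\phi_0\|_\cY \le C\|\phi_0\|_\gamma \le C(t_0/T_0)$, because by \eqref{def:winit0} and \eqref{def:Omapp} we have $\Omega_0 - \Omega_\app(\cdot,t_0) = -(\epsilon_0^2\Omega_2 + \epsilon_0^3\Omega_3 + \epsilon_0^4\Omega_4)$ with $\Omega_2,\Omega_3,\Omega_4 \in \cZ$ and $\epsilon_0^2/\delta = t_0/T_0$. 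Setting $\cE(\tau)=\|\tilde w(\tau)\|_\cY^2$ and $\cF(\tau)=\cF[\tilde w(\tau)]$ as in \eqref{eq:EFdef}, I take the $\cY$-inner product of \eqref{eq:twevol} with $\tilde w$; then $\langle\tilde w,\Lambda\tilde w\rangle_\cY = 0$, $\langle\tilde w,\cL\tilde w\rangle_\cY \le -\kappa\cF$, and $\delta^{-2}|\langle\tilde w,\cR_\app\rangle_\cY| \le C(\epsilon^5/\delta^2 + \epsilon^2)\cE^{1/2}$ exactly as in \eqref{eq:short1}--\eqref{eq:short2} (recall $\epsilon^2/\delta = t/T_0$, now with $t = t_0 e^\tau$).

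For the remaining terms I split $\cA[w_0+\tilde w] = \cA[\tilde w] + \cA[w_0]$ and $\cB[w_0+\tilde w,w_0+\tilde w] = \cB[\tilde w,\tilde w] + \cB[w_0,\tilde w] + \cB[\tilde w,w_0] + \cB[w_0,w_0]$. The self-interacting pieces $\delta^{-1}\cA[\tilde w]$ and $\cB[\tilde w,\tilde w]$ are handled verbatim as in \eqref{eq:short3}--\eqref{eq:short4}, giving $\tfrac{t}{T_0}(\cF + C\cE)$ and $C\cF^{1/2}\cE$. In every term containing $w_0$ I integrate by parts so that all derivatives fall onto $p\,\tilde w$; this is forced on us by Remark~\ref{rem:w0}, which shows that no enhanced dissipation estimate holds for $\nabla w_0$. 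After the integration by parts the $w_0$-factor appears undifferentiated, and one estimates by weighted H\"older inequalities, using Lemmas~\ref{lem:BS1}--\ref{lem:BS2} for the Biot--Savart factors, the bounds $|U_\app - U_0| + |\nabla(\Omega_\app-\Omega_0)| = \cO(\epsilon^2)$ and $|E(f,z)| \le \epsilon^2|\xi| + \cK\epsilon^3$, and — where a moderate weighted $L^q$ norm of $w_0$ is unavoidable — Lemma~\ref{lem:Lqweight} interpolated against the decaying $L^2$ norm $\|w_0\|_\cY$ of Proposition~\ref{prop:ED1}, so that a net positive power of $\delta$ survives. The result is a differential inequality of the schematic form
\[
  \partial_\tau\cE + \kappa\cF \le C\Bigl(\tfrac{\epsilon^5}{\delta^2}+\epsilon^2\Bigr)\cE^{1/2}
  + C\tfrac{t}{T_0}\bigl(\cF + \cE\bigr) + C\cF^{1/2}\cE + C\|w_0\|_\cY^2\,\cE
  + C\tfrac{t}{T_0}\bigl(\|w_0\|_\cY + \||\xi|w_0\|_\cY\bigr)\bigl(\cE^{1/2} + \cF^{1/2}\bigr).
\]

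To close the estimate I run the usual continuity argument. On a maximal interval $[0,\tau_1)$ on which $\cE^{1/2}$ stays below a fixed small threshold, the coefficients $\tfrac{t}{T_0}\le T/T_0$, $\|w_0\|_\cY \le C(t_0/T_0)$, and $\cE^{1/2}$ are all small (using $T/T_0 \ll 1$ and $\delta$ small), so the $\cF$-contributions on the right are absorbed into $\kappa\cF$ on the left, leaving $\partial_\tau\cE + \tfrac\kappa2\cF \le b_1(\tau)\cE^{1/2} + b_2(\tau)\cF^{1/2}$ with $b_1 = C(\epsilon^5/\delta^2+\epsilon^2) + C\tfrac{t}{T_0}(\|w_0\|_\cY+\||\xi|w_0\|_\cY)$ and $b_2 = C\tfrac{t}{T_0}(\|w_0\|_\cY + \||\xi|w_0\|_\cY)$. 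Integrating in $\tau$, using $\cE(0)=0$, and estimating $\int_0^\tau b_2\cF^{1/2}\dd s \le (\int_0^\tau b_2^2\dd s)^{1/2}(\int_0^\tau\cF\dd s)^{1/2}$ with the $\int\cF$-factor reabsorbed, I obtain, with $M(\tau)=\sup_{[0,\tau]}\cE$, an inequality of the form $M \le CM^{1/2}\int_0^\tau b_1\dd s + C\int_0^\tau b_2^2\dd s$, hence $\cE(\tau)^{1/2}\le C\int_0^\tau b_1\dd s + C(\int_0^\tau b_2^2\dd s)^{1/2}$. The $C(\epsilon^5/\delta^2+\epsilon^2)$ part of $\int_0^\tau b_1$ integrates to $\cO(\epsilon(t)^5/\delta^2 + \epsilon(t)^2)$ just as in Proposition~\ref{prop:smalltime}; for the $w_0$-part I bound $t(s) = t_0 e^s \le t$ and apply Cauchy--Schwarz in time with Propositions~\ref{prop:ED1}--\ref{prop:ED2}, getting $\int_0^\tau \tfrac{t}{T_0}(\|w_0\|_\cY+\||\xi|w_0\|_\cY)\dd s \le C\tfrac{t}{T_0}\tau^{1/2}\delta^{1/6}(\log(2/\delta))^{1/2}\|\phi_0\|_\gamma$ and likewise $(\int_0^\tau b_2^2\dd s)^{1/2}\le C\tfrac{t}{T_0}\delta^{1/6}(\log(2/\delta))^{1/2}\|\phi_0\|_\gamma$. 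Since $\tau = \log(t/t_0)\le\log(T_0/t_0)$ when $T<T_0$, we have $\tau^{1/2}\|\phi_0\|_\gamma \le C(t_0/T_0)(1+\log_+(T_0/t_0))^{1/2} = C\Theta(t_0/T_0)$, which produces the second term of \eqref{eq:twshort}. A final check that the right-hand side of \eqref{eq:twshort} is itself below the threshold (for $\delta$ and $T/T_0$ small) gives $\tau_1 = \log(T/t_0)$, finishing the proof.

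I expect the main obstacle to be precisely the four cross terms $\cB[w_0,\tilde w]$, $\cB[\tilde w,w_0]$, $\cA[w_0]$, and $\cB[w_0,w_0]$, in which the advecting velocity or the differentiated field involves $w_0$. Because no gradient enhanced dissipation estimate is available (Remark~\ref{rem:w0}), one must integrate by parts and then reconcile the exponential weight $p = e^{|\xi|^2/4}$ with Biot--Savart integrability, arranging for the $w_0$-factor to land either in $\|w_0\|_\cY$ (which decays and is $\cO(\delta^{1/3})$ after time integration, Proposition~\ref{prop:ED1}) or in the weighted $L^q$ norms of Lemma~\ref{lem:Lqweight} (which are only $\cO(1/\delta)$ but, interpolated against the decaying $L^2$ norm, still leave a positive power of $\delta$). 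Keeping track of the exponents so that the final bound comes out as $\delta^{1/6}(\log(2/\delta))^{1/2}\tfrac{t}{T_0}\Theta(t_0/T_0)$ — the $\delta^{1/6}$ from Cauchy--Schwarz in time applied to the $\delta^{1/3}$-estimates, the logarithm from Proposition~\ref{prop:ED2}, and the factor $\Theta(t_0/T_0)$ from $\tau^{1/2}\|\phi_0\|_\gamma$ — is where essentially all the care is needed.
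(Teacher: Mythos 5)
Your proposal is correct and follows essentially the same route as the paper: an energy estimate in $\cY$ modeled on Proposition~\ref{prop:smalltime}, with the self-interaction terms handled verbatim, the $w_0$-cross terms integrated by parts onto $p\tilde w$ and controlled via Propositions~\ref{prop:ED1}--\ref{prop:ED2} and Lemma~\ref{lem:Lqweight}, and a Gr\"onwall-type closure producing the $\delta^{1/6}(\log(2/\delta))^{1/2}$ and $\Theta(t_0/T_0)$ factors. The only point left vague is the precise mechanism for bounding $\|\BS[w_0]\|_{L^\infty}$ and $\|\BS[\tilde w]\,w_0\|_\cY$ (the paper uses the logarithmic interpolation inequalities of \cite[Lemmas~5.5--5.6]{Ga18}, which is exactly the ``$L^q$ norm interpolated against the decaying $L^2$ norm'' you describe), and your schematic differential inequality omits the resulting logarithmic factors and the $\cB[w_0,w_0]$ source term, but you flag these cross terms explicitly and your final bookkeeping of the exponents is consistent with the paper's.
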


\begin{proof}
Following \eqref{eq:EFdef} we introduce the energy functional $\cE[\tilde w] =
\|\tilde w\|_\cY^2$ which satisfies
\begin{equation}\label{eq:twenergy}
  \frac12 \partial_\tau \cE[\tilde w] + \frac{1}{\delta}\langle \tilde w, \cA[w_0 + \tilde w]
  \rangle_\cY + \langle \tilde w, \cB[w_0 + \tilde w,w_0 + \tilde w]\rangle_\cY \,=\,
  \langle \tilde w,\cL\tilde w\rangle_\cY - \frac{1}{\delta^2} \langle \tilde w,\cR_\app
  \rangle_\cY\,.
\end{equation}
We first recall the estimates already obtained in Section~\ref{ssec31}. According to
\eqref{eq:short1}, there exists a constant $\kappa > 0$ such that
$\langle \tilde w,\cL\tilde w \rangle_\cY \le -\kappa\cF[\tilde w]$, where $\cF[\tilde w]$
is defined in \eqref{eq:EFdef}. In view of \eqref{eq:short4} and \eqref{eq:short3}, there
exists $C > 0$ such that
\begin{equation}\label{eq:old1}
  \frac{1}{\delta}\,\bigl|\langle \tilde w, \cA[\tilde w]\rangle_\cY\bigr| \,\le\,
  \frac{t}{T_0}\bigl(\cF[\tilde w] + C \cE[\tilde w]\bigr)\,, \quad \text{and} \quad 
  \bigl|\langle \tilde w,\cB[\tilde w,\tilde w]\rangle_\cY\bigr| \,\le\, C
  \cF[\tilde w]^{1/2}\cE[\tilde w]\,.
\end{equation}
Finally, the contribution of the source term $\cR_\app$ can be bounded as in \eqref{eq:short2}: 
\[
  \frac{1}{\delta^2}\,\bigl|\langle \tilde w,\cR_\app\rangle_\cY\bigr| \,\le\,
  C\Bigl(\frac{\epsilon^5}{\delta^2} + \epsilon^2\Bigr)\cE[\tilde w]^{1/2}\,.
\]

It remains to estimate all the terms in \eqref{eq:twenergy} that involve
the solution $w_0$ of the linear equation \eqref{def:w0}. We start with the
advection term $\cA[w_0]$. Integrating by parts, we obtain
\[
  \langle \tilde w, \cA[w_0]\rangle_\cY \,=\, -\int_{\R^2} \nabla \bigl(p\tilde w\bigr)
  \cdot \Bigl((U_\app - U_0)w_0 + v_0(\Omega_\app-\Omega_0) +  E(f,z)w_0\Bigr)\dd \xi\,,
\]
where $p(\xi) = e^{|\xi|^2/4}$. We know that $\|U_\app - U_0\|_{L^\infty} \le C\epsilon^2$,
that $\|p^{1/2}(\Omega_\app-\Omega_0)\|_{L^q} \le C\epsilon^2$ for any $q \ge 1$, and
that $|E(f,z)| \le C\epsilon^2(1+|\xi|)$. This gives
\[
  \bigl\| (U_\app - U_0)w_0 + v_0(\Omega_\app-\Omega_0)+  E(f,z)w_0\bigr\|_\cY
  \,\le\, C\epsilon^2 \bigl(\|w_0\|_{\cY} + \| |\xi| w_0\|_{\cY}\bigr)\,.
\]
On the other hand we have $\nabla(p\tilde w) = p\bigl(\nabla \tilde w + \xi\tilde w/2\bigr)$
where $\|\nabla \tilde w + \xi\tilde w/2\|_\cY \le 2\cF[\tilde w]^{1/2}$. Since
$\epsilon^2/\delta = t/T_0$ we conclude that
\begin{equation}\label{eq:Abdd}
  \frac{1}{\delta}\,\bigl|\langle \tilde w, \cA[w_0]\rangle_\cY\bigr| \,\le\, \frac{Ct}{T_0}\,
  \cF[\tilde w]^{1/2}\bigl(\|w_0\|_{\cY} + \| |\xi| w_0\|_{\cY}\bigr)\,.
\end{equation}

We next consider the various terms involving $w_0$ in the quadratic form
$\cB[w_0 + \tilde w,w_0 + \tilde w]$. Integrating by parts as above, we
observe that, for all $w_1,w_2 \in \cY$, 
\[
  \bigl|\langle \tilde w, \cB[w_1,w_2]\rangle_\cY\bigr| \,\le\, 2\cF[\tilde w]^{1/2}
  \bigl\|v_1 w_2\bigr\|_\cY\,, \qquad \text{where}\quad v_1 \,=\, \BS[w_1]\,. 
\]
We first take $w_1 = w_0$ and $w_2 = w_0 + \tilde w$, in which case
$\|v_1 w_2\|_\cY \le \|v_0\|_{L^\infty}\bigl(\|w_0\|_\cY + \|\tilde  w\|_\cY\bigr)$.
To estimate the $L^\infty$ norm of $v_0 = \BS[w_0]$, we invoke \cite[Lemma~5.5]{Ga18}
which asserts that
\[
  \|v_0\|_{L^\infty} \,\le\, C \|w_0\|_{L^1 \cap L^2} \Bigl( 1 +
  \log_+ \frac{\|w_0\|_{L^3}}{\|w_0\|_{L^1 \cap L^2}}\Bigr)^{1/2} \,\le\,
  C\Bigl(\log\frac{2}{\delta}\Bigr)^{1/2}\,\Theta\bigl(\|w_0\|_\cY\bigr)\,,
\]
where in the second inequality we used the fact that $\|w_0\|_{L^1 \cap L^2} \le
C\|w_0\|_\cY$ and $\|w_0\|_{L^3} \le C\delta^{-1}$, see \eqref{eq:sgbounds}. 
We deduce that
\begin{equation}\label{eq:Bbdd1}
  \bigl|\langle \tilde w, \cB[w_0,w_0 + \tilde w]\rangle_\cY\bigr| \,\le\,
  C \Bigl(\log\frac{2}{\delta}\Bigr)^{1/2} \cF[\tilde w]^{1/2} \,
  \Theta\bigl(\|w_0\|_\cY\bigr)\bigl(\|w_0\|_\cY + \|\tilde w\|_\cY\bigr)\,.
\end{equation}
The second case is $w_1 = \tilde w$ and $w_2 = w_0$. Here we invoke \cite[Lemma~5.6]{Ga18}
which gives
\[
  \|\tilde v w_0\|_\cY \,\le\, C \|w_0\|_\cY \|\tilde w\|_{L^1\cap L^2}
  \Bigl( 1 + \log_+ \frac{\|p^{1/2}w_0\|_{L^3}}{\|w_0\|_\cY}\Bigr)^{1/2} \,\le\,
  C \Bigl(\log\frac{2}{\delta}\Bigr)^{1/2}\Theta\bigl(\|w_0\|_\cY\bigr) \|\tilde w\|_\cY\,,
\]
and we conclude that 
\begin{equation}\label{eq:Bbdd2}
  \bigl|\langle \tilde w, \cB[\tilde w,w_0]\rangle_\cY\bigr| \,\le\,
  C \Bigl(\log\frac{2}{\delta}\Bigr)^{1/2} \cF[\tilde w]^{1/2}\,
  \Theta\bigl(\|w_0\|_\cY\bigr)\|\tilde w\|_\cY\,.
\end{equation}

Summarizing, if we collect all estimates \eqref{eq:old1}--\eqref{eq:Bbdd2} we obtain
the inequality
\begin{align*}
  \partial_\tau \cE[\tilde w] &+ \Bigl(2\kappa - \frac{2t}{T_0}\Bigr)\cF[\tilde w] \,\le\,
  C\Bigl(\frac{\epsilon^5}{\delta^2} + \epsilon^2\Bigr)\cE[\tilde w]^{1/2} +                  
  \frac{Ct}{T_0}\,\cE[\tilde w] + C_0 \cF[\tilde w]^{1/2}\cE[\tilde w] \\
  \,&+\, \frac{Ct}{T_0}\,\cF[\tilde w]^{1/2}\bigl(\|w_0\|_{\cY} + \| |\xi| w_0\|_{\cY}\bigr)
  + C \Bigl(\log\frac{2}{\delta}\Bigr)^{1/2} \cF[\tilde w]^{1/2} \Theta\bigl(\|w_0\|_\cY\bigr)
  \bigl(\|w_0\|_\cY + \|\tilde w\|_\cY\bigr)\,,
\end{align*}
for some positive constants $C$ and $C_0$. As in Section~\ref{ssec31}, we suppose
that $t/T_0 \le T/T_0 \le \kappa/2$, and we work under the assumption that
$C_0 \cE[\tilde w]^{1/2} \le \kappa/2$, which will be verified a posteriori. 
Using Young's inequality and the fact that $\cE[\tilde w] \le \cF[\tilde w]$, 
we obtain the simpler relation
\begin{equation}\label{eq:diffcE}
  \partial_\tau \cE[\tilde w(\tau)] \,\le\, \cM(\tau)\,\cE[\tilde w(\tau)]
  + \cS(\tau)\,, \qquad 0 \le \tau \le \log(T/t_0)\,,
\end{equation}
where
\begin{align*}
  \cM(\tau) \,&=\, \frac{Ct}{T_0} + C \Bigl(\log\frac{2}{\delta}\Bigr)\Theta\bigl(
  \|w_0(\tau)\|_\cY\bigr)^2\,, \\
  \cS(\tau) \,&=\, C\Bigl(\frac{\epsilon^5}{\delta^2} + \epsilon^2\Bigr)^2
  \!+ C\Bigl(\frac{t}{T_0}\Bigr)^2\Bigl(\|w_0(\tau)\|_{\cY}^2 + \| |\xi| w_0(\tau)\|_{\cY}^2\Bigr)
  \\ &\quad\, + C \Bigl(\log\frac{2}{\delta}\Bigr)\Theta\bigl(\|w_0(\tau)\|_\cY\bigr)^2
  \|w_0(\tau)\|_\cY^2\,.
\end{align*}
Since $t = t_0 e^\tau$ and $\|w_0(\tau)\|_\cY$ satisfies \eqref{eq:w0bd1}, we have
\[
  \int_0^{\log(T/t_0)} \cM(\tau)\dd\tau \,\le\, \frac{CT}{T_0} + C \delta^{1/3}
  \Bigl(\log\frac{2}{\delta}\Bigr)\Theta\bigl(\|\phi_0\|_\cY\bigr)^2 \,\le\, K\,,
\]
for some constant $K > 0$. Similarly, using Propositions~\ref{prop:ED1} and
\ref{prop:ED2} with $\gamma \in (1/8,1/4)$, we find
\begin{align*}
  \int_0^\tau \cS(\tau')\dd\tau' \,&\le\, C\Bigl(\frac{\epsilon^5}{\delta^2} + \epsilon^2\Bigr)^2 
  +  C \Bigl(\frac{t}{T_0}\Bigr)^2\,\delta^{1/3}\Bigl(\log\frac{2}{\delta}\Bigr)\|\phi_0\|_{\gamma}^2
  + C\,\delta^{1/3}\Bigl(\log\frac{2}{\delta}\Bigr) \Theta\bigl(\|\phi_0\|_\cY\bigr)^2\|\phi_0\|_\cY^2 \\
  \,&\le\, C\Bigl(\frac{\epsilon^5}{\delta^2} + \epsilon^2\Bigr)^2 + 
  C\,\delta^{1/3}\Bigl(\log\frac{2}{\delta}\Bigr) \Bigl(\frac{t}{T_0}\Bigr)^2
  \Theta\Bigl(\frac{t_0}{T_0}\Bigr)^2\,,
\end{align*}
because $\|\phi_0\|_\cY \le C\|\phi_0\|_\gamma \le Ct_0/T_0$ and $t_0 \le t$. 
So, applying Gr\"onwall's lemma to the differential inequality \eqref{eq:diffcE}, we
obtain
\begin{equation}\label{eq:boundcE}
  \cE[\tilde w(\tau)] \,\le\, e^K \int_0^\tau \cS(\tau')\dd\tau' \,\le\,
  C\Bigl(\frac{\epsilon^5}{\delta^2} + \epsilon^2\Bigr)^2 +
  C\,\delta^{1/3}\Bigl(\log\frac{2}{\delta}\Bigr) \Bigl(\frac{t}{T_0}\Bigr)^2
  \Theta\Bigl(\frac{t_0}{T_0}\Bigr)^2\,.
\end{equation}
If $\delta$ is sufficiently small, this ensures that the a priori estimate $C_0\cE[\tilde w]^{1/2}
\le \kappa/2$ holds whenever $\tau \le \log(T/t_0)$. Finally, since $\|\tilde w(\tau)\|_\cY =
\cE[\tilde w(\tau)]^{1/2}$ we see that \eqref{eq:twshort} follows from \eqref{eq:boundcE}.
\end{proof}

\subsubsection{Relaxation for large time}\label{sssec422}

In the general situation where $T/T_0$ is not small, more complicated energy
functionals are needed to control the solutions of \eqref{eq:twevol}, even in
the particular case $w_0 = 0$ which was considered in Sections~\ref{ssec32} and
\ref{ssec33}. As a matter of fact, the terms involving $w_0$ in \eqref{eq:twevol}
do not create any real trouble, and can be treated exactly as in the proof of
Proposition~\ref{prop:twshort}. Since the estimates given by
Propositions~\ref{prop:ED1} and \ref{prop:ED2} hold in the space $\cY$, it is
preferable here to use the weight function $\hat p_\epsilon$ defined in
\eqref{eq:hpepsdef}, which satisfies the upper bound in \eqref{eq:hpest}.
Also, as already explained, it is convenient to express all quantities in terms
of the logarithmic time \eqref{eq:taudef}, instead of the original time $t \in [t_0,T]$. 
We thus consider the energy functionals
\begin{align*}
  \hat\cE(\tau) \,&=\, \int_{\R^2} \hat p_\epsilon(\xi,t) \tilde w(\xi,\tau)^2\dd\xi\,, \\
  \hat\cF(\tau) \,&=\, \int_{\R^2} \hat p_\epsilon(\xi,t)\Bigl\{|\nabla \tilde w(\xi,\tau)|^2 +
  \hat\chi_\epsilon(\xi) \tilde w(\xi,\tau)^2 + \tilde w(\xi,\tau)^2\Bigr\}\dd\xi\,,
\end{align*}
where $\hat\chi_\epsilon$ is defined in \eqref{eq:hchidef}. Here and in what
follows it is understood that $\epsilon = \sqrt{\nu t}/d$ with $t = t_0 e^\tau$.
The analogue of Proposition~\ref{prop:twshort} is:

\begin{proposition}\label{prop:twlong}
There exists a constant $C_5 > 0$ such that, if $\delta > 0$ is small enough, 
the solution of \eqref{eq:twevol} with zero initial data satisfies
\begin{equation}\label{eq:twlong}
  \hat\cE(\tau)^{1/2} \,\le\, C_5 \Bigl(\frac{\epsilon^5}{\delta^2} + \epsilon^2\Bigr)
  + C_5\,\delta^{1/6}\Bigl(\log\frac{2}{\delta}\Bigr)^{1/2}\,\frac{t}{T_0}\,
  \Theta\Bigl(\frac{t_0}{T_0}\Bigr)\,,
\end{equation}
for all $\tau \in \bigl[0,\log(T/t_0)\bigr]$, where $\Theta(s) = s\bigl(1+\log_+(1/s)
\bigr)^{1/2}$.
\end{proposition}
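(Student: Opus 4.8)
The plan is to run the weighted energy estimate of Section~\ref{ssec33}, now for $\tilde w$ instead of $w$, and to dispose of the extra terms carrying the linear flow $w_0$ exactly as in the proof of Proposition~\ref{prop:twshort}, feeding in the enhanced dissipation bounds of Section~\ref{ssec41}. First I would record the energy identity. Since $\partial_\tau = t\partial_t$, and since $\int_{\R^2}w(\cdot,\tau)\dd\xi = \int_{\R^2}w_0(\cdot,\tau)\dd\xi = 0$ (the operator $\cL-\delta^{-1}\Lambda$ preserves the integral and $\int_{\R^2}\phi_0\dd\xi = 0$), so that $\int_{\R^2}\tilde w(\cdot,\tau)\dd\xi = 0$ and Corollary~\ref{cor:Deps} applies, equation \eqref{eq:twevol} gives
\[
  \tfrac12\,\partial_\tau\hat\cE(\tau) \,=\, \hat\cD(\tau) - \tfrac{1}{\delta}\,\hat\cA(\tau)
  - \hat\cN(\tau) - \tfrac{1}{\delta^2}\,\hat\cS(\tau)\,,
\]
where $\hat\cD,\hat\cA,\hat\cN,\hat\cS$ are built from $\hat p_\epsilon$ and $\tilde w$ as in \eqref{def:DANS}, except that the advection and nonlinear terms must be split via $w = w_0+\tilde w$; by linearity of $\cA$ and bilinearity of $\cB$ this produces, besides the diagonal pieces $\cA[\tilde w]$, $\cB[\tilde w,\tilde w]$, the cross pieces $\cA[w_0]$, $\cB[w_0,w_0+\tilde w]$ and $\cB[\tilde w,w_0]$.

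For the diagonal contributions I would reuse the Section~\ref{ssec33} estimates verbatim: Corollary~\ref{cor:Deps} gives $\hat\cD(\tau)\le-\tfrac\kappa2\hat\cF(\tau)$ with $\hat\cF$ the functional built from $\hat p_\epsilon$ and $\hat\chi_\epsilon$; Lemmas~\ref{lem:A1} and~\ref{lem:A2}, with $p_\epsilon$ replaced by $\hat p_\epsilon$ and using $|\nabla\hat p_\epsilon|\le C\hat\chi_\epsilon^{1/2}\hat p_\epsilon$ (established as in Lemma~\ref{lem:A1}), control the advective part of $\tfrac1\delta\cA[\tilde w]$; and \eqref{eq:NSbd} controls $\cB[\tilde w,\tilde w]$ and $\tfrac1{\delta^2}\hat\cS$. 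The structural fact that makes the cross terms harmless is the upper bound $\hat p_\epsilon(\xi,t)\le C\,e^{|\xi|^2/4}$ from \eqref{eq:hpest}: it lets me measure every factor carrying $w_0$ in the Gaussian-weighted space $\cY$, which is the only space in which Propositions~\ref{prop:ED1}, \ref{prop:ED2} and Lemma~\ref{lem:Lqweight} are available, while the reciprocal bound $\hat p_\epsilon^{-1}\le C\,e^{-\gamma|\xi|/4}$ keeps $\hat p_\epsilon^{-1/2}\in L^4(\R^2)$, hence $\|\tilde w\|_{L^1\cap L^2}\le C\hat\cE(\tau)^{1/2}$. Integration by parts together with $\|U_\app-U_0\|_{L^\infty}\le C\epsilon^2$, $\|p^{1/2}(\Omega_\app-\Omega_0)\|_{L^4}\le C\epsilon^2$ and $|E(f,z)|\le C\epsilon^2(1+|\xi|)$ (valid since $\epsilon=\cO(\delta^{1/2})$) gives $\tfrac1\delta|\langle\tilde w,\cA[w_0]\rangle_{\hat p_\epsilon}|\le\tfrac{Ct}{T_0}\hat\cF(\tau)^{1/2}(\|w_0\|_\cY+\| |\xi| w_0\|_\cY)$, as in \eqref{eq:Abdd}. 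For the bilinear cross terms, integration by parts gives $|\langle\tilde w,\cB[w_1,w_2]\rangle_{\hat p_\epsilon}|\le C\hat\cF(\tau)^{1/2}\|\hat p_\epsilon^{1/2}v_1w_2\|_{L^2}$, and the factors $\|\hat p_\epsilon^{1/2}v_0(w_0+\tilde w)\|_{L^2}$ and $\|\hat p_\epsilon^{1/2}\tilde v\,w_0\|_{L^2}$ are estimated through $\|v_0\|_{L^\infty}$ and \cite[Lemmas~5.5 and~5.6]{Ga18} exactly as in Proposition~\ref{prop:twshort}, using $\|w_0\|_{L^3}\le\|p^{1/2}w_0\|_{L^3}\le C\delta^{-1}$ from Lemma~\ref{lem:Lqweight} and $\|\phi_0\|_\cY\le Ct_0/T_0$.

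Collecting these bounds and absorbing all half powers of $\hat\cF$ into the coercive term $-\tfrac\kappa2\hat\cF$ via Young's inequality — choosing $A$ small and $B$ large so that $2K_6(A+1/B)T/T_0\le\kappa/4$, and working under the a priori assumption $\hat\cE(\tau)^{1/2}\le\eta$ for a small $\eta$ to be checked a posteriori, exactly as in Proposition~\ref{prop:largetime} — I would obtain a Grönwall inequality $\partial_\tau\hat\cE(\tau)\le\cM(\tau)\hat\cE(\tau)+\cS(\tau)$ with
\[
  \cM(\tau)=\frac{Ct}{T_0}+C\log\Bigl(\frac2\delta\Bigr)\Theta\bigl(\|w_0(\tau)\|_\cY\bigr)^2,\qquad
  \cS(\tau)=C\Bigl(\frac{\epsilon^5}{\delta^2}+\epsilon^2\Bigr)^2+C\Bigl(\frac{t}{T_0}\Bigr)^2\bigl(\|w_0\|_\cY^2+\| |\xi| w_0\|_\cY^2\bigr)+C\log\Bigl(\frac2\delta\Bigr)\Theta\bigl(\|w_0\|_\cY\bigr)^2\|w_0\|_\cY^2.
\]
Proposition~\ref{prop:ED1} gives $\int_0^\infty\Theta(\|w_0(\tau)\|_\cY)^2\dd\tau=\cO(\delta^{1/3}\Theta(t_0/T_0)^2)$, so $\int_0^{\log(T/t_0)}\cM(\tau)\dd\tau$ is bounded by a constant depending only on $T/T_0$ and $\cK$, uniformly in $\delta$; Propositions~\ref{prop:ED1} and~\ref{prop:ED2} (with $\gamma\in(1/8,1/4)$, so that $\|\phi_0\|_\gamma\le Ct_0/T_0$) give $\int_0^\tau\cS(\tau')\dd\tau'\le C(\epsilon^5/\delta^2+\epsilon^2)^2+C\delta^{1/3}\log(2/\delta)(t/T_0)^2\Theta(t_0/T_0)^2$. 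Grönwall's lemma and a square root then yield \eqref{eq:twlong}, and for $\delta$ small enough — since $\epsilon\le(\delta T/T_0)^{1/2}$ — the right-hand side is $\le\eta$, which closes the bootstrap. The main obstacle is just the bookkeeping of the cross terms: everything carrying $w_0$ must be carried in $\cY$, the space of the enhanced dissipation estimates, which is feasible precisely because $\hat p_\epsilon$, unlike $p_\epsilon$, is dominated by $e^{|\xi|^2/4}$, while $\hat p_\epsilon^{-1}$ still decays fast enough to bound the low-integrability norms of $\tilde w$ appearing in \cite[Lemmas~5.5 and~5.6]{Ga18}.
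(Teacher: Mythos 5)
Your proposal is correct and follows essentially the same route as the paper: an energy estimate for $\tilde w$ with the weight $\hat p_\epsilon$, reusing the diagonal bounds from Proposition~\ref{prop:largetime} and treating the cross terms involving $w_0$ exactly as in Proposition~\ref{prop:twshort} (via $\hat p_\epsilon \le C e^{|\xi|^2/4}$ and Propositions~\ref{prop:ED1}--\ref{prop:ED2}, Lemma~\ref{lem:Lqweight}), ending with the same Gr\"onwall inequality with $\cM(\tau)$, $\cS(\tau)$ as in \eqref{eq:diffcE}. Your write-up even supplies details the paper's sketch leaves implicit, such as $\int_{\R^2}\tilde w\dd\xi = 0$ and the role of the lower bound on $\hat p_\epsilon$ in controlling $\|\tilde w\|_{L^1\cap L^2}$.
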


\begin{proof}
We only give a sketch of the argument, which simply combines the estimates already
obtained in the proofs of Propositions~\ref{prop:largetime} and \ref{prop:twshort}.
In analogy with \eqref{eq:westlarge} we have
\begin{equation}\label{eq:westlarge3}
  \partial_\tau\hat\cE(\tau) + \kappa\hat\cF(\tau) \,\le\, K_4\Bigl(\frac{\epsilon^5}{\delta^2}
  + \epsilon^2\Bigr)\hat\cE(\tau)^{1/2} + K_5\Bigl(\frac{t}{T_0} + \hat\cF(\tau)^{1/2}\Bigr)
  \hat\cE(\tau) - \cG(\tau)\,,
\end{equation}
where the additional term $\cG(\tau)$ collects all contributions due to $w_0$,
namely
\[
  \cG(\tau) \,=\, 2\int_{\R^2} \hat p_\epsilon \tilde w \,\Bigl(\frac{1}{\delta}\,
  \cA[w_0] + \cB[w_0,w_0] + \cB[w_0,\tilde w]+ \cB[\tilde w,w_0]\Bigr)\dd\xi\,.
\]
Integrating by parts and proceeding as in \eqref{eq:Abdd}, \eqref{eq:Bbdd1},
and \eqref{eq:Bbdd2}, we easily find
\begin{equation}\label{eq:cGbdd}
  |\cG(\tau)| \,\le\, C\hat\cF(\tau)^{1/2}\biggl\{\frac{t}{T_0}\bigl(\|w_0\|_{\cY} +
  \| |\xi| w_0\|_{\cY}\bigr) + \Bigl(\log\frac{2}{\delta}\Bigr)^{1/2} \Theta\bigl(\|w_0\|_\cY\bigr)
  \bigl(\|w_0\|_\cY + \|\tilde w\|_\cY\bigr)\biggr\}\,.
\end{equation}
Here we used the fact that $\hat p_\epsilon(\xi,t) \le C e^{|\xi|^2/4}$, see \eqref{eq:hpest}.
Applying Young's inequality to \eqref{eq:cGbdd} and returning to \eqref{eq:westlarge3},
we arrive at a differential inequality of the form 
\[
  \partial_\tau \hat\cE(\tau) \,\le\, \cM(\tau)\,\hat\cE(\tau) + \cS(\tau)\,,
  \qquad 0 \le \tau \le \log(T/t_0)\,,
\]
where $\cM(\tau)$ and $\cS(\tau)$ are exactly as in \eqref{eq:diffcE}. 
Estimate \eqref{eq:twlong} is then obtained by the same argument as
in Proposition~\ref{prop:twshort}.
\end{proof}

\begin{proof}[End of the proof of Theorem~\ref{thm2}]
As in the proof of Theorem~\ref{thm1} we estimate the quantity
\[
  \cI_1(t) \,=\, \int_{\R^2} \bigl|\Omega(\xi,t) - \Omega_\app(\xi,t)\bigr|\dd\xi
 \,\le\, \delta \int_{\R^2} |w_0(\xi,\tau)|\dd \xi \,+\, \delta \int_{\R^2} |\tilde
  w(\xi,\tau)|\dd \xi\,,
\]
where we used the decompositions \eqref{OmUdecomp2} and \eqref{eq:wdecomp}.
To bound the first term in the right-hand side, we apply Proposition~\ref{prop:ED1},
and we recall that $\|\phi_0\|_\cY \le C \epsilon_0^2/\delta$ where $\epsilon_0 =
\sqrt{\nu t_0}/d$, see \eqref{def:winit0}. Since $\tau = \log(t/t_0)$, 
we find
\[
  \delta\,\|w_0(\cdot,\tau)\|_{L^1} \,\le\, C \delta\,\|w_0(\cdot,\tau)\|_\cY
  \,\le\, C \delta\,\|\phi_0\|_\cY\,\exp\Bigl(-\frac{c_0\tau}{\delta^{1/3}}\Bigr) \,\le\,
  C \epsilon_0^2\,\Bigl(\frac{t_0}{t}\Bigr)^\beta\,,
\]
where $\beta = c_0 \delta^{-1/3} \gg 1$. For the second term we invoke
Proposition~\ref{prop:twlong} which gives
\[
  \delta\,\|\tilde w(\cdot,\tau)\|_{L^1} \,\le\, C \delta \,\|\tilde w(\cdot,\tau)\|_\cY
  \,=\, C\delta \,\hat\cE(\tau)^{1/2} \,\le\, C\epsilon^2\bigl(\epsilon + \delta\bigr)
  + C \epsilon^2 \,\delta^{1/6}\Bigl(\log\frac{2}{\delta}\Bigr)^{1/2}
  \Theta\Bigl(\frac{t_0}{T_0}\Bigr)\,,
\]
where we used again the relation $\epsilon^2 = \delta t/T_0$. We thus arrive at
\[
  \cI_1(t) \,\le\, C \epsilon^2 \biggl\{\epsilon + \delta + \delta^{1/6}
  \Bigl(\log\frac{2}{\delta}\Bigr)^{1/2}\Theta\Bigl(\frac{t_0}{T_0}\Bigr)
  + \Bigl(\frac{t_0}{t} \Bigr)^{\beta+1}\biggr\}\,, \qquad t \in (t_0,T)\,,
\]
and the second integral that appears in the proof of Theorem~\ref{thm1} satisfies
$\cI_2(t) \le C\epsilon^2(\epsilon + \delta)$. Altogether we obtain estimate \eqref{eq:thm2}. 
\end{proof}

\appendix

\section{Appendix}\label{appendix}

\subsection{Comparison with the Burgers vortex}\label{ssecA1}

We consider here the simple example of a time-independent velocity field 
of the form
\begin{equation}\label{eq:fstrain}
  f(x) \,=\, \frac{\gamma}{2}\,\begin{pmatrix} -x_1 \\ x_2\end{pmatrix}\,,
  \qquad \forall\,x \in \R^2\,,
\end{equation}
where the strain rate $\gamma > 0$ is a parameter. Using the definitions
\eqref{eq:T0def} and \eqref{def:ab}, we see that $T_0 = 1/\gamma$, $a_f(z) = -\gamma/2$,
and $b_f(z) = 0$ in the present case. Let $\omega(x,t)$ be the solution of
\eqref{eq:NSf} and \eqref{eq:BS} satisfying $\omega(\cdot,t) \weakto \Gamma \delta_0$
as $t \to 0$. Applying the self-similar change of coordinates \eqref{eq:OmU} with
$z(t) = 0$, we obtain the evolution equation \eqref{eq:Omevol} which takes the form
\begin{equation}\label{eq:OmSS}
  t\partial_t \Omega + \frac{1}{\delta}\,U\cdot\nabla \Omega \,=\,
  \cL \Omega + \gamma t \cM \Omega\,, \qquad U = \BS[\Omega]\,,
\end{equation}
where $\cL$ is defined in \eqref{def:cL} and $\cM = \frac{1}{2}\bigl(\xi_1
\partial_1 - \xi_2\partial_2\bigr)$. If we freeze time in \eqref{eq:OmSS}, we arrive
at the elliptic equation
\begin{equation}\label{eq:Omell}
  \frac{1}{\delta}\,U\cdot\nabla \Omega \,=\, \cL \Omega + \lambda\cM \Omega\,,
  \qquad U = \BS[\Omega]\,,
\end{equation}
where $\lambda = \gamma t$. This is exactly the equation satisfied by
the profile of a Burgers vortex with Reynolds number $1/\delta$
and asymmetry parameter $\lambda$, see \cite{RS84,MKO94,GW07}. In particular
the results of \cite{GW07,Ma09a,Ma09b} show that, if $\lambda \in (0,1)$ is
fixed and $\delta > 0$ is sufficiently small, equation~\eqref{eq:Omell}
has a unique solution $\Omega_{\lambda,\delta} \in L^1(\R^2)$ which satisfies
\begin{equation}\label{eq:OmBurg}
  \Omega_{\lambda,\delta}(\xi) \,=\, \Omega_0(\xi) - \frac{1}{2}\,\lambda \delta\,w_2(|\xi|)
  \sin(2\theta) + \cO(\delta^2)\,,
\end{equation}
where $w_2$ is precisely the function considered in Remark~\ref{rem:w2}.
Comparing \eqref{eq:OmBurg} with approximate solution \eqref{def:omapp}
in the particular case of the external field \eqref{eq:fstrain}, we deduce
that
\[
  \int_{\R^2}\biggl|\frac{\Gamma}{\nu t}\,\Omega_{\gamma t,\delta}\Bigl(\frac{x}{
  \sqrt{\nu t}}\Bigr) - \omega_\app\bigl(\Gamma,\sqrt{\nu t},0,f\,;x)\,\biggr|\dd x
  \,=\, \cO\bigl(\Gamma\delta^2\bigr)\,, \qquad \text{as}~\, \delta \to 0\,.
\]
This means that, for $t \in (0,T_0)$ and $\delta > 0$ sufficiently small,
the approximate solution \eqref{def:omapp} is essentially a rescaling of the
vorticity profile of a Burgers vortex with asymmetry parameter $\gamma t
\in (0,1)$ and Reynolds number $1/\delta \gg 1$. 

\subsection{Partial inverse for the advection operator $\Lambda$}\label{ssecA2}

In this section, for completeness, we recall the known formulas for the
(partial) inverse of the integro-differential operator $\Lambda$ defined in
\eqref{def:Lambda}. More details can be found in the references
\cite{Ga11,GaS24}. Since $\Lambda$ leaves invariant the direct sum decomposition
\eqref{Ydecomp}, it is sufficient to study the restriction of $\Lambda$ to each
subspace $\cY_n$. Actually $\cY_0 \subset \Ker(\Lambda)$ by \eqref{eq:KerLam},
so we can assume that $n \ge 1$.  To exploit the rotational symmetry, we use polar
coordinates in $\R^2$ defined by $\xi = (r\cos\theta,r\sin\theta)$, and we
consider the radially symmetric functions
\begin{equation}\label{def:vgh}
  v_*(r) \,=\, \frac{1}{2\pi r^2}\bigl(1-e^{-r^2/4}\bigr)\,,\qquad
  g(r) \,=\, \frac{1}{8\pi}\,e^{-r^2/4}\,, \qquad h(r) \,=\,
  \frac{g(r)}{v_*(r)} \,=\, \frac{r^2/4}{e^{r^2/4}-1}\,.
\end{equation}
Note that $U_0(\xi) = v_*(|\xi|)\xi^\perp$ and $\nabla\Omega_0(\xi) = -g(|\xi|)\xi$,
where $\Omega_0$ and $U_0$ are defined in \eqref{eq:OmU0}. 

Assume that $\Omega \in \cY_n$ takes the form $\Omega = -w(r)\cos(n\theta)$ for some
function $w : \R_+ \to \R$. As is easily verified, the associated velocity field is
\begin{equation}\label{eq:Undef}
  U \,=\, \BS[\Omega] \,=\, \frac{n}{r}\,\varphi(r)\sin(n\theta)\,e_r +
  \varphi'(r)\cos(n\theta)\,e_\theta\,,
\end{equation}
where $e_r = \xi/|\xi|$, $e_\theta = \xi^\perp/|\xi|$, and  $\varphi$ is the unique
solution of the ordinary differential equation
\begin{equation}\label{eq:varphidef}
  -\varphi'' (r) - \frac{1}{r}\,\varphi'(r) + \frac{n^2}{r^2}\,\varphi(r)
  \,=\, w(r)\,, \qquad r > 0\,,
\end{equation}
satisfying the boundary conditions $\varphi(r) = \cO(r^n)$ as $r \to 0$ and
$\varphi(r) = \cO(r^{-n})$ as $r \to +\infty$. Using \eqref{def:vgh} and \eqref{eq:Undef},
we easily obtain
\begin{equation}\label{eq:Lamn}
  \Lambda \Omega \,=\, U_0 \cdot\nabla\Omega + U \cdot\nabla\Omega_0 \,=\,
  n\bigl(v_* w - g\varphi\bigr)\sin(n\theta)\,.
\end{equation}
Similarly, if $\Omega = w(r)\sin(n\theta)$, then $\Lambda \Omega =
n\bigl(v_* w - g\varphi\bigr)\cos(n\theta)$.

Now we give ourselves a function $F \in \cY_n$ of the form $F = b(r)\sin(n\theta)$. 
If the inhomogeneous differential equation
\begin{equation}\label{eq:varphidef2}
  -\varphi'' (r) - \frac{1}{r}\,\varphi'(r) + \Bigl(\frac{n^2}{r^2} - h(r)\Bigr)
  \varphi(r) \,=\, \frac{b(r)}{nv_*(r)}\,, \qquad r > 0\,,
\end{equation}
has a (unique) solution $\varphi$ satisfying the boundary conditions, we define
$\Omega = -w(r)\cos(n\theta)$ with 
\begin{equation}\label{def:wsol}
  w(r) \,=\, \varphi(r)h(r) + \frac{b(r)}{nv_*(r)}\,, \qquad r > 0\,,
\end{equation}
Then \eqref{eq:varphidef} is obviously satisfied, and \eqref{eq:Lamn} implies
that $\Lambda\Omega = F$. The same conclusion holds if $F = b(r)\cos(n\theta)$ and
$\Omega = w(r)\sin(n\theta)$. The level lines of $\Omega$ in the case $n=2$ are depicted on
the right of Figure~\ref{fig:levelLines}.

This discussion shows that the invertibility of the operator $\Lambda$ in
the subspace $\cY_n$ is linked to the solvability of the ODE \eqref{eq:varphidef2}.
The favorable case is $n \ge 2$, because the coefficient $n^2/r^2 - h(r)$ is positive,
which ensures that \eqref{eq:varphidef2} has a unique solution satisfying the boundary
conditions. If $\cZ$ is the function space \eqref{def:cZ}, we thus obtain the
following result: 

\begin{lemma}\label{lem:Lambda} {\bf \cite{Ga11}}
If $n\ge 2$ and $F\in \cY_n\cap \cZ$, there exists a unique $\Omega\in \cY_n
\cap \cZ$ such that $\Lambda\Omega =F$. Moreover, if $F=b(r)\sin(n\theta)$
(respectively, $F = b(r)\cos(n\theta)$) then $\Omega = -w(r)\cos(n\theta)$
(respectively, $\Omega = w(r)\sin(n\theta)$) where $w$ is defined
by \eqref{def:wsol} with $\varphi$ given by \eqref{eq:varphidef2}. 
\end{lemma}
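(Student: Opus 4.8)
The plan is to reduce the statement to the solvability of the scalar ODE \eqref{eq:varphidef2}, using the reduction already set up in this section. Uniqueness is immediate: $\Lambda$ is skew-adjoint, leaves $\cY_n$ invariant, and $\Ker(\Lambda)\cap\cY_n=\{0\}$ for $n\ge 2$ by \eqref{eq:KerLam}, so two solutions in $\cY_n\cap\cZ$ of $\Lambda\Omega=F$ differ by a kernel element and hence coincide. For existence it is enough, by linearity, to treat $F=b(r)\sin(n\theta)$ --- the case $F=b(r)\cos(n\theta)$ being identical after swapping $\sin$ and $\cos$ in \eqref{eq:Lamn}, and a general element of $\cY_n\cap\cZ$ decomposing into these two, with the radial profile $b$ inheriting the smoothness and decay of $F$ and satisfying $b(r)=\cO(r^n)$ as $r\to 0$. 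With the ansatz $\Omega=-w(r)\cos(n\theta)$ and $\varphi$ the solution of \eqref{eq:varphidef} attached to $w$, formula \eqref{eq:Lamn} together with $g=v_*h$ shows that $\Lambda\Omega=F$ is equivalent to \eqref{def:wsol}--\eqref{eq:varphidef2}, provided $\varphi$ also satisfies $\varphi=\cO(r^n)$ at $0$ and $\varphi=\cO(r^{-n})$ at $+\infty$ --- precisely the condition that makes \eqref{eq:varphidef} solvable. So everything reduces to constructing such a $\varphi$.

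The crucial structural input is that for $n\ge 2$ the Sturm--Liouville operator $-\varphi''-r^{-1}\varphi'+\bigl(n^2r^{-2}-h(r)\bigr)\varphi$ has a strictly positive potential: writing $s=r^2/4$, one has $r^2h(r)=4s^2/(e^s-1)<4$, since $e^s-1-s^2$ vanishes at $0$ and has derivative $e^s-2s>0$, whence $n^2r^{-2}-h(r)>0$ for all $r>0$. This positivity excludes any nontrivial solution of the homogeneous equation that is simultaneously $\cO(r^n)$ at $0$ and $\cO(r^{-n})$ at $+\infty$, so the Frobenius solution $\varphi_0\sim r^n$ at the origin and the solution $\varphi_\infty\sim r^{-n}$ at infinity (here $h$ decays super-exponentially, so the equation is asymptotically Euler) are linearly independent. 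One then obtains $\varphi$ by variation of parameters against the right-hand side $b/(nv_*)$, normalised by the constant quantity $r\bigl(\varphi_0\varphi_\infty'-\varphi_0'\varphi_\infty\bigr)$; the two resulting integrals converge and deliver the required two-sided decay because $b(r)=\cO(r^n)$ near $0$ and $b$ decays like $e^{-r^2/4}$ times a polynomial at infinity, while $v_*$ is smooth and comparable to $(1+r^2)^{-1}$.

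With $\varphi$ constructed, set $w$ as in \eqref{def:wsol} and $\Omega=-w(r)\cos(n\theta)$, so that $\Lambda\Omega=F$ by construction; it remains to place $\Omega$ in $\cY_n\cap\cZ$. The factor $h$ in \eqref{def:wsol} makes $w$ decay super-exponentially, which gives $\Omega\in\cY_n$; for $\Omega\in\cZ$ one checks $e^{|\xi|^2/4}\Omega\in\cS_*(\R^2)$, smoothness following from standard regularity theory for \eqref{eq:varphidef2} together with the fact that $n$ is an integer, and moderate growth from the identity
\[
  e^{|\xi|^2/4}w(r)\,=\,\frac{(r^2/4)\,\varphi(r)}{1-e^{-r^2/4}}\,+\,\frac{2\pi r^2 e^{r^2/4}b(r)}{n\bigl(1-e^{-r^2/4}\bigr)}\,,
\]
whose two summands grow at most polynomially since $\varphi=\cO(r^{-n})$ and $e^{r^2/4}b$ is polynomially bounded; the explicit formula for $w$ is then the one in the statement. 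I expect the main obstacle to be the construction in the middle step: producing $\varphi$ with the correct behaviour at \emph{both} endpoints genuinely uses the sign of $n^2r^{-2}-h(r)$ (equivalently the inequality $e^s-1>s^2$) to rule out a zero mode and to make variation of parameters yield a solution subdominant at $0$ and at $+\infty$, whereas the regularity and weighted-$L^2$ bookkeeping of the last step is routine once the decay of $\varphi$ is known. Full details can be found in \cite{Ga11}.
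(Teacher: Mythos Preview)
Your proposal is correct and follows essentially the same route as the paper: the discussion preceding the lemma reduces $\Lambda\Omega=F$ on $\cY_n$ to the ODE \eqref{eq:varphidef2}, and the key point is exactly your observation that $n^2/r^2-h(r)>0$ for $n\ge 2$, which rules out a zero mode and yields a unique solution with the two-sided boundary behaviour. You supply more detail than the paper (the variation-of-parameters construction and the check that $\Omega\in\cZ$), but the argument is the same.
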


If $n = 1$, the homogeneous differential equation \eqref{eq:varphidef2} with
$b = 0$ has a nontrivial solution $\varphi = rv_*$ which satisfies the boundary
conditions. As a consequence, the inhomogeneous equation can be solved only if
the right-hand side satisfies $\int_0^\infty b(r)r^2\dd r = 0$, and the solution
is never unique. The solvability condition ensures that $F$ belongs to the subspace
$\cY_1'$ defined by
\begin{equation}\label{eq:Y1Kerperp}
  \cY_1' \,=\, \cY_1\cap \Ker(\Lambda)^\perp \,=\, \Bigl\{F\in \cY_1 \, ; \,
  \int_{\R^2}\xi_1F(\xi)\dd\xi = \int_{\R^2}\xi_2F(\xi)\dd\xi = 0\Bigr\}\,,
\end{equation}
see also \eqref{eq:solvcond}. We have the following result, which
complements Lemma~\ref{lem:Lambda}. 

\begin{lemma}\label{lem:Lambda2} {\bf \cite{GaS24}}
If $n = 1$ and $F\in \cY_1'\cap \cZ$, there exists a unique $\Omega\in \cY_1'
\cap \cZ$ such that $\Lambda\Omega =F$. Moreover, if $F=b(r)\sin(\theta)$
(respectively, $F = b(r)\cos(\theta)$) then $\Omega = -w(r)\cos(\theta)$
(respectively, $\Omega = w(r)\sin(\theta)$) where $w$ is defined
by \eqref{def:wsol} with $\varphi$ given by \eqref{eq:varphidef2}. 
\end{lemma}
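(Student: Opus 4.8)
The plan is to follow the structure of the proof of Lemma~\ref{lem:Lambda} for $n \ge 2$, the only genuinely new feature being that $\Lambda$ has a two-dimensional kernel in $\cY_1$, namely $\Ker(\Lambda)\cap\cY_1 = \mathrm{span}\{\partial_1\Omega_0,\partial_2\Omega_0\}$ by \eqref{eq:KerLam}. By linearity and rotational symmetry it suffices to treat the case $F = b(r)\sin(\theta)$; the case $F = b(r)\cos(\theta)$ is entirely analogous with $\sin$ and $\cos$ interchanged, and a general element of $\cY_1'\cap\cZ$ is a superposition of the two. Specializing \eqref{eq:Undef}--\eqref{eq:Lamn} to $n = 1$, one has, for $\Omega = -w(r)\cos(\theta)$, the identity $\Lambda\Omega = \bigl(v_* w - g\varphi\bigr)\sin(\theta)$, where $\varphi$ solves \eqref{eq:varphidef} with $n = 1$. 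Hence $\Lambda\Omega = F$ is equivalent to $v_* w - g\varphi = b$, that is to $w = b/v_* + h\varphi$, which is precisely formula \eqref{def:wsol}; substituting this expression of $w$ into \eqref{eq:varphidef} yields exactly the scalar equation \eqref{eq:varphidef2} with $n = 1$. The whole problem thus reduces to solving \eqref{eq:varphidef2} with the boundary conditions $\varphi(r) = \cO(r)$ as $r \to 0$ and $\varphi(r) = \cO(1/r)$ as $r \to +\infty$, and then checking that the resulting $w$, and hence $\Omega$, belongs to the function space $\cZ$.

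The core step is the two-point boundary value problem for \eqref{eq:varphidef2} with $n = 1$. Multiplying by $r$, one rewrites it in the self-adjoint Sturm--Liouville form $-(r\varphi')' + \bigl(r^{-1} - r\,h(r)\bigr)\varphi = r\,b/v_*$, which is formally symmetric on $L^2(\R_+,\dd r)$. As recalled in the discussion preceding the statement, the homogeneous equation admits the explicit solution $\varphi_*(r) = r\,v_*(r)$, which satisfies \emph{both} boundary conditions -- this is nothing but the statement that $\partial_1\Omega_0$ and $\partial_2\Omega_0$ lie in $\Ker(\Lambda)$. A second, linearly independent homogeneous solution, obtained by reduction of order, behaves like $r^{-1}$ as $r \to 0$ and like $r^3$ as $r \to +\infty$, and so violates both boundary conditions; consequently the homogeneous problem has a one-dimensional solution space, spanned by $\varphi_*$. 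By the Fredholm alternative for this singular Sturm--Liouville problem, the inhomogeneous equation has an admissible solution if and only if the source is $L^2$-orthogonal to $\varphi_*$, i.e.
\[
  \int_0^\infty \varphi_*(r)\,\frac{r\,b(r)}{v_*(r)}\,\dd r \,=\, \int_0^\infty r^2\,b(r)\,\dd r \,=\, 0\,,
\]
which is exactly the condition $F \in \cY_1'$, since both orthogonality relations in \eqref{eq:Y1Kerperp} reduce to $\int_0^\infty r^2 b(r)\,\dd r = 0$ when $F = b(r)\sin(\theta)$. Under this condition I would construct a solution by variation of parameters, normalizing the Wronskian $W$ of the two homogeneous solutions so that $rW$ is constant, and choosing the limits of integration so that the $\cO(r)$ behaviour at the origin is preserved while the contribution of the growing homogeneous solution is annihilated at infinity -- this last point is exactly where the vanishing of $\int_0^\infty r^2 b\,\dd r$ enters. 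Smoothness of $\varphi$ (by the Frobenius analysis at the regular singular point $r = 0$, using that $F \in \cZ$), together with the Gaussian decay at infinity, the asymptotics $h(r) \sim (r^2/4)e^{-r^2/4}$ and $v_*(r)^{-1} \sim 2\pi r^2$ as $r \to +\infty$, and formula \eqref{def:wsol}, then yield $\Omega = -w(r)\cos(\theta) \in \cZ$.

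It remains to address uniqueness and to select the representative lying in $\cY_1'$. Any two admissible solutions of \eqref{eq:varphidef2} differ by a multiple of $\varphi_*$, so by \eqref{def:wsol} the corresponding functions $\Omega$ differ by a multiple of $\varphi_*(r)h(r)\cos(\theta) = r\,g(r)\cos(\theta)$, which is proportional to $\partial_1\Omega_0 \in \Ker(\Lambda)\cap\cY_1$. Hence among all solutions of $\Lambda\Omega = F$ in $\cY_1\cap\cZ$ there is exactly one that is orthogonal to $\Ker(\Lambda)$, obtained by subtracting its $\Ker(\Lambda)$-component; equivalently, the free multiple of $\varphi_*$ is fixed by imposing $\int_0^\infty r^2 w(r)\,\dd r = 0$. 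This gives the unique $\Omega \in \cY_1'\cap\cZ$ with $\Lambda\Omega = F$, of the form stated in the lemma, and the case $F = b(r)\cos(\theta)$ follows by the same argument with $\sin$ and $\cos$ exchanged and a sign change. I expect the main obstacle to be the construction in the previous paragraph: verifying that the variation-of-parameters formula, with the integration limits dictated by the solvability condition, really produces a solution with the sharp asymptotics $\cO(r)$ at $0$ and $\cO(1/r)$ at $+\infty$, leaving no spurious slowly decaying or growing tail; the precise identification of the scalar condition $\int_0^\infty r^2 b\,\dd r = 0$ with membership in $\cY_1'$ also requires a short computation, though it is routine.
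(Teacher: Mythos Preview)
Your proposal is correct and follows exactly the approach sketched in the paper's discussion preceding the lemma (the paper itself does not give a full proof here, deferring to \cite{GaS24}): reduction to the scalar ODE \eqref{eq:varphidef2} with $n=1$, identification of the homogeneous solution $\varphi_* = r v_*$, the Fredholm solvability condition $\int_0^\infty r^2 b(r)\,\dd r = 0$ coinciding with membership in $\cY_1'$, and uniqueness obtained by projecting out the $\Ker(\Lambda)$-component. The identification $\varphi_* h \cos\theta = r g(r)\cos\theta \propto \partial_1\Omega_0$ is also correct.
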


\subsection{Estimates on the velocity field}\label{ssecA3}

We collect here, for easy reference, a few classical estimates on the Biot-Savart
operator \eqref{eq:BS} which are used in the proof of our main results. Given
a vorticity distribution $\omega$, we define $u = \BS[\omega]$ as in \eqref{eq:BS}.

\begin{lemma}\label{lem:BS1} {\bf \cite[Lemma~2.1]{GW02}} Assume that
$1 \le p < 2 < q \le \infty$.\\[1mm]
1) If $\frac{1}{q} = \frac{1}{p} -\frac12$ then $\|u\|_{L^q} \le C\|\omega\|_{L^p}$.\\[1mm]
2) If $\frac12 = \frac{\theta}{p} + \frac{1-\theta}{q}$ with $\theta \in (0,1)$, 
then $\|u\|_{L^\infty} \le C\|\omega\|_{L^p}^\theta \|\omega\|_{L^q}^{1-\theta}$. 
\end{lemma}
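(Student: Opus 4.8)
The plan is to reduce both inequalities to a single pointwise bound on the velocity in terms of a Riesz-type convolution. Since $\bigl|(x-y)^\perp\bigr| = |x-y|$, formula \eqref{eq:BS} immediately gives
\[
  |u(x)| \,\le\, \frac{1}{2\pi}\int_{\R^2} \frac{|\omega(y)|}{|x-y|}\dd y
  \,=\, \frac{1}{2\pi}\bigl(k * |\omega|\bigr)(x)\,, \qquad k(z) := |z|^{-1}\,,
\]
so it suffices to prove the two estimates for convolution with the radial kernel $k$ on $\R^2$. The structural fact behind everything is that $k$ lies in the weak-$L^2$ (Lorentz) space, since $\bigl|\{z\in\R^2 : |z|^{-1} > \lambda\}\bigr| = \bigl|\{|z| < 1/\lambda\}\bigr| = \pi\lambda^{-2}$ for all $\lambda > 0$.

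For part~1 I would invoke the weak Young inequality, equivalently the Hardy--Littlewood--Sobolev inequality: convolution with a fixed element of weak-$L^2(\R^2)$ maps $L^p(\R^2)$ boundedly into $L^q(\R^2)$ whenever $1 < p < q < \infty$ and $1 + \tfrac1q = \tfrac12 + \tfrac1p$, which is precisely the scaling relation $\tfrac1q = \tfrac1p - \tfrac12$ in the statement; note that this relation together with $p < 2 < q$ automatically forces $p \in (1,2)$ and $q \in (2,\infty)$, so no endpoints arise. If one prefers a self-contained argument, the same bound follows by splitting $k = k\,\mathbf 1_{\{|z|\le R\}} + k\,\mathbf 1_{\{|z|>R\}}$, estimating the two convolutions by the ordinary Young inequality (the truncated kernels lie in $L^{s}$ for $s<2$ and for $s>2$ respectively), and optimising the cutoff $R$ pointwise against the Hardy--Littlewood maximal function of $\omega$; this is the classical Hedberg argument for fractional integration.

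For part~2 I would argue directly. Fix $x\in\R^2$ and $R>0$, split the defining integral at distance $R$ from $x$, and apply H\"older's inequality to each piece:
\[
  |u(x)| \,\le\, \frac{1}{2\pi}\,\bigl\||x-\cdot|^{-1}\bigr\|_{L^{q'}(|x-y|<R)}\|\omega\|_{L^q}
  \,+\, \frac{1}{2\pi}\,\bigl\||x-\cdot|^{-1}\bigr\|_{L^{p'}(|x-y|>R)}\|\omega\|_{L^p}\,,
\]
where $p',q'$ are the conjugate exponents (with the usual conventions if $p=1$ or $q=\infty$). Since $q>2$ we have $q'<2$, and a computation in polar coordinates gives $\bigl\||x-\cdot|^{-1}\bigr\|_{L^{q'}(|x-y|<R)} = c_q\,R^{1-2/q}$; since $p<2$ we have $p'>2$ and likewise $\bigl\||x-\cdot|^{-1}\bigr\|_{L^{p'}(|x-y|>R)} = c_p\,R^{1-2/p}$ — both norms being finite exactly because $p<2<q$. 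Hence $|u(x)| \le C\bigl(R^{1-2/q}\|\omega\|_{L^q} + R^{1-2/p}\|\omega\|_{L^p}\bigr)$ for every $R>0$, and choosing $R$ so that the two terms balance, namely $R = \bigl(\|\omega\|_{L^p}/\|\omega\|_{L^q}\bigr)^{1/(2/p-2/q)}$, yields $\|u\|_{L^\infty} \le C\,\|\omega\|_{L^p}^{\theta}\|\omega\|_{L^q}^{1-\theta}$ with $\theta = (1-2/q)/(2/p-2/q)$.

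The only remaining task is the short algebraic check that this $\theta$ satisfies $\tfrac12 = \tfrac\theta p + \tfrac{1-\theta}q$, which is immediate after rewriting that identity as $\theta\bigl(\tfrac1p - \tfrac1q\bigr) = \tfrac12 - \tfrac1q$. There is no genuine obstacle here — both statements are classical — and the only point requiring care is verifying that the exponents in the H\"older steps fall in the ranges making the truncated kernel norms finite, which is precisely what the hypothesis $1\le p<2\le\infty$, $2<q\le\infty$ guarantees, including the endpoints $p=1$ (where the far kernel is bounded) and $q=\infty$ (where the near kernel is integrable) in part~2.
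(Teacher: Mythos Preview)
Your argument is correct and is exactly the standard route for these estimates: the pointwise domination $|u|\le \tfrac{1}{2\pi}\,|\cdot|^{-1}*|\omega|$, then Hardy--Littlewood--Sobolev (equivalently weak Young, since $|\cdot|^{-1}\in L^{2,\infty}(\R^2)$) for part~1, and the near/far splitting with H\"older plus optimisation in $R$ for part~2. The algebra on the exponents is right, and you correctly handled the endpoints $p=1$ and $q=\infty$ in part~2.

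As for the comparison: the paper does not actually prove this lemma. It is stated in the appendix purely for reference, with a citation to \cite[Lemma~2.1]{GW02}, and no argument is given. So there is nothing to compare against here --- you have supplied a clean proof where the paper simply quotes the result.
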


Let $b$ be the weight function defined by $b(x) = (1+|x|^2)^{1/2}$ for $x \in \R^2$. 

\begin{lemma}\label{lem:BS2} {\bf \cite[Proposition~B.1]{GW02}}
Assume that $m \in (1,2)$. \\[1mm]
If $b^m \omega \in L^2(\R^2)$ and $\int_{\R^2} \omega \dd x = 0$, then
$\|b^{m-2/q}u\|_{L^q} \le C \|b^m\omega\|_{L^2}$ for all $q \in (2,\infty)$. \\[1mm]
In particular, by H\"older's inequality, $\|u\|_{L^2} \le C \|b^m\omega\|_{L^2}$. 
\end{lemma}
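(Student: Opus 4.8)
The statement is a classical weighted estimate for the Biot--Savart law, and the plan is to reproduce the argument of \cite{GW02}: I will reduce the weighted bound to the unweighted estimate of Lemma~\ref{lem:BS1} by a dyadic decomposition, using the cancellation $\int_{\R^2}\omega\dd x=0$ to control the long-range interactions. Set $f=b^m\omega$, so that $\|f\|_{L^2}=\|b^m\omega\|_{L^2}$ and $\omega=b^{-m}f$. The hypothesis $m\in(1,2)$ enters through three elementary facts: $m>1$ gives $\omega\in L^1(\R^2)$ (so the subtraction below is licit); for $R\ge1$ one has $\int_{\{|y|\le R\}}|y|^2 b(y)^{-2m}\dd y\le CR^{4-2m}$ because $m<2$; and $\int_{\{|y|>R\}}b(y)^{-2m}\dd y\le CR^{2-2m}$ because $m>1$. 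I decompose $f=\sum_{j\ge0}f_j$ with $f_0=f\mathbf 1_{\{|y|<2\}}$ and $f_j=f\mathbf 1_{\{2^j\le|y|<2^{j+1}\}}$ for $j\ge1$, and write $a_j=\|f_j\|_{L^2}$, so that $\sum_j a_j^2=\|f\|_{L^2}^2$; Cauchy--Schwarz on each annulus then yields $\int_{\{2^j\le|y|<2^{j+1}\}}|\omega|\dd y\le C2^{(1-m)j}a_j$. On the ball $\{|x|\le4\}$ the weight $b^{m-2/q}$ is bounded, so there it suffices to bound $\|u\|_{L^q(\R^2)}$: by part~1) of Lemma~\ref{lem:BS1} with $p_0=2q/(q+2)\in(1,2)$ one has $\|u\|_{L^q}\le C\|\omega\|_{L^{p_0}}$, and since $p_0>2/(m+1)$ (equivalently $q>2/m$, which holds since $q>2>2/m$) H\"older's inequality gives $\|\omega\|_{L^{p_0}}\le C\|b^m\omega\|_{L^2}$. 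It remains to treat $\{|x|\ge4\}$, where $b(x)\sim|x|$.

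On $\{|x|\ge4\}$, fix $j\ge2$ and $x$ with $2^j\le|x|<2^{j+1}$. Writing $K(z)=z^\perp/(2\pi|z|^2)$ and using $\int\omega=0$, I split $u(x)=u_1(x)+u_2(x)-u_3(x)$ with
\begin{align*}
  u_1(x) \,&=\, \int_{\{|y|\le|x|/2\}}\bigl(K(x-y)-K(x)\bigr)\omega(y)\dd y\,, \qquad
  u_2(x) \,=\, \int_{\{|y|>|x|/2\}}K(x-y)\omega(y)\dd y\,, \\
  u_3(x) \,&=\, K(x)\int_{\{|y|>|x|/2\}}\omega(y)\dd y\,.
\end{align*}
For $u_1$, the bound $|K(x-y)-K(x)|\le C|y|/|x|^2$ on $\{|y|\le|x|/2\}$ and summation over the annuli with $2^k\lesssim|x|$ give $|u_1(x)|\le C|x|^{-2}\sum_{k\lesssim j}2^{(2-m)k}a_k=C|x|^{-2}2^{(2-m)j}(g*a)(j)$, where $g(l)=2^{-(2-m)l}\mathbf 1_{\{l\ge0\}}$ lies in $\ell^1(\N)$ precisely because $m<2$. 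For $u_3$, and for the part of $u_2$ where $|x-y|>|x|/4$, the kernel is $\cO(1/|x|)$ and I bound by $C|x|^{-1}\sum_{k\gtrsim j}2^{(1-m)k}a_k=C|x|^{-1}2^{(1-m)j}(\tilde g*a)(j)$ with $\tilde g(l)=2^{-(m-1)l}\mathbf 1_{\{l\ge0\}}\in\ell^1(\N)$ because $m>1$. For the remaining diagonal part of $u_2$, where $|x-y|\le|x|/4$ forces $|y|\sim|x|\sim2^j$ and hence $b(y)^{-m}\le C2^{-mj}$, I am left with $C2^{-mj}$ times the convolution of $|z|^{-1}\mathbf 1_{\{|z|\le|x|/4\}}$ with $|f|$ restricted to the $\cO(1)$ annuli surrounding $\{|y|\sim2^j\}$.

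In each of these cases I multiply by $b(x)^{m-2/q}\sim2^{(m-2/q)j}$ and take the $L^q$ norm over $\{2^j\le|x|<2^{j+1}\}$, using for the diagonal term the elementary Young bound $\big\|\,|z|^{-1}\mathbf 1_{\{|z|\le2^j\}}*h\,\big\|_{L^q(\R^2)}\le C2^{2j/q}\|h\|_{L^2}$ (the kernel lies in $L^{2q/(q+2)}$ with norm $\sim2^{2j/q}$). A direct check shows that the powers of $2^j$ cancel, leaving $\|b^{m-2/q}u\|_{L^q(\{2^j\le|x|<2^{j+1}\})}$ bounded by $C(g*a)(j)$, $C(\tilde g*a)(j)$, or $C(a_{j-1}+a_j+a_{j+1})$, respectively. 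Summing the $q$-th powers over $j$ and invoking Young's inequality for sequences ($\|g*a\|_{\ell^q}\le\|g\|_{\ell^1}\|a\|_{\ell^q}$) together with the embedding $\ell^2(\N)\hookrightarrow\ell^q(\N)$ (valid since $q\ge2$), I obtain $\|b^{m-2/q}u\|_{L^q(\{|x|\ge4\})}\le C\|a\|_{\ell^2}=C\|b^m\omega\|_{L^2}$, which together with the bound near the origin proves the first assertion. The final assertion follows by applying H\"older's inequality to $u=b^{-(m-2/q)}\cdot\bigl(b^{m-2/q}u\bigr)$ with any fixed $q\in(2,\infty)$, since $m>1$ guarantees $b^{-(m-2/q)}\in L^{2q/(q-2)}(\R^2)$.

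The main obstacle is that the crude pointwise bound $|u(x)|\le C|x|^{-m}\|b^m\omega\|_{L^2}$, which a naive application of Cauchy--Schwarz produces, is just barely too weak: weighted by $b^{m-2/q}$ it misses membership in $L^q$ by a logarithm. The resolution is to keep the dyadic building blocks $a_k=\|f_k\|_{L^2}$ separate and to realise the arising discrete sums as convolutions against $\ell^1$ sequences --- which is possible \emph{exactly} because $1<m<2$ --- so that Young's inequality for sequences recovers the missing logarithm; fitting the diagonal singularity of the Biot--Savart kernel into the same dyadic scheme, via Young's inequality with the sharp power of the cutoff radius, is the secondary point requiring care.
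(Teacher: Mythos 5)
The paper offers no proof of this lemma: it is quoted verbatim from \cite{GW02}, so there is nothing internal to compare against. Your argument is correct and self-contained, and it is in the same spirit as the cited reference (splitting the Biot--Savart integral into a near-field part, a far-field part regularized by the moment condition $\int\omega\dd x=0$, and a diagonal part handled by Young's inequality). The exponent bookkeeping checks out at every stage: the powers of $2^j$ do cancel for $u_1$, $u_3$, the off-diagonal part of $u_2$ (namely $(m-2/q)-1+(1-m)+2/q=0$) and the diagonal part (namely $(m-2/q)-m+2/q=0$); the hypotheses $m>1$ and $m<2$ are used exactly where you say ($\tilde g\in\ell^1$ and $\omega\in L^1$ on one side, $g\in\ell^1$ and the moment integral $\int_{|y|\le R}|y|^2b^{-2m}\lesssim R^{4-2m}$ on the other); the near-origin reduction to Lemma~\ref{lem:BS1} with $p_0=2q/(q+2)$ is valid since $q>2>2/m$; and the final H\"older step requires precisely $(m-2/q)\,\frac{2q}{q-2}>2$, i.e.\ $m>1$. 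The only blemish is notational: your $\tilde g(l)=2^{-(m-1)l}\mathbf 1_{\{l\ge0\}}$ should be supported on $l\le 0$ (or the convolution written as $\sum_{k\ge j}\tilde g(k-j)a_k$) to represent the tail sum $\sum_{k\ge j}2^{(1-m)k}a_k$ correctly, but since only $\|\tilde g\|_{\ell^1}<\infty$ is used, this does not affect the conclusion.
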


Let $\cZ$ be the function space defined by \eqref{def:cZ}. The following
statement can be established using the same arguments as in
\cite[Appendix~B]{GW02}. 

\begin{lemma}\label{lem:BS3}
If $\omega \in \cZ$, then $u \in \cS_*(\R^2)$ and $b u \in L^\infty(\R^2)$. Moreover:
\\[1mm]
1) If $\int_{\R^2} \omega\dd x = 0$, then $b^2 u \in L^\infty(\R^2)$; \\[1mm]
2) If $\int_{\R^2} \omega\dd x = 0$ and $\int_{\R^2} x_j \omega\dd x = 0$ for $j = 1,2$,
then $b^3 u \in L^\infty(\R^2)$. 
\end{lemma}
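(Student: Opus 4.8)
The plan is to exploit the explicit convolution structure $u = \BS[\omega] = K*\omega$, where $K(x) = \frac{1}{2\pi}\,\frac{x^\perp}{|x|^2}$ is the Biot--Savart kernel, together with the fact that every element of $\cZ$ is smooth and decays, with all its derivatives, faster than any polynomial (in fact like a Gaussian). Throughout I would follow the scheme of \cite[Appendix~B]{GW02}. The kernel $K$ is smooth on $\R^2\setminus\{0\}$, belongs to $L^1_{\mathrm{loc}}(\R^2)$ with $\int_{|z|\le 1}|K(z)|\dd z = 1$, is homogeneous of degree $-1$, and satisfies $|\Diff^k K(x)| \le C_k|x|^{-1-k}$ away from the origin. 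For smoothness of $u$, one first checks that $\partial^\alpha \omega \in \cZ$ for every multi-index $\alpha$ (differentiating the relation $e^{|\xi|^2/4}\omega \in \cS_*$), so that $\partial^\alpha u = K*\partial^\alpha\omega$: splitting $K = K\mathbf{1}_{\{|x|\le 1\}} + K\mathbf{1}_{\{|x|>1\}}$ into an $L^1$ part and a bounded part and convolving with the bounded, rapidly decreasing, continuous function $\partial^\alpha\omega$ shows that $\partial^\alpha u$ is continuous. Hence $u \in C^\infty(\R^2)$, and the pointwise bounds established below show $u$ and its derivatives even decay, so a fortiori $u \in \cS_*(\R^2)$.

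For the bound $bu \in L^\infty$, I would argue separately for $|x|\le 2$ and $|x|\ge 2$. When $|x|\le 2$, splitting the integral into $\{|x-y|\le 1\}$ and $\{|x-y|>1\}$ gives $|u(x)| \le \|\omega\|_{L^\infty} + \frac{1}{2\pi}\|\omega\|_{L^1} \le C$. When $|x|\ge 2$, I split the Biot--Savart integral over $\{|y|\le |x|/2\}$ and $\{|y|>|x|/2\}$. On the first region $|x-y|\ge |x|/2$, so $|K(x-y)| \le \frac{1}{\pi|x|}$ and the contribution is $\le \frac{1}{\pi|x|}\|\omega\|_{L^1}$. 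On the second region the kernel singularity $y=x$ is present, but there $|\omega(y)| \le \sup_{|z|\ge |x|/2}|\omega(z)| \le C_N(1+|x|)^{-N}$ for every $N$ because $\omega\in\cZ$; splitting again $\{|x-y|\le 1\}$ versus $\{|x-y|>1\}$ bounds this contribution by $C\sup_{|y|\ge |x|/2}|\omega(y)| + \frac{1}{2\pi}\int_{|y|>|x|/2}|\omega(y)|\dd y$, which is $O((1+|x|)^{-N})$. Altogether $|u(x)| \le C(1+|x|)^{-1}$, i.e. $bu \in L^\infty$.

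For item 1), I would use $\int\omega = 0$ to write $u(x) = \frac{1}{2\pi}\int\bigl(K(x-y) - K(x)\bigr)\omega(y)\dd y$. On $\{|y|\le |x|/2\}$ the segment joining $x$ to $x-y$ stays in $\{|z|\ge |x|/2\}$, so by the mean value inequality $|K(x-y)-K(x)| \le |y|\sup|\Diff K| \le C|y|/|x|^2$, giving a contribution $\le C|x|^{-2}\bigl\||\cdot|\,\omega\bigr\|_{L^1}$; on $\{|y|>|x|/2\}$ both $K(x-y)\omega(y)$ and $K(x)\omega(y)$ are controlled by the Gaussian decay of $\omega$ exactly as in the previous paragraph, and are $O(|x|^{-2})$. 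Hence $|u(x)|\le C|x|^{-2}$ for $|x|\ge 2$, and combined with the case $|x|\le 2$ this gives $b^2 u \in L^\infty$. For item 2), I would additionally use $\int x_j\omega\dd x = 0$ to subtract the first-order term and write $u(x) = \frac{1}{2\pi}\int\bigl(K(x-y) - K(x) + \Diff K(x)[y]\bigr)\omega(y)\dd y$, whose integrand on $\{|y|\le |x|/2\}$ is the second-order Taylor remainder of $K$, bounded by $C|y|^2\sup|\Diff^2 K| \le C|y|^2/|x|^3$; the far region $\{|y|>|x|/2\}$ is again negligible at order $|x|^{-3}$ thanks to $\omega\in\cZ$. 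This yields $b^3 u \in L^\infty$.

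The only delicate point, and the sole place requiring care, is that the region $\{|y|>|x|/2\}$ always contains the kernel singularity $y=x$, so no decay of $K$ can be exploited there; the argument rests entirely on the super-polynomial (in fact Gaussian) decay of $\omega$ and of all its derivatives built into the definition of $\cZ$, which dominates any algebraic weight $b^k$. Apart from this observation, the proof is a routine repetition of the estimates in \cite[Appendix~B]{GW02}.
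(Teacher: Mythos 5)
Your proof is correct and follows essentially the same route as the paper, which gives no argument of its own but points to \cite[Appendix~B]{GW02}: Taylor expansion of the Biot--Savart kernel on the near region $\{|y|\le |x|/2\}$ with the moment conditions cancelling the zeroth and first order terms, and the Gaussian decay of $\omega \in \cZ$ (and of its derivatives) absorbing the far region containing the kernel singularity. The only blemish is cosmetic: having already included the factor $\tfrac{1}{2\pi}$ in $K$, you should not repeat it in the formulas $u(x) = \int \bigl(K(x-y)-K(x)\bigr)\omega(y)\dd y$ and its second-order analogue.
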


\subsection{Proof of Lemma~\ref{lem:Qeps}}\label{ssecA4}

The parameter $t \in (0,T)$ does not play any role in the argument here, so we
omit the time dependence of all quantities. Given $\epsilon > 0$ sufficiently
small, we give ourselves two smooth and radially symmetric functions
$\zeta_1, \zeta_2 : \R^2 \to [0,1]$ such that $\zeta_1(\xi)^2 + \zeta_2(\xi)^2 = 1$
for all $\xi \in \R^2$, $\zeta_1(\xi) = 1$ whenever $|\xi| \le \epsilon^{-1/4}$,
and $\zeta_2(\xi) = 1$ whenever $|\xi| \ge 2\epsilon^{-1/4}$. It is well-known
that such a partition of unity exists, and we can also assume that $|\nabla\zeta_1(\xi)|
+ |\nabla\zeta_2(\xi)| \le C\epsilon^{1/4}$ for all $\xi \in \R^2$. 

Given $w : \R^2 \to \R$ we define $w_1 = \zeta_1 w$ and $w_2 = \zeta_2 w$, so
that $w^2 = w_1^2 + w_2^2$. A direct calculation shows that $|\nabla w|^2 =
|\nabla w_1|^2 + |\nabla w_2|^2 - w^2\bigl(|\nabla\zeta_1|^2 + |\nabla\zeta_2|^2\bigr)$.
Thus, recalling the definition \eqref{eq:cQdef} of $\cQ_\epsilon[w]$, we have
\begin{equation}\label{eq:Qeps0}
\begin{split}
  \cQ_\epsilon[w] \,&=\, \cQ_\epsilon[w_1] + \cQ_\epsilon[w_2] - \int_{\R^2}
  p_\epsilon w^2 \bigl(|\nabla\zeta_1|^2{+}|\nabla\zeta_2|^2\bigr)\dd \xi \\
  \,&\ge\, \cQ_\epsilon[w_1] + \cQ_\epsilon[w_2] - C\epsilon^{1/2}\cE_\epsilon[w]\,.
\end{split}
\end{equation}
It is therefore sufficient to obtain lower bounds on the quantities $\cQ_\epsilon[w_1]$
and $\cQ_\epsilon[w_2]$. 

To estimate $\cQ_\epsilon[w_2]$ we apply H\"older's inequality to obtain
\[
  \biggl| \int_{\R^2} w_2 (\nabla w_2\cdot\nabla p_\epsilon)\dd\xi \biggr| \,\le\,
  \frac34 \int_{\R^2} p_\epsilon|\nabla w_2|^2\dd\xi + \frac13 \int_{\R^2}
  \frac{|\nabla p_\epsilon|^2}{p_\epsilon}\,w_2^2\dd\xi\,,
\]
so that
\begin{equation}\label{eq:Qeps1}
  \cQ_\epsilon[w_2] \,\ge\, \int_{\R^2} p_\epsilon\Bigl\{\frac14\,|\nabla w_2|^2
  + \Bigl(V_\epsilon -\frac12\Bigr) w_2^2\Bigr\}\dd\xi\,, \qquad
  V_\epsilon(\xi) \,=\, \frac{\xi\cdot\nabla p_\epsilon}{4p_\epsilon}  -
  \frac{|\nabla p_\epsilon|^2}{3p_\epsilon^2}\,.
\end{equation}
Using the definition \eqref{eq:pepsdef} of the weight $p_\epsilon$, it is not
difficult to verify that, under the assumptions of Lemma~\ref{lem:Ieps}, 
\[
  V_\epsilon(\xi) \,=\, \begin{cases} \frac{|\xi|^2}{24}\bigl(
  1 + \cO(A^2)\bigr) & \text{ if } \xi \in \I_\epsilon\,, \\[1mm]
  ~0 & \text{ if } \xi \in \II_\epsilon\,, \\[1mm]
  |\xi|^2 \Bigl(\frac{\gamma}{8} - \frac{\gamma^2}{12}\Bigr) & \text{ if } \xi \in \III_\epsilon\,.
  \end{cases}
\]
We further observe that $\gamma/8 - \gamma^2/12 \ge \gamma/16$ as soon as
$\gamma \le 3/4$, and that $w_2$ vanishes for $|\xi| \le \epsilon^{-1/4}$, which
implies that
\[
  \Bigl(V_\epsilon(\xi) -\frac{1}{2}\Bigr) w_2(\xi)^2 \,\ge\, \kappa\bigl(|\xi|^2+1\bigr)
  w_2(\xi)^2\,, \qquad \forall \xi \in \I_\epsilon \,,
\]
for some $\kappa < 1/24$. Thus, assuming that $\epsilon, A$ are as in
Lemma~\ref{lem:Ieps}, we obtain the lower bound
\begin{equation}\label{eq:Qeps2}
  \cQ_\epsilon[w_2] \,\ge\, \frac14 \int_{\R^2} p_\epsilon|\nabla w_2|^2\dd\xi +
  \kappa \int_{\I_\epsilon \cup \III_\epsilon}\bigl(\chi_\epsilon + 1\bigr) p_\epsilon
  w_2^2\dd\xi - \frac12 \int_{\II_\epsilon}p_\epsilon w_2^2\dd\xi\,.
\end{equation}

On the other hand, since $w_1$ is supported in the region $\I_\epsilon$ where the weight
$p_\epsilon = \exp(q_\epsilon)$ is smooth, we can define $h = e^{q_\epsilon/2} w_1$ and
integrate by parts to show that $\cQ_\epsilon[w_1] = \hat\cQ_\epsilon[h]$, where 
\[
  \hat\cQ_\epsilon[h] \,=\, \int_{\R^2}\bigl(|\nabla h|^2+ U_\epsilon\,h^2\bigr)\dd \xi\,,
  \qquad U_\epsilon(\xi) \,=\, \frac14\,\xi\cdot\nabla q_\epsilon - \frac14\,
  |\nabla q_\epsilon|^2 - \frac12\,.
\]
It is easy to verify that $|U_\epsilon(\xi) - U_0(\xi)| \le C\epsilon$ when $|\xi| \le
2\epsilon^{-1/4}$, so that $\hat\cQ_\epsilon[h]$ is close to $\hat\cQ_0[h]$ when $\epsilon$
is small. Note that $q_0(\xi) = |\xi|^2/4$ and $U_0(\xi) = |\xi|^2/16 - 1/2$,
so that $\cQ_0$ is the quadratic form of the quantum harmonic oscillator with
ground state $\psi(\xi) = e^{-|\xi|^2/8}$, see \cite[Appendix~A]{GW02}. In particular,
if $\langle h,\psi\rangle_{L^2} = 0$, it is known that $\hat\cQ_0[h] \ge \frac12 \|h\|_{L^2}^2$.

In our case, since we assume that $\int_{\R^2}w\dd\xi = 0$, the orthogonality
condition above is nearly satisfied in the sense that
\[
  \bigl|\langle h,\psi\rangle_{L^2}\bigr| \,=\, \biggl|\,\int_{\R^2} e^{(q_\epsilon-q_0)/2}w_1
  \dd\xi\,\biggr| \,=\, \biggl|\,\int_{\R^2}\Bigl(e^{(q_\epsilon-q_0)/2}\zeta_1 - 1\Bigr)w\dd\xi\,\biggr|
  \,\le\, C\epsilon\,\|p_\epsilon^{1/2}w\|_{L^2}\,,
\]
where we used the fact that $e^{(q_\epsilon-q_0)/2} = 1 + \cO(\epsilon)$ for $|\xi| \le \epsilon^{-1/4}$, 
and that $p_\epsilon^{-1/2} \in L^2(\R^2)$. If $\epsilon$ is sufficiently small, we deduce that
\begin{equation}\label{eq:Qeps3}
  \cQ_\epsilon[w_1] \,=\, \hat\cQ_\epsilon[h] \,\ge\, \frac12\,\|h\|_{L^2}^2 - C\epsilon 
 \|p_\epsilon^{1/2}w\|_{L^2}^2 \,=\, \frac12\,\int_{\I_\epsilon}p_\epsilon w_1^2\dd\xi
  - C\epsilon \int_{\R^2}p_\epsilon w^2\dd\xi\,.
\end{equation}
Observing that inequality \eqref{eq:Qeps1} also holds with $w_2$ replaced by $w_1$,
we add $3/4$ of \eqref{eq:Qeps3} and $1/4$ of \eqref{eq:Qeps1} to arrive at the
lower bound
\begin{equation}\label{eq:Qeps4}
  \cQ_\epsilon[w_1] \,\ge\, \kappa\int_{\I_\epsilon} p_\epsilon\Bigl(|\nabla w_1|^2 + 
  \bigl(\chi_\epsilon + 1\bigr)w_1^2\Bigr)\dd\xi - C\epsilon\int_{\R^2}p_\epsilon w^2\dd\xi\,,
\end{equation}
for some $\kappa > 0$. Finally, estimate \eqref{eq:Qepslow} is  a direct
consequence of \eqref{eq:Qeps0}, \eqref{eq:Qeps2}, and \eqref{eq:Qeps4}. 

\medskip\noindent{\bf Acknowledgements.} The research of both authors is supported 
by the grant BOURGEONS ANR-23-CE40-0014-01 of the French National Research Agency.
The first author was also supported by the Simons Collaboration on Wave Turbulence.

\bigskip\noindent
{\bf Martin Donati}\\
Institut Fourier, Universit\'e Grenoble Alpes, CNRS\\
and CNRS, Universit\'e de Poitiers, LMA, Poitiers, France \\
Email\: {\tt Martin.Donati@math.univ-poitiers.fr}

\bigskip\noindent
{\bf Thierry Gallay}\\
Institut Fourier, Universit\'e Grenoble Alpes, CNRS, Institut Universitaire de France\\
100 rue des Maths, 38610 Gi\`eres, France\\
Email\: {\tt Thierry.Gallay@univ-grenoble-alpes.fr}

\end{document}